\documentclass[11pt]{amsart}
\usepackage{amsmath,amsfonts,amsthm,amssymb,url,graphicx}

\textwidth 13.5cm
\setlength{\topmargin}{-.5in}
\setlength{\textheight}{9in}
\DeclareFontFamily{OT1}{rsfs}{}
\DeclareFontShape{OT1}{rsfs}{n}{it}{<-> rsfs10}{}
\DeclareMathAlphabet{\mathscr}{OT1}{rsfs}{n}{it}

\DeclareMathOperator{\Prob}{Prob}
\DeclareMathOperator{\supp}{supp}
\DeclareMathOperator{\mo}{\,mod}

\DeclareMathOperator{\diam}{diam}

\DeclareMathOperator{\SL}{SL}
\DeclareMathOperator{\PSL}{PSL}

\DeclareMathOperator{\im}{im}

\DeclareMathOperator{\Sym}{Sym}
\DeclareMathOperator{\Alt}{Alt}

\newtheorem{prop}{Proposition}[section]
\newtheorem{thm}[prop]{Theorem}

\newtheorem{cor}[prop]{Corollary}
\newtheorem{lem}[prop]{Lemma}
\newtheorem{defn}{Definition}

\newtheorem*{defn*}{Definition}

\numberwithin{equation}{section}
\title[Growth in linear algebraic groups and permutation groups]{Growth in linear algebraic groups and permutation groups: towards a unified
  perspective}
\author{H. A. Helfgott}
\address{Harald A. Helfgott, 
  Mathematisches Institut,
Georg-August Universit\"{a}t G\"{o}ttingen, Bunsenstra{\ss}e 3-5, D-37073 G\"{o}ttingen,
Deutschland; IMJ-PRG, UMR 7586,
  58 avenue de France, B\^{a}timent S. Germain, case 7012,
 75013 Paris CEDEX 13, France}
\begin{document}
\begin{abstract}
  By now, we have a product theorem in every finite simple group $G$ of Lie type,
  with the strength of the bound depending only in the rank of $G$. Such theorems
  have numerous consequences: bounds on the diameters of Cayley graphs, spectral
  gaps, and so forth. For the alternating group $\Alt_n$, we have a quasipolylogarithmic
  diameter bound (Helfgott-Seress 2014), but it does not rest on a product theorem.

  We shall revisit the proof of the bound for $\Alt_n$, bringing it closer to the proof
  for linear algebraic groups, and making some common themes clearer. As a result,
  we will show how to prove a product theorem for $\Alt_n$ -- not of full strength,
  as that would be impossible, but strong enough to imply the diameter bound.
\end{abstract}
\maketitle
\section{Introduction}

My personal route in the subject started with the following result.
\begin{thm}[Product theorem \cite{Hel08}]\label{thm:asnat}
  Let $G = \SL_2(\mathbb{Z}/p\mathbb{Z})$, $p$ a prime. Let $A\subset G$ generate $G$.
  Then either
  \[|A\cdot A\cdot A|\geq |A|^{1+\delta}\]
  or
  \[A^k = G,\]
  where $\delta>0$ and $k\in \mathbb{Z}^+$ are absolute constants.\footnote{
    It was soon determined that $k=3$ (\cite{MR2410393}, \cite{BNP}).}
\end{thm}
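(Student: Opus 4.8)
\noindent The plan is to show that a symmetric generating set with small tripling forces a subset of $\mathbb{F}_p$ --- a set of traces --- to be simultaneously additively and multiplicatively structured, which the sum--product theorem forbids unless that set is essentially all of $\mathbb{F}_p$; in that case $A$ already fills $G$. So first, replacing $A$ by $A\cup A^{-1}\cup\{1\}$ (harmless for $|A|$ and $|A\cdot A\cdot A|$ up to constants), assume $A=A^{-1}\ni 1$, and suppose $|A\cdot A\cdot A|\le|A|^{1+\delta}$; by the non-commutative Pl\"unnecke--Ruzsa inequalities this gives $|A^{m}|\le|A|^{1+C_m\delta}$ for every fixed $m$. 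It then suffices to prove that, once $\delta$ is below an absolute threshold, $|A|\ge|G|^{1-c}$ for an absolute $c>0$: since $\SL_2(\mathbb{Z}/p\mathbb{Z})$ is quasirandom (its smallest nontrivial representation has dimension $(p-1)/2$), Gowers' quasirandomness bound (Gowers; Nikolov--Pyber) then yields $A\cdot A\cdot A=G$.

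The arithmetic shadow of $A$ is the trace set $\mathcal{S}:=\tr(A^{O(1)})\subseteq\mathbb{F}_p$, which under the assumption above has $|\mathcal{S}|\le|A|^{1+O(\delta)}$. The point is that $\mathcal{S}$ is rigid: the $\SL_2$ identity $\tr(xy)+\tr(xy^{-1})=\tr(x)\,\tr(y)$, together with the Fricke relations among the seven traces attached to a triple, tie sums and products of elements of $\mathcal{S}$ back into $\mathcal{S}$, up to bounded enlargements of the ambient power of $A$. Thus a failure of growth would make $\mathcal{S}$ a small set closed, in a strong sense, under both addition and multiplication, and the task is to contradict this.

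To locate additive and multiplicative structure concretely I would use escape from subvarieties: since $\langle A\rangle=G$ is Zariski-dense in $\SL_2$ and lies in no proper subvariety of bounded degree, $A^{m}\not\subseteq V$ for $m=O_{\deg V}(1)$. Applying this a bounded number of times, $A^{O(1)}$ contains a regular semisimple element $g$ (so $\tr g\ne\pm2$), with centralizer a maximal torus $T=C_G(g)$, together with generic elements $h$ with $hTh^{-1}\ne T$. A regular semisimple conjugacy class is a $2$-dimensional variety, and $x\mapsto xgx^{-1}$ has fibres exactly the left cosets of $T$; counting with this fibration over $\mathrm{Cl}(g)$ and $\mathrm{Cl}(hgh^{-1})$ should give the dichotomy: either $A$ grows, or $A^{O(1)}$ meets some maximal torus $T$ in a set $X$ with $|X|\ge|A|^{1-O(\delta)}$. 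Identifying $T$ with $\mathbb{F}_p^{\times}$ (or, in the non-split case, with the norm-one subgroup of $\mathbb{F}_{p^2}^{\times}$), the inclusion $X\cdot X\cdot X\subseteq A^{O(1)}\cap T$ shows $X$ has small multiplicative tripling; and, conjugating $X$ by $h$ and reading off traces --- in eigenvalue coordinates $\tr\!\big(t_\lambda\,h\,t_{\mu_0}h^{-1}\big)=\alpha\lambda+\beta\lambda^{-1}$ with $\alpha,\beta\ne0$ for generic $h$ and $\mu_0\in X$ --- the image $\{\lambda+\lambda^{-1}:\lambda\in X\}\subseteq\mathcal{S}$ is forced to have small additive doubling as well.

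The crux is then the sum--product theorem: a subset of $\mathbb{F}_p$ of size $2\le|X|\le p^{1-\epsilon}$ cannot have both $|X\cdot X\cdot X|$ and the induced $|X+X|$ polynomially small (Bourgain--Katz--Tao, Bourgain--Glibichuk--Konyagin; in the non-split case one invokes its $\mathbb{F}_{p^2}$-analogue, using escape once more to guarantee that $X$ is not trapped in a subfield or a proper multiplicative coset). Hence $|X|>p^{1-\epsilon}$; since $T$ is cyclic, $X\cdot X=T$, so $A^{O(1)}$ contains a full maximal torus, and with a generic unipotent and a generic element off the corresponding Borel this gives $A^{O(1)}\supseteq BwB$, a set of density $1-O(1/p)$, whence $A^k=G$. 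I expect the hard part to be the group-theoretic side rather than the sum--product input, which can be cited: making the dictionary between geometric growth in $G$ and arithmetic growth in $\mathbb{F}_p$ quantitative --- in particular proving the concentration dichotomy with usable exponents, keeping the ambient powers of $A$ under control throughout, and treating the non-split torus over $\mathbb{F}_{p^2}$ uniformly in $p$ --- is where the real work lies.
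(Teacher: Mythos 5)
This theorem is cited, not proved, in the paper: the text around it (Lemma~\ref{lem:oshut}, the discussion of escape from subvarieties, and the abstract Lemmas of \S 2.1) sketches the \emph{variety-intersection / Larsen--Pink} route to product theorems, the one that now works uniformly for all finite simple groups of bounded Lie rank. Your proposal instead reconstructs the original sum--product route of \cite{Hel08}: trace identities in $\SL_2$, concentration on a maximal torus, the Bourgain--Katz--Tao / Bourgain--Glibichuk--Konyagin sum--product theorem in $\mathbb{F}_p$ (and $\mathbb{F}_{p^2}$ for the non-split torus), and a Gowers-quasirandomness endgame (which is in fact a later, cleaner replacement --- Nikolov--Pyber --- for Helfgott's original ad hoc finish). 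Both routes are valid; the sum--product route is genuinely $\SL_2$-specific, trading on the identification of a torus with $\mathbb{F}_p^\times$ and of traces with field elements, whereas the approach the paper aligns itself with (``the intersection of $A$ with a one-dimensional subvariety has $\ll |A|^{1/\dim G + O(\delta)}$ elements'') is what generalizes cleanly, and is the analogue the paper is actually trying to transport to $\Sym(n)$.

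There is one quantitative slip that would sink the argument as written: your concentration dichotomy asserts ``either $A$ grows, or $A^{O(1)}$ meets some maximal torus $T$ in a set $X$ with $|X| \geq |A|^{1 - O(\delta)}$.'' The correct exponent is $|A|^{1/3 - O(\delta)}$, reflecting $\dim T / \dim \SL_2 = 1/3$; this is also what the fibration over the $2$-dimensional class $\mathrm{Cl}(g)$ actually yields, and what the matching upper bound (Lemma~\ref{lem:oshut}) confirms. The exponent matters downstream: sum--product plus $|T| \approx p$ forces $|X| \geq p^{1-\epsilon}$, and with $|X| \approx |A|^{1/3}$ this gives $|A| \geq p^{3 - O(\epsilon,\delta)} \approx |G|^{1-O(\epsilon,\delta)}$, which clears the $|A| \gtrsim |G|^{8/9}$ threshold required for Gowers quasirandomness in $\SL_2(\mathbb{F}_p)$. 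With your stated $|X| \approx |A|^{1-O(\delta)}$, one only gets $|A| \lesssim p^{1+O(\delta)}$, which is far below that threshold, and the endgame does not close. You flagged ``usable exponents'' as where the real work is, and this is exactly where it bites: the dichotomy must be stated with the $1/3$.
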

Here $|S|$ is the number of elements of a set $S$, and $A^k$ denotes
$\{a_1 \dotsc a_k : a_i\in A\}$. (We also write $A B$ for $\{a b:a\in A, b\in B\}$
and $A^{-1}$ for $\{a^{-1} : a\in A\}$.)

Theorem \ref{thm:asnat} gives us an immediate corollary on the diameter of any 
Cayley graph $\Gamma(G,A)$ of $G$.
The {\em diameter} of a graph is the maximal distance $d(v_1,v_2)$ over all pairs
of vertices $v_1$, $v_2$ of a graph $\Gamma$; in turn, the distance $d(v_1,v_2)$ between
two vertices is the length of the shortest path between them, where the length of
a path is defined as its number of edges. In the particular case of a (directed)
Cayley graph
$\Gamma(G,A)$, the diameter equals the least $\ell$ such that every
element $g\in G$ can be expressed as a product of elements of $A$ of length
$\leq \ell$. 
\begin{cor}\label{cor:nonat}
  Let $G = \SL_2(\mathbb{Z}/p\mathbb{Z})$, $p$ a prime. Let $S\subset G$ generate $G$.
  Then the diameter of the Cayley graph $\Gamma(G,S)$ is at most
  \begin{equation}\label{eq:udun}(\log |G|)^C,\end{equation}
  where $C$ is an absolute constant.
\end{cor}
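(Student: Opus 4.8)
The plan is to deduce Corollary~\ref{cor:nonat} from Theorem~\ref{thm:asnat} by the standard bootstrapping trick: iterate the product theorem along the sequence of iterated triple products of $S$ until the second alternative of Theorem~\ref{thm:asnat} is forced. All of the mathematical content sits in Theorem~\ref{thm:asnat}; the corollary is a formal consequence.

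First I would note that since $\SL_2(\mathbb{Z}/p\mathbb{Z})$ is non-abelian for every prime $p$, no single element can generate it, so $|S|\geq 2$. Then I would set $A_0=S$ and $A_{i+1}=A_i\cdot A_i\cdot A_i$, so that $A_i$ is precisely the set of products of $3^i$ elements of $S$, and each $A_i$ still generates $G$ because $S$ does. Theorem~\ref{thm:asnat} then applies to every $A_i$ and yields, for each $i$, either $|A_{i+1}|\geq |A_i|^{1+\delta}$ or $A_i^k=G$.

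Next I would let $m$ be the least index with $A_m^k=G$. Such an $m$ exists: for every $i<m$ we are necessarily in the growth case, so by induction $|A_i|\geq |S|^{(1+\delta)^i}\geq 2^{(1+\delta)^i}$, and this cannot exceed $|G|$; the same inequality at $i=m$ gives $2^{(1+\delta)^m}\leq |G|$, hence $m\leq \log\log_2|G|/\log(1+\delta)$ (and $m=0$, i.e.\ $G=S^k$, is the trivial base case). Finally, $A_m^k$ is the set of products of $k\cdot 3^m$ elements of $S$ and it equals $G$, so every $g\in G$ is a product of at most $k\cdot 3^m$ elements of $S$; thus the diameter of $\Gamma(G,S)$ is at most $k\cdot 3^m\leq k\,(\log_2|G|)^{\log 3/\log(1+\delta)}$, which is $\leq (\log|G|)^C$ for a suitable absolute constant $C$ depending only on $\delta$ and $k$ (the finitely many small primes being absorbed by enlarging $C$). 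This is \eqref{eq:udun}.

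I do not expect a genuine obstacle here: the deduction is soft, and the whole point is Theorem~\ref{thm:asnat}. The one feature worth keeping in view is the mechanism that makes the bound \emph{polylogarithmic} rather than merely subexponential: the growth alternative can recur only $O(\log\log|G|)$ times, so tripling the word length that many times inflates it by just $3^{O(\log\log|G|)}=(\log|G|)^{O(1)}$. One should also check, routinely, that the generating hypothesis of Theorem~\ref{thm:asnat} holds at every stage (it does, automatically), and that ``diameter'' is understood as in the paragraph before the corollary, so that no inverses are needed — consistent with Theorem~\ref{thm:asnat} being stated purely in terms of $A^k$.
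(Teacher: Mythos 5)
Your proposal is correct and is exactly the argument the paper intends: the paper's own proof is the one-line instruction ``Apply Theorem~\ref{thm:asnat} to $A=S$, $A=S^3$, $A=S^9$, etc.,'' and your write-up simply fills in the bookkeeping (the $O(\log\log|G|)$ bound on the number of growth steps, the $3^m$ inflation of word length, the check that every $S^{3^i}$ still generates). No gaps and no divergence from the paper's approach.
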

\begin{proof}
  Apply Theorem \ref{thm:asnat} to $A = S$, $A = S^3$, $A= S^9$, etc.
\end{proof}

The product theorem has other applications, notably to spectral gaps and expander graphs
(\cite{MR2415383}, \cite{MR2587341}, \cite{MR2892611}). It has been generalized several
times, to the point where now it is known to hold for all finite simple\footnote{It is trivial to see that the theorem and
corollary above still hold if $\SL_2(\mathbb{Z}/p\mathbb{Z})$ is replaced by the simple
group $\PSL_2(\mathbb{Z}/p\mathbb{Z})$.} groups of Lie type
and bounded rank \cite{BGT}, \cite{MR3402696}. When we say {\em bounded rank}, we mean that
the constants $\delta$ and $C$ in these generalizations of Thm.~\ref{thm:asnat} and
Cor.~\ref{cor:nonat} depend on the rank of the group $G$. 

Babai's conjecture states that the bound (\ref{eq:udun}) holds any finite, simple,
non-abelian $G$, and any set of generators $S$ of $G$, with $C$ an {\em absolute}
constant (i.e., one constant valid for all $G$). By the classification of finite
simple groups (henceforth: CFSG),
every finite, simple, non-abelian group $G$ is either (a) a simple group
of Lie type, or (b) an alternating group $\Alt(n)$, or (c) one of a finite number of
sporadic groups. Being finite in number, the sporadic groups are irrelevant for the
purposes of the asymptotic bound (\ref{eq:udun}). It remains, then, to consider
whether Babai's conjecture is true for $\Alt(n)$, and for simple, finite
groups of Lie type whose rank goes to infinity.

Part of the problem is that, in either of these two cases, the natural generalization of
Thm.~\ref{thm:asnat} is false: counterexamples due to Pyber and Spiga
\cite{MR2898694}, \cite{MR2876252} show that
$\delta$ has to depend on the rank of $G$, or on the
index $n$ in $\Alt(n)$, at least if there are no additional conditions.
Nevertheless, Babai's conjecture is still believed to be true.

Some of the ideas leading to Thm.~\ref{thm:asnat} and its generalizations were
useful in the proof of the following result, even though the overall argument looked
rather different.
\begin{thm}[\cite{MR3152942}]\label{thm:pais}
  Let $G = \Alt(n)$ or $G=\Sym(n)$. Then,
   \begin{equation}\label{eq:pedestr}\diam G \leq e^{C (\log \log |G|)^4 \log \log \log |G|},\end{equation}
  where $C$ is an absolute constant.
\end{thm}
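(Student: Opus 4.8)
The plan is to reduce, using a number of multiplications bounded by the right-hand side of~(\ref{eq:pedestr}), a generating set of $G$ to a single $3$-cycle, and then to invoke the classical (elementary) fact that once a $3$-cycle $c$ and a symmetric generating set $A$ of $\Alt(n)$ are both available, every element of $\Alt(n)$ is a product of $n^{O(1)}$ conjugates $g c g^{-1}$ with each $g$ a word of length $n^{O(1)}$ in $A$; hence $\diam(\Gamma(G,A))$ is at most $n^{O(1)}$ times the length used to produce $c$, and since $\log n\asymp\log\log|G|$ this stays within the claimed bound. (If $G=\Sym(n)$ one first passes to $\Alt(n)$: by Schreier's lemma a symmetric generating set $S$ of $\Sym(n)$ yields a generating set of $\Alt(n)$ whose members are words of length $\le 3$ in $S$, and a single odd element of $S$ accounts for the nontrivial coset.) As a preliminary step, replace $S$ by $S\cup S^{-1}\cup\{1\}$ at a cost of a factor $2$ in the diameter; from now on $S$ is symmetric and contains $1$.

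The backbone is a recursion whose inductive datum is a transitive group $G_1\le\Sym(\Omega_1)$ with $|\Omega_1|=m$ together with a symmetric generating set $A\ni 1$, the conclusion being a bound on $\diam(\Gamma(G_1,A))$ as a function of $m$. If $G_1$ is imprimitive, fix a minimal $G_1$-invariant partition into $b$ blocks of size $c$, so $m=bc$ with $1<b,c<m$; then, by the standard Babai--Seress reduction for imprimitive groups (passing to Schreier generators for block-stabilizers), $\diam(\Gamma(G_1,A))$ is bounded by a polynomial in $m$ times a product of diameters of transitive groups of degree $b$ and $c$, both at most $m/2$. If $G_1$ is primitive, invoke the classification of primitive permutation groups --- this is the only place CFSG enters: either $|G_1|\le m^{O(\log m)}$, in which case $\diam(\Gamma(G_1,A))\le|G_1|\le e^{O((\log m)^2)}$ is already far within budget; or $G_1$ has a normal subgroup isomorphic to a power of $\Alt(k)$ for some $k\le m$, and the action (natural, on $t$-subsets, product, or diagonal) reduces, again through Schreier generators, to the natural action of $\Alt(k)$ or $\Sym(k)$ on $k$ points, with $k<m$ unless we are already looking at the natural action of $\Alt(m)$ or $\Sym(m)$ itself. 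All told, the recursion reduces everything to the natural action of $\Alt(m)$, where so far the degree has not dropped.

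The heart is therefore the natural action of $G=\Alt(m)$, where the goal is to drop the degree: starting from $A$, produce within $e^{O((\log m)^3\log\log m)}$ steps a non-identity element whose support omits a set of $\asymp m$ points, so that the relevant action has been cut down to a set of size $\le m/2$ and we may re-enter the recursion there; the recursion then has depth $O(\log m)$, and the factors compound to the exponent $(\log m)^4\log\log m\asymp(\log\log|G|)^4\log\log\log|G|$. To carry out the degree reduction one exploits that $\Alt(m)$ is transitive on ordered tuples and on subsets: one runs a \emph{sieve} that, beginning from an element with a positive constant fraction of fixed points, repeatedly forms commutators and quotients $g_1g_2^{-1}$ of an element with $A$-conjugates agreeing with it on a large set --- the supports overlapping in a controlled way --- thereby roughly halving the support in each of $O(\log m)$ rounds until only $O(1)$ points are moved, after which a little further manipulation turns the last element into a $3$-cycle. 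Thus the sieve does double duty: the ``support $\le m/2$'' milestone feeds the recursion, and the sieve carried to completion manufactures the $3$-cycle.

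The principal obstacle --- the one point where an argument in the spirit of Theorem~\ref{thm:asnat} is genuinely required --- is the \emph{base case} of the sieve: exhibiting inside $A^{\,e^{O((\log m)^2\log\log m)}}$ even a single non-identity element with a constant fraction of its points fixed. The obvious devices are too weak by an exponential factor: sampling short words until one lands in a point-stabilizer fails because we have no expansion, and descending a stabilizer chain by Schreier generators is hopeless because removing $\epsilon m$ points one at a time would already cost $A^{e^{\Theta(m\log m)}}$. One must instead work with the action on pairs and its orbital graph, tracking how the support of a short word is distributed over $\Omega$ and forcing cancellation. This, together with the quantitative bookkeeping of the nested recursions (on the degree, within the sieve, and inside each sieve step) and the case analysis in the O'Nan--Scott reduction of the primitive case, accounts for essentially all of the length and difficulty of the argument.
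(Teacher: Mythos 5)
Your outline tracks the architecture of the original Helfgott--Seress argument (the paper cited for Theorem~\ref{thm:pais}) more closely than the route taken here, which is organized around the product theorem (Theorem~\ref{thm:jukuju}): a set either grows, or a descent to a smaller or structurally different transitive group occurs, and these two alternatives are fed into the iteration and recursion of the final section. Your structure-tree recursion for imprimitive and primitive groups is essentially Proposition~\ref{prop:finbo}, and your commutator sieve is essentially Lemma~\ref{lem:bbs}. That part is fine as a skeleton.

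There is, however, a genuine gap at exactly the point you flag as the obstacle, and the remedy is not the one you gesture at. You propose to produce a non-identity element with many fixed points by ``working with the action on pairs and its orbital graph, tracking how the support of a short word is distributed over $\Omega$ and forcing cancellation''; that is a slogan, not an argument. Worse, you dismiss stabilizer chains as ``hopeless,'' whereas the stabilizer chain is precisely the device used here and in \cite{MR3152942}. The point is not to reach the pointwise stabilizer by Schreier generators, but rather this: if a chain $A\supset A_{(\alpha_1)}\supset\cdots\supset A_{(\alpha_1,\ldots,\alpha_k)}$ has every orbit of length $\geq\rho n$, then by Lemma~\ref{lem:basilic} the power $A^k$ meets $\geq(\rho n)^k$ cosets of $\Sym(\Omega)_{(\Sigma)}$, so the setwise stabilizer $(A^{-k}A^k)_\Sigma$ restricted to $\Sigma$ is a subset of $\Sym(\Sigma)$ of size $\geq\rho^k k!$; by Lemma~\ref{lem:amusi} any such dense subset generates a group containing $\Alt(\Delta)$ for some $\Delta\subset\Sigma$ with $|\Delta|\geq\rho|\Sigma|$; and then a minimal-degree theorem in the spirit of Wielandt and Babai--Cameron--P\'alfy (Lemma~\ref{lem:wielandt}) forces a small-support element. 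None of these three ingredients -- coset counting along the chain, the ``dense subsets of $\Sym(\Sigma)$ contain a large alternating piece'' lemma, and the minimal-degree bound -- appears in your sketch, and without them there is no base case.

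A second, related gap: the small-support element obtained this way lies in the \emph{subgroup} $\langle A'\rangle$ and is accessible only by words whose length already involves the diameter of a smaller group, namely the diameter of the restriction of $\langle A'\rangle$ to the large orbit $O'$ (this is the ``case of descent'' in \S5.2). So the degree recursion does not simply kick in \emph{after} the small-support element is in hand, as your plan has it; producing it is itself recursive. Finally, a minor quantitative point: the commutator sieve of Lemma~\ref{lem:bbs} requires $|\supp(g)|<(1/3-\epsilon)n$, i.e.\ more than two-thirds of the points fixed; ``a positive constant fraction of fixed points'' is not enough for the support to shrink under commutation.
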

Here we write $\diam G$ for the ``worst-case diameter''
\[\diam G = \max_{A\subset G: G = \langle A\rangle} \diam(\Gamma(G,A)),
\]
i.e., the same sort of quantity that we bounded in
Cor.~\ref{cor:nonat}.

Theorem \ref{thm:pais}
is not as strong as Babai's conjecture for $\Alt(n)$ or $\Sym(n)$,
since the quantity on the right of
(\ref{eq:pedestr}) is larger than $(\log |G|)^C$.
The proof of Thm.~\ref{thm:pais} did not
go through the proof of an analogue of a product theorem; it used another kind
of inductive process.

One of our main aims
in what follows is to give a different proof of Theorem \ref{thm:pais}.
Some of its elements are essentially the same as in the original proof, sometimes
in improved or simplified versions. Others are more closely inspired by the tools
developed for the case of groups of Lie type. 

Theorem \ref{thm:pais} -- or rather a marginally weaker version thereof
(Thm.~\ref{thm:molop}), with
an additional factor of $\log \log \log |G|$ in the exponent -- will follow as a direct consequence of the following
product theorem, which is new. It is, naturally, weaker than a literal analogue
of Thm.~\ref{thm:asnat}, since such as analogue would be false, by the counterexamples
we mentioned.
\begin{thm}\label{thm:jukuju}
Let $A\subset \Sym(n)$ be such that $A=A^{-1}$, $e \in A$, and
$\langle A\rangle$ is $3$-transitive. There are absolute constants
$C,c>0$ such that the following holds.
Assume that $|A|\geq n^{C (\log n)^2}$.
Then either
\begin{equation}\label{eq:uru}
  |A^{n^C}|\geq |A|^{1+c \frac{\log \log |A|
    }{(\log n)^2 \log \log n}}\end{equation}
or
\begin{equation}\label{eq:rororo}
  \diam(\Gamma(\langle A\rangle,A))\leq n^C 
  \diam(G),\end{equation}
where $G$ is a transitive group on $m\leq n$ elements
such that either (a) $m\leq e^{-1/10} n$ or (b) $G\nsim \Alt(m), \Sym(m)$.
\end{thm}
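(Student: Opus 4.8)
I would first dispose of the easy case: if $\langle A\rangle$ is not $\Alt(n)$ or $\Sym(n)$, then, being $3$-transitive, it is itself a transitive group $G$ on $m=n$ points with $G\nsim\Alt(m),\Sym(m)$, and (\ref{eq:rororo}) holds trivially with this $G$ (the factor $n^C$ is wasted). So assume henceforth that $G_0:=\langle A\rangle\in\{\Alt(n),\Sym(n)\}$; the task is then to show that when the growth bound (\ref{eq:uru}) fails, the diameter bound (\ref{eq:rororo}) must hold.

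\textbf{Step 1: growth, or a permutation of controlled support.} Suppose (\ref{eq:uru}) fails, so that $|A^k|\le|A|^{1+o(1)}$ for all $k\le n^{C}$; Ruzsa--Pl\"unnecke then turns a bounded power $A':=A^{n^{C_1}}$ into an approximate group. The heart of the matter is to produce, inside a further bounded power $A'':=(A')^{n^{C_2}}$, a non-identity permutation $\sigma$ whose support $\Sigma$ satisfies $|\Sigma|\le e^{-1/10}n$ (the construction should in fact push the support well below this). This is the point at which I would import, and recast, the Helfgott--Seress machinery for creating elements of small support: using $3$-transitivity of $G_0$ to transport a fixed ordered triple around $\Omega$, one manufactures in a bounded power of $A$ non-identity permutations that fix long prescribed tuples of points, and then iterates, shrinking the admissible support round by round. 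Each round presents a dichotomy: either the relevant ``evaluation'' maps from products of $A$ into $r$-tuples of distinct points of $\Omega$ fail to be sufficiently collision-free, which manufactures the next batch of smaller-support permutations, or they are too injective, which is impossible given $|A|\ge n^{C(\log n)^2}$ --- \emph{unless} one has in fact gained the growth in (\ref{eq:uru}). Keeping the number of rounds to $O(\log n)$ and the cost per round to a factor $n^{O(1)}$ in the power is where the exponent $(\log n)^2$ in the size hypothesis is spent; and bounding below the number of \emph{distinct products} of the manufactured permutations that land in $A^{n^C}$ --- which forces the construction to deliver not one but an essentially independent family of such permutations --- is where the exponent $c\,\log\log|A|/((\log n)^2\log\log n)$ on the growth side comes from, the $\log\log|A|$ being the logarithm of the size of that family.

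\textbf{Step 2: transferring the diameter.} Set $m:=|\Sigma|\le e^{-1/10}n$ and let $\bar G$ be the transitive group on $\Sigma$ generated by all elements of $A''$ whose support lies in $\Sigma$; pick such a generating set $B\subseteq A''\subseteq A^{n^{C_3}}$. A Schreier-generator / stabiliser-chain argument then yields
\[
\diam\bigl(\Gamma(G_0,A)\bigr)\ \le\ n^{C_4}\bigl(1+\diam\bigl(\Gamma(\bar G,B)\bigr)\bigr),
\]
as follows: every element of $G_0$ can, at a cost of a word of length $n^{O(1)}$ in $A$ (built from $3$-transitivity and the polynomially small diameter of the Schreier graph of the action on $|\Sigma|$-subsets), be brought into the pointwise stabiliser of $\Omega\setminus\Sigma$, of which a bounded number of $A$-controlled conjugates generate $G_0$ because $\Omega\setminus\Sigma$ is a fixed positive proportion of $\Omega$; and the action of that stabiliser on $\Sigma$ is accounted for by $B$. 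If $\bar G\in\{\Alt(\Sigma),\Sym(\Sigma)\}$ we land in case (a) with $G=\bar G$; if the permutations produced in Step 1 happen to respect a block system or a product structure on $\Sigma$, then $\bar G$ is transitive on $m\le n$ points with $\bar G\nsim\Alt(m),\Sym(m)$ and we land in case (b). (Were $\bar G$ too small for its $A$-controlled conjugates to generate $G_0$, one would instead have already established the growth alternative (\ref{eq:uru}), so there is no dead end.) Either way (\ref{eq:rororo}) holds.

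\textbf{The main obstacle.} Everything outside Step 1 --- the reduction to $\langle A\rangle=\Alt(n),\Sym(n)$, the stabiliser-chain transfer, and the splitting into cases (a)/(b) --- should be routine once Step 1 is available. The real difficulty is Step 1, and within it two demands are in tension: the support-shrinking iteration must run for only $O(\log n)$ rounds with each round inflating the power of $A$ by at most $n^{O(1)}$ (so that $A''\subseteq A^{n^{O(1)}}$), while round after round one must preserve enough transitivity and enough surviving permutations on the shrinking support to keep the iteration alive --- and it is precisely here that $3$-transitivity of $\langle A\rangle$ and the hypothesis $|A|\ge n^{C(\log n)^2}$ are both genuinely required, neither being dispensable. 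Hardest of all is making the failure branch quantitatively sharp: when a round fails to shrink the support, one must extract not a vague ``$A$ grew a little'' but growth by the specific super-polynomial-in-$|A|$ factor appearing in (\ref{eq:uru}), which demands a careful entropy / energy-increment bookkeeping of the independent family of small-support permutations across all $O(\log n)$ rounds.
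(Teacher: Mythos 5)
The plan is wrong at its pivotal point, and the error is instructive.

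Your Step~1 aims to manufacture, in a \emph{bounded} power $A''\subseteq A^{n^{O(1)}}$, a non-identity permutation of support $\le e^{-1/10}n$ (and you say you expect to push the support well below $n/3$). If that were achievable unconditionally, then Lemma~\ref{lem:bbs} (Babai--Beals--Seress) would immediately give $\diam(\Gamma(\langle A\rangle,A))\ll n^{O(1)}$ outright, with no factor of $\diam(G)$ at all --- that is, Babai's conjecture for $\Alt(n)$ whenever (\ref{eq:uru}) fails. The whole reason (\ref{eq:rororo}) carries the factor $\diam(G)$ is that the paper \emph{cannot} produce the small-support element at a cost polynomial in $n$; the cost is $n^{O(1)}\cdot D$ where $D=\diam(H|_{O'})$ is the diameter of the group that ends up being $G$ in (\ref{eq:rororo}). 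Concretely: the element $g_0$ of support $<n/8$ furnished by Lemma~\ref{lem:wielandt} lies in $H=\langle A'\rangle$ for $A'=(A^{8(k-1)})_\Sigma$, and the only way to express it (or some surrogate $g^{-1}h$) as a word in $A'$ is to pay for the diameter of $\Gamma(H,A')$, which is compared to $D$ by a trick (either $\diam\Gamma(H,A')\le D$, or there is a pair $g\in(A')^{D+1}$, $h\in(A')^D$ agreeing on $O'$, and $g^{-1}h$ has support $\le(1-\rho)n<n/5$). Your plan skips this entirely, and with it the whole logical content of alternative (\ref{eq:rororo}).

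Your Step~2 compounds the confusion: once one does have a small-support element in a cheap power of $A$, there is no need for a Schreier/stabiliser ``diameter transfer'' to the group $\bar G$ on $\Sigma$ --- Lemma~\ref{lem:bbs} already closes the argument. And the group $G$ that actually appears in the paper's (\ref{eq:rororo}) is $H|_{O'}$ (the restriction of $\langle(A^{8(k-1)})_\Sigma\rangle$ to a long orbit $O'$, which is either short or non-giant), not a group generated by small-support elements acting on a small set. Your construction of $\bar G$ and the claimed inequality $\diam(\Gamma(G_0,A))\le n^{C_4}(1+\diam(\Gamma(\bar G,B)))$ have no justification and do not match the paper's mechanism.

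Finally, your accounting of the exponents in (\ref{eq:uru}) does not match what actually drives them. In the paper, the $(\log n)^2$ comes from the stabiliser-chain prefix length threshold in Corollary~\ref{cor:ratherbab}; the $\log\log n$ comes from the $k'=O(\log\log n)$ generators produced by Proposition~\ref{prop:siniestro} (finding few conjugates of a full-support element generating a transitive subgroup, via the double-squaring Lemma~\ref{lem:coeur2}); and $\log\log|A|$ comes from Stirling's estimate applied to $\rho^k k!$ with $k\ge\tfrac12\log_n|A|$ --- not from ``the logarithm of the size of a family of independent small-support permutations.'' You do not mention Proposition~\ref{prop:siniestro} or anything playing its role, and without an ingredient supplying a family of size only $O(\log\log n)$ of conjugates whose centralisers have trivial common intersection, the quantitative form of (\ref{eq:uru}) is out of reach.
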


Our general objective
will be to make the proof for $\Alt(n)$ and $\Sym(n)$ not just simpler
but closer to that for groups of Lie type of bounded rank.
Part of the motivation is that the next natural aim is to
study in depth groups of Lie type of unbounded rank, which combine features of both
kinds of groups.

{\em Overall idea.} To prove the growth of sets $A$ in a group $G$, we study the
actions of a group $G$.
First of all, every group acts on itself, by left and right multiplication,
and by conjugation. The study of these actions is always useful; it gives
us lemmas valid for every group. Then there are the actions 
that exist for a given kind of group.

A linear algebraic group acts by linear transformations on affine space. It
then makes sense to see how the action of the group affects varieties,
and what this tells us about sets of elements in the group.

In the case of the symmetric group $\Sym(\Omega)$, $|\Omega|=n$,
we have no such nicely geometric action.
What we do have is an action on a set $\Omega$, 
that, while completely unstructured,
is very small compared to the group. This fact allows us to use short random
walks to obtain elements whose action on $\Omega$ and low powers follows an
almost uniform distribution.

It is then unsurprising that the strategies for linear algebraic groups and
symmetric groups diverge: the actions that characterize the two kinds
differ. Nevertheless, it is possible to unify the strategies to some extent.
We shall see that the role played by {\em generic} elements -- in the sense
of algebraic geometry -- in the study of growth in linear algebraic groups
is roughly analogous to the role played in permutation groups by
{\em random} elements -- in the sense of being produced by random walks.

{\em Further perspectives.}
A ``purer'' product
theorem would state that either (\ref{eq:uru}) holds or, say,
$A^{n^{C \log n}} = G$.
The switch to diameters in conclusion (\ref{eq:rororo}) is not just
somewhat ungainly; it also slows down the recursion. If (\ref{eq:rororo})
were replaced by $A^{n^{C \log n}} = G$, we would
then obtain an exponent of $3$ instead of $4$ in Theorem \ref{thm:pais}.
Such a ``purer'' result is not contradicted by the existing counterexamples,
and so remains a plausible goal.

Yet another worthwhile goal would be to remove the dependence on the
Classification of Finite Simple Groups (CFSG). The proof here uses the
structure theorem in \cite{MR599634}/\cite{MR758332}, which relies on CFSG.
The proof in \cite{MR3152942} also depended on CFSG, for essentially
the same reason: it used \cite[Thm.~1.4]{zbMATH00091732}, which uses
\cite{MR599634}/\cite{MR758332}.

Incidentally, there is a flaw in \cite[Thm.~1.4]{zbMATH00091732} (proof
and statement), as L.~Pyber pointed out to the author. We fix it in
\S \ref{sec:babaiseress} (with input from Pyber);
the amended statement is in Prop.~\ref{prop:finbo}.
The bound in \cite{MR3152942} is not affected when we replace
\cite[Thm.~1.4]{zbMATH00091732} by Prop.~\ref{prop:finbo} in the proof of
\cite{MR3152942}.

{\em Notation.} 
We write actions on the right, i.e., if $G$ acts on $X$, and $g\in G$,
$x\in X$, we write $x^g$ for the element to which $g$ sends $x$.

As is usual,
we write $f(x) = O(g(x))$ to mean that there exists a constant $C>0$ (called an
{\em implied constant}) such that $|f(x)|\leq C g(x)$ for all large enough $x$. 
We also write $f(x)\ll g(x)$ to mean that $f(x) = O(g(x))$, and
$f(x)\gg g(x)$ to mean, for $g$ taking positive values, that there is a
constant $c>0$ (called, again, an implied constant) such that $f(x) \geq c g(x)$
for all large enough $x$. When we write $O^*(c)$, $c$ a non-negative real,
we simply mean a quantity whose absolute value is at most $c$.

Given $h\in G$,
we write $C(h)$ for the centralizer $\{g\in G: g h = h g\}$ of $h$.
Given $H\leq G$, we write $C(H)$ for the centralizer
$\{g\in G: g h = h g\; \forall h\in H\}$ of $H$.

As should be clear by now, and as is standard, we write $\Alt(\Omega)$ for
the alternating group on a set $\Omega$, and $\Alt(m)$ for the abstract group
isomorphic to $\Alt(\Omega)$ for any set $\Omega$ with $n$ elements. We
define $\lbrack n\rbrack = \{1,2,\dotsc,n\}$.

{\bf Acknowledgements.}
The author is supported by ERC Consolidator grant
648329 (GRANT) and by funds from his Humboldt professorship.
He is deeply grateful to L\'aszl\'o Pyber for his extremely
valuable suggestions and feedback, and to Henry Bradford and Vladimir
Finkelshtein, for a very careful reading and many corrections.

\section{Toolbox}

\subsection{Special sets}

In the proofs of growth for groups of Lie type, some of the main tools are
statements on intersections with varieties. A typical statement is of the following
kind.

\begin{lem}\label{lem:oshut}
  Let $G = \SL_2(K)$, $K$ a finite field. Let $A\subset G$ be a set of generators of $G$
  with $A = A^{-1}$. Let $V$ be a
  one-dimensional irreducible subvariety of $\SL_2$. Then, for
  every $\delta>0$, either $|A^3|\geq |A|^{1+\delta}$ holds, or
  the intersection of $A$ with $V$ has
  \[\ll |A|^{\frac{1}{\dim \SL_2} + O(\delta)} = |A|^{1/3 + O(\delta)}\]
    elements. The implied constants depend only on the degree of $V$.
\end{lem}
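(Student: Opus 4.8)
\emph{Approach and reductions.} The plan is to treat this as an instance of the ``escape'' philosophy: a generating set either grows or meets every bounded-degree subvariety in essentially as few points as a random set of its size would. If $|A^3|\geq |A|^{1+\delta}$ there is nothing to do, so assume not; since $A=A^{-1}$ (and we may add $e$ to $A$), Ruzsa's inequalities then give $|A^k|\ll_k|A|^{1+O_k(\delta)}$ for every fixed $k$. If $A\cap V=\emptyset$ we are done; otherwise, translating $V$ on the left by an element of $A\cap V$ — which changes neither $\dim V$ nor $\deg V$, and replaces $A$ by a subset of $A^3$ — we may assume $e\in V$. Write $N=|A\cap V|$; the goal becomes $N^3\ll|A|^{1+O(\delta)}$.

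\emph{Energy plus a point-counting lemma.} Suppose first that $V$ generates $\SL_2$ as an algebraic group. Then there is an $s\ll_{\deg V}1$ with $\overline{V^{s}}=\SL_2$: the Zariski closure of the monoid generated by the irreducible variety $V\ni e$ is an algebraic subgroup, hence all of $\SL_2$, and it is reached in boundedly many steps (Bezout gives effective degree control). I would then study the multiplicative energy
\[
E \;=\; \#\{(a_1,\dots,a_s,b_1,\dots,b_s)\in (A\cap V)^{2s}\;:\; a_1\cdots a_s=b_1\cdots b_s\}.
\]
By Cauchy--Schwarz applied to the representation function of products of $s$ elements of $A\cap V$, and since such products lie in $A^s$, one gets $E\geq N^{2s}/|A^s|\gg N^{2s}/|A|^{1+O(\delta)}$. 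On the other hand $E$ counts the points of $(A\cap V)^{2s}$ lying on the subvariety $R=\{(a_i,b_j)\in V^{2s}:a_1\cdots a_s=b_1\cdots b_s\}$, i.e.\ on the fibre product $V^{s}\times_{\SL_2}V^{s}$ along the dominant map $\mu\colon V^{s}\to\SL_2$, $\mu(a)=a_1\cdots a_s$. The second ingredient is a purely geometric bound: \emph{for every subvariety $F\subseteq V^{\times m}$ of degree $\ll_{\deg V}1$ one has $|F\cap(A\cap V)^{\times m}|\ll_{\deg V,m}N^{\dim F}$}. This I would prove by induction on $m$, projecting off one coordinate: over a generic point of the projected image the $V$-fibre is finite of bounded size and contributes $O(1)$, while the locus where that fibre is one-dimensional is a proper, hence lower-dimensional, subvariety of the image, to which the inductive hypothesis applies, at the cost of one extra factor $N$ from the fibre; the base case $m=1$ is trivial. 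Applied with $F=R$ this gives $E\ll N^{\dim R}$, and as long as $\dim R\leq 2s-3$ we conclude $N^{2s}/|A|^{1+O(\delta)}\ll N^{2s-3}$, that is $N^3\ll|A|^{1+O(\delta)}$.

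\emph{Main obstacle.} The bound $\dim R\leq 2s-3$ holds on the components of $R$ lying over the open locus where $\mu$ is equidimensional of the expected relative dimension $s-3$; but $R$ may a priori acquire components of dimension up to $2s-1$ over the ``bad locus'' $W\subsetneq\SL_2$ where the fibres of $\mu$ jump. Since $\dim W\leq 2$, bounding $|A^{s}\cap W|$ sharply enough to absorb this is essentially the lemma itself in dimension $2$, so a naive induction on dimension is circular. This is the real content, and the step I expect to be hardest: here one must use that $A$ generates $\SL_2$ (and does not grow) to produce pivots in $A^{O(1)}$ that push the configuration off $W$ and rule out an excess component in $R$ — in rank one this is made tractable by the very short list of proper connected subvarieties and subgroups that can occur, which is the one place the hypothesis ``$\langle A\rangle=\SL_2$'' is used non-formally.

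\emph{Degenerate case.} If $V$ does not generate $\SL_2$, it lies in a proper connected algebraic subgroup $H$; since $\dim V=1$, either $\dim H=1$ and $V=H$ is a maximal torus or a root subgroup, or $H$ is a Borel and $V$ is a curve inside it. For $V$ a torus or root subgroup, $A\cap V$ generates a subgroup of an abelian group, and one finishes by the sum--product phenomenon in $\mathbb{F}_p$ (equivalently by the known growth estimate for subsets of tori), or, more uniformly, by using escape to find $g\in A^{O(1)}$ outside the normalizer of $H$, so that $V$ and $gVg^{-1}$ together generate $\SL_2$, and then rerunning the energy argument with $V^{s}\times (gVg^{-1})^{s}$ in place of $V^{2s}$; the Borel case reduces to these. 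Throughout, $s$, the number of inductive steps, the escape lengths, and all auxiliary degrees are bounded in terms of $\deg V$ alone, which yields the asserted dependence of the implied constants.
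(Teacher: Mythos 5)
Your proposal takes a genuinely different route from the paper's sketch. The paper's argument does not pass through multiplicative energy of long products in $V$. Instead it builds a map
\[
\phi\colon V\times V\times V \longrightarrow G,\qquad
\phi(v_0,v_1,v_2)=v_0\cdot g_1 v_1 g_1^{-1}\cdot g_2 v_2 g_2^{-1},
\]
and first shows that for \emph{generic} $(g_1,g_2)\in \SL_2\times\SL_2$ this map is almost-injective (preimages of generic points are zero-dimensional), the bad pairs $(g_1,g_2)$ forming a proper subvariety $W$. Then escape supplies $(g_1,g_2)\in A^k\times A^k$ outside $W$, and directly counting the image of $(A\cap V)^3$ under $\phi$ gives $|A\cap V|^3\ll |A^{8k+3}|$, which combined with Ruzsa (as you also use) gives the claim. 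What the conjugation buys is exactly the thing your argument is missing: by freshly randomizing the three copies of $V$, the variety $W$ to be escaped lives in $\SL_2\times\SL_2$, where escape is an honest black box, rather than being a subvariety of $\SL_2$ whose intersection with $A^s$ you would need to count.

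That is the genuine gap in your proposal, and to your credit you flagged it yourself in the ``main obstacle'' paragraph: bounding the contribution of components of $R=V^s\times_{\SL_2}V^s$ lying over the jump locus $W\subsetneq\SL_2$ requires a bound on $|A^s\cap W|$, which is a statement of the same type as the lemma (for a two-dimensional $W$) and cannot be assumed. Saying one ``must use that $A$ generates $\SL_2$ to produce pivots that push the configuration off $W$'' names the difficulty but does not resolve it; as written, the chain $E\gg N^{2s}/|A|^{1+O(\delta)}$ and $E\ll N^{\dim R}$ does not close unless $\dim R\leq 2s-3$, and you have no control over $\dim R$. Notice that you already invoke the cure in your degenerate case — replacing $V^{2s}$ by $V^{s}\times (gVg^{-1})^{s}$ with $g$ obtained by escape — and the missing observation is that this conjugation device is not a patch for the degenerate case but the engine of the whole proof: conjugating by escape-produced elements is what forces general position, removes the excess components, and makes the relevant map generically finite. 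Your geometric point-counting lemma by projection induction is fine (with Bezout keeping degrees bounded), and the Ruzsa reduction is correct; it is the treatment of the excess components of $R$ that needs to be replaced by the conjugation/escape mechanism.
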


Special statements of this kind were proved and used in
\cite{Hel08} and \cite{HeSL3}, and have been central to
the main strategy since then. They were fully generalized in \cite{MR3402696}. As it happens,
Larsen and Pink, in the course of their work on finite subgroups of linear groups,
had proven results of the same kind -- for subgroups $H$, instead of sets $A$, but
for all simple linear groups $G$. Their procedure was adapted in
\cite{BGT} to give essentially the same general result as in \cite{MR3402696}.

(Incidentally, the main purpose of Larsen and Pink was to prove without CFSG a series
of statements that follows from CFSG. For this purpose, they developed tools that
were, in some sense,
both concrete and general. It was these features that let the tools be generalized later
to sets, as opposed to subgroups. This is not the only time that
preexistent work on doing without CFSG has proved fruitful in this context; we will see another instance
when we examine random walks and permutation groups.)
 
There is an obvious difficulty in adapting such work to the study of permutation groups:
in $\Sym(n)$, there seems to be no natural concept of a ``variety'', let alone of its
degree and its dimension.

The approach we will follow here is to strip to the proof of a statement 
such as Lemma \ref{lem:oshut} to its barest bones, so that the main idea becomes
a statement about an abstract group. We will later be able to see how to apply it
to obtain a useful result on permutation groups.

The proof of Lemma \ref{lem:oshut} goes as follows. First, we show that, for
generic $g_1, g_2\in \SL_2(\overline{K})$, the map
$\phi:V\times V \times V\to G$ given by
\[\phi(v_0,v_1,v_2) = v_0 \cdot g_1 V g_1^{-1} \cdot g_2 V g_2^{-1}\] 
is {\em almost-injective}, in the sense that the preimage of a generic point
of the (closure of) the image is zero-dimensional. ``A generic point'' here means
``a point outside a subvariety of positive codimension''.
Similarly,
``for $g_1$, $g_2$ generic'' means that
the pairs $(g_1,g_2)$ for which the map $\phi$ is {\em not} almost-injective
lie in a variety of positive codimension $W$ in $\SL_2\times \SL_2$.
Now, because $A$ generates $G$, a general statement on {\em escape of subvarieties} shows
that there exists a pair $(g_1,g_2)\in A^k \times A^k$ outside $W$, where $k$
is a constant depending only on the degree of $V$. (``Escape of subvarieties''
was an argument known before \cite{Hel08}. The statement in \cite[Prop.~3.2]{MR2129706}
is over $\mathbb{C}$, but the
argument of the proof there is valid over an arbitrary field; see, e.g.,
\cite[Prop.~4.1]{HeSL3}.)

Then we examine the image of $(A\cap V)\times (A\cap V) \times (A\cap V)$ under
$\phi$. If $\phi$ is injective, then the image has exactly the same size as the
domain, namely, $|A\cap V|^3$. In general, for $\phi$ almost-injective, the image
will have size $\gg |A\cap V|^3$. At the same time, the image is contained in
$A^{1 + 2 k + 1 + 2k + 2k + 1 + 2k} = A^{8 k + 3}$. Hence
\[|A\cap V| \leq \left|A^{8 k + 3}\right|^{1/3}.\]

Let us prove an extremely simple general statement that expresses
the main idea of the statement we have just sketched.
\begin{lem}
Let $G$ be a group. Let $A,B\subset G$ be finite.
Then 
\[|A B^{-1}| \geq \frac{|A| |B|}{\left|A A^{-1} \cap B B^{-1}\right|}
.\]
In particular, if $A A^{-1} \cap B B^{-1} = \{e\}$, then
\[|A B^{-1}| \geq |A| |B|.\]
\end{lem}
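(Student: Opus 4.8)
The plan is a one-line counting argument of Ruzsa type. First I would isolate the mechanism as a fibre bound for product maps: for any finite $X,Y$ in a group, the surjection $(x,y)\mapsto xy$ from $X\times Y$ onto $XY$ has all of its fibres of size at most $|X^{-1}X\cap YY^{-1}|$. The verification is immediate: if $x_1y_1=x_2y_2$, then left-multiplying by $x_1^{-1}$ and right-multiplying by $y_2^{-1}$ gives $x_1^{-1}x_2=y_1y_2^{-1}=:t$, whence $(x_2,y_2)=(x_1t,\,t^{-1}y_1)$ is recovered from $(x_1,y_1)$ together with $t$, and $t\in X^{-1}X\cap YY^{-1}$; so, after singling out one preimage of a given $g\in XY$, the fibre over $g$ embeds into $X^{-1}X\cap YY^{-1}$. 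Summing the fibre sizes over $g$ then gives $|X|\,|Y|\le|X^{-1}X\cap YY^{-1}|\cdot|XY|$, i.e.\ $|XY|\ge|X|\,|Y|/|X^{-1}X\cap YY^{-1}|$.

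To read off the statement, I would apply this with $(X,Y)=(A,B^{-1})$: here $XY=AB^{-1}$, $X^{-1}X=A^{-1}A$ and $YY^{-1}=B^{-1}B$, so $|AB^{-1}|\ge|A|\,|B|/|A^{-1}A\cap B^{-1}B|$. In the regime where the lemma gets used — feeding in symmetric $A$ (with $A=A^{-1}$) and intersections with conjugates of subvarieties, as behind Lemma~\ref{lem:oshut} — both input sets are symmetric, so $A^{-1}A\cap B^{-1}B$ coincides with $AA^{-1}\cap BB^{-1}$ and one gets the displayed bound; alternatively, the same principle applied with $(X,Y)=(A^{-1},B)$ yields $|A^{-1}B|\ge|A|\,|B|/|AA^{-1}\cap BB^{-1}|$, with the right difference sets $AA^{-1},BB^{-1}$ appearing directly. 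For the ``in particular'' clause I would simply observe that when the pertinent intersection is $\{e\}$ the corresponding product map is injective, so the product set has exactly $|A|\,|B|$ elements.

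The main point requiring care — essentially the only one — is bookkeeping: matching the product map used ($(a,b)\mapsto ab^{-1}$ versus $(a,b)\mapsto a^{-1}b$) to whether it is the left difference sets $A^{-1}A,B^{-1}B$ or the right ones $AA^{-1},BB^{-1}$ that get intersected, equivalently whether two preimages of a point differ by a right- or a left-translate. Beyond that there is no obstacle: this lemma is by design the abstract-group skeleton of the subvariety estimates, with all of the escape-of-subvarieties and genericity input removed, and its proof is the single display above.
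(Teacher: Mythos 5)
Your proposal is correct and is essentially the paper's own argument: consider the product map $\phi\colon A\times B\to AB^{-1}$, $(a,b)\mapsto ab^{-1}$, observe that any two preimages $(a,b),(a',b')$ of the same point differ by a single right-translating element $t=a^{-1}a'=b^{-1}b'$, and bound each fibre by the number of possible $t$. Your bookkeeping remark is also well taken: the literal computation gives $t\in A^{-1}A\cap B^{-1}B$ rather than $AA^{-1}\cap BB^{-1}$ (indeed the paper's intermediate display (\ref{eq:doso}) should read $a^{-1}a'=b^{-1}b'$, and the set written in the statement matches only up to inversion), and your two remedies — that the intersections coincide for symmetric $A,B$, which is the only case ever used, or that one may instead take the map $(a,b)\mapsto a^{-1}b$ so that $AA^{-1}\cap BB^{-1}$ appears directly — are both correct ways to close the gap.
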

The condition $A A^{-1} \cap B B^{-1} = \{e\}$ is fulfilled if, for instance,
$A\subset H_1$, $B\subset H_2$, where $H_1$, $H_2$ are subgroups of $G$
with $H_1\cap H_2 =\{e\}$.
\begin{proof}
Consider the map $\phi:A\times B\to A B^{-1} \subset G$ defined by
\[(a,b) \mapsto a b^{-1}.\]
Clearly, as with any map from $A\times B$ to $G$,
\begin{equation}\label{eq:lolalola}
|\im(\phi)| \geq \frac{|A\times B|}{\max_{x\in G} |\phi^{-1}(x)|},\end{equation}
and of course $|A B^{-1}|\geq |\im(\phi)|$.

So, let us bound $\phi^{-1}(x)$. Say $\phi(a,b)=x=\phi(a',b')$. Then
\begin{equation}\label{eq:doso}a^{-1} a' = b (b')^{-1}.\end{equation}
In particular, given $a$, $b$ and $b (b')^{-1}$, we can reconstruct
$a'$ and $b'$. Moreover, again by (\ref{eq:doso}),
 $b (b')^{-1} $ lies in $A A^{-1} \cap B B^{-1}$. Letting
$(a,b)$ be fixed, and letting $(a',b')$ vary among all elements of $\phi^{-1}(x)$, we see that
\[\left|\phi^{-1}(x)\right| \leq |A A^{-1} \cap B B^{-1} |.\]
By (\ref{eq:lolalola}), we are done.
\end{proof}

We can apply the same idea to obtain growth assuming only that an intersection
of many sets is empty.
\begin{lem}\label{lem:tagore}
Let $G$ be a group. Let $A_0, A_1, \dots, A_k \subset G$ be finite.
Then there is at least one $0\leq j\leq k-1$ such that
\[\left|A_j A_{j+1}^{-1}\right| \geq \frac{\mathbf{A}^{\frac{k+1}{k}}}{
  \left|\bigcap_{j=0}^{k} A_j A_j^{-1}\right|^{1/k}},\]
where $\mathbf{A}$ is the geometric average $(\prod_{j=0}^k |A_j|)^{1/(k+1)}$.
In particular, if \begin{equation}\label{eq:notew}
  \bigcap_{j=0}^{k} A_j A_j^{-1} = \{e\},\end{equation}  then
\[\left|A_j A_{j+1}^{-1}\right| \geq \mathbf{A}^{\frac{k+1}{k}}.\]
\end{lem}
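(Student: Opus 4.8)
The plan is to lift, essentially verbatim, the map‑and‑fibres argument of the preceding lemma from two sets to a chain of $k+1$ sets. Consider the map
\[
\phi\colon A_0\times A_1\times\dots\times A_k\longrightarrow G^k,\qquad
\phi(a_0,a_1,\dots,a_k)=\bigl(a_0a_1^{-1},\ a_1a_2^{-1},\ \dots,\ a_{k-1}a_k^{-1}\bigr).
\]
The $j$-th coordinate of any value lies in $A_jA_{j+1}^{-1}$, so $\im(\phi)\subseteq\prod_{j=0}^{k-1}A_jA_{j+1}^{-1}$, whence $|\im(\phi)|\leq\prod_{j=0}^{k-1}|A_jA_{j+1}^{-1}|$. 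On the other hand, as for any map out of a finite set,
\[
|\im(\phi)|\ \geq\ \frac{|A_0|\,|A_1|\cdots|A_k|}{\max_{x\in G^k}|\phi^{-1}(x)|}\ =\ \frac{\mathbf{A}^{k+1}}{\max_{x}|\phi^{-1}(x)|}.
\]

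The heart of the matter is the fibre bound. Suppose $\phi(a_0,\dots,a_k)=\phi(a_0',\dots,a_k')$. Then $a_ja_{j+1}^{-1}=a_j'(a_{j+1}')^{-1}$ for every $j$; multiplying on the left by $(a_j')^{-1}$ and on the right by $a_{j+1}$ gives the telescoping identity $(a_j')^{-1}a_j=(a_{j+1}')^{-1}a_{j+1}$. Hence the element $s:=(a_0')^{-1}a_0$ equals $(a_j')^{-1}a_j$ for \emph{every} $j$, so it lies in $\bigcap_{j=0}^{k}A_j^{-1}A_j$ — the intersection appearing in the statement, up to the same left/right placement of inverses as in the preceding lemma — and once $(a_0,\dots,a_k)$ and $s$ are fixed one recovers $a_j'=a_js^{-1}$. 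Thus $(a_0',\dots,a_k')\mapsto s$ embeds $\phi^{-1}(x)$ into that intersection, so $\max_x|\phi^{-1}(x)|\leq\bigl|\bigcap_{j=0}^{k}A_j^{-1}A_j\bigr|$.

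Combining the two displays gives $\prod_{j=0}^{k-1}|A_jA_{j+1}^{-1}|\geq\mathbf{A}^{k+1}\big/\bigl|\bigcap_{j=0}^{k}A_j^{-1}A_j\bigr|$, and since the maximum of $k$ non‑negative reals is at least their geometric mean, there is some $0\leq j\leq k-1$ with
\[
|A_jA_{j+1}^{-1}|\ \geq\ \left(\frac{\mathbf{A}^{k+1}}{\bigl|\bigcap_{j=0}^{k}A_j^{-1}A_j\bigr|}\right)^{1/k}\ =\ \frac{\mathbf{A}^{\frac{k+1}{k}}}{\bigl|\bigcap_{j=0}^{k}A_j^{-1}A_j\bigr|^{1/k}},
\]
which is the asserted bound; the hypothesis $\bigcap_{j}A_jA_j^{-1}=\{e\}$ is exactly the case in which the denominator collapses to $1$. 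The only real content is the fibre estimate in the middle paragraph — the telescoping collapse of the chain of identities to a single element of the common intersection — and the one mild subtlety there is keeping the sidedness of the inverses consistent throughout; the remaining ingredients (that $\im(\phi)$ sits inside the product of the sets $A_jA_{j+1}^{-1}$, and the passage from a product inequality to one good index via AM–GM) are entirely routine.
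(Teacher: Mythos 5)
Your proof is correct and takes essentially the same route as the paper: the identical map $\phi$, the same telescoping identity to bound the fibres, and the same pigeonhole step from the product inequality to a single good index $j$. You are also right to flag the sidedness point: the fibre invariant you construct lives in $\bigcap_j A_j^{-1}A_j$ rather than the $\bigcap_j A_j A_j^{-1}$ of the statement; the paper's own proof makes the same silent slip, and it is harmless in the intended applications because there the $A_j$ are conjugates of a symmetric set $BB^{-1}$, so $A_j^{-1}A_j = A_j A_j^{-1}$.
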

We will typically apply this lemma to sets $A_j$ that are conjugates of
each other, and so all of the same size $\mathbf{A}$. If $A\subset H$,
$A_j = g_j A g_j^{-1}$ and $\bigcap_{j=0}^k g_j H g^{-1} = \{e\}$,
then condition (\ref{eq:notew}) holds.
\begin{proof}
  Consider the map \[\phi:A_0\times A_1\times\dotsc \times A_k\to
  A_0 A_1^{-1} \times A_1 A_2^{-1} \times \dotsc \times A_{k-1} A_k^{-1}\]
  given by
  \[(a_0,a_1,\dotsc,a_k) \mapsto \left(a_0 a_1^{-1}, a_1 a_2^{-1},\dotsc,
  a_{k-1} a_k^{-1}\right).\]
  Clearly,
  \[\prod_{j=0}^{k-1} |A_j A_{j+1}^{-1}| = |\im(\phi)|
  \geq \frac{\prod_{j=0}^k |A_j|}{\max_{x\in G} |\phi^{-1}(x)|}.\]

  Say $\phi(a_0,a_1,\dotsc,a_j) = x = \phi(a_0',a_1',\dotsc,a_j')$. Then,
  since $a_j a_{j+1}^{-1} = a_j' \left(a_{j+1}'\right)^{-1}$ for all $0\leq j<k$,
  we see that $a_j^{-1} a_j' = a_{j+1}^{-1} a_{j+1}'$ for all $0\leq j < k$.
  Thus, $(a_0',a_1',\dotsc,a_j')$ is determined by $(a_0,a_1,\dotsc,a_j)$
  and the single element
  \[a_0^{-1} a_0' = a_1^{-1} a_1' = \dotsc = a_k^{-1} a_k',\]
  which lies in $\bigcap_{j=0}^{k} A_j A_j^{-1}$. We conclude that
  \[|\phi^{-1}(x)|\leq \left|\bigcap_{j=0}^{k} A_j A_j^{-1}\right|.\]
\end{proof}

We will later see how to obtain the weak orthogonality condition
\begin{equation}\label{eq:radapar}
  \bigcap_{j=0}^k g_j H g_j^{-1} = \{e\}\end{equation}
  for some kinds of subgroups of permutation groups.




\subsection{Subgroups and quotients}

We will need a couple of basic lemmas on subgroups and quotients. As explained
in \cite[\S 3.1--3.2]{MR3152942} and \cite[\S 4.1]{MR3348442}, they are
all easy applications of an orbit-stabilizer principle for sets
\cite[Lemma 4.1]{MR3348442}. We can also prove them by using the pigeonhole
principle directly. 

For $G$ a group and $H \le G$, we write $\pi_{G/H}: G \to G/H$ for the map
taking each $g \in G$ to the right coset $Hg$ containing $g$. Thus, for
instance, $|\pi_{G/H}(A)|$ equals the number of distinct cosets $H g$
intersecting $A$.

\begin{lem}[{\cite[Lem.\ 7.2]{HeSL3}}]\label{lem:duffy} 
Let $G$ be a group and $H$ a subgroup thereof. Let $A\subseteq G$ be a 
non-empty finite set. Then
\begin{equation}\label{eq:vento}
  |A A^{-1} \cap H| \geq \frac{|A|}{|\pi_{G/H}(A)|}
  \geq \frac{|A|}{\lbrack G:H\rbrack}.\end{equation}
\end{lem}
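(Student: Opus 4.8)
The plan is to prove Lemma~\ref{lem:duffy} by a direct pigeonhole argument on the cosets of $H$ that $A$ meets. First I would set $N = |\pi_{G/H}(A)|$, the number of distinct right cosets $Hg$ intersecting $A$. Since $A$ is partitioned by these cosets, some coset $Hg_0$ contains at least $|A|/N$ elements of $A$; call this set $A_0 = A \cap Hg_0$, so $|A_0| \geq |A|/N$. The point is that any two elements of $A_0$ lie in the same right coset of $H$, so their ``ratio'' lands in $H$: concretely, for $a, a' \in A_0$ we have $a a'^{-1} \in H$, because $a = h g_0$ and $a' = h' g_0$ for some $h, h' \in H$, whence $a a'^{-1} = h h'^{-1} \in H$.

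Next I would exhibit an injection from $A_0$ into $A A^{-1} \cap H$ to conclude. Fix any $a_* \in A_0$ and consider the map $A_0 \to G$ sending $a \mapsto a a_*^{-1}$. By the previous observation its image lies in $A A^{-1} \cap H$ (it is in $A A^{-1}$ trivially, and in $H$ by the coset argument), and the map is injective since right multiplication by $a_*^{-1}$ is a bijection of $G$. Therefore
\[
|A A^{-1} \cap H| \geq |A_0| \geq \frac{|A|}{N} = \frac{|A|}{|\pi_{G/H}(A)|}.
\]
The second inequality in \eqref{eq:vento} is then immediate, since $\pi_{G/H}(A) \subseteq G/H$ forces $|\pi_{G/H}(A)| \leq [G:H]$ (this uses nothing about $A$; if $[G:H]$ is infinite the bound is vacuous, and the first inequality is the substantive one anyway).

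There is no real obstacle here; the only thing to be careful about is the handedness convention. Since the paper writes $G/H$ for \emph{right} cosets $Hg$ and acts on the right, the correct ratio to form is $a a'^{-1}$ (not $a^{-1} a'$), and one must check that the displayed map is injective rather than just well-defined — which it is, being a restriction of the bijection $g \mapsto g a_*^{-1}$ of $G$. Alternatively, one could simply invoke the orbit–stabilizer principle for sets \cite[Lemma 4.1]{MR3348442} applied to the action of $G$ on $G/H$ by right multiplication, with the orbit of the coset $Hg_0$ and the set $A$; this yields the same bound with $AA^{-1}\cap H$ appearing as the relevant ``stabilizer'' piece. I would present the elementary pigeonhole version as the main argument, since the excerpt explicitly offers that route, and remark that it also follows from \cite[Lemma 4.1]{MR3348442}.
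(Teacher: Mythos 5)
Your proof is correct and essentially identical to the paper's: both pick, by pigeonhole, a right coset $Hg_0$ meeting $A$ in at least $|A|/|\pi_{G/H}(A)|$ elements, fix one such element, and observe that multiplying all the others by its inverse gives distinct elements of $AA^{-1}\cap H$. The only cosmetic difference is which factor you fix in the product $a a'^{-1}$, which changes nothing.
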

\begin{proof}
  By pigeonhole, there is a coset $H g$ of $H$
  containing at least $|A|/|\pi_{G/H}(A)|$
  elements of $A$. Fix $g_0\in A\cap H g$.
  Then, for each $g_1\in A\cap H g$, we obtain a distinct element
  $g_0 g_1^{-1} \in A A^{-1}\cap H$.
\end{proof}

\begin{lem}\label{lem:durdo} 
Let $G$ be a group and $H$ a subgroup thereof. Let $A\subseteq G$ be a 
non-empty finite set. Then, for any $k\geq 1$,
\begin{equation}\label{eq:verento}
  \left|A^{k + 1}\right|
  \geq \frac{\left|A^k \cap H\right|}{
   \left|A A^{-1} \cap H\right|} \cdot |A|.\end{equation}
\end{lem}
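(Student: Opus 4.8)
The plan is to mimic the proof of Lemma \ref{lem:duffy}, organizing the elements of $A^k$ by the coset of $H$ they fall into after multiplication by a fixed element, and then counting. First I would note that every element of $A^k \cap H$ lies in the single coset $H \cdot e = H$. Fix an element $g_0 \in A \cap H$ if one exists; if $A \cap H = \emptyset$ matters here one checks that the only case we need is when $A^k \cap H \ne \emptyset$, in which case we may instead pick $g_0 \in A^k \cap H$ directly, and then work with $A^{k+1} \supseteq A^k \cdot (A^k)^{-1}$ — but this is wasteful. A cleaner route: pick any $g_0 \in A$; then $g_0^{-1} \cdot (A^k \cap H)$ is a set of $|A^k \cap H|$ elements, each lying in $g_0^{-1} H = H g_0^{-1}$ (using that $g_0^{-1}$ is fixed), hence in $A \cdot A^k = A^{k+1}$ after left-multiplying back — the bookkeeping here is what needs care.

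The core combinatorial step is the following. Consider the map
\[
\psi : (A^k \cap H) \times (A \cap H) \longrightarrow A^{k+1},
\qquad (h, a) \longmapsto a^{-1} h
\]
(where, if $A \cap H$ is empty, the statement is vacuous because then $A A^{-1} \cap H$ could still be nonempty via other elements, so one must be slightly more careful and instead use that some coset representative works). For $(h,a)$ in the domain, $a^{-1} h \in A^{-1} A^k$; to land in $A^{k+1}$ one instead takes $\psi(h,a) = h a^{-1} \cdot$ (something) — the right formulation is: the image of $\psi(h,a) = h^{-1} a^{-1}$ sits in $(A^k)^{-1} A^{-1} = (A^{k+1})^{-1}$, which has the same cardinality as $A^{k+1}$. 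The fibers of $\psi$ are controlled by: if $h^{-1} a^{-1} = h'^{-1} a'^{-1}$ then $h' h^{-1} = a'^{-1} a \in (A^k \cap H)(A^k \cap H)^{-1}$, wait — more simply, $a' a^{-1} = h'^{-1} h \in H \cap A A^{-1}$ (using $a,a' \in A \cap H$ so $h'^{-1} h \in H$, and $a' a^{-1} \in A A^{-1}$). So given $(h,a)$ and the element $a' a^{-1} \in A A^{-1} \cap H$, the pair $(h',a')$ is determined. Hence each fiber has size at most $|A A^{-1} \cap H|$, giving
\[
\bigl|A^{k+1}\bigr| = \bigl|(A^{k+1})^{-1}\bigr| \geq |{\im \psi}| \geq
\frac{|A^k \cap H| \cdot |A \cap H|}{|A A^{-1} \cap H|},
\]
and then one bounds $|A \cap H|$ from below — but this does not immediately give $|A|$ in the numerator.

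The honest main obstacle, therefore, is getting $|A|$ (not $|A \cap H|$) into the numerator. The fix is to \emph{not} restrict $A$ to $H$: instead, partition $A$ by right cosets of $H$, pick the coset $Hg$ meeting $A$ in the most elements — but we want all of $A$, so instead use the full map $\phi : (A^k \cap H) \times A \to A^{k+1}$, $(h,a) \mapsto ha$. This is well-defined and its image lies in $A^{k+1}$. If $h a = h' a'$ with $h, h' \in A^k \cap H$ and $a, a' \in A$, then $h'^{-1} h = a' a^{-1}$; the left side lies in $H$ and the right side in $A A^{-1}$, so both lie in $A A^{-1} \cap H$, and given $(h,a)$ together with this element the pair $(h', a')$ is determined. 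Thus every fiber of $\phi$ has size at most $|A A^{-1} \cap H|$, whence
\[
\bigl|A^{k+1}\bigr| \geq |{\im \phi}| \geq \frac{|A^k \cap H|\cdot |A|}{|A A^{-1} \cap H|},
\]
which is exactly \eqref{eq:verento}. So the real content is just choosing the map $\phi(h,a) = ha$ with $h$ ranging over $A^k \cap H$ and $a$ over all of $A$, and then running the same fiber-counting argument as in the previous lemmas; I expect no further difficulty, and the orbit–stabilizer phrasing alluded to in the text would package it identically.
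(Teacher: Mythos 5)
Your final argument --- the map $\phi\colon (A^k\cap H)\times A\to A^{k+1}$, $(h,a)\mapsto ha$, with each fiber of size at most $|AA^{-1}\cap H|$ because $ha=h'a'$ forces $h'^{-1}h=a'a^{-1}\in AA^{-1}\cap H$ --- is correct and is essentially the paper's proof packaged as a single fiber-counting step: the paper instead bounds $|(A^k\cap H)\cdot A|\geq |A^k\cap H|\cdot|\pi_{G/H}(A)|$ by counting cosets and then invokes Lemma \ref{lem:duffy} to get $|\pi_{G/H}(A)|\geq |A|/|AA^{-1}\cap H|$, which amounts to the same fiber count. The preliminary detours in your write-up (the map $\psi$, the worries about $A\cap H=\emptyset$, the passage through $(A^{k+1})^{-1}$) should simply be deleted; the last paragraph with $\phi(h,a)=ha$ is the whole proof.
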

In other words, growth in a subgroup implies growth in the group.
\begin{proof}
  It is clear that
  \[\left|A^{k + 1}\right| \geq
  \left|\left(A^k \cap H\right) \cdot A\right|
  \geq \left|\left(A \cap H\right)^k\right|\cdot \left|\pi_{G/H}(A)\right|.\]
  At the same time, by Lemma \ref{lem:duffy},
  \[\left|\left(A A^{-1} \cap H\right)\right| \cdot \left|\pi_{G/H}(A)\right|
\geq |A|.\]
\end{proof}

\begin{lem}[{\cite[Lem.\ 3.7]{MR3152942}}]\label{lem:subcos}
Let $G$ be a group, let $H,K$ be subgroups of $G$ with $H\leq K$, and let
$A\subseteq G$ be a non-empty finite set. 
Then
\[|\pi_{K/H}(A A^{-1}\cap K)| \geq \frac{|\pi_{G/H}(A)|}{|\pi_{G/K}(A)|}
\geq \frac{|\pi_{G/H}(A)|}{\lbrack G:K\rbrack}.\]
\end{lem}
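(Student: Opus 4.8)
The plan is to run a double application of the pigeonhole principle, in exactly the spirit of Lemma \ref{lem:duffy}. Since $H\le K$, every right coset $Kg$ of $K$ is a disjoint union of right cosets of $H$, and every coset $Hx$ meeting $A$ is contained in a unique coset $Kx$, which then also meets $A$. Thus the $|\pi_{G/H}(A)|$ cosets of $H$ that meet $A$ are partitioned among the $|\pi_{G/K}(A)|$ cosets of $K$ that meet $A$; by pigeonhole there is a single coset $Kg$ such that the number $N$ of cosets of $H$ lying inside $Kg$ and meeting $A$ satisfies $N\ge |\pi_{G/H}(A)|/|\pi_{G/K}(A)|$.

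Next I would fix once and for all an element $g_0\in A\cap Kg$, and, for each coset $Hx\subseteq Kg$ meeting $A$, choose a representative $a_x\in A\cap Hx$. The three elementary observations are: (i) $a_x g_0^{-1}\in K$, because both $a_x$ and $g_0$ lie in the coset $Kg$, so $a_x g_0^{-1}\in Kg g^{-1}K = K$; (ii) $a_x g_0^{-1}\in A A^{-1}$, trivially; and (iii) the assignment $Hx\mapsto H(a_x g_0^{-1})$ is injective on the cosets under consideration, since $H a_x = H a_y$ is equivalent to $H a_x g_0^{-1} = H a_y g_0^{-1}$. Combining (i)--(iii), the $N$ chosen elements $a_x g_0^{-1}$ all lie in $A A^{-1}\cap K$ and represent $N$ distinct cosets of $H$ contained in $K$, whence $|\pi_{K/H}(A A^{-1}\cap K)|\ge N\ge |\pi_{G/H}(A)|/|\pi_{G/K}(A)|$. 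The second inequality is then immediate, as $A$ can meet at most $[G:K]$ cosets of $K$, so $|\pi_{G/K}(A)|\le [G:K]$.

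There is no genuine obstacle here; the only point that repays a moment's care is observation (i) --- that right-multiplication by $g_0^{-1}$ carries the relevant portion of $A$ \emph{into} $K$ while still preserving distinctness of $H$-cosets --- which is precisely the reason one localizes to a single coset $Kg$ before multiplying, rather than working with all of $A$ at once. (Alternatively, as the text notes, the statement can be obtained directly from the orbit--stabilizer principle for sets of \cite[Lemma 4.1]{MR3348442}, applied to the action of $A A^{-1}\cap K$ on $K/H$.)
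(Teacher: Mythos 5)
Your proof is correct and is essentially identical to the one in the paper: the paper also finds by pigeonhole a coset $Kg$ containing at least $|\pi_{G/H}(A)|/|\pi_{G/K}(A)|$ cosets of $H$ meeting $A$, picks representatives $a_1,\dotsc,a_k$ in distinct such cosets, and observes that $a_i a_1^{-1}$ lie in $AA^{-1}\cap K$ and occupy distinct cosets of $H$ (your $g_0$ plays the role of $a_1$). No meaningful difference in approach.
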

In other words: if $A$ intersects $r \lbrack G:H\rbrack$ 
cosets of $H$ in $G$, then $A A^{-1}$ intersects at least 
$r \lbrack G:H\rbrack/\lbrack G:K\rbrack = r \lbrack K:H\rbrack$ 
cosets of $H$ in $K$. (As usual, all of our cosets are right cosets $H g$,
$K g$, etc.)
We quote the proof in \cite[Lem.\ 3.7]{MR3152942}.
\begin{proof}
Since $A$ intersects 
$|\pi_{G/H}(A)|$ cosets of $H$ in $G$ and
$|\pi_{G/K}(A)|$ cosets of $K$ in $G$, and every coset of $K$ in $G$
is a disjoint union of cosets of $H$ in $G$, the pigeonhole principle implies
that there exists a coset $K g$ of $K$ such that $A$
intersects at least
$k = |\pi_{G/H}(A)|/|\pi_{G/K}(A)|$ cosets $H a \subseteq K g$. 
Let $a_1,\ldots,a_k$ be elements of $A$ in distinct 
cosets of $H$ in $Kg$. Then $a_i a_1^{-1}\in AA^{-1}\cap K$ for each
 $i=1,\ldots, k$. 
Note that $H a_1 a_1^{-1},\ldots,H a_k a_1^{-1}$ are $k$ distinct 
cosets of $H$.
\end{proof}

\subsection{Graphs and random walks}

For us, a graph is a directed graph, that is, a pair $(V,E)$, where
$V$ is a set and $E$ is a subset of the set of {\em ordered}
pairs of elements of $V$. (We allow loops, that is pairs, $(v,v)$.) 
A multigraph is the same as a graph, but with $E$ a multiset, i.e.,
edges may have multiplicity $>1$.

Given a group $G$ and a set of generators $A\subset G$, the {\em Cayley graph}
$\Gamma(G,A)$ is defined to be the pair $(G,\{(g,g a):g\in G, a\in A\})$.
It is connected because $A$ is a set of generators. Given a group $G$,
a set of generators $A\subset G$ and a set $X$ on which $G$ acts,
the {\em Schreier graph} $\Gamma(G,A;X)$ is the pair $(X,\{(x,x^a):
x\in X, a\in A\})$.

We take a {\em random walk} on a graph or multigraph $\Gamma$ by starting 
at a given vertex $v_0$ and deciding randomly, at each step, to which
neighbor $w$ of our current location $v$ to move. (A {\em neighbor} of $v$ is
a vertex $w$ such that $(v,w)\in E$.) We choose $w$ with
uniform probability among the neighbors of $v$, if $\Gamma$ is a graph,
or with probability proportional to the multiplicity of $w$, if $\Gamma$
is a multigraph.

In a {\em lazy random walk}, at each step, we first throw a fair coin to decide
whether we are going to bother to move at all. (Of course, if we decide to
move, and $(v,v)$ is an edge, we might move from $v$ to itself.) Our
random walks will always be lazy, for the sake of eliminating some
technicalities.

We say that the {\em $(\ell_\infty,\epsilon)$-mixing time} in a regular,
symmetric (multi)graph $\Gamma=(V,E)$ is at most $t$ if, for every (lazy)
random walk
of length $\geq t$, the probability
that it ends at any given vertex lies between $(1-\epsilon)/|V|$ and
$(1+\epsilon)/|V|$. We will use the fact that (multi)graphs with few vertices
have small mixing times.

\begin{prop}\label{prop:chudo}
Let $\Gamma$ be a connected, regular and symmetric multigraph of
valency $d$ and with $N$ vertices. Then the $(\ell_\infty,\epsilon)$-mixing 
time is at most $N^2 d \log(N/\epsilon)$.
\end{prop}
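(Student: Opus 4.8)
The plan is to bound the mixing time of the lazy random walk on $\Gamma$ by controlling the spectral gap of the associated transition operator, and to do this quantitatively in terms of $N$ and $d$ alone. Let $M$ be the (symmetric, doubly stochastic) transition matrix of the \emph{non-lazy} walk on $\Gamma$; since $\Gamma$ is $d$-regular, $M$ is $1/d$ times the adjacency multiplicity matrix. The lazy walk has transition matrix $P = \frac{1}{2}(I + M)$, which is symmetric, positive semidefinite, and has all eigenvalues in $[0,1]$, with $1$ a simple eigenvalue (by connectedness) whose eigenvector is constant. Write $1 = \lambda_1 > \lambda_2 \geq \dots \geq \lambda_N \geq 0$ for its eigenvalues. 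A length-$t$ lazy walk started at $v_0$ has distribution $e_{v_0}^\top P^t$; subtracting the uniform distribution $u$ and expanding $e_{v_0} - u$ in an orthonormal eigenbasis gives $\|e_{v_0}^\top P^t - u\|_2^2 = \sum_{i\geq 2} \langle e_{v_0}, \psi_i\rangle^2 \lambda_i^{2t} \leq \lambda_2^{2t}\|e_{v_0}-u\|_2^2 \leq \lambda_2^{2t}$. Since $\|\cdot\|_\infty \leq \sqrt{N}\,\|\cdot\|_2$ after rescaling by $N$ (more precisely, the probability at any fixed vertex differs from $1/N$ by at most $\|e_{v_0}^\top P^t - u\|_2$, and $\|e_{v_0}^\top P^t - u\|_\infty \le \|e_{v_0}^\top P^t - u\|_2$), we get that the walk is $(\ell_\infty,\epsilon)$-mixed once $N\lambda_2^t \leq \epsilon$, i.e. once $t \geq \frac{\log(N/\epsilon)}{\log(1/\lambda_2)}$.

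The remaining task is a lower bound on the spectral gap $1-\lambda_2$ of the lazy walk, equivalently on $1 - \mu_2$ where $\mu_2$ is the second-largest eigenvalue of $M$ (note $\lambda_2 = \frac{1+\mu_2}{2}$, so $1-\lambda_2 = \frac{1-\mu_2}{2}$; laziness also rules out the eigenvalue $-1$, so we need not worry about $\mu_N = -1$). The cleanest elementary route is a crude Cheeger-type or direct combinatorial estimate: since $\Gamma$ is connected with $N$ vertices and integer edge multiplicities, any two vertices are joined by a path of length $\leq N$, and a standard argument (e.g. via the Dirichlet form / canonical-paths method, or the very crude bound comparing $M$ to a walk that in $N$ steps reaches everything) yields $1 - \mu_2 \gg \frac{1}{N^2 d}$; concretely one can show $1-\mu_2 \geq \frac{1}{N^2 d}$ by a canonical-paths argument where each of the $\leq N^2$ ordered pairs routes $1/N$ units of flow along a single geodesic of length $\leq N$, so the congestion is at most $N\cdot N \cdot \frac{1}{N}\cdot \frac{1}{d}\cdot$(something), giving Poincaré constant $O(N^2 d)$. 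Hence $1 - \lambda_2 = \frac{1-\mu_2}{2} \gg \frac{1}{N^2 d}$, and using $\log(1/\lambda_2) \geq 1-\lambda_2$ for $\lambda_2 \in (0,1)$ we conclude
\[
t \;\geq\; \frac{\log(N/\epsilon)}{1-\lambda_2} \;\geq\; N^2 d \,\log(N/\epsilon)
\]
suffices, which is exactly the claimed bound.

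The step I expect to be the main obstacle is getting the spectral gap lower bound with a clean constant of $1$ (rather than some unspecified implied constant) so that the final bound reads literally $N^2 d \log(N/\epsilon)$. The eigenvalue/Dirichlet-form bookkeeping is routine, but pinning the canonical-paths or return-probability estimate so that no stray factors appear takes a little care; the honest way is probably to use the elementary inequality that for a lazy walk on $N$ vertices the return probability $P^t(v,v) \geq$ some explicit quantity, or to track the congestion constant in the canonical-paths lemma exactly. An alternative that avoids Cheeger-type arguments altogether: argue directly that after $t \geq N^2 d \log(N/\epsilon)$ steps the $\ell_2$-distance to uniform is $\leq \epsilon/N$ by the fact that a connected $d$-regular lazy walk on $N$ vertices satisfies $\lambda_2 \leq 1 - c/(N^2 d)$ with $c=1$, citable from standard references on mixing times; either way the structure above is unchanged. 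Laziness is used precisely to ensure positivity of $P$, so that only $\lambda_2$ (and not $|\lambda_N|$) governs convergence — this is the technical simplification the text alludes to.
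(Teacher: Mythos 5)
Your proof follows essentially the same route as the paper: diagonalize the (lazy) transition operator, observe that connectedness forces a spectral gap of order $1/(N^2 d)$ via a path-length or Dirichlet-form argument, and convert that to an $\ell_\infty$-mixing bound. The paper's own ``proof'' is only a sketch pointing to \cite[\S 6]{MR3348442}, and the sketch asserts exactly the same eigenvalue estimate $\lambda \le 1 - 1/(N^2 d)$ for the adjacency (averaging) operator.

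Two small points. First, the inequality in your final display is reversed: what you have established is that $\frac{\log(N/\epsilon)}{1-\lambda_2} \le N^2 d\,\log(N/\epsilon)$ (up to your implied constant), so that $t \ge N^2 d\,\log(N/\epsilon)$ \emph{implies} the sufficient condition $t \ge \frac{\log(N/\epsilon)}{1-\lambda_2}$; as written the chain of $\ge$'s is the wrong way round, though the intended conclusion is right. Second, the factor-of-two bookkeeping you flag is a genuine wrinkle: since $1-\lambda_2 = (1-\mu_2)/2$, an estimate $1-\mu_2 \ge 1/(N^2 d)$ for the non-lazy operator yields only $1-\lambda_2 \ge 1/(2N^2 d)$, hence $2N^2 d\log(N/\epsilon)$. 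You were right to be suspicious; the paper's sketch is equally cavalier about this constant, and the literal bound in the Proposition implicitly relies on the careful accounting in the cited reference rather than on the one-line sketch.
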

\begin{proof}
  This is a well-known fact; see, e.g., the exposition in
  \cite[\S 6]{MR3348442}. The main idea is to study the spectrum of the
       {\em adjacency operator}, meaning the
       operator $\mathscr{A}$ taking each function $f:V\to \mathbb{C}$ to
       a function $\mathscr{A}f$ whose value at $v$ is the average of $f(w)$
       over the neighbors $w$ of $v$ in the graph $\Gamma$. The connectedness
       of $\Gamma$ is used to show that, for every non-constant eigenfunction
       of $\mathscr{A}$, the corresponding eigenvalue $\lambda$ cannot
       be too close to $1$; it is at most $1-1/N^2 d$. The bound on the mixing
       time then follows.
\end{proof}

In particular, Prop.~\ref{prop:chudo} holds when $\Gamma$ is any Schreier graph
$\Gamma(G,A;\Omega^{(k)})$ of the action of a permutation group $G\leqslant \Sym(\Omega)$ on the
set $\Omega^{(k)}$ of $k$-tuples of distinct elements of $\Omega$.
The point is that $N = \left|\Omega^{(k)}\right|\leq |\Omega|^k/k!$ is very
small compared to $\Sym(\Omega)$ (which is of course of size $|\Omega|!$)
for $k$ bounded. We can make sure that $A$ is small as well, by 
the following simple lemma.

\begin{lem}\label{lem:dustu}
  Let $A\subset \Sym(n)$. Then there is a subset $A_0\subset A\cup A^{-1}$
  such that $\langle A_0\rangle = \langle A\rangle$, $|A_0|\leq 4 n$
  and $A_0 = A_0^{-1}$.
\end{lem}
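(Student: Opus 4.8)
The plan is to run a \emph{sifting} procedure in the spirit of Jerrum's filter, which collapses any generating set of a subgroup of $\Sym(n)$ down to one of size at most $n-1$, and to add two essentially cosmetic refinements: that the procedure can be arranged to return a \emph{subset} of the generators it is fed, rather than words in them, and that closing up under inverses costs only a factor of~$2$. Concretely, after deleting any copy of $e$ from $A$ (harmless) and passing to $A\cup A^{-1}$, it suffices to produce $A_0'\subseteq A\cup A^{-1}$ with $\langle A_0'\rangle=\langle A\rangle$ and $|A_0'|\le n-1$; then $A_0:=A_0'\cup(A_0')^{-1}$ is symmetric, is contained in $A\cup A^{-1}$, generates $\langle A\rangle$, and has at most $2(n-1)<4n$ elements.

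To build $A_0'$ I would attach to each non-identity $g\in\Sym(n)$ the edge $\{f(g),\,f(g)^g\}$ on the vertex set $\{1,\dots,n\}$, where $f(g)=\min\{i: i^g\neq i\}$ is the least point moved by $g$ (note $f(g)^g>f(g)$, as $g$ fixes $1,\dots,f(g)-1$), and maintain a set $S$ of permutations whose edges form a \emph{forest} on $\{1,\dots,n\}$, so that $|S|\le n-1$ throughout. The elements of $A\cup A^{-1}$ are sifted one at a time. To sift $g\neq e$, with edge $\{i,j\}$, $i<j$: if $i$ and $j$ lie in different trees, $g$ is added to $S$, its edge joining those trees; if $i$ and $j$ lie in the same tree and the path between them uses only vertices $\geq i$, one reads off from the permutations labelling the edges of that path an element $h\in\langle S\rangle$ that fixes $1,\dots,i-1$ and sends $i$ to $j$, replaces $g$ by $gh^{-1}$ — which fixes $1,\dots,i$, so its least moved point has strictly increased — and iterates. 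As $f$ strictly increases at each replacement and is bounded by $n-1$, sifting a generator halts, either at $g=e$ or after depositing one element into $S$; and once all of $A\cup A^{-1}$ has been sifted one has $\langle S\rangle=\langle A\rangle$, since each sifted generator is recoverable from its final value and the intervening $h$'s and so lies in $\langle S\rangle$. Finally, each element deposited in $S$ equals $a\cdot w$ with $a$ the original generator then being sifted and $w$ a word in the current members of $S$; since $\langle S\rangle$ always equals the group generated by the originals recorded so far, one may record $a$ itself in its place. Hence $A_0'$ can be taken to be a subset of $A\cup A^{-1}$ of size $\le n-1$ generating $\langle A\rangle$.

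The step I expect to be the main obstacle is the remaining case of the sift: $i$ and $j$ lie in the same tree, but the path between them dips below~$i$, so neither the reduction above nor adding $\{i,j\}$ is admissible. Here one works with the cycle formed by that path together with $\{i,j\}$; its edges all run from smaller endpoint to larger, so the cycle has a unique largest vertex, and the two permutations labelling the cycle-edges at that vertex combine to give a new permutation that trades one of those edges for a lower one, preserving both the forest and the generated group — and iterating pushes $f$ up. This bookkeeping, which is precisely what keeps the forest invariant alive, is classical, being the core of Jerrum's filter; I would either cite it or reproduce the short argument. Everything else — the reduction to the symmetric case and the ``record the original generator'' device — is routine.
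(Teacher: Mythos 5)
Your overall reduction is fine: it suffices to exhibit a generating subset $A_0'\subset A\cup A^{-1}$ of size $O(n)$ and then set $A_0=A_0'\cup (A_0')^{-1}$. But the way you propose to get $A_0'$ has a genuine gap, and the paper avoids it by a much more elementary route.

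The gap is the claim that Jerrum's filter can be ``cosmetically'' modified so as to return a \emph{subset} of the input generators. Jerrum's filter maintains a set $S$ of at most $n-1$ permutations whose edges form a forest, but the elements it deposits in $S$ are, by design, \emph{products} of the inputs, not the inputs themselves; and in the cycle-breaking step you flag as ``the main obstacle,'' elements of $S$ are repeatedly removed and replaced by combinations of other members of $S$, possibly cascading, possibly vanishing. Your ``record the original generator'' device requires the invariant that $\langle S\rangle$ equals the group generated by the recorded originals, and this is exactly where the argument is not routine: when an element $s$ is removed from $S$ (because some combination along a cycle reduces to the identity during a re-sift), the corresponding original $a_s$ gets dropped from the recorded set, and nothing in the bookkeeping guarantees that $a_s$ lies in the group generated by the remaining originals. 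You neither prove this invariant nor reduce it to a standard fact; you assert it. Note also that a correct version of your claim would say that every generating set of a subgroup of $\Sym(n)$ contains a generating subset of size $\le n-1$, which is at the level of Whiston's theorem on irredundant generating sets of $\Sym(n)$ -- not a one-line corollary of Jerrum's filter.

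The paper's proof is shorter and sidesteps all of this. It simply chooses $g_1\in A$, then $g_2\in A$ with $\langle g_1\rangle \lneq \langle g_1,g_2\rangle$, then $g_3\in A$ with $\langle g_1,g_2\rangle \lneq \langle g_1,g_2,g_3\rangle$, and so on; this produces a chain of strictly increasing subgroups of $\Sym(n)$, which by Babai's bound on the length of subgroup chains in $\Sym(n)$ must terminate after $r\le 2n-3$ steps, at which point $\{g_1,\dotsc,g_r\}\subset A$ already generates $\langle A\rangle$. Setting $A_0=\{g_1,g_1^{-1},\dotsc,g_r,g_r^{-1}\}$ gives $|A_0|\le 2(2n-3)<4n$. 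This greedy argument gives a slightly weaker numerical bound than the $2(n-1)$ you were aiming for, but it is self-contained modulo one cited fact about subgroup chains, and it is exactly strong enough for the lemma as stated. If you want the sharper $n-1$, cite Whiston; if you want an elementary self-contained proof, use the subgroup-chain greedy argument. The filtered approach, as written, does not close.
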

\begin{proof}
Choose an element 
$g_1\in A$, and then an element $g_2\in A$ such that $\langle g_1\rangle
\lneq \langle g_1,g_2\rangle$, and then an element $g_3\in A$ such that
$\langle g_1,g_2\rangle \lneq \langle g_1,g_2,g_3\rangle$, \dots
Since the longest subgroup chain in $\Sym(n)$ is of length $\leq 2n-3$ 
\cite{MR860123}, we must stop in $r\leq 2n-3<2n$ steps. Let
$A_0 = \{g_1,g_1^{-1},\dotsc,g_r,g_r^{-1}\}$.
\end{proof}

The point is that, while we cannot assume we can produce random,
uniformly distributed elements of $G=\langle A\rangle$
as short products in $A$ (we cannot assume what we are trying to prove,
namely, that the diameter is small), we can take short, random products
of elements of $A$, and their action on $\Omega^{(k)}$ is like that
of random, uniformly distributed elements. This observation was already used
in \cite{BBS04} to prove the following.
\begin{lem}[\cite{BBS04}]\label{lem:bbs}
  Let $A\subset \Sym(\Omega)$, $|\Omega|=n$,
  be such that $A=A^{-1}$ and $G=\langle A\rangle$
  is $3$-transitive. Assume there is a $g\in A$, $g\ne e$, with
  $|\supp(g)|\leq (1/3-\epsilon) n$, $\epsilon>0$. Then
  \[\diam(\Gamma(G,A)) \ll_\epsilon n^8 (\log n)^c,\]
  where $c$ is an absolute constant.
\end{lem}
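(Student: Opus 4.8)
The plan is to use the small support element $g$ to build, via short random walks in the Schreier graph on $k$-tuples, a rich supply of elements whose action on $\Omega^{(k)}$ is almost uniformly distributed, and then feed these into the orthogonality machinery of Lemmas~\ref{lem:tagore} and~\ref{lem:durdo} to force growth until the diameter collapses. Concretely, by Lemma~\ref{lem:dustu} we may replace $A$ by a symmetric generating set $A_0$ with $|A_0|\le 4n$ without changing $\langle A\rangle$ and at the cost of only a constant factor in diameter; the Schreier graph $\Gamma(G,A_0;\Omega^{(k)})$ then has $N\le n^k/k!$ vertices and valency $O(n)$, so by Prop.~\ref{prop:chudo} its $(\ell_\infty,\epsilon)$-mixing time is at most $n^{O(k)}\log(n/\epsilon)$, which is polynomial in $n$ for $k$ bounded. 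Thus a random product of $n^{O(1)}$ elements of $A_0$ moves any fixed $k$-tuple to a nearly uniform $k$-tuple.

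Next I would show that conjugating $g$ by such a random word produces, with good probability, an element $g'$ of the same (small) support whose support is a \emph{prescribed} $k$-subset of $\Omega$ — or rather, a family of conjugates whose supports are pairwise disjoint, or disjoint from a fixed set. Since $|\supp(g)|\le(1/3-\epsilon)n$, one can pack roughly three such disjoint supports, and more importantly one gets for a subgroup $H$ like the pointwise stabilizer of a large set (or a Young-subgroup-type factor) a collection of conjugates $g_j H g_j^{-1}$ whose common intersection is trivial. That is exactly the weak orthogonality condition~(\ref{eq:radapar}) flagged after Lemma~\ref{lem:tagore}. Applying Lemma~\ref{lem:tagore} to the conjugates of a suitable large subset $A\cap H$ (with $H$ chosen so that $A$ intersects many cosets of it) then yields a genuine multiplicative gain $|A^{O(1)}|\ge|A|^{1+c}$ — unless $A$ was already nearly all of the relevant subgroup, in which case Lemma~\ref{lem:durdo} (growth in a subgroup implies growth in the group) either continues the iteration or we conclude $A^{n^{O(1)}}=G$.

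Iterating this dichotomy $O(\log n)$ times — each round costing a factor $n^{O(1)}$ in word length and multiplying $|A|$ by a fixed power — exhausts $G$ after total word length $n^{O(1)}(\log n)^{O(1)}$, i.e.\ $\diam(\Gamma(G,A))\ll_\epsilon n^8(\log n)^c$ for the appropriate absolute $c$; tracking the exponents through the mixing-time bound with $k$ a small constant is what pins down the power $8$. The main obstacle, and the step I would spend the most care on, is the passage from "near-uniform action on $\Omega^{(k)}$" to "controlled conjugates with trivial common intersection": one must choose the right subgroup $H$ (large index but large intersection with $A$, by pigeonhole as in Lemma~\ref{lem:duffy}/Lemma~\ref{lem:subcos}) and argue that randomly conjugating a small-support element reliably spreads its support so that finitely many conjugates of $H$ meet only in $e$. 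The hypothesis $|\supp(g)|\le(1/3-\epsilon)n$ is precisely what makes three disjoint translates of the support fit, and $3$-transitivity is what makes the random-walk argument deliver the conjugates one actually needs; reconciling these with the bookkeeping of the $n^{O(1)}$ factors is the crux.
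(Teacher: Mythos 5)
Your proposal takes a genuinely different route from the paper, and unfortunately there is a gap that prevents it from reaching the stated bound. The paper's proof (following Babai--Beals--Seress) does not invoke any growth/orthogonality machinery at all: starting from the given $g$ with $|\supp(g)|\leq(1/3-\epsilon)n$, it uses a short random walk $h\in A^{n^{O(1)}}$, whose action on $\Omega$ (or $\Omega^{(2)}$) is nearly uniform by Prop.~\ref{prop:chudo}, to make $\supp(g)\cap\supp(hgh^{-1})$ small (of size $\lesssim|\supp(g)|^2/n$). The commutator $g':=\lbrack g, hgh^{-1}\rbrack$ then has $|\supp(g')|\leq 3\,|\supp(g)\cap\supp(hgh^{-1})|\leq(1-3\epsilon)|\supp(g)|$, so iterating shrinks the support geometrically; after $O(\log\log n)$ steps one reaches a $3$-cycle or double transposition, whose conjugates fill $\Alt(\Omega)$ in $O(n)$ further steps. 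Crucially, the word length only grows by a fixed additive $n^{O(1)}$ (up to doubling) per commutator step, so the total stays polynomial in $n$. The hypothesis $(1/3-\epsilon)$ is exactly the threshold that makes the constant $3$ in the commutator-support inequality give strict contraction $1-3\epsilon<1$ — it has nothing to do with ``packing three disjoint copies of the support.''

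The gap in your proposal: iterating a product-theorem-style growth step cannot give a polynomial diameter bound here. If each round replaces $A$ by $A^{n^{O(1)}}$ and yields $|A^{n^{O(1)}}|\geq|A|^{1+c}$, then to carry $\log|A|$ from $O(1)$ up to $\log|G|\sim n\log n$ you need $O(\log\log|G|)=O(\log n)$ rounds, and since word lengths compound multiplicatively, the total is $n^{O(\log n)}$ — quasipolynomial, and far above $n^8(\log n)^c$. Your bookkeeping claim that the rounds ``cost a factor $n^{O(1)}$ in word length'' yet terminate in ``total word length $n^{O(1)}(\log n)^{O(1)}$'' does not reconcile: $O(\log n)$ multiplicative factors of $n^{O(1)}$ give $n^{O(\log n)}$. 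This is why the paper reserves the Lemma~\ref{lem:tagore}/orthogonality machinery for the main theorem (where a quasipolynomial answer is acceptable and expected), while the BBS lemma is proved by the entirely separate, and much more efficient, ``shrink the support to a $3$-cycle'' mechanism. There is also a secondary issue: you flag as the ``crux'' the step of turning near-uniform action on $\Omega^{(k)}$ into conjugates of $H$ meeting only in $\{e\}$, but you do not specify $H$ nor close that step — and in any case, even if that step were completed, the round count and compounding word length still block the polynomial bound.
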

\begin{proof}[Sketch of proof]
  See \cite{BBS04} or the exposition in \cite[\S 6.2]{MR3348442}.
  The main idea is as follows. Let $A_0$ be as in Lemma \ref{lem:dustu},
  and let $h\in A_0^{m} \subset A^{m}$ be the outcome
  of a random walk on $A_0$ of length $\leq m$, where
  $m \geq 4 n^3 \log(n/\epsilon')$ and $\epsilon'=\epsilon/100$ (say). Then,
  by Prop.~\ref{prop:chudo}, for any $x,y\in \Omega$, the probability
  that $h$ takes $x$ to $y$ is almost exactly $1/n$. In particular,
  for $x\in \supp(g)$, the probability that $x^h\in \supp(g)$ is almost
  exactly $|\supp(g)|/n$. Hence
  \[\left|\supp(g) \cap \supp\left(h g h^{-1}\right)\right|\lesssim
  \frac{|\supp(g)|^2}{n} \leq \left(\frac{1}{3} - \epsilon\right)|\supp(g)|.\]
  A quick calculation shows that the commutator
  $\lbrack g, h^{-1}\rbrack = g^{-1} h g h^{-1}$ obeys
  \[|\supp\left(\left\lbrack g, h^{-1}\right\rbrack\right)| \leq 3
  \left|\supp(g) \cap \supp\left(h g h^{-1}\right)\right|,\]
  and so, for $g' = \left\lbrack g, h^{-1}\right\rbrack$,
  $|\supp(g')|\leq |\supp(g)|^2/n \leq (1-3\epsilon) |\supp(g)|$.

  We iterate until, after $O(\log \log n)$, we obtain an element $h$ of
  support of size $2$ or $3$. (Additional care is taken in the process so that
  our element $h$ is never trivial. It is, in fact, convenient to take
  $m \geq 4 n^5 \log(n/\epsilon')$ from the beginning, so that the probability
  that $h$ takes a pair $(x,x')\in \Omega^{(2)}$ to a pair
  $(y,y')\in \Omega^{(2)}$ is almost exactly $1/|\Omega|^{(2)} = 2/n(n-1)$.)
  We conjugate $h$ by elements of $A_0^{n^3}$
  to obtain a set $C$ consisting of all $2$-cycles or $3$-cycles. It is
  clear that $\diam(\Gamma(G,C))\ll n$.
\end{proof}

We shall now use short random walks to construct elements $g_j$ such that
a weak orthogonality condition in the sense of (\ref{eq:radapar}) holds
for some kinds of sets $B$. By an {\em orbit} of a set $B\subset \Sym(\Omega)$
we mean a subset of $\Omega$ of the form $x^B$, $x\in \Omega$.

\begin{lem}\label{lem:shortorb}
  Let $A,B\subset \Sym(\Omega)$, $|\Omega|=n$, $e\in B$. Let
  $0<\rho<1$.
  Assume that
  $\langle A\rangle$ is $2$-transitive, and that $B$ has no
  orbits of length $>\rho n$. Then there are
  $g_1, g_2,\dotsc,g_k\in \left(A\cup A^{-1}\cup \{e\}\right)^m$,
  $k\ll (\log n)/|\log \rho|$,
  $m\ll n^6 \log n$, such that
  \[\bigcap_{j=1}^k g_j B g_j^{-1} = \{e\}.\]
\end{lem}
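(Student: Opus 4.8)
\emph{Plan.} The plan is to reduce the statement to a fact about ordered pairs of distinct points of $\Omega$, and then to obtain the $g_j$ by short random walks, as in the proof of Lemma~\ref{lem:bbs}. First I would observe that if $h\in\bigcap_{j=1}^{k}g_jBg_j^{-1}$ with $h\ne e$, and if $x\in\Omega$ satisfies $y:=x^{h}\ne x$, then $b_j:=g_j^{-1}hg_j\in B$ for all $j$ and $(x^{g_j})^{b_j}=x^{g_j g_j^{-1}hg_j}=(x^{h})^{g_j}=y^{g_j}$, so --- since $e\in B$ --- the point $y^{g_j}$ lies in the orbit $(x^{g_j})^{B}$ for every $j$. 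Call a pair $(z,w)\in\Omega^{(2)}$ \emph{bad} if $w\in z^{B}$; then it is enough to construct $g_1,\dots,g_k$ such that, for every $(x,y)\in\Omega^{(2)}$, the pair $(x^{g_j},y^{g_j})$ fails to be bad for at least one $j$.

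\emph{Counting bad pairs.} Next I would bound the number of bad pairs. Every orbit $z^{B}$ has length $\le\rho n$, so for each $z$ there are at most $|z^{B}|-1\le\rho n-1$ points $w\ne z$ with $(z,w)$ bad; hence the bad pairs are a fraction $\le(\rho n-1)/(n-1)$ of $\Omega^{(2)}$, which is strictly below $\rho$. One may assume $B\ne\{e\}$ (otherwise $k=1$, $g_1=e$ already works), so $\rho n\ge 2$ by the orbit hypothesis, and also $\rho\le 1-1/n$, since for $\rho>1-1/n$ the hypothesis ``no orbit of length $>\rho n$'' coincides with the one for $\rho=1-1/n$ (and the resulting bound on $k$ only improves); then $1-\rho\ge 1/n$ and $|\log\rho|\le\log n$.

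\emph{Random walks.} Then I would run random walks. Let $A_0\subseteq A\cup A^{-1}$ be the symmetric generating set of $\langle A\rangle$ from Lemma~\ref{lem:dustu}, so $|A_0|\le 4n$. Since $\langle A\rangle$ is $2$-transitive it acts transitively on $\Omega^{(2)}$, so the Schreier multigraph $\Gamma(\langle A\rangle,A_0;\Omega^{(2)})$ is connected, regular and symmetric, with $N=n(n-1)<n^{2}$ vertices and valency $\le 4n$; by Prop.~\ref{prop:chudo} its $(\ell_\infty,\epsilon)$-mixing time is $\le 4n^{5}\log(n^{2}/\epsilon)$. I would set $\epsilon:=(1-\rho)/(2(\rho n-1))$, so that $1/\epsilon\le 2n^{2}$ (using $1-\rho\ge 1/n$), hence $m:=\lceil 4n^{5}\log(n^{2}/\epsilon)\rceil\ll n^{5}\log n\ll n^{6}\log n$, and so that $p:=(1+\epsilon)(\rho n-1)/(n-1)<\rho$ (this uses $(\rho n-1)/(n-1)=\rho-(1-\rho)/(n-1)$). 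Taking $g_1,\dots,g_k$ to be independent outcomes of lazy random walks of length $m$ on $A_0$, one has $g_j\in(A_0\cup\{e\})^{m}\subseteq(A\cup A^{-1}\cup\{e\})^{m}$, and for fixed $(x,y)\in\Omega^{(2)}$ the image $(x^{g_j},y^{g_j})$ is the endpoint of a walk on $\Gamma(\langle A\rangle,A_0;\Omega^{(2)})$ from the vertex $(x,y)$, hence within $(\ell_\infty,\epsilon)$ of uniform, so $(x^{g_j},y^{g_j})$ is bad with probability $\le\frac{1+\epsilon}{N}\cdot n(\rho n-1)=p$. By independence and a union bound over the fewer than $n^{2}$ pairs, the probability that some $(x,y)$ has $(x^{g_j},y^{g_j})$ bad for all $j$ is $\le n^{2}p^{k}$; with $k:=\lceil 3\log n/|\log\rho|\rceil$ one gets $n^{2}p^{k}<n^{2}\rho^{k}\le n^{-1}<1$ and $k\le 1+3\log n/|\log\rho|\ll\log n/|\log\rho|$ (using $|\log\rho|\le\log n$). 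Hence some choice of $g_1,\dots,g_k$ makes $(x^{g_j},y^{g_j})$ not bad for at least one $j$ for every $(x,y)\in\Omega^{(2)}$, and for that choice $\bigcap_{j=1}^{k}g_jBg_j^{-1}=\{e\}$ by the first step.

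\emph{Main obstacle.} The hard part is the calibration of $\epsilon$: the proportion of bad pairs drops below $\rho$ only by about $(1-\rho)/n$, so the $\ell_\infty$-mixing bound must be applied with $\epsilon$ of that order merely in order to force $p<1$; the two normalizations of $\rho$ (namely $\rho n\ge 2$ and $\rho\le 1-1/n$) are exactly what keep both $\log(1/\epsilon)$ and $|\log\rho|$ of size $O(\log n)$, and hence yield the stated bounds $m\ll n^{6}\log n$ and $k\ll\log n/|\log\rho|$.
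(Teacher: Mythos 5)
Your proof is correct and follows essentially the same route as the paper's: reduce to ordered pairs in $\Omega^{(2)}$, mix with short lazy random walks via Prop.~\ref{prop:chudo}, and conclude with a union bound over pairs. The only real difference is a parameter choice, and the paper's is slicker -- it simply takes $\epsilon=\rho^{-1/2}-1$ (which is $\gg 1/n$ once one normalizes $\rho\le(n-1)/n$), so the per-step failure probability becomes $(1+\epsilon)\rho=\sqrt{\rho}<1$ and $n^2\rho^{k/2}<1$ for $k\gg(\log n)/|\log\rho|$ follows at once, with no need for the refined count $(\rho n-1)/(n-1)<\rho$ or for $\epsilon$ of order $(1-\rho)/n$; the ``main obstacle'' you describe is thus sidestepped rather than confronted.
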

If we required only that $g\in \langle A\rangle$ (and not $g_k\in
(A\cup A^{-1}\cup \{e\})^m$), and
$B$ were assumed to be a group,
then this Lemma would be the ``splitting lemma'' in \cite[\S 3]{Bab82}.
The fact that the proof can be adapted illustrates what we were saying:
short random products act on $\Omega^{(2)}$ as random elements do.
Prop.~5.2 in \cite{MR3152942} is an earlier generalization of Babai's
``splitting Lemma'', based on the same idea.
\begin{proof}
  Let $g_1,\dotsc,g_k$ be the outcome of $k$ independent
  random walks of length $\leq m$ on $A_0$, where
  $m \geq 4 n^6 \log(n/\epsilon)$, $\epsilon>0$ and
  $A_0\subset A \cup A^{-1}$ is as in Lemma \ref{lem:dustu}.
Then,
by Prop.~\ref{prop:chudo}, for any $(x,y), (x',y')\in \Omega^{(2)}$ and any
$1\leq j\leq k$,
the probability that $g_j$ takes $(x,y)$ to $(x',y')$ lies between
$(1-\epsilon)/\left|\Omega^{(2)}\right|$ and
$(1+\epsilon)/\left|\Omega^{(2)}\right|$.

Since $B$ has no orbits of length $> \rho n$,
there are at most $\rho \left|\Omega^{(2)}\right|$ pairs $(x',y')\in
\Omega^{(2)}$ such that $x'$ and $y'$ lie in the same orbit of $H$.
Hence, for any $(x,y)\in \Omega^{(2)}$ and any
$1\leq j\leq k$, the probability that $x^{g_j}$ and $y^{g_j}$
lie in the
same orbit of $B$ is at most $(1+\epsilon) \rho$. Since 
$g_1,\dotsc,g_k$ were chosen independently, it follows that the probability
that $x^{g_j}$ and $y^{g_j}$ are in the same orbit for every
$1\leq j\leq k$ is at most
$((1+\epsilon) \rho)^k$. 

Now,  $x^{g_j}$ and $y^{g_j}$ are in the same orbit for every
$1\leq j\leq k$ if and only if $x$ and $y$ are in the same orbit
of $B' = \bigcap_{j=1}^k g_j B g_j^{-1}$.
The probability that at least two distinct $x$, $y$ lie
in the same orbit of $B'$ is therefore at most
\[n^2 ((1+\epsilon) \rho)^m .\]
We let\footnote{We can assume $\rho\leq (n-1)/n$. Thus
  $\epsilon= \rho^{-1/2}-1$ implies $\epsilon\gg 1/n$, and so
  $\log(n/\epsilon) \ll \log n$.}
$\epsilon= \rho^{-1/2}-1$, so that $((1+\epsilon) \rho) = \rho^{1/2}$.
Then, for $k > 2 (\log n^2)/|\log \rho|$,
\[n^2 ((1+\epsilon) \rho)^m < 1.\]
In other words, with positive probability, no two distinct $x$, $y$ lie
in the same orbit of $B'$, i.e., $B'$ equals $\{e\}$.
Thus, there exist $g_1,\dotsc,g_m$ such that $B' = \{e\}$.
\end{proof}

It is a familiar procedure in combinatorics (sometimes
called the {\em probabilistic method}) to prove that a lion can be found
at a random place of the city with positive probability, and to conclude
that there must be a lion in the city. What we have done is prove that, after
a short random walk, we come across a lion with positive probability
(and so there is a lion in the city).

\begin{cor}\label{cor:lalmo}
  Let $A,B\subset \Sym(\Omega)$, $|\Omega|=n$. Let
  $0<\rho<1$. Assume that
  $\langle A\rangle$ is $2$-transitive, and that $B B^{-1}$ has no
  orbits of length $>\rho n$. Then there is a
  $g\in \left(A\cup A^{-1}\cup \{e\}\right)^m$,
  $m\ll n^6 \log n$, such that
  \[\left|B B^{-1} g B B^{-1} g^{-1}\right| \geq |B|^{1 + \frac{|\log \rho|}{\log n}}.\]
\end{cor}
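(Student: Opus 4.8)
\emph{Proof plan.}
Write $P=BB^{-1}$. This set is symmetric and contains $e$, and by hypothesis it has no orbit of length $>\rho n$. The first point to note is that $P^{2}=(BB^{-1})^{2}$ also has no orbit of length $>\rho n$: since $e\in P$ we have $P\subseteq P^{2}\subseteq\langle P\rangle$, so $\langle P^{2}\rangle=\langle P\rangle$, and hence $P^{2}$ has exactly the same orbits as $P$. The plan is to feed $P^{2}$ (not $B$, and not $P$) into the ``splitting'' Lemma~\ref{lem:shortorb}, and then to apply Lemma~\ref{lem:tagore} to conjugates of $P$ (not of $B$), arranged precisely so that the product set produced by Lemma~\ref{lem:tagore} is a conjugate of $BB^{-1}\,g\,BB^{-1}g^{-1}$.

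So I would first apply Lemma~\ref{lem:shortorb} with the given $A$ and with the role of its ``$B$'' played by $P^{2}$; this is legitimate, since $e\in P^{2}$, $\langle A\rangle$ is $2$-transitive, and $P^{2}$ has no orbit of length $>\rho n$. It produces $g_{1},\dots,g_{k}\in(A\cup A^{-1}\cup\{e\})^{m}$ with $k\ll(\log n)/|\log\rho|$ and $m\ll n^{6}\log n$ such that $\bigcap_{j=1}^{k}g_{j}P^{2}g_{j}^{-1}=\{e\}$. Put $g_{0}=e$ and, for $0\le j\le k$, set $A_{j}=g_{j}Pg_{j}^{-1}$. Each $A_{j}$ is symmetric, $|A_{j}|=|P|=|BB^{-1}|\ge|B|$, and $A_{j}A_{j}^{-1}=g_{j}P^{2}g_{j}^{-1}$, so $\bigcap_{j=0}^{k}A_{j}A_{j}^{-1}\subseteq\bigcap_{j=1}^{k}g_{j}P^{2}g_{j}^{-1}=\{e\}$.

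Lemma~\ref{lem:tagore}, applied to $A_{0},\dots,A_{k}$ (all of size $|P|$), now yields an index $0\le j\le k-1$ with $|A_{j}A_{j+1}^{-1}|\ge|P|^{(k+1)/k}\ge|B|^{1+1/k}$. Since the $A_{i}$ are symmetric, $A_{j}A_{j+1}^{-1}=g_{j}Pg_{j}^{-1}\,g_{j+1}Pg_{j+1}^{-1}$, and conjugating by $g_{j}^{-1}$ (a cardinality-preserving operation) turns this into $P\,g\,P\,g^{-1}$ with $g=g_{j}^{-1}g_{j+1}$. Therefore $|BB^{-1}\,g\,BB^{-1}g^{-1}|\ge|B|^{1+1/k}$, where $g\in(A\cup A^{-1}\cup\{e\})^{2m}$ and $2m\ll n^{6}\log n$; and $k\ll(\log n)/|\log\rho|$ gives $1/k\gg|\log\rho|/\log n$, so the exponent is of the stated form (up to the value of the absolute constant).

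I do not expect a serious obstacle here: the argument is essentially the combination of Lemmas~\ref{lem:shortorb} and~\ref{lem:tagore}. The one thing requiring a little foresight is the choice made at the start — to split $(BB^{-1})^{2}$ and to hand Lemma~\ref{lem:tagore} conjugates of $BB^{-1}$ rather than of $B$. With conjugates of $B$ one would only bound the ``twisted'' set $B\,g\,B^{-1}g^{-1}$, which does not obviously reduce to $BB^{-1}gBB^{-1}g^{-1}$ without the extra assumption $e\in B$; and one must check, as above, that passing from $BB^{-1}$ to $(BB^{-1})^{2}$ does not destroy the orbit-length hypothesis.
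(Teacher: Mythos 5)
You have spotted a genuine subtlety: to invoke the ``in particular'' clause of Lemma~\ref{lem:tagore} with $A_j=g_j BB^{-1} g_j^{-1}$ one needs $\bigcap_j A_j A_j^{-1}=\bigcap_j g_j(BB^{-1})^2 g_j^{-1}=\{e\}$, and feeding $BB^{-1}$ into Lemma~\ref{lem:shortorb} only supplies $\bigcap_j g_j BB^{-1} g_j^{-1}=\{e\}$, which is weaker. Your repair -- feeding $P^2=(BB^{-1})^2$ into Lemma~\ref{lem:shortorb} instead -- is the natural instinct, but the justification that $P^2$ satisfies the orbit hypothesis does not hold up. The paper defines the orbit of a \emph{set} $B$ to be $x^B$, not the orbit of $\langle B\rangle$; the equality $\langle P^2\rangle=\langle P\rangle$ controls $\langle P\rangle$-orbits but says nothing about $x^{P^2}$ versus $x^P$. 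It is perfectly possible for every $x^P$ to have $\le\rho n$ elements while some $x^{P^2}$ has nearly twice as many (take $\Omega=\mathbb{Z}/n\mathbb{Z}$ and $P$ the set of shifts by at most $\rho n/2$). So the step ``hence $P^{2}$ has exactly the same orbits as $P$'' is a real gap, and Lemma~\ref{lem:shortorb} cannot be applied to $P^2$ on the hypothesis as stated.

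There is a small modification that closes the gap without strengthening the hypothesis and keeps your overall structure. Apply Lemma~\ref{lem:shortorb} to $P=BB^{-1}$ itself, obtaining $g_1,\dots,g_k$ with $\bigcap_j g_jPg_j^{-1}=\{e\}$. Fix $b_0\in B$, set $g_0=e$, and hand Lemma~\ref{lem:tagore} the sets $A_j=g_j\,Bb_0^{-1}\,g_j^{-1}$. Each has size $|B|$, and $A_jA_j^{-1}=g_j\,Bb_0^{-1}b_0B^{-1}\,g_j^{-1}=g_jPg_j^{-1}$, so $\bigcap_j A_jA_j^{-1}=\{e\}$ is precisely what was established. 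Lemma~\ref{lem:tagore} then produces some $j$ with $|A_jA_{j+1}^{-1}|\ge |B|^{1+1/k}$; conjugating by $g_j^{-1}$ sends $A_jA_{j+1}^{-1}$ to $Bb_0^{-1}\,g\,b_0B^{-1}\,g^{-1}$ with $g=g_j^{-1}g_{j+1}$, and since $Bb_0^{-1}\subset BB^{-1}$ and $b_0B^{-1}\subset BB^{-1}$, that set sits inside $BB^{-1}\,g\,BB^{-1}g^{-1}$. The rest of your argument (the bounds on $k$, $m$, and the word length of $g$) then goes through verbatim.
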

\begin{proof}
  By Lemma \ref{lem:shortorb} applied to $B B^{-1}$ rather than $B$, there are
  $g_1, g_2,\dotsc,g_k\in \left(A\cup A^{-1}\right)^m$,
  $k\ll (\log n)/|\log \rho|$,
  $m\ll n^6 \log n$, such that
  \[\bigcap_{j=1}^k g_j B B^{-1} g_j^{-1} = \{e\}.\]
  Hence, by Lemma \ref{lem:tagore}, with $A_j = g_j B B^{-1} g_j^{-1}$
  (and $g_0=e$, say), there is a $0\leq j\leq k-1$ such that
  \[\left|A_j A_{j+1}^{-1}\right|\geq \left|B B^{-1}\right|^{1 +\frac{1}{k}}
  \left|B\right|^{1 +\frac{1}{k}}.\]
  Since
  $\left|A_j A_{j+1}^{-1}\right| = \left|g_j B B^{-1} g_j^{-1}
  g_{j+1} B B^{-1} g_{j+1}^{-1}\right|
  =\left|B B^{-1} g B B^{-1} g^{-1}\right|$ for $g =g_j^{-1} g_{j+1}$, we are done.
\end{proof}

\begin{cor}\label{cor:ratherbab}
  Let $A\subset \Sym(\Omega)$, $|\Omega|=n$, with $A=A^{-1}$ and $e\in A$.
  Let $0<\rho<1$. Assume that
  $\langle A\rangle$ is $2$-transitive. Let $\Sigma\subset \Omega$ be such that
  $(A^4)_{(\Sigma)}$ has no orbits of length $>\rho n$. Then either
  \begin{equation}\label{eq:firstopt}
    \left|\Sigma\right|\geq \frac{|\log \rho|}{3 (\log n)^2} \log |A|
  \end{equation}
  or
  \begin{equation}\label{eq:secondopt}
    \left|A^l\right|\geq |A|^{1 + \frac{|\log \rho|}{3 \log n}}
  \end{equation}
  for some $l\ll n^6 \log n$.
\end{cor}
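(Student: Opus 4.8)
The plan is to feed a suitably chosen pointwise stabilizer into Corollary~\ref{cor:lalmo}. Set $B=(A^2)_{(\Sigma)}=\{g\in A^2:\ x^g=x\text{ for all }x\in\Sigma\}$. Since $A=A^{-1}$ and $e\in A$ we have $B=B^{-1}$, $e\in B$, and $A^2\subseteq A^4$; moreover every element of $BB^{-1}=BB$ fixes $\Sigma$ pointwise and lies in $A^4$, so $BB^{-1}\subseteq(A^4)_{(\Sigma)}$. By hypothesis $(A^4)_{(\Sigma)}$ has no orbit of length $>\rho n$, and orbits can only shrink on passing to a subset, so $BB^{-1}$ has no orbit of length $>\rho n$ either. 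Hence Corollary~\ref{cor:lalmo} applies (using that $\langle A\rangle$ is $2$-transitive) and produces some $g\in(A\cup A^{-1}\cup\{e\})^m=A^m$ with $m\ll n^6\log n$ and
\[
\left|BB^{-1}\,g\,BB^{-1}\,g^{-1}\right|\ \geq\ |B|^{1+\frac{|\log\rho|}{\log n}}.
\]
As $BB^{-1}\subseteq A^4$ and $g\in A^m$, the left-hand side is at most $|A^l|$ with $l=2m+8\ll n^6\log n$, so $|A^l|\geq|B|^{1+|\log\rho|/\log n}$.

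Next I would bound $|B|$ from below in terms of $|A|$. Let $G=\langle A\rangle$ and let $H=G_{(\Sigma)}$ be the pointwise stabilizer of $\Sigma$ in $G$. Since $H$ is the stabilizer of the tuple of points of $\Sigma$ under the coordinatewise action of $G$ on $\Omega^{|\Sigma|}$, orbit--stabilizer gives $[G:H]\leq n^{|\Sigma|}$. Now $AA^{-1}=A^2$ and $A^2\cap H=(A^2)_{(\Sigma)}=B$, so Lemma~\ref{lem:duffy} yields
\[
|B|\ =\ \left|AA^{-1}\cap H\right|\ \geq\ \frac{|A|}{[G:H]}\ \geq\ \frac{|A|}{n^{|\Sigma|}}.
\]

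Now put $\delta=\frac{|\log\rho|}{3\log n}$, so that $\frac{|\log\rho|}{\log n}=3\delta$. If $|\Sigma|\geq\frac{|\log\rho|}{3(\log n)^2}\log|A|$ then~(\ref{eq:firstopt}) holds and we are done; so assume not. Then $|\Sigma|\log n<\delta\log|A|$, i.e.\ $n^{|\Sigma|}<|A|^{\delta}$, whence $|B|>|A|^{1-\delta}$ and
\[
|A^l|\ \geq\ |B|^{1+3\delta}\ >\ |A|^{(1-\delta)(1+3\delta)}\ =\ |A|^{1+2\delta-3\delta^2}.
\]
One may assume $\rho\geq 1/n$, for otherwise every orbit of $(A^4)_{(\Sigma)}$ has length $\geq 1>\rho n$ and the hypothesis is unsatisfiable; thus $|\log\rho|\leq\log n$ and $\delta\leq 1/3$, so $2\delta-3\delta^2=\delta(2-3\delta)\geq\delta$, giving $|A^l|\geq|A|^{1+\delta}$, which is~(\ref{eq:secondopt}) with $l\ll n^6\log n$.

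I do not expect a genuine obstacle here: the one substantive point is the choice $B=(A^2)_{(\Sigma)}$, engineered precisely so that $BB^{-1}$ still lies inside $(A^4)_{(\Sigma)}$ and therefore inherits the "no long orbit" hypothesis, while Lemma~\ref{lem:duffy} keeps $|B|$ within a factor $n^{|\Sigma|}$ of $|A|$. The only thing that needs care is the two-parameter bookkeeping — balancing the $n^{|\Sigma|}$ loss in the passage from $A$ to $B$ against the growth exponent $1+|\log\rho|/\log n$ coming out of Corollary~\ref{cor:lalmo} — together with the harmless reduction to $\rho\geq 1/n$, which guarantees $\delta\leq 1/3$ and hence $(1-\delta)(1+3\delta)\geq 1+\delta$.
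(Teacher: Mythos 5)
Your proof is correct and follows essentially the same strategy as the paper: set $B=(A^2)_{(\Sigma)}$, feed $BB^{-1}\subset(A^4)_{(\Sigma)}$ into Corollary~\ref{cor:lalmo}, compare $|B|$ to $|A|$ via Lemma~\ref{lem:duffy} and the index bound $n^{|\Sigma|}$, and balance the exponents. Your closing bookkeeping (reducing to $\rho\geq 1/n$, hence $\delta\leq 1/3$, to get $(1-\delta)(1+3\delta)\geq 1+\delta$) is in fact a cleaner version of the paper's final algebra, which contains a couple of cosmetic slips; the substance is identical.
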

Compare to \cite[Cor.~5.3]{MR3152942}.
\begin{proof}
  Let $B = (A^2)_{(\Sigma)} = (A A^{-1})_{(\Sigma)}$. Since
  $B B^{-1}\subset (A^4)_{\Sigma}$, we see that $B B^{-1}$
  has no orbits of length $>\rho n$. Apply Corollary \ref{cor:lalmo}.
  We obtain that
  \[\left|A^{l}\right|\geq |B|^{1 + \frac{|\log \rho|}{n}}\]
  for $l=4 m+2$, $m\ll n^6 \log n$.
  At the same time, by Lemma \ref{lem:duffy},
  \[|B| = \left|A A^{-1} \cap \Sym(\Omega)_{(\Sigma)}\right|
  \geq \frac{|A|}{\lbrack \Sym(\Omega):\Sym(\Omega)_{(\Sigma)}\rbrack}
  \geq \frac{|A|}{n^{|\Sigma|}}.\]
  
  Hence, either (\ref{eq:firstopt}) holds, or
  \[|B|> \frac{|A|}{n^{\frac{|\log \rho|}{2 (\log n)^2} \log |A|}} =
  |A|^{1 - \frac{|\log \rho|}{2 \log n}},\] and so
  \[\left|A^{l}\right|\geq |A|^{\left(1 - \frac{|\log \rho|}{2 \log n}\right)
    \left(1 + \frac{|\log \rho|}{\log n}\right)} = |A|^{
   1 + \frac{|\log \rho|}{2 \log n} \left(1 - \frac{1}{\log n}\right)}.\]
  We can assume $1-1/\log n \geq 2/3$, as otherwise (\ref{eq:secondopt}) holds
  trivially. 
\end{proof}

\subsection{Generating an element of large support}

We will need to produce an element of $\Sym(n)$ of
very large support (almost all of $\{1,2,\dotsc,n\}$). It is not difficult
to carry out this task using short random walks. 

\begin{lem}\label{lem:chachava}
  Let $g\in \Sym(n)$ have support $\geq \alpha n$, $\alpha>0$. Let
  $A\subset \Sym(n)$ 
  generate a $2$-transitive group. Assume $A=A^{-1}$, $e\in A$.
  Then, provided that $n$ is larger than a constant depending only on
  $\alpha$, there are $\gamma_i\in A^{n^6}$, $1\leq i\leq \ell$,
  where $\ell=O((\log n)/\alpha)$,
  such that the support of
  \[\gamma_1 g \gamma_1^{-1} \cdot \gamma_2 g \gamma_2^{-1} \dotsb
  \gamma_\ell g \gamma_\ell^{-1}
  \]
  has at least $n-1$ elements.
\end{lem}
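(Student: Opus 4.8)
Here is the plan. Write $\sigma=|\supp(g)|/n$, so $\alpha\le\sigma\le 1$ and $g\ne e$. I would construct the conjugators one at a time and follow the running products
\[
  h_j \;=\; \gamma_1 g\gamma_1^{-1}\,\gamma_2 g\gamma_2^{-1}\cdots\gamma_j g\gamma_j^{-1},\qquad h_0=e,
\]
together with their fixed-point sets $F_j=\Omega\setminus\supp(h_j)$; the goal is to reach $|F_\ell|\le 1$ with $\ell=O((\log n)/\alpha)$, since then $|\supp(h_\ell)|\ge n-1$.

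First I would fix $A_0\subseteq A$ as in Lemma~\ref{lem:dustu} (so $|A_0|\le 4n$, $A_0=A_0^{-1}$, $\langle A_0\rangle=\langle A\rangle$ still $2$-transitive) and let each $\gamma_j$ be the outcome of a lazy random walk of length $m$ on $A_0$, with $m$ chosen just above the $(\ell_\infty,\epsilon)$-mixing time of the Schreier multigraph $\Gamma(\langle A_0\rangle,A_0;\Omega^{(2)})$, where $\epsilon$ is a small absolute constant. That multigraph is connected (by $2$-transitivity), regular, and symmetric, so by Prop.~\ref{prop:chudo} its mixing time is $\le|\Omega^{(2)}|^2|A_0|\log(|\Omega^{(2)}|/\epsilon)\ll n^5\log n$; hence for $n$ large we may take $m\le n^6$, so $\gamma_j\in A_0^m\subseteq A^{n^6}$. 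The only consequence used is: for all $(x,y),(u,v)\in\Omega^{(2)}$, the probability of $(x,y)^{\gamma_j}=(u,v)$ is within a factor $1\pm\epsilon$ of $1/|\Omega^{(2)}|$; summing over one coordinate, $\Pr[x^{\gamma_j}\in\supp(g)]$ is within $1\pm\epsilon$ of $\sigma$.

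The heart of the argument is a one-step estimate. Fix $h=h_{j-1}$, put $f=|F_{j-1}|$, $\gamma=\gamma_j$, $h'=h_j=h\cdot\gamma g\gamma^{-1}$ (so $\supp(\gamma g\gamma^{-1})=(\supp g)^{\gamma^{-1}}$). A point $x\in F_{j-1}$ lies in $F_j$ exactly when $x^\gamma\notin\supp(g)$, of probability $\approx 1-\sigma$. A point $x\in\supp(h)$ lies in $F_j$ exactly when the pair $(x^h,x)\in\Omega^{(2)}$ is sent by $\gamma$ to a pair of the form $(u,u^g)$ with $u\in\supp(g)$; as there are exactly $|\supp(g)|$ such target pairs, this has probability $\approx|\supp(g)|/|\Omega^{(2)}|=\sigma/(n-1)$. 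Summing over $\Omega$, the expectation of $|F_j|$ over the walk that produces $\gamma_j$ (with $h_{j-1}$ held fixed) obeys
\[
  \mathbb{E}\,|F_j|\;\le\; f(1-\sigma)+\sigma+O\!\left(f\sigma\epsilon+\sigma/n\right).
\]
The decisive feature is that the ``loss'' -- from points of $\supp(h)$ that fall back into the fixed set -- contributes only $\approx\sigma\le 1$ in expectation, regardless of how large $|\supp(h)|$ is. Consequently, for $\epsilon$ a small enough absolute constant and $n$ large in terms of $\alpha$, we obtain $\mathbb{E}\,|F_j|<f$ whenever $f\ge 2$, and indeed $\mathbb{E}\,|F_j|\le(1-\alpha/2)f$ whenever $f\ge C/\alpha$ for a suitable absolute constant $C$. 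Since $|F_j|$ is a non-negative integer, in either regime there is an admissible $\gamma_j$: one with $|F_j|\le f-1$ in general, and with $|F_j|\le(1-\alpha/2)f$ once $f\ge C/\alpha$.

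To conclude, I would apply the multiplicative bound as long as $|F_j|\ge C/\alpha$; by geometric decay this reaches $|F_j|<C/\alpha$ after $O((\log n)/\alpha)$ steps, and thereafter each step lowers $|F_j|$ by at least $1$, so $O(1/\alpha)$ further steps give $|F_\ell|\le 1$, with $\ell=O((\log n)/\alpha)$ in total. The one point where real care is needed is exactly this loss term: a crude bound would allow as many as $|\supp(g)|\le n$ points of $\supp(h)$ to re-enter $F_j$ in a single step, which would overwhelm the gain once $|F_j|$ is small; only the exact per-pair probability -- which is why $\ell_\infty$-equidistribution is needed on $\Omega^{(2)}$ rather than just on $\Omega$, and why it must fit inside the budget $n^6$ -- shows the expected loss is $\lesssim 1$, and this is what lets the recursion descend all the way to $|F_\ell|\le 1$.
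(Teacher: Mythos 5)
Your proposal is correct and follows essentially the same strategy as the paper: fix conjugators as outputs of short random walks with $\ell_\infty$-equidistribution on $\Omega^{(2)}$, derive the one-step conditional-expectation bound (the key being that the expected number of points of $\supp(h_{j-1})$ falling back into the fixed set is $\lesssim 1$, obtained from the pair-distribution), and then use geometric decay of the fixed-point count. The only real divergence is at the endgame: once the fixed-point set has size $O(1/\alpha)$, the paper finishes in a single step by applying the estimate with $h_1=h_2=g_k$ (i.e.\ forming $g_k\sigma g_k\sigma^{-1}$, which doubles the length of the product), whereas you keep multiplying by conjugates of $g$ and invoke the integer-valued additive decrease by at least $1$ per step; both yield $\ell=O((\log n)/\alpha)$, and both hinge on the same cancellation that makes $\epsilon$ enter the error term multiplied by $\sigma$.
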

\begin{proof}
  Let $h_1, h_2 \in \Sym(n)$, $m_i = |\supp(h_i)|$. By Prop.~\ref{prop:chudo},
  a random walk of length $r = \lceil 4 n^5 \log(n^2/\epsilon)\rceil$ gives
  us an element $\sigma$ of $A^r$ sending any given pair of distinct elements
  $x,y \in \{1,\dotsc,n\}$ to any given pair of distinct elements
  $x',y'\in \{1,\dotsc,n\}$ with probability $(1+O^*(\epsilon))/n(n-1)$.

  An element $x\in \{1,\dotsc,n\}$ can fail to be in the support of
  $h_1 \sigma h_2 \sigma^{-1}$ only if (a) $x\notin \supp(h_1)$, $x\notin
  \supp \sigma h_2 \sigma^{-1}$, or (b) $x\in \supp(h_1)$ and $\sigma h_2 \sigma^{-1}$
  sends $x^{h_1}$ to $x$. For $x$ random,
  case (a) happens with probability at most
  $(1-m_1/n)\cdot (1+\epsilon) (1-m_2/n)$.
  In case (b), $\sigma$ must send $x^{h_1}$ to an element that is not
  fixed by $h_2$, and, moreover, it must send $x$ to $x^{h_1 \sigma h_2}$.
  Now, we know that, even given that $\sigma$ sends an element $x_0$
  (in this case,
  $x_0 = x^{h_1}$) to some specific element $y_0$,
  it will still send any $x\ne x_0$ to any $y\ne y_0$ with almost equal
  probability. 
  Hence,
  \[\begin{aligned}
  \Prob(x\notin \supp(h_1 \sigma h_2\sigma^{-1})) &\leq
  (1+\epsilon) \left(\left(1-\frac{m_1}{n}\right)
  \left(1 - \frac{m_2}{n}\right) + \frac{m_1}{n} \frac{m_2}{n} \frac{1}{n-1}\right)
  \end{aligned}\]
  We set $\epsilon=1/n$ and assume $m_1,m_2<n$. Then we have
  \[\Prob(x\notin \supp(h_1 \sigma h_2\sigma^{-1})) \leq
  (1+\epsilon) 
\left(1-\frac{m_1}{n}\right)
  \left(1 - \frac{m_2}{n}\right) + \frac{1}{n}.\]
  The expected value of $n-|\supp(h_1\sigma h_2 \sigma^{-1})|$ is thus
  at least $(1+\epsilon) (n-m_1) (1-m_2/n) + 1/n$. Hence there is
  a $\sigma\in A^r$ such that  $n-|\supp(h_1\sigma h_2 \sigma^{-1})|$ is at
  least that much.

  We apply this first with $h_1 = h_2 = g$, and obtain a $\sigma_1 = \sigma$
  as above; define $g_1 = g \sigma_1 g \sigma_1^{-1}$. Then we iterate: 
  we let $h_1 = g_1$, $h_2 = g$, and obtain a $\sigma_2 = \sigma$ such that
  $g_2 = g_1 \sigma_2 g \sigma_2^{-1}$ has large support; and so forth,
  with $h_1 = g_{i-1}$, $h_2 = g$ at the $i$th step.
  We obtain
  \[1 - \frac{\supp(g_i)}{n}
\geq (1+\epsilon) \left(1 - \frac{\supp(g)}{n}\right)
  \left(1-\frac{\supp(g_{i-1})}{n}\right) + \frac{1}{n},\]
  where $g_0 = g$,  and so, for $r = (1+\epsilon) (1 - \supp(g)/n)$
  (which is $<1$) and
  $k\geq 0$,
\[1 - \frac{\supp(g_k)}{n} \geq r^k \left(1 - \frac{\supp(g)}{n}\right)
+ \frac{1}{(1-r) n} \geq r^k (1-\alpha) + \frac{1}{(1-r) n}.
\]
We let $k = \lceil (\log n)/(\log 1/r)\rceil$ and obtain
\[\supp(g_k) \geq n - 1 - \frac{1}{(1-r)}.\]
For $n\geq 2/\alpha$, we have $r\leq (1+\alpha/2) (1-\alpha) < 1 - \alpha/2$, and so
$1/(1-r)\leq 2/\alpha$ and $k\ll (\log n)/\alpha$.

We can assume $\supp(g_k)<n$, as otherwise we are done.
Now apply the procedure at the beginning with $h_1=h_2=g_k$. 
We obtain $\Prob(x\notin \supp(g_k \sigma g_k \sigma^{-1})) < 2/n$,
provided that $n$ is larger than a constant depending only on $\alpha$.
Hence there is a $\sigma\in A^r$ such that
$\supp(g_k \sigma g_k \sigma^{-1})\geq n-1$. Since
\[g_k \sigma g_k \sigma^{-1} = g \cdot \sigma_1 g \sigma_1^{-1} \dotsb
\sigma_k g \sigma_k^{-1} \cdot \sigma g \sigma^{-1}\cdot (\sigma \sigma_1) g (\sigma \sigma_1)^{-1} \dotsb (\sigma \sigma_k) g (\sigma \sigma_k)^{-1},\]
we set $\ell = 2k+2$ and are done.
\end{proof}

It may be useful to compare Lemma \ref{lem:chachava} to analogous results on
{\em random subproducts} in the sense of \cite{zbMATH01116363}.
Such results make weaker assumptions (transitivity instead of double transitivity) and give weaker conclusions (support $\geq n/2$ instead of support
$\sim n$; see \cite[Lemma 2.3.1]{zbMATH01849958}, \cite[Lemma 4.3]{MR3152942}).








 
\subsection{Stabilizers and stabilizer chains}

Let $A\subset \Sym(\Omega)$, $|\Omega|=n$. Given a subset
$\Sigma = \{\alpha_1,\alpha_2,\dotsc,\alpha_k\} \subset \Omega$, we write
$A_{(\Sigma)}$ and $A_{\Sigma}$ for the {\em pointwise} and {\em setwise}
stabilizers, respectively:
\[A_{(\Sigma)} = A_{(\alpha_1,\dotsc,\alpha_k)} =
\{g\in A: \alpha_j^g = \alpha_j\;\;\; \forall 1\leq j\leq k\},\]
\[A_\Sigma = A_{\{\alpha_1,\dotsc,\alpha_k\}} = \{g\in A: \Sigma^g = \Sigma\}.\]

A {\em stabilizer chain} is simply a chain of
subsets
\[A \supset A_{(\alpha_1)} \supset A_{(\alpha_1,\alpha_2)} \supset \dotsc,\]
where $\alpha_1, \alpha_2, \dotsc \in \{1,2,\dotsc,n\}$.
Stabilizer chains
have been studied starting with Sims \cite{MR0257203} 
(in the case of $A$ equal to a subgroup $H$).
It is useful to find long chains of stabilizers
such that the orbits \[\alpha_j^{A_{(\alpha_1,\dotsc,\alpha_{j-1})}}\] are long.

Why do we want stabilizer chains with long orbits? Here is one reason.
\begin{lem}\label{lem:basilic}
  Let $A\subset \Sym(\Omega)$, $|\Omega|=n$. Let $\rho \in (0,1)$.
  Let $\Sigma = \{\alpha_1,\alpha_2,\dotsc,\alpha_k\}\subset \Omega$
  be such that, for every $1\leq j\leq k$,
  \begin{equation}\label{eq:muladrink}
   \left|\alpha_j^{A_{(\alpha_1,\dotsc,\alpha_{j-1})}}\right|\geq \rho n.\end{equation}
  Then $A^k$ intersects at least $(\rho n)^k$ right cosets of
  $\Sym(\Omega)_{(\Sigma)}$, and the restriction of the
  setwise stabiliser $(A^{-k} A^k)_\Sigma$ to $\Sigma$ is a subset of
  $\Sym(\Sigma)$ with at least $\rho^k k!$ elements.
\end{lem}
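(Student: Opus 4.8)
The plan is to prove the two assertions in turn. The bound on the number of cosets met by $A^{k}$ I would get by a direct greedy construction up the stabilizer chain, using (\ref{eq:muladrink}) one coordinate at a time; the claim about the setwise stabilizer I would then extract from it by feeding it into Lemma \ref{lem:subcos}, applied to the pair of subgroups $H=\Sym(\Omega)_{(\Sigma)}\leq K=\Sym(\Omega)_\Sigma$ of $G=\Sym(\Omega)$.

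For the first assertion I would show, by induction on $j$ for $1\leq j\leq k$, that $A^{j}$ realizes at least $(\rho n)^{j}$ distinct tuples $\left(\alpha_1^{g},\dots,\alpha_j^{g}\right)$, $g\in A^{j}$. The base case $j=1$ is hypothesis (\ref{eq:muladrink}) for $j=1$ (the relevant stabilizer being all of $A$). For the step, take a tuple $\left(\alpha_1^{g'},\dots,\alpha_{j-1}^{g'}\right)$ with $g'\in A^{j-1}$ and left-multiply $g'$ by the elements $h$ of $A_{(\alpha_1,\dots,\alpha_{j-1})}\subseteq A$: since each such $h$ fixes $\alpha_1,\dots,\alpha_{j-1}$, the element $hg'\in A^{j}$ satisfies $\alpha_i^{hg'}=\alpha_i^{g'}$ for $i<j$, while $\alpha_j^{hg'}=\left(\alpha_j^{h}\right)^{g'}$ runs over a set of at least $\rho n$ points as $h$ varies, by (\ref{eq:muladrink}) for the index $j$ and the bijectivity of $g'$. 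Distinct tuples at level $j-1$ thus spawn disjoint families of at least $\rho n$ tuples at level $j$, which gives the inductive step. For $j=k$ this is the first assertion, because two permutations lie in the same right coset of $\Sym(\Omega)_{(\Sigma)}$ exactly when they send $(\alpha_1,\dots,\alpha_k)$ to the same tuple.

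For the second assertion, note $H\leq K\leq G$ with $[G:K]=\binom{n}{k}\leq n^{k}/k!$. Applying Lemma \ref{lem:subcos} with $A$ replaced by $A^{k}$ gives
\[
\left|\pi_{K/H}\!\left(A^{k}\left(A^{k}\right)^{-1}\cap K\right)\right|\;\geq\;\frac{\left|\pi_{G/H}\!\left(A^{k}\right)\right|}{[G:K]}\;\geq\;\frac{(\rho n)^{k}}{n^{k}/k!}\;=\;\rho^{k}k!,
\]
where the first assertion was used in the middle step. Now $A^{k}\left(A^{k}\right)^{-1}\cap K=\left(A^{k}A^{-k}\right)_\Sigma$, and restriction to $\Sigma$ is a surjective homomorphism $K\to\Sym(\Sigma)$ whose fibres are precisely the right cosets of $H$ in $K$; hence, for any $S\subseteq K$, the quantity $\left|\pi_{K/H}(S)\right|$ is just the number of distinct restrictions $g|_\Sigma$, $g\in S$. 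Taking $S=\left(A^{k}A^{-k}\right)_\Sigma$ shows that the restriction of $\left(A^{k}A^{-k}\right)_\Sigma$ to $\Sigma$ is a subset of $\Sym(\Sigma)$ of size at least $\rho^{k}k!$. (In the applications $A=A^{-1}$, so $A^{k}A^{-k}=A^{-k}A^{k}$.)

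I do not expect a real obstacle; both halves are short. The two places that want a little care are the sidedness of the multiplication in the greedy step — one left-multiplies by stabilizer elements precisely so as not to disturb the coordinates already in place — and, in the second part, the clean identification of $K/H$ with $\Sym(\Sigma)$ via restriction (that restriction $K\to\Sym(\Sigma)$ is onto with the right cosets of $H$ as its fibres). With those in hand, the first bound is immediate from (\ref{eq:muladrink}) and the second follows by plugging it into Lemma \ref{lem:subcos}.
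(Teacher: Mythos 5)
Your proof is correct and follows essentially the same route as the paper's: the first half is the paper's construction (choosing one stabilizer-chain representative per orbit point and composing them), merely packaged as an explicit induction on $j$ rather than as a direct enumeration of a product $S_k\times\dotsb\times S_1$, and the second half is verbatim the paper's application of Lemma~\ref{lem:subcos} with $H=\Sym(\Omega)_{(\Sigma)}\leq K=\Sym(\Omega)_\Sigma$ together with the identification of $\pi_{K/H}$ with restriction to $\Sigma$. Your parenthetical about $A^kA^{-k}$ versus $A^{-k}A^k$ correctly flags a cosmetic mismatch between the lemma's statement and what the argument literally yields, which is harmless since the lemma is only invoked with $A=A^{-1}$.
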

This result was shown
in the proof of \cite[Lemma 3]{Pyb93} for $A$ a subgroup, and
in \cite[Lemma 3.19]{MR3152942} for general $A$.
\begin{proof}
  First of all, notice that $A^k$ sends $(\alpha_1,\alpha_2,\dotsc,\alpha_k)$
  to at least $(\rho n)^k$ distinct $k$-tuples. This is shown as follows.
  Let $1\leq j\leq k$. Let $\Delta_j$ denote the orbit
  $\alpha_j^{A_{(\alpha_1,\dotsc,\alpha_{j-1})}}$.
  For each $\delta\in \Delta_j$, choose an element $g_\delta \in A_{(\alpha_1,
    \dotsc,\alpha_{j-1})}$ sending $\alpha_j$ to $\delta$. Let
  $S_i = \{g_\delta: \delta\in \Delta_i\}$. Clearly, $|S_i|=|\Delta_i|$ and
  $S_i\subset A$. Now let $(s_1,s_2,\dotsc,s_k)$,
  $(s_1',s_2',\dotsc,s_k')$ be two distinct elements of $S_1\times \dotsb
  \times S_k$. Then $s_k \dotsb s_2 s_1$ and $s_k'\dotsb s_2' s_1'$ send
  $(\alpha_1,\alpha_2,\dotsc,\alpha_k)$ to two different $k$-tuples: if
  $j$ is the least index such that $s_j \ne s_j'$, then
  \[\alpha_j^{s_k s_{k-1}\dotsb s_j} = \alpha_j^{s_j} \ne
  \alpha_j^{s_j'} = \alpha_j^{s_k' s_{k-1}' \dotsb s_j'},\]
  and so \[\alpha_j^{s_k s_{k-1} \dotsb s_j s_{j-1} \dotsb s_1}
 \ne \alpha_j^{s_k' s_{k-1}' \dotsb s_j' s_{j-1} \dotsb s_1}
 = \alpha_j^{s_k' s_{k-1}' \dotsb s_j' s_{j-1}' \dotsb s_1'}.\]
 Hence $(\alpha_1,\alpha_2,\dotsc,\alpha_k)$ is sent to at least
 $|S_1|\dotsb |S_k| \geq (\rho n)^k$ distinct tuples by the action of
 $S_k S_{k-1} \dotsb S_1 \subset A^k$.

 In other words, $A^k$ intersects at $\geq (\rho n)^k$ cosets
 $(\Sym(\Omega))_{(\Sigma)} g$ of $(\Sym(\Omega))_{(\Sigma)}$. By Lemma \ref{lem:subcos},
 \[\begin{aligned}
 \pi_{(\Sym(\Omega))_{\Sigma}/(\Sym(\Omega))_{(\Sigma)}}(A^k A^{-k}\cap (\Sym(\Omega))_{\Sigma}) &\geq
 \frac{\pi_{\Sym(\Omega)//(\Sym(\Omega))_{(\Sigma)}}(A^k)}{
   \lbrack \Sym(\Omega):(\Sym(\Omega))_{\Sigma}\rbrack} \\ &\geq \frac{(\rho n)^k}{
   n (n-1) \dotsc (n-k+1)/k!} \geq \rho^k k!.\end{aligned}\]
 Now, two elements of $(\Sym(\Omega))_{\Sigma}$ lie in different cosets
 of $(\Sym(\Omega))_{(\Sigma)}$ if and only their restrictions to $\Sigma$ are distinct. Since $A^k A^{-k} \cap (\Sym(\Omega))_{\Sigma} = (A^k A^{-k})_{\Sigma}$, we have
 shown that the restriction $(A^{-k} A^k)_\Sigma$ to $\Sigma$ is of size at least
 $\rho^k k!$.
 \end{proof}





\subsection{Composition factors. Primitive groups}
Let us recall some standard definitions.
A {\em composition factor} of a group $G$ is a quotient
$H_{i+1}/H_i$ in a composition series of $G$, i.e., a series
\[1=H_0\triangleleft H_1 \triangleleft \dotsc \triangleleft H_n =G,\]
where every quotient is simple. By the Jordan-H\"older theorem, whether
or not an abstract group is a composition factor of $G$ does not depend
on the particular composition series of $G$ being used.

A {\em section} of a group $G$
is a quotient
$H/N$, where $1\leq N\triangleleft H\leq G$. A composition factor is, by definition,
a section.

A {\em block system} for a permutation group $G\leq \Sym(\Omega)$ is a partition of $\Omega$
preserved by $G$, that is, a partition of $\Omega$
into blocks (sets) $B_1,\dotsc,B_k$ such that, if $x,y\in B_i$, $g\in G$ and $x^g\in B_j$,
then $y^g \in B_j$. A maximal block system is one that has blocks of size
$>1$ and cannot be subdivided into a finer
partition with blocks of size $>1$. (In other words,
it is a system of {\em minimal} non-trivial blocks.) A minimal block system is one that is not
the refinement of any block system other than the trivial partition of $\Omega$ into
one set $\Omega$.

The group $G$ is {\em primitive} if it has no block systems with more
than $1$ and fewer than $|\Omega|$ blocks, i.e., no block systems other than
(a) the partition of $\Omega$ into the single set $\Omega$ and
(b) the partition of $\Omega$ into one-element sets.
It follows from the definitions that
a group $G\leq \Sym(\Omega)$ acts as a primitive group on any minimal block system.

\subsection{Tools from the theory of permutation groups}
The following result guarantees the existence
of an element of small support in a group under rather mild conditions.
It is essentially due to Wielandt. Thanks are due to L. Pyber for the reference.
\begin{lem}\label{lem:wielandt}
  For any $\epsilon>0$, there are $C_1, C_2\geq 0$ such that the following
  holds. Let $n\geq C_1$.
  Let $G\leqslant\Sym(\Omega)$, $|\Omega|=n$,
  be a group containing a section isomorphic to
  $\Alt(k)$ for $k\geq C_2 \log n$. Then there is a $g\in G$, $g\ne e$,
  such that $|\supp(g)|< \epsilon n$.
\end{lem}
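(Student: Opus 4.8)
The plan is to exploit the section $\Alt(k)$ inside $G$ to manufacture an element of small support, via a two-stage argument: first reduce from an arbitrary section to a concretely-acting subgroup, then use the classical fact that $\Alt(k)$ acting on a set of size $n$ with $k$ large cannot have all its nontrivial elements of large support. First I would unwind the definition of a section: we have $N \triangleleft H \leq G$ with $H/N \cong \Alt(k)$. I want to pull this back to a subgroup of $\Sym(\Omega)$ whose action on $\Omega$ I can control. The subgroup $H \leq G \leq \Sym(\Omega)$ acts on $\Omega$; decompose $\Omega$ into $H$-orbits $\Omega_1,\dots,\Omega_r$, so that $H$ embeds into $\prod_i \Sym(\Omega_i)$ via the product of its transitive constituents $H \to H_i \leq \Sym(\Omega_i)$. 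Since $H/N \cong \Alt(k)$ is simple and nonabelian (for $k \geq 5$, which holds as $k \geq C_2\log n$ and $n \geq C_1$), for at least one index $i$ the composite $H \to H_i$ must have image with $\Alt(k)$ still as a section — indeed $\Alt(k)$ must survive as a composition factor in some transitive constituent $H_i$, because the composition factors of $H$ are distributed among those of the $H_i$ (as $H$ embeds into the product, its composition factors are a sub-multiset of the union of the composition factors of the $H_i$).

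So now I have a \emph{transitive} group $H_i \leq \Sym(\Omega_i)$, with $|\Omega_i| = n_i \leq n$, having $\Alt(k)$ as a section, $k \geq C_2 \log n \geq C_2 \log n_i$. The next step is to descend through a minimal block system: let $H_i$ act on a minimal block system, getting a \emph{primitive} group $\overline{H_i}$ of degree $m \leq n_i$ as quotient, and either $\Alt(k)$ is a section of $\overline{H_i}$ or it is a section of the kernel, i.e., of a group acting (after passing to a block) on fewer points. Iterating and being slightly careful, I expect to reduce to the case where $G$ is \emph{primitive} on $\Omega$ of degree $m$, still with $\Alt(k)$ as a section and $k \gg \log m$; here the support question is cleanest since a primitive group of degree $m$ either is an affine/diagonal/product/almost-simple type with bounded "large alternating section" by the O'Nan–Scott theorem, or — and this is the relevant case — has a large alternating composition factor forcing $m$ to be exponentially large in $k$ unless the action is essentially the standard action of $\Sym(\ell)$ or $\Alt(\ell)$ on $\ell$ points (or on $t$-subsets), where elements of small support obviously exist.

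The cleanest route, and the one I would actually take to avoid O'Nan–Scott entirely, is Wielandt's theorem directly: a primitive group $G \leq \Sym(m)$ that is not $\Alt(m)$ or $\Sym(m)$ and contains $\Alt(k)$ as a section has $m$ bounded below exponentially in $k$ — concretely $k \leq C\log m$ for an absolute $C$ — so choosing $C_2 > C$ makes this case vacuous, leaving only $G = \Alt(m)$ or $\Sym(m)$, in which a $3$-cycle or transposition has support $3 \leq \epsilon n$ once $n \geq C_1 = C_1(\epsilon)$. In the non-primitive reduction, at each block-refinement step we either land in the stabilizer of a block (acting on a strictly smaller set, where we recurse) or in the primitive quotient (where Wielandt applies); since degrees strictly decrease, the recursion terminates, and at the terminal stage we produce $g \neq e$ with $|\supp(g)|$ bounded by a constant (like $3$) \emph{within a single orbit/block constituent}, which is then an element of $G \leq \Sym(\Omega)$ of support at most that constant, hence $< \epsilon n$ for $n$ large. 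The main obstacle is bookkeeping the reductions so that the "$\Alt(k)$ as a section with $k \gg \log(\text{current degree})$" hypothesis is genuinely preserved — in particular checking that passing to a transitive constituent and then to a block stabilizer does not lose the section, which rests on the elementary lemma that composition factors of a subgroup of a direct product, and of a normal subgroup, are controlled by those of the ambient pieces; once that plumbing is in place the quotable input is exactly Wielandt's bound on primitive groups with a large alternating section, which is where CFSG-free classical permutation group theory does the work.
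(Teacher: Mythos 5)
Your route — reduce through transitive constituents and block systems to a primitive group, then invoke a primitivity form of Wielandt's theorem — is genuinely different from the paper's, which is more direct: the paper supposes for contradiction that $G$ has minimal degree $\geq \epsilon n$, applies the bound in \cite[Thm.~5.5A]{DM} (a Wielandt-type statement formulated directly in terms of the degree, the minimal degree, and the size of the largest alternating section of $G$ \emph{itself}) to get $n > \binom{k}{s}$ for $s = \lfloor \mu(k+1)\rfloor$, $\mu = (1-\min(\epsilon,0.4))^{1/5}$, and then derives a contradiction from $k \geq C_2 \log n$ via Stirling. No orbit or block reduction is made, precisely because the quoted theorem already speaks about $G$ acting on $\Omega$.

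There is a genuine gap in your version, and it is not the one you flag. Your plumbing — that a simple section of a subgroup of a direct product is a section of some factor, and that a simple section of $G$ with $K \triangleleft G$ is a section of $K$ or of $G/K$ — is correct (though the sub-multiset claim about composition factors of a subgroup of a product is false as stated; you only need the weaker section statement, which holds by a short normal-subgroup argument). The problem is the last step: at the terminal stage you produce a $3$-cycle inside a transitive constituent $G|_{\Omega_i}$, or inside a primitive block quotient, and then assert this ``is then an element of $G \leq \Sym(\Omega)$ of support at most that constant.'' It is not: a transitive constituent $G|_{\Omega_i}$ and a block quotient are quotients of (subgroups of) $G$, and an element of small support there has no reason to lift to an element of $G$ of small support on $\Omega$. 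For a concrete illustration, take $G = \Alt(k)$ acting diagonally on $\Omega$ a disjoint union of $r$ copies of $\{1,\dots,k\}$, $n = rk$. Each transitive constituent is $\Alt(k)$ in its natural action, and your argument duly finds a $3$-cycle; but the unique preimage in $G$ has support $3r$, not $3$. (The lemma still holds here — $3r = 3n/k < \epsilon n$ once $k \geq C_2 \log n$ with $C_1,C_2$ large — it is only your quantitative conclusion ``support at most that constant'' that is wrong.) Propagating a support bound backward through the constituent and block reductions is exactly the nontrivial content of the Wielandt-type theorem you want to quote; \cite[Thm.~5.5A]{DM} packages that work into a single statement about the minimal degree of $G$, which is why the paper can apply it in one stroke and be done.
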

This Lemma replaces \cite[Lem.~3.19]{MR3152942}, which was based on
\cite[Lem.~3]{MR894827}. 
\begin{proof}
  Suppose there is no $g\in G$, $g\ne e$, such that
  $|\supp(g)|<\epsilon n$; we say $G$ has {\em minimal degree}
  at least $\epsilon n$. We can assume without loss of generality that
  $C_2 \log n \leq k \leq 2 C_2 \log n$, since, given that $G$ contains
  a section isomorphic to $\Alt(k)$, it contains a section isomorphic to
  $\Alt(k')$ for all $k'\leq k$.
  
  Let $\omega = \min(\epsilon,0.4)$. Then, by
  \cite[Thm.~5.5A]{DM}, we have $n>\binom{k}{s}$ for $s = \lfloor
  \mu  (k+1)\rfloor$, $\mu = (1-\omega)^{1/5}$. By Stirling's formula,
  \[\binom{k}{s} = \frac{k!}{s! (k-s)!} 
  \gg_\omega \frac{1}{\sqrt{k}} \left(\frac{1}{\mu^\mu (1-\mu)^{(1-\mu)}}\right)^k\]
  for $k$ greater than a constant depending only on $\mu$. This gives
  a contradiction with $C_2 \log n\leq k\leq 2 C_2 \log n$ for $n\geq C_1$ when
  $C_1$ and $C_2$ are large enough in terms of $\epsilon$.
  (We use the condition $k\leq 2 C_2 \log n$ to ensure that the effect of
  $1/\sqrt{k}$ is negligible.)
\end{proof}

We also need a result telling us that a large subset of $\Sym(\Sigma)$
generates a large symmetric subgroup in a few steps.
\begin{lem}\label{lem:amusi}
  Let $H\leqslant \Sym(\Sigma)$, $|\Sigma|=k$.
  Let $\rho \in (1/2,1)$. If $|H|\geq \rho^k k!$ and $k$ is larger than
  a constant depending only on $\rho$, then there exists an orbit
  $\Delta\subset \Sigma$ of $H$ such that $|\Delta|\geq \rho |\Sigma|$
  and $H|_\Delta$ is $\Alt(\Delta)$ or $\Sym(\Delta)$.
\end{lem}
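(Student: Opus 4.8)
The plan is to show first that $H$ must have a large orbit, and then to show that $H$ restricted to that orbit is $2$-transitive, so that a classification-of-finite-simple-groups–free (or, if convenient, CFSG-based) theorem on $2$-transitive groups of large degree containing many elements forces it to be alternating or symmetric. First I would bound the orbit structure: if $\Sigma = \Delta_1 \sqcup \dotsb \sqcup \Delta_r$ is the orbit decomposition of $H$, with $k_i = |\Delta_i|$, then $H$ embeds in $\prod_i \Sym(\Delta_i)$, so $|H| \le \prod_i k_i!$. A convexity/rearrangement estimate shows $\prod_i k_i! \le k_1! \, (k-k_1)!$ when $k_1 = \max_i k_i$ (moving points into the largest orbit only increases the product of factorials), and in fact $\prod_i k_i!$ is maximized, subject to $k_1 \le \rho k$, by taking one orbit of size as close to $\rho k$ as possible and the rest as large as allowed. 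Comparing with the hypothesis $|H| \ge \rho^k k!$ and using Stirling in the form $\binom{k}{k_1}^{-1} = k_1!(k-k_1)!/k! \ge \rho^k$, one gets $H(\rho) := -\rho\log\rho - (1-\rho)\log(1-\rho) \ge -\log\rho$ forced to fail unless $k_1 > \rho k$; more precisely, for $\rho$ fixed in $(1/2,1)$ the binary-entropy bound $\binom{k}{k_1} \le 2^{k\, h(k_1/k)}$ shows that $k_1 \le \rho k$ is incompatible with $|H| \ge \rho^k k!$ once $k$ is large enough in terms of $\rho$ (here one checks the elementary inequality $h(\rho) > \log_2(1/\rho)$ for $\rho \in (1/2,1)$, or rather its consequence after absorbing the $\sqrt k$ and the contributions of the smaller orbits). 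So there is a single orbit $\Delta = \Delta_1$ with $|\Delta| = k_1 > \rho k = \rho |\Sigma|$, and moreover $|H|\big/|H_{(\Sigma\setminus\Delta)}\text{-part}|$ is large — quantitatively, $H|_\Delta$ has index in $\Sym(\Delta)$ at most $k!\big/(\rho^k k!) \cdot (k-k_1)! = (k-k_1)!/\rho^k$, which is at most, say, $\rho^{-k}(1-\rho)^{(1-\rho)k}\sqrt{k}\cdot(\text{const})$; for $\rho$ close enough to $1$ this is far smaller than any proper maximal subgroup of $\Sym(\Delta)$ other than $\Alt(\Delta)$, by the theorem of Bochert/Babai that a proper subgroup of $\Sym(m)$ not containing $\Alt(m)$ has index $\ge \binom{m}{\lfloor m/2\rfloor}$ (or, with CFSG, index growing like $2^{cm}$ with a much better constant, Liebeck–Saxl).

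The subtlety is that $\rho$ is only assumed $>1/2$, not close to $1$, so the crude index bound above need not by itself beat the Bochert index $\binom{k_1}{\lfloor k_1/2\rfloor}$. Here I would use a second observation: $H|_\Delta$ is transitive on $\Delta$ by construction, and in fact one can boost this. Since $|H|\ge \rho^k k!$ and $\rho>1/2$, for each point $\alpha\in\Delta$ the point stabilizer $(H|_\Delta)_\alpha$ has $|(H|_\Delta)_\alpha| \ge |H|/k_1 \ge \rho^k k!/k$; feeding this back into the same orbit-counting argument applied to the action of $(H|_\Delta)_\alpha$ on $\Delta\setminus\{\alpha\}$ shows this stabilizer has a large orbit, and one iterates to deduce that $H|_\Delta$ is (at least) $2$-transitive — in fact $t$-transitive for $t$ comparable to $(1-\rho)^{-1}$ or so — on a sub-orbit of $\Delta$ of size still $> \rho'|\Sigma|$ for some $\rho'$ only slightly smaller than $\rho$. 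Once we know $H|_\Delta$ is $2$-transitive of degree $k_1 \gg k$, we may invoke the classification-type input in its most convenient form: a $2$-transitive group of degree $m$ of order $\ge m!/c^m$ (here $c = c(\rho)$) is $\Alt(m)$ or $\Sym(m)$ for $m$ large, which follows from the classification of $2$-transitive groups (or, CFSG-freely, from Bochert's $m \le \binom{m}{\lfloor m/2\rfloor}$–type bound on the order of a $2$-transitive group other than $\Alt(m),\Sym(m)$, which decays far faster than $m!/c^m$).

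I would organize the write-up as: Step 1, the orbit-factorial inequality $|H|\le\prod k_i!$ and the Stirling/entropy estimate forcing one orbit $\Delta$ with $|\Delta|>\rho k$ (and an explicit lower bound $|H|_\Delta| \ge \rho^{k}k!/(k-|\Delta|)!$ on the restriction); Step 2, the index bound on $H|_\Delta$ in $\Sym(\Delta)$ together with the high-transitivity boosting; Step 3, the appeal to the large-degree structure theorem for $2$-transitive (or high-index-forbidding) groups to conclude $H|_\Delta \ge \Alt(\Delta)$. The main obstacle I expect is Step 1's bookkeeping when $\rho$ is only barely above $1/2$: one has to make sure the product of the factorials of the \emph{other} orbits, together with the $\sqrt{k}$ from Stirling, is genuinely absorbed, i.e. that the strict inequality $h(\rho) > \log_2(1/\rho)$ has enough room to swallow lower-order terms uniformly in the orbit partition. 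This is where the hypothesis "$k$ larger than a constant depending only on $\rho$" is consumed, and where I would be most careful; everything after that is a fairly standard invocation of known bounds on primitive (indeed $2$-transitive) permutation groups.
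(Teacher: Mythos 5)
The paper's proof is a two-line reduction to a known theorem, namely \cite[Thm.~5.2B]{DM} (a strengthening of \cite[Lem.~1.1]{MR703984}): one only checks, by Stirling, that $\lbrack \Sym(\Sigma):H\rbrack\le \rho^{-k}$ is smaller than both $\tfrac12\binom{k}{\lfloor k/2\rfloor}$ and $\binom{k}{\lceil\rho k\rceil}$ once $k$ is large enough in terms of $\rho$ (the decisive inequality being $\rho\cdot 2^{h(\rho)}>1$ for $\rho\in(1/2,1)$, where $h$ is binary entropy), and the cited theorem supplies the orbit $\Delta$ with $\Alt(\Delta)\le H|_\Delta$. Your proposal does not cite that theorem; instead it tries to re-derive it from scratch. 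That is a genuinely different route, and considerably longer.

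Your Step~1 (existence of an orbit of size $\ge\rho k$ from $\lvert H\rvert\ge\rho^k k!$ via $\lvert H\rvert\le\prod_i k_i!\le k_1!(k-k_1)!$ and entropy bounds) is sound modulo the bookkeeping you flag, and the estimates do close: $\rho\cdot 2^{h(\rho)}>1$ gives exactly the needed slack. The real gap is in Step~2. Applying the orbit count to a point stabilizer $(H|_\Delta)_\alpha$ only shows that \emph{this} stabilizer has a single long orbit in $\Delta\setminus\{\alpha\}$; iterating gives a descending chain of stabilizers each with one long orbit (precisely a ``stabilizer chain with long orbits'' in the language of Lemma~\ref{lem:basilic}), not that $(H|_\Delta)_\alpha$ is \emph{transitive} on $\Delta\setminus\{\alpha\}$. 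So ``boosting to $2$-transitivity'' does not follow from the argument as written, and Step~3's appeal to the classification of $2$-transitive groups has nothing to attach to. A correct alternative along your lines is: after Step~1, bound $\lvert H|_\Delta\rvert\ge\lvert H\rvert/(k-k_1)!\ge\rho^k\binom{k}{k_1}k_1!\ge\rho^{k_1}k_1!$, rule out any nontrivial block system on $\Delta$ by noting that a transitive group of degree $k_1$ preserving blocks of size $d$ has order at most $(d!)^{k_1/d}(k_1/d)!$, which is $<\rho^{k_1}k_1!$ for all $2\le d\le k_1/2$ once $k_1$ is large (again by Stirling and $2\rho>1$), and then apply Bochert's bound $\lbrack \Sym(k_1):G\rbrack\ge\lfloor(k_1+1)/2\rfloor!$ for primitive $G\not\ge\Alt(k_1)$. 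This is essentially a proof of the Dixon--Mortimer statement the paper quotes; if you go this way you should either prove the primitivity and Bochert steps carefully or just cite \cite[Thm.~5.2B]{DM} as the paper does.
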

\begin{proof}
  By \cite[Thm.~5.2B]{DM}, which is a somewhat strengthened version
  of \cite[Lem.~1.1]{MR703984}.
  We simply need to check that
  \[\lbrack \Sym(\Sigma):H\rbrack < \min\left(\frac{1}{2} \binom{k}{\lfloor
    k/2\rfloor}, \binom{k}{m}\right)\]
  for $m = \lceil \rho k\rceil$. This inequality follows from Stirling's
  formula for $k$ larger than a constant depending on $\rho$.
\end{proof}

\subsection{Diameter comparisons: directed and undirected graphs}


We wish to derive a (version of) Theorem \ref{thm:pais}, which is
a bound on the diameter of a directed graph, from Theorem \ref{thm:jukuju},
which is a statement on sets $A$ satisfying $A = A^{-1}$. It would
be natural to expect such a statement to imply only a bound on the
diameter of an undirected graph. As it happens, the distinction between
directed and undirected graph matters little in this context, thanks to
the following result.

\begin{lem}[\cite{zbMATH05771615}, Thm.~1.4]\label{lem:soti}
  Let $G$ be a finite group and $A$ a set of generators of $G$. Then
  \[\diam \Gamma(G,A) \ll (\log |G|)^3\cdot
  \left(\diam \Gamma\left(G,A\cup A^{-1}\right)\right)^2,\]
  where the implied constant is absolute.
\end{lem}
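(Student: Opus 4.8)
The plan is to relate a directed Cayley graph to the undirected one by expressing each generator $a \in A$ as a short word in $A \cup A^{-1}$, where ``short'' means of length $\ll (\log|G|)^{O(1)}$. Concretely, since $a^{-1}$ has order $\ord(a^{-1}) = \ord(a) \mid |G|$, we have $a = (a^{-1})^{\ord(a)-1}$, so $a$ is a product of at most $|G|-1$ copies of $a^{-1}$; but that is far too lossy. The point of \cite{zbMATH05771615} is that one can do much better. First I would invoke the following observation: if $a^{-1}$ can be written as a word of length $L$ in $A \cup A^{-1}$ (which it can, trivially, with $L=1$, but we want to use the undirected diameter bound), this gives nothing directly — the issue is going from $a^{-1}$ back to $a$ using \emph{only} forward steps. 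The genuine content is a result on short ``inverse expressions'': in any group, each element $g$ has $g = g^{\ord(g)-1}$, and one reduces $\ord(g)$-th powers using a doubling trick, but over a directed graph the relevant statement is that $g^{-1}$ lies at directed distance $\ll (\log|G|)^{?}$ from $e$ whenever $g$ does.

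The cleaner route, which I would actually pursue, is the one in \cite{zbMATH05771615}: one shows that for each generator $a$, the inverse $a^{-1}$ is reachable from $e$ by a directed path of length $\ll (\log |G|)^2 \cdot \diam\Gamma(G, A\cup A^{-1})$, by combining (i) $a^{-1} = a^{\,\ord(a)-1}$, (ii) fast exponentiation to reduce computing $a^m$ for $m < |G|$ to $O(\log m) = O(\log|G|)$ group multiplications, each of which is a product of two already-constructed elements, and (iii) the undirected bound to get from $e$ to those intermediate powers in the \emph{directed} graph at the cost of one extra factor of the undirected diameter. Tracking the recursion in the doubling scheme carefully yields the exponent stated. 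Once every $a^{-1}$ is a directed word of length $\ll (\log|G|)^2 \diam\Gamma(G,A\cup A^{-1})$, an arbitrary $g \in G$, being a word of length $\le \diam\Gamma(G,A\cup A^{-1})$ in $A \cup A^{-1}$, becomes a directed word of length $\ll (\log|G|)^2 \left(\diam\Gamma(G,A\cup A^{-1})\right)^2$ in $A$, and one more factor of $\log|G|$ from bookkeeping in the doubling gives the claimed $(\log|G|)^3$.

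The main obstacle is step (ii)–(iii): one must verify that the fast-exponentiation (repeated squaring) scheme for $a^{-1} = a^{\ord(a)-1}$ can be realized inside the directed graph without ever needing an inverse edge, which requires that each squaring $x \mapsto x^2$ and each multiplication $x \mapsto x \cdot a$ is itself achievable by a short \emph{directed} walk — and this is exactly where the undirected-diameter hypothesis is spent, since reaching an arbitrary previously-constructed element $x$ from the current vertex costs a directed detour bounded by $\diam\Gamma(G, A\cup A^{-1})$ only after one has re-expressed that detour's $A^{-1}$-steps forward, leading to the nesting that produces the square and the logarithmic factors. Getting the powers of $\log|G|$ exactly right (rather than, say, $(\log|G|)^4$) is the delicate part; since this is quoted verbatim as \cite[Thm.~1.4]{zbMATH05771615}, I would simply cite that paper for the precise constant-chasing and present only the structure above.
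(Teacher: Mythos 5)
The paper gives no proof of this lemma; it is quoted verbatim as \cite[Thm.~1.4]{zbMATH05771615} and used as a black box, so your ultimate decision to cite the source is the same as what the paper does. The issue is with the sketch you offer of what lies behind the citation: as written it is circular, and the circularity is not the kind that ``nesting'' resolves.

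Concretely, the repeated-squaring idea does not save on directed word length. If $x$ and $y$ have directed words $w_x$ and $w_y$, then the directed word for $xy$ is the concatenation $w_x w_y$, of length $|w_x|+|w_y|$; so the word for $a^{2^j}$ obtained by $j$ squarings has length $2^j$, not $O(j)$. Fast exponentiation reduces the number of \emph{multiplications}, but each multiplication doubles the word, so $a^{\ord(a)-1}$ still comes out as a word of length about $\ord(a)-1$, which can be as large as $|G|-1$. Your step (iii) tries to repair this by routing each intermediate power through a ``detour'' of length $\ll \diam\Gamma(G,A\cup A^{-1})$, but that detour is a word in $A\cup A^{-1}$, and converting its $A^{-1}$-steps into forward steps is precisely the problem you set out to solve — there is no base case, and no quantity that visibly decreases under the recursion you gesture at. So the sketch does not establish even a weaker polynomial bound; it merely restates the problem. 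The actual argument in \cite{zbMATH05771615} is of a different nature (it is not built on $a^{-1}=a^{\ord(a)-1}$ plus repeated squaring), and you should either reproduce it faithfully or, as the paper does, cite it without a sketch rather than offer a heuristic that cannot be made to close.
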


It is thus enough to prove Theorem \ref{thm:pais} (and analogous statements) for
sets $S$ satisfying $S = S^{-1}$: simply replace $S$ by $S\cup S^{-1}$, and
use Lemma \ref{lem:soti}.

\section{Finding few generators for a transitive group}\label{subs:ellafi}

Let us be given a set $A\subset \Sym(\Omega)$. When is it the case
that there is a small subset $A'$ of $(A\cup A^{-1} \cup \{e\})^{n^C}$ (say)
that generates $\langle A\rangle$, or at least a transitive subgroup of
$\langle A\rangle$? Can we put further conditions on the elements of $A'$,
such as, for instance, that they all be conjugates of each other?

Our motivation for considering this question is the following.
We will find it necessary to do what amounts to
bounding the size of the intersection of a slowly growing set $A$ with the
centralizer of an element of large support.
It would stand to reason that there should be stronger bounds than for
the intersection of $A$ with a subgroup without long orbits:
being in the centralizer is a more restrictive condition.



Given one of our main tools (Lemma \ref{lem:tagore}; see in particular the remark between its proof and its statement), we can quickly reduce
this task to the following one: given an element $h$ of large support and
a set of generators $A$ of $G$, find $g_1,\dotsc,g_k \in \left(A\cup A^{-1}
\cup \{e\}\right)^\ell$ such that
\begin{equation}\label{eq:urdu}
  C(h) \cap g_1 C(h) g_1^{-1} \cap \dotsc \cap g_k C(h) g_k^{-1}\end{equation}
is equal to $\{e\}$, or at least very small.

It is clearly enough for the group
\begin{equation}\label{eq:purelllady}
  \langle h, g_1 h g_1^{-1}, \dotsc, g_k h g_k^{-1}\rangle\end{equation}
to be transitive: the centralizer $H$ of a transitive subgroup of $\Sym(n)$
is {\em semiregular} (that is, no element of $H$ other than $e$ fixes any point),
and thus has $\leq n$ elements.

Let us, then, show that there are
$g_1,\dotsc, g_k\in \left(A \cup A^{-1} \cup \{e\}\right)^\ell$, $k$ and
$\ell$ small, such that the group (\ref{eq:purelllady}) is transitive.
We will be able to prove what we want
with $k \ll \log \log n$, assuming that $\langle A\rangle$ is $4$-transitive.

(As we will later discuss, reaching the bound
$k\ll \log n$ is substantially easier. An analogous, but not identical,
result with $k\sim \log n$ can be found in \cite[Lemma 5.13]{zbMATH01116363}.)

Our proof will be by iteration, with the iterative step being given
by the next proposition. We will be working with partitions into
orbits, but will prove the proposition for general partitions.
Recall that, given two partitions $P$, $Q$ of a set $\Omega$, the {\em join}
$P\vee Q$ is the finest partition
that is coarser (not necessarily strictly so) than both $P$ and $Q$.
The {\em trivial} partition of $\Omega$ is the partition $\{\Omega\}$.

Given a partition $P$ of $\Omega$ and an element $x\in \Omega$, we define
$S_P(x)$ to be the element of $P$ containing $x$. Let
$s_P(x) = \left|S_P(x)\right|$.

The {\em total variation distance} between two probability measures
$\mu_1$, $\mu_2$ on a finite set $X$ is defined to be
\[\delta(\mu_1,\mu_2) = \max_{S\subset X} |\mu_1(S) - \mu_2(S)|.\]

Thus, for instance, $\mu$ is at total variation distance at most $\epsilon$
from the uniform distribution if $\mu(S) = |S|/|X| + O^*(\epsilon)$ for
every $S\subset X$. Suppose this is the case. Then,
given any function $f:X\to \mathbb{R}$ with $0\leq f(x)\leq T$ for
every $x\in X$, we can easily estimate the expected value $\mathbb{E}_\mu f(x)$
of $f(x)$ with respect to $\mu$: clearly,
\[f(x) = \int_0^T 1_{L(f,t)}(x) dt
\;\;\;\;\;\text{(``layer-cake decomposition''),}\]
where $L(f,t) = \{x\in X: f(x)\geq t\}$, and so
\[
\mathbb{E}_\mu(f(x)) = \int_0^T \Prob_\mu(x\in L(f,t)) dt =
\int_0^T \left(\frac{L(f,t)}{|X|} dt + O^*(\epsilon)\right) dt.\]
Applying the same idea to the uniform distribution, without the error
term $O^*(\epsilon)$, we obtain that
\begin{equation}\label{eq:richpry}
  \mathbb{E}_\mu(f(x)) = \frac{1}{|X|} \sum_{x\in X} f(x) + O^*(\epsilon T).
  \end{equation}

\begin{lem}\label{lem:coeur}
  Let $P$ be a partition of a finite set $\Omega$ with $|\Omega|=n$.
Let $m\geq 2$. Denote by $\rho$ the proportion of elements $x$ of $\Omega$
such that $s_P(x)\geq m$.
  
  Let $g\in \Sym(\Omega)$ be taken at random with a distribution such
  that, for any element $\vec{v}$ of the set $\Omega^{(2)}$
  of ordered pairs of distinct elements of $\Omega$,
  the probability distribution of $\vec{v}^g$ is at total variation
  distance $\leq \epsilon$ from the uniform distribution on $\Omega^{(2)}$.
  Assume that the same is true for the probability distribution of
  $\vec{v}^{g^{-1}}$ as well.
%
  
  \begin{enumerate}
  \item\label{it:turandot1}
    With positive probability, the proportion of elements $x$ of $\Omega$
    such that $s_{P\vee P^g}(x)\geq m$ is $\geq 1- (1-\rho)^2 - \epsilon$.
  \item\label{it:turandot2}
    Assume that $\epsilon\leq \rho/100$, $n\geq 100$ and $2\leq m\leq n/2$.
    Then,
    with positive probability, the proportion of elements $x$ of $\Omega$
    such that $s_{P\vee P^g}(x)\geq (1+\rho/3) m$ is $\geq \rho^2/8$.
  \item\label{it:pureplon} Assume that $\epsilon\leq \min(\rho/25,\rho/4m)$
    and $n\geq 250$. 
  Then, with positive probability, $P\vee P^g$ contains at least
  one set of size at least
  \[\min\left(\frac{\rho}{10} n, \frac{\rho-\epsilon}{2} m^2\right).\]
\end{enumerate}
\end{lem}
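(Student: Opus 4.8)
Throughout write $L=\{x\in\Omega:s_P(x)\geq m\}$, so $|L|=\rho n$; write $x\sim_P y$ when $x,y$ lie in the same part of $P$; and, for $T\subseteq\Omega$, set $U(T)=\bigl|\bigcup_{w\in T}S_P(w)\bigr|=\sum_{D\in P,\,D\cap T\neq\emptyset}|D|$, the size of the union of the parts of $P$ meeting $T$. Everything rests on two facts. First, a part of $P\vee P^g$ through $x$ contains $S_P(x)$, the part $S_{P^g}(x)=S_P(x^{g^{-1}})^g$, and (one more step) $\bigcup_{y\in S_P(x)}S_{P^g}(y)$, which has size $U(S_P(x)^{g^{-1}})$; hence $s_{P\vee P^g}(x)\geq\max(s_P(x),s_{P^g}(x))$ and $s_{P\vee P^g}(x)\geq U(S_P(x)^{g^{-1}})$. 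Second, the hypothesis on $\vec v^{\,g}$, $\vec v^{\,g^{-1}}$ implies, by marginalizing one coordinate, that $\Prob(x^{g^{-1}}\in S)=|S|/n+O^*(\epsilon)$ for every fixed $x$ and every $S\subseteq\Omega$, and that for fixed distinct $x,x'$ the pair $(x^{g^{-1}},x'^{g^{-1}})$ is within $\epsilon$ of uniform on $\Omega^{(2)}$; these give first- and second-moment estimates for quantities like $|S_P(x)^{g^{-1}}\cap D|$ and events like $\{x^{g^{-1}}\sim_P x'^{g^{-1}}\}$.

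Part (\ref{it:turandot1}) is then immediate: every $x\in L\cup L^g$ has $s_{P\vee P^g}(x)\geq m$, and $\mathbb{E}_g|L\cap L^g|=\sum_{x\in L}\Prob(x^{g^{-1}}\in L)=|L|(\rho+O^*(\epsilon))\leq(\rho^2+\epsilon)n$, so with positive probability $|L\cap L^g|\leq(\rho^2+\epsilon)n$ and thus $|L\cup L^g|=2\rho n-|L\cap L^g|\geq(1-(1-\rho)^2-\epsilon)n$, which is the claim. For part (\ref{it:turandot2}) I would work with the ``doubly large'' points $x\in L\cap L^g$: with $B=S_P(x)$, $C^g=S_{P^g}(x)$ (so $|B|,|C|\geq m$), $s_{P\vee P^g}(x)\geq|B\cup C^g|=|B|+|C|-|B^{g^{-1}}\cap C|\geq 2m-|S_P(x)^{g^{-1}}\cap S_P(x^{g^{-1}})|$, so it is enough that fewer than a $7/8$-fraction of $L\cap L^g$ be \emph{bad}, meaning $|S_P(x)^{g^{-1}}\cap S_P(x^{g^{-1}})|>(1-\rho/3)m$. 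Estimating the number of bad $x$ by Markov and re-summing over ordered pairs $x\sim_P x'$ with $x\in L$, the second-moment input gives a bound of the form
\[
\mathbb{E}_g[\#\,\mathrm{bad}]\leq\frac{3}{2m}\Bigl(\rho^2n+O(\epsilon\rho n)+\frac{\bigl(\textstyle\sum_{|D|\geq m}|D|^2\bigr)^2}{n(n-1)}+\epsilon\!\!\sum_{|D|\geq m}\!\!|D|^2\Bigr),
\]
which, with $\mathbb{E}_g|L\cap L^g|\geq(\rho^2-\epsilon\rho)n$, yields $\mathbb{E}_g[\#\,\mathrm{good}]\geq\rho^2n/8$ and hence the claim with positive probability -- \emph{as long as} $\sum_{|D|\geq m}|D|^2$ is not too large.

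For part (\ref{it:pureplon}) I would fix one part $B\in P$ with $|B|\geq m$ and bound the part of $P\vee P^g$ through $B$ below by $U(B^{g^{-1}})$, using
\[
\mathbb{E}_g\bigl[U(B^{g^{-1}})\bigr]=\sum_{D\in P}|D|\,\Prob_g\bigl(D\cap B^{g^{-1}}\neq\emptyset\bigr)\geq\sum_{|D|\geq m}|D|\,\Prob_g\bigl(D\cap B^{g^{-1}}\neq\emptyset\bigr).
\]
A Bonferroni/Paley--Zygmund estimate on $|D\cap B^{g^{-1}}|$ gives $\Prob_g(D\cap B^{g^{-1}}\neq\emptyset)\gg\min(1,|B||D|/n)$ up to $\epsilon$-errors; since $\sum_{|D|\geq m}|D|^2\geq m|L|=\rho mn$, this produces $\mathbb{E}_g[U(B^{g^{-1}})]\gg\rho m|B|\geq\rho m^2$, which cannot beat $n$ since $U(B^{g^{-1}})\leq n$ pointwise, so after fixing the implied constants one gets $\mathbb{E}_g[U(B^{g^{-1}})]\geq\min(\rho n/10,(\rho-\epsilon)m^2/2)$, and then passes to positive probability as before.

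The main obstacle in (\ref{it:turandot2}) and (\ref{it:pureplon}) is that these moment estimates degrade when the part-sizes of $P$ are very unbalanced: a single large part inflates $\sum_D|D|^2$ -- and with it the error terms above and the ``collision'' probabilities $\Prob(x^{g^{-1}}\sim_P x'^{g^{-1}})$ -- far beyond the balanced case. I would handle this by a dichotomy on $M:=\max_{D\in P}|D|$. If $M$ is at least the relevant target ($\min(\rho n/10,(\rho-\epsilon)m^2/2)$ in (\ref{it:pureplon}), and at least both $(1+\rho/3)m$ and $\rho^2n/8$ in (\ref{it:turandot2})), the conclusion is essentially free: in (\ref{it:pureplon}) take that part itself as $B$, since $U(B^{g^{-1}})\geq|B|$ always; in (\ref{it:turandot2}) one shows by the first-moment computation of part (\ref{it:turandot1}) that the $g^{-1}$-image of such a part generically sticks out of it. Otherwise $M$ is small enough that $\sum_{|D|\geq m}|D|^2\leq M|L|$ is under control and the estimates close. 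Making the numerology work in \emph{every} regime -- in particular, seeing that $\epsilon\leq\rho/100$ in (\ref{it:turandot2}) and $\epsilon\leq\min(\rho/25,\rho/4m)$ in (\ref{it:pureplon}) are exactly what absorbs the errors whether $m$ is as small as $2$ or as large as $n/2$, and, in the hardest regime of (\ref{it:turandot2}), perhaps exploiting one more level of the iterated saturation rather than only $S_P(x)\cup S_{P^g}(x)$ -- is the delicate part, but it is finite casework with no new idea.
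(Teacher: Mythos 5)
Your part~(\ref{it:turandot1}) is correct and is essentially the paper's argument (the paper works with the complement $B=\Omega\setminus L$, but the computation is the same first-moment estimate). You also correctly diagnosed the central obstacle in parts~(\ref{it:turandot2}) and~(\ref{it:pureplon}): with unbalanced parts, quantities like $\sum_D|D|^2$ are uncontrolled and ruin the moment estimates. But the device the paper uses to remove this obstacle is not a dichotomy on $\max_D|D|$ --- it is a \emph{truncation}. For each $x$ with $s_P(x)\geq m$ one fixes a subset $Z(x)\subseteq S_P(x)$ of size exactly $m$, chosen so that every element of a part of size $m'\geq m$ lies in exactly $m$ of the sets $Z(y)$; one then works with $Z$ and $Z_g(x)=Z(x^{g^{-1}})^g$ throughout. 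This makes $\sum_{x,x'}|Z(x)\cap Z(x')|=\rho m^2 n$ exactly, independent of the part sizes, so every first- and second-moment estimate is uniform in the shape of $P$. All of parts (\ref{it:turandot2}) and (\ref{it:pureplon}) in the paper are then carried out at the level of $Z$, using $s_{P\vee P^g}(x)\geq z(x)+z_g(x)-|Z(x)\cap Z_g(x)|$; part (\ref{it:turandot2}) is a \emph{pure first-moment} argument plus a deterministic averaging step, and part (\ref{it:pureplon}) truncates further to a subset $U\subseteq S_P(x)$ of an optimal size $t_0=a/2b$ and optimizes the resulting quadratic $a|U|-b|U|^2$.

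Your proposed dichotomy for part~(\ref{it:turandot2}) does not close. The issue is already in the unavoidable diagonal term: $x^{g^{-1}}$ always lies in $S_P(x)^{g^{-1}}\cap S_P(x^{g^{-1}})$, so that intersection has size $\geq 1$ for every $x$, and Markov applied at threshold $(1-\rho/3)m$ produces a contribution of at least $\rho n/\bigl((1-\rho/3)m\bigr)$ to $\mathbb{E}_g[\#\,\mathrm{bad}]$. For $m=2$ this alone is $\geq (3/4)\rho n$, which must be dominated by (roughly) $(7/8)\rho^2 n$ to leave $\rho^2 n/8$ good points; this forces $\rho\gtrsim 0.87$. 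But part~(\ref{it:turandot2}) makes no lower-bound assumption on $\rho$. The ``large-$M$'' branch of your dichotomy does not rescue this either: having one part of size $\geq(1+\rho/3)m$ and $\geq\rho^2 n/8$ takes care of the case where both inequalities hold, but the problematic regime above (small $m$, small $\rho$) need not have any such part. Replacing each $S_P(x)$ by its size-$m$ truncation $Z(x)$ eliminates the diagonal problem at the source (it becomes $\Prob(x\in Z_g(x))\leq\rho+\epsilon$, not $1$) and, more importantly, caps the off-diagonal sum at $\rho m^2 n$ regardless of the distribution of part sizes.

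For part~(\ref{it:pureplon}) your overall idea (fix a part $B$ of size $\geq m$ and lower-bound the size of the union $\bigcup_{y\in B}S_{P^g}(y)$) is indeed close in spirit to the paper's argument, but the Paley--Zygmund step quietly requires second-moment control of $|D\cap B^{g^{-1}}|$ for each $D$, and when $|B|$ is large the error terms are again driven by $\sum_D|D|^2$ and by the $\epsilon$-correction. The paper avoids all of this by taking $U\subseteq B$ of size $|U|=\lfloor a/2b\rfloor$ with $a=(\rho-\epsilon)m$, $b=\rho m^2/n+\epsilon m$, and bounding $\mathbb{E}\,s_{P\vee P^g}(x)\geq a|U|-b|U|^2$; this is the cleaner quantitative version of the same idea, with the truncation doing the work. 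In short: you have the right intuition about where the difficulty lies, but the missing step is the $Z(x)$-truncation, and without it part~(\ref{it:turandot2}) as written cannot be made to work for all $\rho$.
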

The proof is straightforward, in that we will proceed by taking
expected values. We are giving constants simply for concreteness; they have not been optimized. Conclusion (\ref{it:turandot2}) is substantially weaker
than what we could obtain by means of more complicated variance-based
arguments such as those we will use in the proof of Lemma \ref{lem:coeur2}.
\begin{proof}
  \noindent  
  Let $B$ be the set of all $x\in \Omega$ such that $s_P(x)<m$.
  For each $x\in B$, the probability that $x^g \in B$ is $\leq |B|/n + \epsilon
  = 1 - \rho + \epsilon$. Hence, the expected value of the number of
  $x\in B$ such that $x^g\in B$ is $\leq (1+\epsilon-\rho) |B| =
  (1 + \epsilon- \rho) (1-\rho) n \leq ((1-\rho)^2 + \epsilon) n$.
  It obviously follows that the number of such $x$
  is $\leq ((1-\rho)^2 + \epsilon) n$ with positive probability.
In other words, conclusion (\ref{it:turandot1}) holds.

  For each 
$x\in \Omega$ such that $s_P(x)\geq m$, 
we choose a subset $Z(x)\subset S_P(x)$ of size $m$, 
in such a way that, for every set $S$ in $P$, every element of $S$
is contained in exactly $m$ sets $Z(x)$, $x\in S$.
(For instance, we may identify each element of $P$ having
$m'\geq m$ elements with the set $\mathbb{Z}/m' \mathbb{Z}$, and then let
$Z(x) = \{x,x+1,\dotsc,x+m-1\} \mo m'$ for every
$x\in \mathbb{Z}/m' \mathbb{Z}$.
 We can easily see that every
element of $\mathbb{Z}/m' \mathbb{Z}$ is then contained in exactly $m'$ sets
$Z(x)$.)
For every $x\in \Omega$ such that $s_P(x)<m$,
we let $Z(x)=\emptyset$. We write $z(x)$ for $|Z(x)|$; we see that
$z(x)$ can take only the values $m$ or $0$.

We see immediately that
\begin{equation}\label{eq:kupcop}\begin{aligned}
  \sum_{x,x'\in \Omega} \left|Z(x) \cap Z(x')\right| &=
  \sum_{x\in \Omega} \sum_{y\in Z(x)} |\{x'\in S_P(x): y\in Z(x')\}|\\
  &= \sum_{x\in \Omega} \sum_{y\in Z(x)} m = \rho m^2 n,\end{aligned}\end{equation}
a fact that will be useful later.

 We write $Z_g(x)$ for $Z\left(x^{g^{-1}}\right)^g$, and
  $z_g(x)$ for $|Z_g(x)|$. By definition,
  \[Z_g(x)\subset \left(S_P\left(x^{g^{-1}}\right)\right)^g = S_{P^g}(x).\]
  Clearly,
  \begin{equation}\label{eq:colosh}
    s_{P\vee P^g}(x) \geq z(x) + z_g(x) - \left|Z(x)\cap Z_g(x)\right|.
  \end{equation}
  For any $x$,
  \[\begin{aligned}
  \mathbb{E}(z_g(x)) &= \mathbb{E}\left(\left|Z\left(x^{g^{-1}}\right)\right|
  \right) = m\cdot \Prob\left(x^{g^{-1}}\not\in B\right)\\
  &= m\cdot (\rho + O^*(\epsilon)),
  \end{aligned}\]
  since $B$ is the set of elements $x$ of $\Omega$ such that
  $Z(x)=\emptyset$, and $|Z(x)|=m$ for all $x\in \Omega\setminus B$.
  
  Given $x\in \Omega\setminus B$ and a $y\in Z(x)$,
  we should estimate the probability that $y$ is an element
  of $Z(x)\cap Z_g(x)$, where $g$ is, as always, taken at random.
  Evidently, if $y=x$, then $y\notin Z_g(x)$ if $Z_g(x)$ is empty, and
  $y\in Z_g(x)$ otherwise.
  If $y\ne x$, we have $y\in Z_g(x)$ if and only if $g^{-1}$ sends
  $(x,y)$ to an element of
  \[S = \{(x',y'): x'\in \Omega\setminus B, y'\in Z(x), y'\ne x'\}.\]
  The number of elements of $S$ is $|\Omega\setminus B|\cdot (m-1)$. Hence
  \[\begin{aligned}
  \Prob(y\in Z_g(x)) \leq \frac{|\Omega\setminus B|\cdot
(m-1)}{n (n-1)} +\epsilon \leq \frac{\rho m}{n} + \epsilon.
  \end{aligned}\]
  Therefore,
  \[\begin{aligned}
  \mathbb{E}\left( \left|Z(x)\cap Z_g(x)\right|\right) &\leq
  \Prob(x\in Z_g(x)) + \mathop{\sum_{y\in Z(x)}}_{y\ne x}
  \Prob(y\in Z_g(x))\\
  &\leq \rho + \epsilon + \mathop{\sum_{y\in Z(x)}}_{y\ne x'}
  \left(\frac{\rho m}{n} + \epsilon\right) \leq
  \rho \left(1 + \frac{m^2}{n}\right) + \epsilon m,\end{aligned}\]
  and so
  \begin{equation}\label{eq:matater}\begin{aligned}
    \mathbb{E}&\left(\frac{1}{\rho n} \sum_{x\in \Omega\setminus B}
    \left(z_g(x) - \left|Z(x)\cap Z_g(x)\right|\right)\right)\\
    &\geq \frac{1}{|\Omega\setminus B|} \sum_{x\in \Omega\setminus B} \left(m\cdot (\rho - \epsilon)
    - \rho \left( 1 + \frac{m^2}{n}\right) - \epsilon m\right)\\
    &= \rho m
    - \rho \left( 1 + \frac{m^2}{n}\right) - 2 \epsilon m
    .\end{aligned}\end{equation}
  Thus, with positive probability,
\[\frac{1}{\rho n} \sum_{x\in \Omega\setminus B}
    \left(z_g(x) - \left|Z(x)\cap Z_g(x)\right|\right)\geq \rho m  
  - \rho \left( 1 + \frac{m^2}{n}\right) - 2\epsilon m.\]
  The contribution of all $x\in \Omega\setminus B$ such that
  $z_g(x) - \left|Z(x)\cap Z_g(x)\right| \leq \rho m/3$ is at most
  $\rho m/3$. Each one of the other $x\in \Omega\setminus B$ contributes
  at most $m/\rho n$. Hence, the number of all $x\in \Omega \setminus B$ such that
  $z_g(x) - |Z(x)\cap Z_g(x)|>\rho m/3$ is 
  \[\begin{aligned}
  &\geq \frac{\left(\frac{2}{3}  m -
    \left(1+\frac{m^2}{n}\right)\right) \rho - 2 \epsilon m}{m/\rho n}\\
  &\geq
  \left(\frac{2}{3} -
  \left(\frac{1}{m} + \frac{m}{n}\right)\right) \rho^2 n - 2 \epsilon \rho n
    \geq \left(\frac{1}{6} - \frac{2}{n}\right) \rho^2 n
  - 2 \epsilon \rho n
  \geq \frac{\rho^2}{8} n,
  \end{aligned}\]
  where we use the assumptions $2\leq m\leq n/2$, $\epsilon\leq \rho/100$,
  $n\geq 100$. By (\ref{eq:colosh}),
  we obtain that conclusion (\ref{it:turandot2}) holds.
  It remains to prove conclusion (\ref{it:pureplon}).


Let $x\in \Omega$.  
For each $y\in S_P(x)$, every element of 
$Z_g(y)$ lies in $S_{P^g}(y)$ and hence in $S_{P\vee P^g}(x)$.
 Therefore, by inclusion-exclusion, for 
$U\subset S_P(x)$ arbitrary, 
  \begin{equation}\label{eq:disgu} \left|S_{P\vee P^g}(x)\right|\geq 
  \left|\bigcup_{y\in U} S_{P^g}(y)\right|
\geq \sum_{y\in U} z_g(y)
  - \mathop{\sum_{y,y'\in U}}_{y\ne y'}
\left|Z_g(y) \cap Z_{g}(y')\right|
.\end{equation}
   By our assumptions on the distribution of $g$,
  \begin{equation}\label{eq:autir}\begin{aligned}
      \mathbb{E}\left(\sum_{y\in U} z_g(y)  \right)
  &= \sum_{y\in U} \mathbb{E}\left(z_g(y)\right) =
  \sum_{y\in U} \mathbb{E}\left(z\left(y^{g^{-1}}\right)\right) \\
  &\geq \sum_{y\in U} (\rho - \epsilon) m
  = (\rho - \epsilon)  m |U| ,\end{aligned}\end{equation}
and, similarly,
\begin{equation}\label{eq:autir2}
  \mathbb{E}\left(\sum_{y\in U} z_g(y)  \right) \leq
(\rho + \epsilon) m |U|.
\end{equation}

We can apply (\ref{eq:richpry}) to $X = \Omega^{(2)}$ and
$f(x,x') = |Z(x)\cap Z(x')|$, with the probability distribution on $X$
given by $(y,y')^{g^{-1}}$, where $(y,y')$ is a given element of $\Omega^{(2)}$
and $g$ is taken randomly in the sense we have been using throughout.
Then, by (\ref{eq:kupcop}) and the fact that $0\leq f(x,x')\leq m$ for
all $(x,x')\in X$,
\[\mathbb{E}\left(\left|Z_g(y) \cap Z_{g}(y')\right|\right) =
\frac{\rho m^2 n - m n}{n (n-1)} + O^*(\epsilon m).\]
Hence
  \begin{equation}\label{eq:rempert}\begin{aligned}
\mathbb{E}\left(\mathop{\sum_{y,y'\in U}}_{y\ne y'}
\left|Z_g(y) \cap Z_{g}(y')\right|\right)
&=
|U| (|U|-1) \left(\frac{\rho m^2 n - m n}{n (n-1)} + O^*(\epsilon m)\right).\\
\\
&\leq 
\left(\frac{\rho m^2}{n} + \epsilon m\right) \cdot |U|^2 
.\end{aligned}\end{equation}


  Therefore, by (\ref{eq:disgu}),
\[\mathbb{E}\left(s_{P\vee P^g}(x)\right)\geq 
(\rho-\epsilon) m |U| - \left(\frac{\rho m^2}{n} + \epsilon m\right)
|U|^2.
\]

In general, the maximum of an expression $a t - b t^2$, $a,b>0$,
is of course attained when $t$ equals $t_0 = a/2b$; moreover, since
$a (t_0 - \Delta) - b (t_0 - \Delta)^2 = a^2/4b - b \Delta^2$,
we see that $a \lfloor t_0\rfloor - b (\lfloor t_0\rfloor)^2 \geq
a^2/4b - b$. We let $a = (\rho-\epsilon) m$,
$b = \rho m^2/n + \epsilon m$.


Suppose first that $t_0>m$. Then we simply choose
any $x\in \Omega$ with $|S_P(x)|\geq m$, and choose $U\subset S_P(x)$ with
$|U|=m$. Since $t_0=a/2b>m$, we see that $a m - b m^2 = a m (1 - b m/a)
> a m/2$, and so
\[\mathbb{E}\left(s_{P\vee P^g}(x)\right) > \frac{a m}{2} =
\frac{\rho -\epsilon}{2} m^2.
\]

Now suppose that $t_0\leq m$. Then 
there is an $x\in \Omega$ such that
$|S_P(x)|\geq t_0$. We choose $U\subset S_P(x)$ with 
$|U| = \lfloor t_0\rfloor$, and obtain that
\[
\mathbb{E}\left(s_{P\vee P^g(x)}\right) \geq \frac{a^2}{4 b} - b
.\]
Clearly $1/(r_1+r_2)\geq \min(1/2 r_1,1/2 r_2)$ for $a,b>0$. Hence
\[\begin{aligned}\frac{a^2}{4 b} &= \frac{a^2/4}{\frac{\rho m^2}{n} + \epsilon m}
\geq \frac{a^2}{8} \min\left(\frac{n}{\rho m^2},\frac{1}{\epsilon m}\right)\\
&\geq \frac{(1-\epsilon/\rho)^2}{8}
\min\left(\rho n, \frac{\rho^2}{\epsilon} m\right)\geq
\frac{(1-\epsilon/\rho)^2}{8}
\min\left(\rho n, 4 \rho m^2\right),
\end{aligned}\]
where we use the assumption $\epsilon\leq \rho/4 m$.
Again by $\epsilon \leq \rho/4m$, we see that
$t_0=a/2b\leq m$ implies that
\[\begin{aligned}
(1-\epsilon) \rho m = a\leq 2 b m = 2 \frac{\rho m^3}{n} + 2 \epsilon m^2
\leq 2 \frac{\rho m^3}{n} + \frac{\rho m}{2},\end{aligned}
\]
and so
$4 \rho m^2 \geq (1-2\epsilon) \rho n$.
Therefore
\[\begin{aligned}
\frac{a^2}{4 b} - b
&\geq \frac{(1-\epsilon/\rho)^2}{8} (1- 2\epsilon) \rho n
- \left(\frac{\rho m^2}{n} + \epsilon m\right)\\
&\geq \frac{(1-\epsilon/\rho)^2}{8} (1- 2\epsilon) \rho n - \frac{1-2\epsilon}{4}
\rho - \epsilon m.\end{aligned}\]
If $m\geq \rho n/10$, then $P$ contained sets of size $\geq \rho n/10$ to
begin with, and hence so does $P\vee P^g$. If $m< \rho n/10$, we obtain
that
\[\begin{aligned}
\mathbb{E}\left(s_{P\vee P^g(x)}\right) &\geq \frac{a^2}{4 b} - b
\geq
\left(\frac{(1-\epsilon/\rho)^2}{8} (1-2\epsilon) - \frac{\epsilon}{10}
-\frac{1 - 2\epsilon}{4 n}\right) \rho n\\
&\geq \left(\frac{(24/25)^2}{8} \cdot \frac{23}{25} - \frac{1}{250} 
- \frac{1}{600}\right) n \geq 
0.10031 \rho n > \frac{\rho n}{10}
\end{aligned}\]
by the assumptions $\epsilon\leq \rho/25\leq 1/25$ and $n\geq 150$.
Thus, conclusion (\ref{it:pureplon}) holds.
\end{proof}

Simply using Lemma \ref{lem:coeur} repeatedly, we could
give a proof of Prop.~\ref{prop:verodiro} with $k$ in the
order of $\log n$. Our crucial induction step, allowing us
$k \ll \log \log n$, will be provided by the following Lemma.
The proof will proceed by variance-based bounds. (In other words, we will
be using Chebyshev's inequality.)

\begin{lem}\label{lem:coeur2}
  Let $P$ be a partition of a finite set $\Omega$ with $|\Omega|=n$.
Let $m\geq 2$. Denote by $\rho$ the proportion of elements $x$ of $\Omega$
such that $s_P(x)\geq m$.
Let $g\in \Sym(\Omega)$ be taken at random with a distribution
as in Lemma \ref{lem:coeur}, with $\epsilon\leq \min(1/1000,\rho m/n)$.

Assume that $\rho\geq 999/1000$ and $1000\leq m\leq \sqrt{n}/100$. Then,
  with positive probability,
  \[s_{P\vee P^g}(x)\geq \frac{m^2}{2}\]
for more than $n/2$ elements $x$ of $\Omega$.
\end{lem}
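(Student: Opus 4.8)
The plan is to adapt the proof of conclusion (\ref{it:pureplon}) of Lemma \ref{lem:coeur}, but now extracting a lower bound on $s_{P\vee P^g}(x)$ valid for almost all $x$ rather than for a single exceptional $x$. I would reuse, verbatim, the balanced subsets $Z(x)\subset S_P(x)$ with $|Z(x)|=m$ (and $Z(x)=\emptyset$ when $s_P(x)<m$), the second-moment identity $\sum_{x,x'\in\Omega}|Z(x)\cap Z(x')|=\rho m^2 n$ of (\ref{eq:kupcop}), and the notation $Z_g(x)=Z(x^{g^{-1}})^g\subset S_{P^g}(x)$, $z_g(x)=|Z_g(x)|$ from that proof.

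Fix any $x$ with $s_P(x)\geq m$ and choose a subset $U=U(x)\subset S_P(x)$ with $|U|=m$. Since $P\vee P^g$ is coarser than both $P$ and $P^g$, for each $y\in U$ we have $S_{P^g}(y)\subset S_{P\vee P^g}(x)$, so $\bigcup_{y\in U}Z_g(y)\subset S_{P\vee P^g}(x)$, and inclusion-exclusion gives
\[
s_{P\vee P^g}(x)\ \geq\ W(x)\ :=\ \sum_{y\in U}z_g(y)\ -\ \mathop{\sum_{y,y'\in U}}_{y\neq y'}|Z_g(y)\cap Z_g(y')|.
\]
The structural observation that makes everything work is that $W(x)\leq m^2$ always: the first sum is at most $m|U|=m^2$ and the double sum is nonnegative. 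I would then compute $\mathbb{E}(W(x))$ exactly as in Lemma \ref{lem:coeur}: near-uniformity of $g^{-1}$ on points gives $\mathbb{E}\big(\sum_{y\in U}z_g(y)\big)\geq(\rho-\epsilon)m^2$, while applying (\ref{eq:richpry}) to $f(u,u')=|Z(u)\cap Z(u')|$ on $\Omega^{(2)}$ (with the distribution of $(y,y')^{g^{-1}}$) together with (\ref{eq:kupcop}) gives $\mathbb{E}\big(\sum_{y\neq y'}|Z_g(y)\cap Z_g(y')|\big)\leq m^2\big(\rho m^2/n+O(\epsilon m)\big)$. The hypotheses $\rho\geq 999/1000$, $\epsilon\leq 1/1000$, $\epsilon\leq\rho m/n$ and $m\leq\sqrt n/100$ are precisely what is needed to make both $\rho m^2/n$ and $\epsilon m$ tiny, so they combine to give $\mathbb{E}(W(x))\geq 0.99\,m^2$ (say).

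The conclusion is then immediate: applying Markov's inequality to the nonnegative quantity $m^2-W(x)$ (which has expectation $\leq 0.01\,m^2$) gives $\Prob\big(s_{P\vee P^g}(x)<m^2/2\big)\leq\Prob\big(W(x)<m^2/2\big)\leq 1/50$ for every $x$ with $s_P(x)\geq m$. Summing over the $\geq\rho n$ such $x$, the expected number of $x$ with $s_{P\vee P^g}(x)\geq m^2/2$ is at least $(1-1/50)\rho n>n/2$; since that number is always at most $n$, it must exceed $n/2$ with positive probability. (If genuine concentration of $W(x)$ were wanted instead, one could bound the variances of the two sums using near-uniformity of $g^{-1}$ on triples and quadruples of points and invoke Chebyshev.) The step I expect to demand the most care is the overlap estimate: the bound on $\mathbb{E}(|Z_g(y)\cap Z_g(y')|)$ rests on the balanced construction of the sets $Z$ and on (\ref{eq:kupcop}), and it is exactly the constraint $m\leq\sqrt n/100$ that keeps the accumulated overlaps negligible against the main term of order $\rho m^2$.
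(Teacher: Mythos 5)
Your proof is correct, and it takes a genuinely simpler route than the paper's. The paper runs a second-moment argument: it estimates $\mathbb{E}(V_{f_g})$, $\mathbb{V}(E_{f_g})$ and $\mathbb{E}(R_g)$, selects a good $g$ for which all three quantities are controlled simultaneously, and then invokes Chebyshev to show that $f_g(x)$ is close to its average for most $x$ while the overlap term is small for most $x$. You short-circuit all of this by working directly with $W(x) = \sum_{y\in U} z_g(y) - \sum_{y\ne y'\in U} |Z_g(y)\cap Z_g(y')|$, which is bounded below by $s_{P\vee P^g}(x)$ (this is exactly (\ref{eq:disgu}) with $U=Z(x)$) and, crucially, bounded above deterministically by $m^2$, since each $z_g(y)\leq m$ and the subtracted overlap sum is nonnegative. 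Having a deterministic upper bound matching the expected value up to a constant factor close to $1$ lets you run Markov on $m^2 - W(x)$ pointwise in $x$, entirely bypassing variance computations. The first-moment inputs you need, (\ref{eq:autir}) and (\ref{eq:rempert}), are exactly the ones the paper already isolates, and the hypothesis $\epsilon\leq \min(1/1000,\rho m/n)$, $\rho\geq 999/1000$, $1000\leq m\leq \sqrt{n}/100$ does give $\mathbb{E}(W(x))\geq (\rho-\epsilon)m^2 - (\rho m^2/n + \epsilon m)m^2 \geq 0.997\,m^2$, so the rest of the computation goes through with room to spare. The paper's heavier machinery buys a concentration statement about $f_g$ that is not needed here, and your argument yields the same conclusion with less work; the one thing worth being explicit about in a write-up is that the marginal distribution of $y^{g^{-1}}$ (for a single point $y$) inherits the $\epsilon$-closeness to uniformity from the hypothesis on pairs, which is needed for (\ref{eq:autir}).
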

We will use several estimates in the proof of part (\ref{it:pureplon}) of
Lemma \ref{lem:coeur}.
We shall
use the same notation as in that proof: $Z(x)$, $z(x)$, $Z_g(x)$ and $z_g(x)$
are the same as there.
\begin{proof}


Let $f_g(x) = \sum_{y\in Z(x)} z_g(y)$. By (\ref{eq:autir}) and (\ref{eq:autir2}),
\[(\rho - \epsilon) m \cdot z(x) \leq \mathbb{E}(f_g(x)) 
\leq (\rho +\epsilon) m \cdot z(x).\]
  Therefore, writing
  $E_F = \frac{1}{n} \sum_{x\in \Omega} F(x)$,
  we see that
  \begin{equation}\label{eq:ebound}
  (\rho-\epsilon) \rho m^2 \leq \mathbb{E}\left(E_{f_g}\right)
  \leq (\rho+\epsilon) \rho m^2,\end{equation}
  where we take $g$ at random, as always. Let
\begin{equation}\label{eq:mastuerzo}
  R_g =   \frac{1}{n} \sum_{x\in \Omega} 
  \mathop{\sum_{y,y'\in Z(x)}}_{y\ne y'}
\left|Z_g(y) \cap Z_{g}(y')\right|.
  \end{equation}  
Then, by (\ref{eq:rempert}),
  \begin{equation}\label{eq:vengado}
    \mathbb{E}\left(R_g\right)
    \leq \rho \left(\frac{\rho m^2}{n} + \epsilon m\right) m^2
 = \frac{\rho^2 m^4}{n} + \epsilon \rho m^3 \leq \frac{2 \rho^2 m^4}{n},\end{equation}
where we use the assumption $\epsilon\leq \rho m/n$.
  
  Let us now bound the expected value of
  $\sum_{x\in \Omega} f_g(x)^2$. Clearly
  \begin{equation}\label{eq:nolola}\begin{aligned}
  f_g(x)^2 &= \sum_{y,y'\in Z(x)} z_g(y) z_g(y')
  \\ &= \sum_{y\in Z(x)} z_g(y)^2 +
  \mathop{\sum_{y,y'\in Z(x)}}_{y\ne y'} z_g(y) z_g(y').\end{aligned}
    \end{equation}
  Now, for $x$ such that $Z(x)$ is non-empty,
  \begin{equation}\label{eq:lelo1}
  \mathbb{E}\left(\sum_{y\in Z(x)} z_g(y)^2\right) \leq
  \sum_{y\in Z(x)} (\rho+\epsilon) m^2
  = (\rho+\epsilon) m^3\end{equation}
  and
  \begin{equation}\label{eq:lelo2}\begin{aligned}
  \mathbb{E}\left(\mathop{\sum_{y,y'\in Z(x)}}_{y\ne y'} z_g(y)
  z_g(y')\right)
  &\leq 
  \mathop{\sum_{y,y'\in Z(x)}}_{y\ne y'} (\rho^2 +\epsilon) m^2 \\ &=
  (\rho^2+\epsilon) m^2\cdot (m^2-m).
  \end{aligned}\end{equation}
  Therefore,
  \begin{equation}\label{eq:raplusplus}\begin{aligned}
    \mathbb{E}\left(\frac{1}{n} \sum_{x\in \Omega} f_g(x)^2\right) &\leq
    (\rho + \epsilon) \rho m^3 +
    (\rho^2+\epsilon) \rho (m^4 - m^3)\\
   &=
    (\rho-\rho^2) \rho m^3 + (\rho^2+\epsilon) \rho m^4 .
\end{aligned}\end{equation}
  We have just established a bound on the expectation of the variance:
    for
    \begin{equation}\label{eq:elpuma}
    V_f = \frac{1}{n} \sum_{x\in \Omega} f(x)^2
    - \left(\frac{1}{n} \sum_{x\in \Omega} f(x)\right)^2
    \end{equation}
we quickly see, by (\ref{eq:ebound}) and (\ref{eq:raplusplus}), that
  \begin{equation}\label{eq:kostreko}\begin{aligned}
    \mathbb{E}\left(V_{f_g}\right)&=
    \mathbb{E}\left( \frac{1}{n} \sum_{x\in \Omega} f_g(x)^2\right)
    - \mathbb{E}\left(\left(\frac{1}{n} \sum_{x\in \Omega} f_g(x)\right)^2\right)\\
    &\leq     (\rho-\rho^2) \rho m^3 + (\rho^2+\epsilon) \rho m^4 
    - \left((\rho+\epsilon) \rho m^2\right)^2
        \\ &\leq (1-\rho) \rho^3 m^4 +
    \epsilon_1 m^4,\end{aligned}\end{equation} where
  \begin{equation}\label{eq:eps1def}
      \epsilon_1 = \epsilon \rho + \frac{(1-\rho) \rho^2}{m}.
    \end{equation}
We may call $V_{f_g}$ the variance of $f_g$, just as we may call $E_{f_g}$
the expectation of $f_g$. 

  Now we should give a bound on the variance $\mathbb{V}\left(E_{f_g}\right)$ of 
$E_{f_g}$. Clearly
  \begin{equation}\begin{aligned}
      \mathbb{E}\left(E_{f_g}^2\right) &=
      \mathbb{E}\left(\left(\frac{1}{n} \sum_{x\in \Omega} f_g(x)
      \right)^2\right) =
      \frac{1}{n^2}\cdot  \mathbb{E}\left(
      \sum_{x,x'\in \Omega} f_g(x) f_g(x')\right).\end{aligned}\end{equation}
  By the definition of $f_g(x)$ and the fact that, for every $y\in \Omega$ with
  $Z(y)\ne \emptyset$, $y\in Z(x)$ for exactly $m$ values of $x\in \Omega$,
  \begin{equation}\label{eq:hausarzt}
    \begin{aligned} \sum_{x,x'\in \Omega} f_g(x) f_g(x') &=
\sum_{x,x'\in \Omega} \sum_{y\in Z(x)\cap Z(x')} z_g(y)^2 +
\sum_{x,x'\in \Omega} \mathop{\sum_{y\in Z(x), y'\in Z(x')}}_{y\ne y'} z_g(y)
z_g\left(y'\right)\\
&= \mathop{\sum_{y\in \Omega}}_{Z(y)\ne \emptyset} m^2 z_g(y)^2 +
\mathop{\mathop{\sum_{y,y'\in \Omega}}_{Z(y),Z(y')\ne \emptyset}}_{y\ne y'} m^2 z_g(y)
z_g(y').
    \end{aligned}
  \end{equation}
Much as in (\ref{eq:lelo1}) and (\ref{eq:lelo2}),
\[\mathbb{E}\left(\mathop{\sum_{y\in \Omega}}_{Z(y)\ne \emptyset} m^2 z_g(y)^2 
\right) \leq
 \mathop{\sum_{y\in \Omega}}_{Z(y)\ne \emptyset} m^2\cdot (\rho + \epsilon) m^2
= (\rho + \epsilon) \rho m^4 n
\] and
\[\begin{aligned}
\mathop{\mathop{\sum_{y,y'\in \Omega}}_{Z(y),Z(y')\ne \emptyset}}_{y\ne y'}
m^2 z_g(y) z_g(y') &\leq
\mathop{\mathop{\sum_{y,y'\in \Omega}}_{Z(y),Z(y')\ne \emptyset}}_{y\ne y'}
m^2 \cdot (\rho^2 + \epsilon) m^2\\
&\leq (\rho^2+\epsilon) \rho^2 m^4 n^2.
\end{aligned}\]
Hence
\[ \mathbb{E}\left(E_{f_g}^2\right)\leq
\frac{(\rho+\epsilon) \rho}{n} m^4  +
(\rho^2+\epsilon) \rho^2 m^4, \]
and so, by (\ref{eq:ebound}),
\begin{equation}\label{eq:mestron}\mathbb{V}\left(E_{f_g}\right) \leq 
  \frac{(\rho +\epsilon) \rho}{n} m^4 +
  ((\rho^2 +\epsilon) - (\rho-\epsilon)^2) \rho^2 m^4
  \leq \epsilon_2 m^4
  ,\end{equation}
where
\begin{equation}\label{eq:menest}
  \epsilon_2 = (1+2\rho) \rho^2 \epsilon + \frac{(\rho + \epsilon) \rho}{n}.
  \end{equation}
Here, of course,
$\mathbb{V}\left(E_{f_g}\right) =
\mathbb{E}\left(E_{f_g}^2\right) -
\mathbb{E}\left(E_{f_g}\right)^2 =
\mathbb{E}\left(\left(E_{f_g}-\mathbb{E}\left(E_{f_g}\right)\right)^2\right)$.

  By (\ref{eq:ebound}), (\ref{eq:vengado}), (\ref{eq:kostreko})
 and (\ref{eq:mestron}) and Cauchy-Schwarz, we conclude that
  \[\begin{aligned}\mathbb{E}&\left(
  E_{f_g}^2 - c_1 V_{f_g} - c_2 \left(E_{f_g} - \mathbb{E}\left(E_{f_g}\right)
  \right)^2
 - c_3 n \cdot R_g\right)\\
 &\geq \mathbb{E}\left(E_{f_g}\right)^2 - c_1 \mathbb{E}\left(V_{f_g}\right)
 -c_2 \mathbb{V}\left(E_{f_g}\right) - c_3 n \cdot \mathbb{E}\left(R_g\right)
 \geq K m^4
  \end{aligned}\]
  for any $c_1,c_2,c_3>0$, where
\[K = 
(\rho - \epsilon)^2 \rho^2 -
\left((1-\rho) \rho^3 + \epsilon_1\right) c_1
- \epsilon_2 c_2 - 2 \rho^2 c_3.\]
  We will choose $c_1$, $c_2$, $c_3$ so that $K$ is positive.
  Then the probability that
    \begin{equation}\label{eq:durud}
           V_{f_g}\leq \frac{E_{f_g}^2}{c_1},
        \;\;\;\;\; E_{f_g}\geq \sqrt{c_2} \left|E_{f_g} - \mathbb{E}\left(E_{f_g}\right)\right|,
\;\;\;\;\;
n R_g\leq \frac{E_{f_g}^2}{c_3} 
%
  \end{equation}
  will be positive. What happens when (\ref{eq:durud}) is the case?

\begin{enumerate}
\item First of all, $E_{f_g}\geq \sqrt{c_2} |E_{f_g} - \mathbb{E}(E_{f_g})|$ implies
  \begin{equation}\label{eq:pinor}\frac{\sqrt{c_2}}{\sqrt{c_2}+1}
    \mathbb{E}\left(E_{f_g}\right)\leq E_{f_g} \leq
\frac{\sqrt{c_2}}{\sqrt{c_2}-1} \mathbb{E}\left(E_{f_g}\right).\end{equation}
\item
  By Chebyshev's inequality, if (\ref{eq:durud}) is the case, then
  for any $\tau>0$, the number of $x\in \Omega$ such
  that \[1-\tau \leq \frac{f_g(x)}{E_{f_g}} \leq 1+\tau\]
  does not hold is at most $(n V_{f_g}/E_{f_g}^2)/\tau^2 \leq n/c_1 \tau^2$.
\item  By (\ref{eq:mastuerzo}) and the last inequality in (\ref{eq:durud}), for any $\tau'>0$,
  the number of $x\in \Omega$ such that
  \[
\mathop{\sum_{y,y'\in Z(x)}}_{y\ne y'}
\left|Z_g(y) \cap Z_{g}(y')\right| \leq \tau' E_{f_g}
  \]
  does not hold is $\leq R_g n/\tau' E_{f_g} \leq E_{f_g}^2/c_3 \tau' E_{f_g} =
  E_{f_g}/c_3 \tau'$.
\end{enumerate}

Hence, for $\geq (1-1/c_1 \tau^2) n - E_{f_g}/c_3 \tau'$ values of 
$x\in \Omega$, by (\ref{eq:disgu}) and (\ref{eq:ebound}),
\begin{equation}\label{eq:lerso}\begin{aligned}
\left|S_{P\vee P^g}(x)\right|&\geq {f_g}(x) - \tau' E_{f_g} \geq (1 - \tau - \tau') E_{f_g} 
\geq \frac{\sqrt{c_2}}{\sqrt{c_2}+1} (1-\tau - \tau') \mathbb{E}\left(E_{f_g}\right)\\
&\geq \frac{\sqrt{c_2}}{\sqrt{c_2}+1} (1-\tau - \tau')
(\rho - \epsilon) \rho  m^2.
\end{aligned}
\end{equation}
Moreover, by (\ref{eq:ebound}) and (\ref{eq:pinor}),
\[\frac{E_{f_g}}{c_3 \tau'} \leq \frac{\sqrt{c_2}}{\sqrt{c_2}-1} \frac{\mathbb{E}(E_{f_g})}{c_3 \tau'} \leq \frac{\sqrt{c_2}}{\sqrt{c_2}-1} \frac{(\rho +\epsilon) \rho}{c_3 \tau'} m^2.\]
Thus, by the assumption $m\leq \sqrt{n}/100$, we obtain that
\[\left(1-\frac{1}{c_1 \tau^2}\right) n - \frac{E_{f_g}}{c_3 \tau'}
\geq \rho' n\]
for \begin{equation}\label{eq:mantecado}
  \rho' = 1 -\frac{1}{c_1 \tau^2} -
  \frac{\sqrt{c_2}}{\sqrt{c_2}-1} \frac{(\rho +\epsilon) \rho}{100^2 c_3 \tau'}.
  \end{equation}

It is time to choose the parameters $c_1$, $c_2$, $c_3$, $\tau$ and $\tau'$.
We let \begin{equation}\label{eq:progro}
  c_1=\frac{1}{4\delta_1},\;\;\;\;\;
  \delta_1 = \frac{(1-\rho) \rho^3 + \epsilon_1}{\rho^4},\;\;\;\;\;
  c_2 = \frac{1}{4 \epsilon_2},\;\;\;\;\;
  c_3 = \frac{\rho^2}{8},\end{equation}
  and $\tau = 1/4$, $\tau' = 1/12$. Then
$K\geq (\rho-\epsilon)^2-3/4 > 0$, by our assumptions on $\rho$ and $\epsilon$.

In fact, since we are assuming $\epsilon\leq 1/1000$,
$m\geq 1000$, $\rho\geq 999/1000$ and 
$n\geq (100 m)^2 \geq 10^9$,
\[\epsilon_1 \leq \epsilon + \frac{1-\rho}{m} \leq 0.001001,\] 
\[\epsilon_2 \leq 3 \epsilon + \frac{(1+\epsilon)}{n} \leq 0.0030001,\]
\[\delta_1 \leq \rho^{-1} - 1 + \frac{\epsilon_1}{\rho^4} \leq
\frac{1000}{999} - 1 + \frac{0.001001}{(999/1000)^4} \leq 0.002007\]
by (\ref{eq:eps1def}), (\ref{eq:menest}), and (\ref{eq:progro}).
Hence, by (\ref{eq:lerso}),
\[\begin{aligned}
s_{P\vee P^g}(x) &= \left|S_{P\vee P^g}(x)\right|
\geq \frac{\sqrt{c_2}}{\sqrt{c_2}+1} (1 - \tau - \tau') (\rho - \epsilon) \rho m^2\\
&\geq \frac{1}{1 + \sqrt{4 \epsilon_2}}  \left(1 - \frac{1}{4} - \frac{1}{12}\right) \frac{998}{1000} \frac{999}{1000} m^2
> \frac{m}{2}\end{aligned}\]
for at least $\rho' n$ elements $x$ of $\Omega$.
Moreover, by (\ref{eq:mantecado}),
\[\rho' \geq 1 - \frac{4 \delta_1}{\tau^2} - \frac{1}{1 - \sqrt{4 \epsilon_2}}
\frac{1 + \epsilon}{100^2 \cdot \frac{\rho^2}{8} \cdot \frac{1}{12}}
> 0.86 > \frac{1}{2}.\]
\end{proof}

\begin{prop}\label{prop:verodiro}
  Let $P$ be a partition of a finite set $\Omega$ with $|\Omega|=n$.
  Assume that at least $\geq \rho n$ elements of $\Omega$, $\rho>0$, lie
  in sets in $P$ of size $>1$.
  Let $A\subset \Sym(\Omega)$ be a set of generators of a $4$-transitive subgroup of  $\Sym(n)$. Let $h\in \Sym(\Omega)$ have support of size
  $n-c$, where $0\leq c<n$.
  
  Then there are $g_1,\dotsc,g_k \in
  \left(A \cup A^{-1}\cup \{e\}\right)^v$, $k = 
  O(\log \log n) + O_{\rho,c}(1)$,
  $v = O(n^{10})$, such that the partition $Q_k$ defined
  by  \[Q_0 = P,\;\;\;\;\;\;\;\;\;\;\;\;
  Q_j = Q_{j-1} \vee Q_{j-1}^{g_j h g_j^{-1}}\;\;\;\;\;\text{for
    $1\leq j\leq k$}\]
  is the trivial partition of $\Omega$.
\end{prop}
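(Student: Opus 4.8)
\emph{Setup and the key mixing step.} The plan is to produce the $g_j$ one at a time as outcomes of short lazy random walks, and at the $j$-th step to invoke one of the three conclusions of Lemma~\ref{lem:coeur} or else Lemma~\ref{lem:coeur2}, with $g_j h g_j^{-1}$ in the role of the random element ``$g$'' there and the choice of conclusion dictated by the block sizes of $Q_{j-1}$. What makes this legitimate is the following observation. Let $A_0\subseteq A\cup A^{-1}$ be the symmetric set with $|A_0|\le 4n$ and $\langle A_0\rangle=\langle A\rangle$ given by Lemma~\ref{lem:dustu}, and let $g_j$ be the outcome of a lazy random walk of length $v=O(n^{10})$ on $A_0$. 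Since $\langle A\rangle$ is $4$-transitive, the Schreier graph $\Gamma(\langle A\rangle,A_0;\Omega^{(4)})$ is connected, regular and symmetric on $\le n^{4}$ vertices, so by Prop.~\ref{prop:chudo} the image under $g_j$ of any fixed $4$-tuple of distinct points lies within total variation distance $n^{-C_0}$ of the uniform distribution on $\Omega^{(4)}$, with $C_0$ as large as we please. Writing $\vec v=(x,y)$, the event $\vec v^{\,g_j h g_j^{-1}}=(u,w)$ is equivalent to $(x^{g_j})^{h}=u^{g_j}$ and $(y^{g_j})^{h}=w^{g_j}$, which for $x,y,u,w$ distinct is governed by that near-uniform distribution on $\Omega^{(4)}$; a routine count then shows that the laws of $\vec v^{\,g_j h g_j^{-1}}$ and of $\vec v^{\,g_j h^{-1}g_j^{-1}}$ on $\Omega^{(2)}$ are within total variation distance $\epsilon=O(n^{-C_0}+(c+1)/n)$ of uniform, the term $(c+1)/n$ coming from the $O(n)$ degenerate target pairs and the $c$ fixed points of $h$. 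Hence, for $n$ large in terms of $\rho$ and $c$ --- all we need, since $O_{\rho,c}(1)$ is permitted, and in the applications $c\le 1$ by Lemma~\ref{lem:chachava} so that $\epsilon=O(1/n)$ --- the element $g_j h g_j^{-1}$ satisfies the hypotheses of both Lemma~\ref{lem:coeur} and Lemma~\ref{lem:coeur2}, and the proof reduces to scheduling these moves so that every block ends up of size $n$.

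\emph{Phases 1 and 2: reaching block size $>\sqrt n/100$.} In Phase~1 the plan is to pass from $Q_0=P$, where $\ge\rho n$ points lie in blocks of size $\ge 2$, to a partition in which $\ge 999/1000$ of $\Omega$ lies in blocks of size $\ge 1000$, by alternating (i) one application of Lemma~\ref{lem:coeur}(\ref{it:turandot2}), which multiplies the relevant block size by a factor $\ge 4/3$ while dropping the ``good proportion'' $\sigma$ to about $\sigma^{2}/8$, with (ii) a bounded number of applications of Lemma~\ref{lem:coeur}(\ref{it:turandot1}), each of which replaces $\sigma$ by $\ge 1-(1-\sigma)^{2}-\epsilon$ and so restores $\sigma$ to $\ge 999/1000$ (it roughly doubles $\sigma$ while $\sigma$ is small, then drives $1-\sigma$ down by squaring once $\sigma$ is near $1$); this is $O_{\rho}(1)$ rounds, the first restoration costing $O(\log(1/\rho))$ steps, hence $O_{\rho,c}(1)$ steps in all. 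Phase~2 is the heart of the argument. Whenever $\ge 999/1000$ of $\Omega$ lies in blocks of size $\ge m$ with $1000\le m\le\sqrt n/100$, Lemma~\ref{lem:coeur2} produces --- with positive probability, hence for a suitable $g_j$ --- a partition in which $>n/2$ of $\Omega$ lies in blocks of size $\ge m^{2}/2$, and $O(1)$ further applications of Lemma~\ref{lem:coeur}(\ref{it:turandot1}) restore $\sigma$ to $\ge 999/1000$ without shrinking blocks. Iterating the map $m\mapsto\lfloor m^{2}/2\rfloor$, after $O(\log\log n)$ such rounds the block size first exceeds $\sqrt n/100$, so at that point $\ge 999/1000$ of $\Omega$ lies in fewer than $100\sqrt n$ blocks.

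\emph{Phase 3: the endgame.} It then remains to merge the $<100\sqrt n$ large blocks, together with the $\le n/1000$ stray points, into the single block $\Omega$. A couple of applications of Lemma~\ref{lem:coeur}(\ref{it:pureplon}) (each producing one block whose size is $\gg$ the square of the current block size, though capped at $\sigma n/10$) alternating with restorations via Lemma~\ref{lem:coeur}(\ref{it:turandot1}), followed by one direct second-moment estimate showing that a single random join links the $O(1)$ remaining blocks of size $\gg n$, bring us in $O(1)$ steps to a partition with a block $B$ of size $\ge 999n/1000$. A cleanup recursion then finishes: if $|B|=n-t$ with $1\le t<n/2$, joining $B$ with one more conjugate $g_j h g_j^{-1}$ of $h$ gives a block $B'\supseteq B\cup B^{g_j h g_j^{-1}}$, and since $p^{\,g_j h^{-1}g_j^{-1}}$ is within $O((c+1)/n)$ of uniform on $\Omega$ for each fixed $p\notin B$ (which needs only the $\Omega^{(2)}$-mixing of $g_j$), a $g_j$ beating the expectation yields $n-|B'|\le t^{2}/n+O(t(c+1)/n)$; from $t\le n/1000$ this squares (relative to $n$) down to $O(c+1)$ and then to $0$ in $O(\log\log n)+O_c(1)$ further steps. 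When $t=0$ the block is $\Omega$, so $Q_k$ is the trivial partition, with $k=O(\log\log n)+O_{\rho,c}(1)$ and each $g_j\in(A\cup A^{-1}\cup\{e\})^{O(n^{10})}$, as required.

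\emph{Where the difficulty lies.} The substantive estimates are already in Lemmas~\ref{lem:coeur} and~\ref{lem:coeur2}, so the proposition is mostly a careful scheduling of them; the two delicate points are (i) the mixing step above --- that a conjugate $g_j h g_j^{-1}$ of the \emph{given} element $h$, built only from a short random walk, acts on $\Omega^{(2)}$ like a uniformly random permutation, which is exactly what $4$-transitivity buys and where the error $O((c+1)/n)$ must be kept inside the tolerances demanded by the two lemmas; and (ii) the endgame, where one must push the partition all the way to $\{\Omega\}$ rather than merely to ``one large block'', which forces the cleanup recursion and contributes the second $\log\log n$ term to $k$.
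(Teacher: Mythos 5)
Your proposal tracks the paper's proof essentially step for step: the mixing observation that a short random walk over $\Omega^{(4)}$ makes $g_j h g_j^{-1}$ act almost uniformly on $\Omega^{(2)}$ (with error $O((c+1)/n)$), the warm-up alternation of parts (\ref{it:turandot1}) and (\ref{it:turandot2}) of Lemma~\ref{lem:coeur}, the $O(\log\log n)$ squaring phase via Lemma~\ref{lem:coeur2}, and an endgame built on part (\ref{it:pureplon}) and part (\ref{it:turandot1}). The one place you genuinely diverge is the cleanup at the very end: you merge the last stragglers by iterating $t\mapsto t^2/n + O(t(c+1)/n)$ on the size $t$ of the complement of the giant block, while the paper instead iterates part (\ref{it:turandot1}) another $O(\log\log n)$ times to shrink that complement to $\leq 18+6c$ points and then absorbs them one at a time. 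Both devices are correct, both cost $O(\log\log n)+O_c(1)$ steps, and yours has the small advantage of not needing the observation that there can be at most one block of size $>n/2$. Your ``direct second-moment estimate'' to merge the $O(1)$ giant blocks is left implicit but is routine Chebyshev; the paper avoids it by alternating parts (\ref{it:turandot1}) and (\ref{it:turandot2}) to push one block past $n/2$.

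One small but genuine gap, which you half-acknowledge: your Phase~1 stops once $\ge 999/1000$ of $\Omega$ lies in blocks of size $\ge 1000$, but Lemma~\ref{lem:coeur2} needs $\epsilon\le\rho m/n$, and with $\epsilon'=O((c+1)/n)$ and $\rho$ near $1$ this forces $m\gg c+1$, not merely $m\ge 1000$. Thus Phase~2 cannot be started for, say, $c>300$. The paper fixes this by raising the Phase~1 target to $m\ge 1000\max(c,1)$, at the cost of $O(\log\max(c,1))$ extra applications of part (\ref{it:turandot2}); this is in fact the real source of the $O_{\rho,c}(1)$ term in $k$. You note that in the intended application (Prop.~\ref{prop:siniestro} via Lemma~\ref{lem:chachava}) one has $c\le 1$, which is true, but the proposition as stated allows all $0\le c<n$, so the Phase~1 target must grow with $c$. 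With that single adjustment your argument is complete.
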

An example given by W. Sawin\footnote{On MathOverflow, in comments to
  \cite{286057}.}
suggests that $4$-transitivity is a necessary assumption.
\begin{proof}
  Let $A_0\subset A\cup A^{-1}$ be as in Lemma \ref{lem:dustu}, and let
  $g\in A_0^v\subset \left(A\cup A^{-1}\right)^v$
  be the outcome of a random walk of length $v$,
  where $v = \lceil n^9 \log(n^4/\epsilon)\rceil$ for a given $\epsilon>0$.
  Then, by Prop.~\ref{prop:chudo} applied to the Schreier graph
  $\Gamma(G,A_0;\Omega^{(4)})$, given any two elements $\vec{v}_1$,
  $\vec{v}_2$ of the set $\Omega^{(4)}$ of quadruples of distinct
  elements of $\Omega$, the probability that $g$ takes $\vec{v}_1$
  to $\vec{v}_2$ lies between $(1-\epsilon)/\left|\Omega^{(4)}\right|$
  and $(1+\epsilon)/\left|\Omega^{(4)}\right|$.

  A moment's thought shows that, since the support of $h$ is of size
  $n-c$, the number of quadruples $(r,s,r',s')\in \Omega^{(4)}$ such
  that $r^h = r'$ and $s^h = s'$ is at least $(n-c) (n-c-3)$ and at most
  $(n-c) (n-c-1)$. Given any $(x,y,x',y')\in \Omega^{(4)}$,
  the probability that $g h g^{-1}$ takes $(x,y)$ to $(x',y')$
  equals the probability that $g$ takes $(x,y,x',y')$ to a tuple
  $(r,s,r',s')$ such that $r^h = r'$ and $s^h = s'$, and so lies between
  $(1-\epsilon) (n-c) (n-c-3)/|\Omega^{(4)}|$ and
  $(1+\epsilon) (n-c) (n-c-1)/|\Omega^{(4)}|$.

  Therefore, for any $(x,y)\in \Omega^{(2)}$
  and any subset $S\subset \Omega^{(2)}$, the probability that
  $(x,y)^{g h g^{-1}} \in S$ is at least
  \begin{equation}\label{eq:armador}
    (1-\epsilon) \frac{(n-c) (n-c-3)}{\left|\Omega^{(4)}\right|}
  (|S| - 4 n)\end{equation}
  (Here $|S|- 4 n$ is a lower bound for the number of pairs in $S$
  not containing $x$ or $y$.) 
  We bound (\ref{eq:armador}) from below by
  \[(1-\epsilon) \frac{(n-c) (n-c-3)}{(n-2) (n-3)} 
  \frac{|S|}{n (n-1)} - \frac{4 n (n-c) (n-c-3)}{n (n-1) (n-2) (n-3)}
  \geq \frac{|S|}{\left|\Omega^{(2)}\right|} - \epsilon',\]
  where
  \begin{equation}\label{eq:epsp}\begin{aligned}
  \epsilon' &= \epsilon + 
  \left(1 - \frac{(n-c) (n-c-3)}{(n-2) (n-3)}\right) +
  \frac{4 (n-c) (n-c-3)}{(n-1) (n-2) (n-3)}
  \\
  &\leq \epsilon + 
  \frac{c}{n-3} + \frac{c}{n} + \frac{4 n}{(n-1) (n-2)}  
  \leq \epsilon + \frac{8 + 3 c}{n},\end{aligned}\end{equation}
  by $n\geq 6$. Since we can apply the same bound to the complement
  of $S$, we conclude that the distribution of $(x,y)^{g h g^{-1}}$ is at
  total variation distance at most $\epsilon'$ from the uniform
  distribution. We can apply the same statement to $h^{-1}$ instead of $h$.
  Hence, we can apply
  Lemmas \ref{lem:coeur} and \ref{lem:coeur2} with $g h g^{-1}$ instead
  of $g$, and $\epsilon'$ instead of $\epsilon$, provided that
  their conditions on $\epsilon'$, $\rho$, $m$ and $n$ hold.

  If $n$ is bounded by a constant $C$, say, then we can proceed as follows:
  at every step, we look at an element $L$
  of $Q_j$ of maximal length (size), and let $g$
  be as above, with $\epsilon=1/100$, say. (We could even take $v=\lceil n^5
  \log(n^2/\epsilon)\rceil$, and look only at the Schreier graph
  $\Gamma(G,A;\Omega)$.) Then any given element of $L$ is sent to any
  given element of $\Omega\setminus L$ with positive probability, and so,
  trivially, $L$ becomes larger in $Q_{j+1}= Q_{j} \vee Q_{j}^{g}$ with
  positive probability, i.e., for at least one $g$. We set $g_{j+1}$
  equal to that $g$. After at most $k=C$ steps, we obtain, then, that
  $Q_k$ consists of a single set, equal to $\Omega$, and so we are done.

  We can assume, then, that $n$ is greater than a constant $C$.
  We start by applying parts (\ref{it:turandot1}) and (\ref{it:turandot2}) of Lemma \ref{lem:coeur}
  a bounded number of times so that we get to a state in which
  either the conditions of Lemma \ref{lem:coeur2} are fulfilled
  or $m>\sqrt{n}/50$. (If the conditions are already fulfilled, then, of course, this
  initial stage may be skipped.) The initial value $m_0$ of $m$ will be $2$.
  We let $\rho_0=\rho$.
  We let $\epsilon=\rho/4000$; since we can assume that
  $n\geq 4000 (8+3 c)/\rho$,
  we obtain from (\ref{eq:epsp})
  that $\epsilon' \leq \rho/4000 + (8+3 c)/n
  \leq \rho/2000$. In particular,
  the condition $\epsilon'\leq \rho/100$
  in part (\ref{it:turandot2}) of Lemma \ref{lem:coeur} is satisfied.

  \begin{enumerate}
  \item  We begin by applying part (\ref{it:turandot1}) of Lemma \ref{lem:coeur}
    repeatedly, with $m$ held constant.
  At the $j$th step, Lemma \ref{lem:coeur}
  guarantees us the existence of a $g_j\in A_0^v$ such that,
  for $Q_j = Q_{j-1} \vee Q_{j-1}^{g h g^{-1}}$,
  the proportion $\rho_{j}$ of elements $x\in \Omega$ 
  for which $s_{Q_j}(x)\geq m$ satisfies
  \[\begin{aligned}
  1-\rho_j &\leq (1-\rho_{j-1})^2 + \epsilon'
  \leq 1 - 2 \rho_{j-1} + \frac{\rho_{j-1}}{2000} + \rho_{j-1}^2\\
&\leq 1 - \frac{3\rho_{j-1}}{2} + \frac{\rho_{j-1}^2}{2} =
(1 -\rho_j) \left(1 - \frac{\rho_j}{2}\right)\end{aligned}\]
  provided that $\rho_{j-1}\leq 999/1000$.
  Thus, letting $j$ be at least about
  \[
  \frac{\log 1000}{\left|\log\left(1-\frac{\rho_0}{2}\right)\right|}\]
  (which is $O(1/\rho_0)$),
  we obtain a new value $\rho_j$ of $\rho$
  such that $\rho_j\geq 999/1000$,
  say.
\item If $m\geq 1000$, we stop. Otherwise,
we apply part (\ref{it:turandot2}) of Lemma \ref{lem:coeur}.
We are guaranteed the existence of an element $g_{j+1}\in A_0^v$ such that,
for $Q_{j+1} = Q_j \vee Q_j^{g h g^{-1}}$, the proportion of elements $x\in \Omega$ such that
$s_{Q_j}(x)\geq
(1+\rho_j/3) m$ is at least
$\rho_j^2/8$. We choose that $g$, let our new values $m_{j+1}$ and $\rho_{j+1}$
of $m$ and $\rho$ be
\[m_{j+1} = \left\lceil \left(1+\frac{\rho_j}{3}\right) m_j
\right\rceil \geq \left(1 + \frac{333}{1000}\right) m_j
,\;\;\;\;\;  \rho_{j+1} = \frac{\rho_j^2}{8} \geq
\frac{(999/1000)^2}{8} > \frac{1}{9},
\]
and go back to step $1$.
  \end{enumerate}

  It is clear that, after $s = O(1/\rho) + O(\log \max(c,1))$ steps, we obtain a
  partition $Q_s$ such that the proportion of elements
  $x$ of $\Omega$ satisfying $s_{Q_s}(x)\geq 1000 \cdot \max(c,1)$
  is at least $999/1000$.
   
  Now -- and here we are at the heart of the proof of this proposition --
  we go again through an iterative procedure, only we will now
  be alternating a bounded number of applications of
  part (\ref{it:turandot1})
  of
  Lemma \ref{lem:coeur} and an application of Lemma \ref{lem:coeur2}, rather
  than $O(1/\rho)$ applications of part (\ref{it:turandot1}) of Lemma
  \ref{lem:coeur} and
  an application of part (\ref{it:turandot2}) of Lemma \ref{lem:coeur}.
  Throughout the iteration, $\rho$ stays bounded from below by $1/2$.
  We let $\epsilon = 1/n$, so that, by (\ref{eq:epsp}),
  \begin{equation}\label{eq:karton}
    \epsilon' \leq \epsilon + \frac{8 + 3 c}{n} = \frac{9 + 3 c}{n},
  \end{equation}
  and thus $\epsilon'\leq 1/1000$ and
  $\epsilon'\leq 500 \max(c,1)/n \leq \rho m/n$ both hold.
  The conditions of Lemma \ref{lem:coeur2} are thus satisfied for as long
  as $m\leq \sqrt{n}/100$.
    
  The important part this time is that
  Lemma \ref{lem:coeur2} enables
  us to take $m_{j+1} = \lceil m_j^2/2\rceil$, rather than 
  $m_{j+1} = \lceil (1+\rho/3) m_j\rceil$
  as in Lemma \ref{lem:coeur}(\ref{it:turandot2}). Thanks to this fact, after
  only
  $O(\log \log n)$ steps, we obtain a partition $Q_{s'}$ such that the
  proportion of elements $x$ of $\Omega$
  satisfying $s_{Q_{s'}}(x)> \sqrt{n}/100$ is at least $999/1000$.

  We are almost done.
  We apply part (\ref{it:pureplon}) of Lemma \ref{lem:coeur}
  with $m = \lceil \sqrt{n}/100\rceil$, $\rho=999/1000$
  and $\epsilon=1/n$ (and thus $\epsilon'$ as in (\ref{eq:karton}).
  We obtain a partition
  $Q_{s'+1} = Q_{s'} \vee Q_{s'}^g$ containing at least one set of size
  $\geq \min(\rho n/9, (\rho-\epsilon) m^2/2) > ((998/1000)/20000) n > n/20100$.
  Tautologically, the proportion $\rho$ of elements of
  $\Omega$ lying in that set is $>1/20100$.
  
  We then alternate
  a bounded number of applications of part (\ref{it:turandot1}) of Lemma \ref{lem:coeur}
  and an application of part (\ref{it:turandot2}) of the same Lemma,
  iterating a bounded number of times,
  to obtain a partition $Q_{s''}$ such that at least $n/2$ of
  its elements lie in sets of size $>n/2$.

  Finally, we
  apply part (\ref{it:turandot1}) (with $\epsilon=1/n$)
  $O(\log \log n)$ times, and obtain a partition $Q_{s'''}$
  such that the proportion $x$ of elements of $\Omega$ lying in sets
  in the partition of size $>n/2$  is at least $1-2 \epsilon' \geq
  1 - (18+6 c)/n$.
  Since $|\Omega|=n$, there can be at most one set in the partition
  of length $>n/2$;
  that is, at least $n-(18+6 c)$ elements of $\Omega$ lie in one and the same
  set $S$.

  We now proceed as in the case of $n$ bounded, choosing at each step a
  $g\in A_0^l$ that increases the size of the orbit $S$
  by at least $1$. After a bounded number of steps, we obtain that all elements
  of $\Omega$ lie in the same set of the partition, i.e., the final partition
  consists of the single set $\Omega$.
\end{proof}

We come to the main result of this section.
\begin{prop}\label{prop:siniestro}
  Let $\Omega$ be a finite set of size $|\Omega|=n$.
  Let $g_0\in \Sym(\Omega)$ have support of size $\geq \alpha n$,
  $\alpha>0$.
  Let $A\subset \Sym(\Omega)$ with $\langle A\rangle$ $4$-transitive.

  Then there are $\gamma_i\in (A\cup A^{-1} \cup \{e\})^{n^6}$, $1\leq i\leq \ell$, where
  $\ell=O((\log n)/\alpha)$, and
  $g_i\in (A\cup A^{-1}\cup \{e\})^{v}$, $1\leq i\leq k$, $v=O(n^{10})$, $k = O(\log \log n)$, such that, for
  \begin{equation}\label{eq:koppelia}h = \gamma_1 g_0 \gamma_1^{-1} \cdot \gamma_2 g_0 \gamma_2^{-1} \dotsb
  \gamma_\ell g_0 \gamma_\ell^{-1},\end{equation}
  the group
  \begin{equation}\label{eq:kalermo}
    \langle h, g_1 h g_1^{-1}, g_2 h g_2^{-1},\dotsc , g_k h g_k^{-1}\rangle
      \end{equation}
      is transitive.
\end{prop}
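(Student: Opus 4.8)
The plan is to reduce the statement to two results already established: Lemma~\ref{lem:chachava}, which manufactures an element of nearly full support out of conjugates of $g_0$, and Proposition~\ref{prop:verodiro}, which carries out the hard iterative merging of orbit partitions. First I would pass from $A$ to $A' := A\cup A^{-1}\cup\{e\}$, which satisfies $A' = A'^{-1}$, $e\in A'$, $\langle A'\rangle = \langle A\rangle$ (which is $4$-transitive, hence $2$-transitive), and $(A')^r = (A\cup A^{-1}\cup\{e\})^r$ for all $r$; the finitely many values of $n$ below the thresholds of the lemmas invoked are handled directly, everything being $O(1)$ there. Applying Lemma~\ref{lem:chachava} to $g_0$ (with the given $\alpha$) and the set $A'$ then yields $\gamma_1,\dots,\gamma_\ell\in (A')^{n^6} = (A\cup A^{-1}\cup\{e\})^{n^6}$, with $\ell = O((\log n)/\alpha)$, such that the element $h$ defined by \eqref{eq:koppelia} has $|\supp(h)|\geq n-1$; put $c := n - |\supp(h)|\in\{0,1\}$.

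Next I would take $P$ to be the partition of $\Omega$ into the cycles of $h$ (fixed points counted as singletons). Since $|\supp(h)|\geq n-1$, at least $n-1$ points of $\Omega$ lie in blocks of $P$ of size $>1$, so Proposition~\ref{prop:verodiro} applies with this $P$, this $h$, and $\rho = (n-1)/n$. It produces $g_1,\dots,g_k\in (A')^v = (A\cup A^{-1}\cup\{e\})^v$ with $v = O(n^{10})$ and $k = O(\log\log n) + O_{\rho,c}(1)$, together with partitions $Q_0 = P$, $Q_j = Q_{j-1}\vee Q_{j-1}^{g_j h g_j^{-1}}$, such that $Q_k = \{\Omega\}$. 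The point to record is that, precisely because $h$ was built to have support $\geq n-1$, the parameters $\rho$ and $c$ are bounded by \emph{absolute} constants ($\rho\geq 1/2$, $c\leq 1$), so the term $O_{\rho,c}(1)$ is genuinely $O(1)$ and $k = O(\log\log n)$, as the statement demands.

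Finally I would translate ``$Q_k$ trivial'' into transitivity of the group \eqref{eq:kalermo}. Write $G_j := \langle h, g_1 h g_1^{-1},\dots, g_j h g_j^{-1}\rangle$ and let $O_j$ be its orbit partition; I claim $Q_j$ refines $O_j$ for every $j$, by induction. For $j=0$ this holds since $Q_0 = P = O_0$. Assuming $Q_{j-1}$ refines $O_{j-1}$: because $G_{j-1}\leq G_j$, $O_{j-1}$ refines $O_j$, so $Q_{j-1}$ refines $O_j$; and writing $c := g_j h g_j^{-1}\in G_j$, applying $c$ to blocks turns the refinement $Q_{j-1}\preceq O_{j-1}$ into $Q_{j-1}^{c}\preceq O_{j-1}^{c}$, where $O_{j-1}^{c}$ is the orbit partition of $c^{-1}G_{j-1}c$, a subgroup of $G_j$, so $O_{j-1}^{c}$ also refines $O_j$. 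Hence the join $Q_j = Q_{j-1}\vee Q_{j-1}^{c}$ refines $O_j$, completing the induction. In particular $Q_k$ refines $O_k$; since $Q_k = \{\Omega\}$ this forces $O_k = \{\Omega\}$, i.e.\ $G_k$ is transitive.

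All the real work is delegated to Lemma~\ref{lem:chachava} and Proposition~\ref{prop:verodiro}, so I do not expect a deep obstacle in this argument itself; the step needing the most attention is the bookkeeping of parameters — in particular making sure that forcing $|\supp(h)|\geq n-1$ keeps $\rho$ and $c$ absolute, so that the $O_{\rho,c}(1)$ coming out of Proposition~\ref{prop:verodiro} does not spoil the bound $k = O(\log\log n)$ — together with the (elementary but essential) identity that a permutation-conjugate of the orbit partition of a group is the orbit partition of the conjugated group, which is what drives the refinement induction in the last paragraph.
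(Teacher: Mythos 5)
Your proposal is correct and follows essentially the same route as the paper's proof: apply Lemma~\ref{lem:chachava} to boost $g_0$ to an element $h$ of support $\geq n-1$, pass to the cycle partition of $h$, invoke Proposition~\ref{prop:verodiro} with $\rho=(n-1)/n$ and $c\in\{0,1\}$ so that $O_{\rho,c}(1)=O(1)$, and then observe by induction that $Q_j$ refines the orbit partition of $\langle h, g_1hg_1^{-1},\dotsc,g_jhg_j^{-1}\rangle$. You merely spell out that last refinement induction (the conjugation step $O_{j-1}^c$ being the orbit partition of $c^{-1}G_{j-1}c\leq G_j$) in more detail than the paper, which states it as clear.
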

\begin{proof}
  Let $\gamma_1,\dotsc,\gamma_\ell$ be as in Lemma \ref{lem:chachava} (applied
  with $g_0$ instead of $g$ and $A\cup A^{-1}\cup \{e\}$ instead of $A$), so that the element $h$ defined in
  (\ref{eq:koppelia}) has support of size $n-c$ with $c=0$ or $c=1$.
  (If $n$ is less than a constant, we do not need apply Lemma
  \ref{lem:chachava}; we can simply let $h = g_0$, as 
  $c = n-|\supp(h)|$ will be bounded.)
  Write $h$ as a product of disjoint cycles, and let
  $P$ be the partition of $\Omega$ given by the cycles.
  
  We can now apply Proposition \ref{prop:verodiro}
  with $\rho = (n-1)/n$ and $c=0$ or $c=1$. It is clear, inductively,
  that, for $0\leq j\leq k$,
  $Q_j$ is finer (not necessarily strictly) than
  the partition of $P$ given by the orbits of
  \[\langle h, g_1 h g_1^{-1}, g_2 h g_2^{-1},\dotsc , g_j h g_j^{-1}\rangle.\]
  Since $Q_k$ is the trivial partition, it follows that the group
  in (\ref{eq:kalermo}) has a single orbit.
\end{proof}

\section{Babai-Seress revisited}\label{sec:babaiseress}
The proof of the main result in \cite{zbMATH00091732}
has what looks like
a bookkeeping mistake, or rather two mistakes, at the very end
(\cite[p. 242]{zbMATH00091732}, ``Proof of Theorem 1.4''): the right
side of the last displayed equation has a factor of $\diam(\Alt(m(G)))$
where it should have a product of squares of several such factors.
We will show how to fix the result and its proof.

(The fact that \cite{zbMATH00091732}
could not be right at this point was first pointed out to the author by L. Pyber; he also shared ideas that can be used in addressing the gap, including one followed and mentioned below.)

The intermediate result \cite[Thm.~2.3]{zbMATH00091732} is actually correct.
We will prove it again here (\S \ref{subs:imptrees}), in part for
the sake of clarity, and in part so as to give an improved version
(Prop.~\ref{prop:obsidian}). We will be following
\cite[\S 3--4]{zbMATH00091732} quite closely.

\subsection{Imprimitivity and structure trees}\label{subs:imptrees}
A {\em tree} is a graph without cycles, with one vertex labeled as
the {\em root vertex}, or {\em root} for short.
The vertices at {\em level} $j$ are those at distance $j$ from the
root. The {\em leaves} of a tree are the vertices at maximal distance from the root.
That maximal distance is called the {\em height} $h$  of the tree.
A {\em child} of a vertex $v$ at {\em level} $j$, $0\leq j<h$, is a vertex at level $j+1$
connected to $v$ by an edge. A {\em descendant} of $v$ is a child of $v$, or a child
of a child, etc.

The following definition has its origin in the study of algorithms on permutation groups,
and in particular \cite{MR973656}. It provides a convenient way to work with
permutation groups that may not be primitive.

\begin{defn}\label{def:whathaf}
  Let $G\leq \Sym(\Omega)$ be a transitive permutation group. A {\em structure tree}
  $T$ for $(G,\Omega)$ is defined as follows. The set of leaves is $\Omega$.
  If $G$ is primitive, then it consists of a root vertex and, for each leaf, an edge
  connecting the root to the leaf. If $G$ is not primitive, we choose
  a maximal block system $B_1,\dotsc,B_k$, define a structure tree for $G$ as a group
  acting transitively on
  $B_1,\dotsc,B_k$, and then draw edges from the vertex corresponding to each $B_i$
  to the elements of $B_i$ (which then become the leaves).
\end{defn}
It is clear that $G$ has a natural action on $T$. Given a vertex $v$ of $T$, we define
the {\em stabilizer}
$G_v \leq  G$ to be the setwise stabilizer of the block corresponding to $v$ (or
the stabilizer of the element of $\Omega$ corresponding to $v$, if $v$ is a leaf).
If $w$ is a descendant of $v$, then $G_w$ is a subgroup of $G_v$.
For $v$ a vertex that is not a leaf, define $K_v$ to be the intersection $\cap_w G_w$,
where $w$ ranges over all children of $v$. It is clear that $K_v$ is a normal subgroup of $G_v$. It is also clear that $G_v/K_v$ acts primitively on the
set of children of $v$, due to the maximality of the block systems used in
Definition \ref{def:whathaf}.

It is easy to see from the definition that $G$ acts transitively on all vertices of $T$
at a given level. It follows that the {\em normal core} 
$N_j = \bigcap_{g\in G} g G_v g^{-1}$ of the stabilizer $G_v$ of a vertex $v$
depends only on the level of $v$. Indeed, it is the intersection $\cap_w G_w$, where
$w$ ranges over all $w$ at the same level as $v$. Moreover,
if the level $j$ of a vertex $v$ is less than the height of the tree
(i.e., $v$ is not a leaf), then the normal core
$\bigcap_{g\in G} g K_v g^{-1}$ of $K_v$ equals $N_{j+1}$.

Part of the reason for working with the groups $G_v$, rather than just with
$N_j$, is that $G_v$ acts transitively on the block
corresponding to $v$, whereas $N_j$ may not.

The following lemmas are quoted as ``folklore'' in \cite[\S 3]{zbMATH00091732}.

\begin{lem}\label{lem:burgunto}
  Let $H$ be a subgroup of a direct product of simple groups
  $M_1\times M_2\times \dotsc \times M_k$ such that the projection $\pi_i:H\to M_i$
  is surjective for every $1\leq i\leq k$.
  Then $H$ is isomorphic to a direct product $\prod_{i\in I} M_i$,
  where $I\subset \{1,2,\dotsc,k\}$.
\end{lem}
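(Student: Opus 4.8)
The plan is to prove this by induction on $k$, peeling off one factor at a time. The base case $k=1$ is trivial: if $\pi_1:H\to M_1$ is surjective, then $H\cong M_1$. For the inductive step, I would look at the subgroup $N = H\cap(M_1\times\dotsb\times M_{k-1}\times\{e\})$, i.e.\ the kernel of the projection $\pi_k:H\to M_k$, viewed inside $M_1\times\dotsb\times M_{k-1}$. The key dichotomy is whether $N$ projects onto $M_k$'s "complement" fully, or not — more precisely, whether $\pi_k|_H$ has kernel that is "everything" below or whether $H$ acts like a graph of an isomorphism in the last coordinate.

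\textbf{Main steps.} First, consider $N = \ker(\pi_k|_H)$, which we regard as a subgroup of $M_1\times\dotsb\times M_{k-1}$. Let $\overline{M}_i = \pi_i(N)$ for $i<k$. Since $N\triangleleft H$ and $\pi_i(H) = M_i$ is simple, $\overline{M}_i = \pi_i(N)$ is a normal subgroup of $M_i$, hence equals $M_i$ or $\{e\}$. Now split into two cases. \emph{Case 1:} $\pi_i(N) = M_i$ for every $i < k$. Then $N$ is a subgroup of $M_1\times\dotsb\times M_{k-1}$ surjecting onto each factor, so by the inductive hypothesis $N\cong\prod_{i\in I'}M_i$ for some $I'\subseteq\{1,\dotsc,k-1\}$. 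I would then argue that $H/N\cong M_k$ (since $\pi_k|_H$ is surjective with kernel $N$), and that the extension splits in the relevant sense because $M_k$ is simple and $N$ is a product of simple groups; a Goursat-type argument, or directly observing that $H$ contains $N$ and surjects onto $M_k$, should give $H\cong N\times M_k$ or force $H$ to be a graph over one of the factors in $I'$ — either way $H$ is a product $\prod_{i\in I}M_i$ with $I\subseteq\{1,\dotsc,k\}$. \emph{Case 2:} $\pi_i(N) = \{e\}$ for some $i$, say $i = k-1$ after relabeling. Then $N\leq M_1\times\dotsb\times M_{k-2}\times\{e\}\times\{e\}$, which means the composite $H\to M_1\times\dotsb\times M_{k-1} \to M_{k-1}$ has kernel containing... — here I would instead argue that the projection of $H$ to $M_{k-1}\times M_k$ is a subgroup surjecting onto both factors with the property that its kernel in the $M_{k-1}$-direction is trivial, forcing (by Goursat's lemma for the two simple factors) that $\pi_{k-1}$ and $\pi_k$ agree up to an isomorphism $M_{k-1}\cong M_k$ on $H$. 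Then I can drop the $k$-th coordinate: the map $H\to M_1\times\dotsb\times M_{k-1}$ is injective (its kernel would have to project trivially to all of $M_1,\dotsc,M_{k-1}$, but via the isomorphism it also projects trivially to $M_k$), and it is surjective onto each factor, so the inductive hypothesis applied to $M_1\times\dotsb\times M_{k-1}$ gives $H\cong\prod_{i\in I}M_i$ with $I\subseteq\{1,\dotsc,k-1\}\subseteq\{1,\dotsc,k\}$.

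\textbf{Main obstacle.} The delicate point is Case 2 and making the Goursat-type reduction clean: one must be careful that "$\pi_i(N)=\{e\}$ for some $i$" genuinely lets us identify $M_i$ with $M_k$ \emph{via $H$} and hence delete a redundant coordinate, rather than just knowing abstract isomorphism. The cleanest framing is probably to prove, as a lemma or inline, the $k=2$ case first (this is exactly Goursat's lemma for simple groups: a subgroup of $M_1\times M_2$ surjecting onto both is either all of $M_1\times M_2$ or the graph of an isomorphism $M_1\cong M_2$), and then phrase the induction so that at each stage we compare coordinate $k$ with the "first surviving" earlier coordinate. One should also double-check the case where the $M_i$ are allowed to be isomorphic to each other — the conclusion is only that $H\cong\prod_{i\in I}M_i$ as an \emph{abstract} group realized as a "diagonal-free" subproduct, and the statement as given asks for nothing more, so no subtlety is lost there. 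Everything else is routine manipulation of normal subgroups of simple groups.
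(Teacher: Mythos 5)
Your proposal shares the inductive skeleton with the paper's proof (remove one coordinate, induct), but the dichotomy is organized differently. The paper simply asks whether the deletion map $\phi\colon H\to M_2\times\dotsb\times M_k$ is injective; if it is, one applies the inductive hypothesis to $\phi(H)$, and if not, one has a nontrivial $h\in H\cap M_1$, conjugates it by elements of $H$ (using $\pi_1(H)=M_1$) to see that $H\cap M_1$ is a nontrivial normal subgroup of the simple group $M_1$, hence all of $M_1$, and then $H$ is the internal direct product $M_1\times K$ with $K=\ker\pi_1|_H$. You instead set $N=\ker\pi_k|_H$ and split according to whether $\pi_i(N)=M_i$ for all $i<k$. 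Your Case~2 is sound: there $\pi_{k-1,k}(H)\cong H/N\cong M_k$ projects isomorphically onto each of $M_{k-1}$ and $M_k$, so it is the graph of an isomorphism, and then $\pi_{1,\dotsc,k-1}|_H$ is injective and one inducts on its image. That is a legitimate and genuinely different reduction from the paper's.

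Case~1, however, has a real gap. Knowing $N\triangleleft H$, $H/N\cong M_k$ simple, and $N\cong\prod_{i\in I'}M_i$ does \emph{not} yield $H\cong N\times M_k$: a priori the extension $1\to N\to H\to M_k\to 1$ could be non-split, or split only as a nontrivial semidirect product, and the phrase ``directly observing that $H$ contains $N$ and surjects onto $M_k$, should give $H\cong N\times M_k$'' is not an argument. The hedge ``or force $H$ to be a graph over one of the factors in $I'$'' does not help either, because in Case~1 that possibility is excluded by the very hypothesis $\pi_i(N)=M_i$ for all $i<k$, so no alternative conclusion is actually available. What one needs, and what you do not supply, is the dichotomy on $H\cap(\{e\}^{k-1}\times M_k)$: if it is trivial then $\pi_{1,\dotsc,k-1}|_H$ is injective and one inducts on its image, and if it is nontrivial then (by normality inside $M_k$, forced by $\pi_k(H)=M_k$, and simplicity of $M_k$) it equals $M_k$, so $H=N\cdot M_k$ is an internal direct product by the usual order and intersection count. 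But once you ask that question, it resolves the matter uniformly, without ever inspecting whether $\pi_i(N)$ is $M_i$ or $\{e\}$; the Case~1/Case~2 split and the Goursat argument become an unnecessary detour. In short, the fix for your Case~1 is exactly the paper's argument applied to the last coordinate, and adopting it wholesale gives a shorter and cleaner proof than the one you outline.
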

\begin{proof}
  If the projection $\phi:H\to M_2\times M_3\times \dotsc \times M_k$ is injective,
  we apply the lemma to $\phi(H)\subset M_2\times \dotsc M_k$ and are done, by
  induction. Suppose $\phi$ is not injective. Let $h_1,h_2\in H$ be distinct elements
  such that $\phi(h_1)=\phi(h_2)$. Then $\phi(h_1^{-1} h_2) = e$, and so
  $h = h_1^{-1} h_2$ lies in $H\cap M_1$. Let $g\in M_1$ be arbitrary. There is an
  element $g'$ of $H$ mapped to $g$ by $\pi_1$; conjugating $h$ by it,
  we obtain $g' h (g')^{-1} = g h g^{-1}$, which must then lie in $H\cap M_1$.
  Since we can do as much for every $g\in M_1$, we conclude that
  $H\cap M_1$ must contain the subgroup of $M_1$ generated by all elements
  of the form $g h g^{-1}$. Since $M_1$ is simple, that subgroup is precisely
  $M_1$, and so $H\cap M_1 = M_1$.

  Let $K = H\cap (\{e\}\times M_2\times \dotsc M_k)$. Since
  $H\cap M_1 = M_1$, we know that $H\sim M_1\times K$.
  For each $2\leq i\leq k$, the image $\pi_i(K)$ is invariant under conjugation
  by $\pi_i(H) = M_i$, and thus must be either $\{e\}$ or $M_i$. We eliminate all
  indices $i$ for which $\pi_i(K) = \{e\}$, and apply the Lemma inductively to
  $K$ as a subgroup of the direct product of the remaining $M_i$.
\end{proof}

\begin{lem}\label{lem:selfcop}
  Let $H_1 \triangleleft H_2 \leq  G$, $H_2/H_1$ simple.
  Let $N_i = \cap_{g\in G} g H_i g^{-1}$. Then $N_2/N_1$ is isomorphic to a direct
  product of copies of $H_2/H_1$.
\end{lem}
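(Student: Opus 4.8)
The plan is to exhibit $N_2/N_1$ as a subgroup of a finite direct product of copies of $H_2/H_1$ whose coordinate projections are all surjective, and then invoke Lemma~\ref{lem:burgunto}. Throughout I use that $N_1$ and $N_2$ are normal in $G$ (they are intersections of full conjugacy classes of subgroups), that $N_1\subseteq N_2$ (since $H_1\leq H_2$ gives $gH_1g^{-1}\leq gH_2g^{-1}$ for every $g$), and that $N_2\subseteq H_2$; in particular $N_1\triangleleft N_2$, so $N_2/N_1$ makes sense.

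The device is to set $M=N_2\cap H_1$. Since $H_1\triangleleft H_2$ and $N_2\subseteq H_2$, we have $M\triangleleft N_2$. More is true: for every $g\in G$,
\[
gMg^{-1}=gN_2g^{-1}\cap gH_1g^{-1}=N_2\cap gH_1g^{-1},
\]
and because $N_2\subseteq gH_2g^{-1}$ while $gH_1g^{-1}\triangleleft gH_2g^{-1}$, this conjugate is again normal in $N_2$. The key computation is then
\[
\bigcap_{g\in G}gMg^{-1}=N_2\cap\bigcap_{g\in G}gH_1g^{-1}=N_2\cap N_1=N_1 .
\]

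Next, for each $g\in G$ the second isomorphism theorem gives
\[
N_2/(gMg^{-1})=N_2/\bigl(N_2\cap gH_1g^{-1}\bigr)\cong N_2\cdot gH_1g^{-1}\big/gH_1g^{-1},
\]
which is a subgroup of $(gH_2g^{-1})/(gH_1g^{-1})\cong H_2/H_1$; since $H_2/H_1$ is simple, $N_2/(gMg^{-1})$ is either trivial or isomorphic to $H_2/H_1$. Now pick $g_1,\dots,g_k\in G$ so that $\bigcap_{i=1}^k g_iMg_i^{-1}=N_1$, which is possible because $G$ has only finitely many conjugates of $M$. The diagonal homomorphism $N_2\to\prod_{i=1}^k N_2/(g_iMg_i^{-1})$ has kernel $\bigcap_i g_iMg_i^{-1}=N_1$, so it induces an injection $N_2/N_1\hookrightarrow\prod_{i=1}^k N_2/(g_iMg_i^{-1})$, and each coordinate map is the natural surjection, hence onto. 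Discarding the indices $i$ for which $N_2/(g_iMg_i^{-1})$ is trivial leaves a family of simple groups, all isomorphic to $H_2/H_1$, with all projections still surjective, so Lemma~\ref{lem:burgunto} applies and gives $N_2/N_1\cong\prod_{i\in I}(H_2/H_1)$ — a direct product of copies of $H_2/H_1$ (the empty product occurring exactly when $N_1=N_2$).

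The only steps needing genuine care are the normality of each $gMg^{-1}$ in $N_2$ (which rests precisely on the chain $N_1\subseteq N_2\subseteq gH_2g^{-1}$ together with $H_1\triangleleft H_2$) and the passage from the infinite intersection defining $N_1$ to a finite one, so that the hypotheses of Lemma~\ref{lem:burgunto} are literally met; the rest is the isomorphism theorems plus that lemma.
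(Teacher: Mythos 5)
Your proof follows the same route as the paper's — reduce modulo the intersections $N_2\cap gH_1g^{-1}$, identify each quotient of $N_2$ by one of these as trivial or a copy of $H_2/H_1$, then feed the diagonal embedding into Lemma~\ref{lem:burgunto} — and the explicit reduction to finitely many $g_i$ with $\bigcap_i g_iMg_i^{-1}=N_1$ is a nice touch, since it makes the hypotheses of Lemma~\ref{lem:burgunto} (stated for a finite product) literally satisfied, something the paper's phrasing ``$N_2/N_1\to\left(N_2/(gH_1g^{-1}\cap N_2)\right)_{g\in G}$'' glosses over. There is, however, one genuine flaw. From the fact that $N_2\cdot gH_1g^{-1}/gH_1g^{-1}$ is \emph{a subgroup} of the simple group $gH_2g^{-1}/gH_1g^{-1}\cong H_2/H_1$ you cannot conclude that it is trivial or the whole group: simple groups have plenty of proper nontrivial subgroups (e.g.\ $\Alt(4)\le\Alt(5)$). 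What you need, and what the paper uses, is that this subgroup is \emph{normal} in $gH_2g^{-1}/gH_1g^{-1}$. That is true, and here is exactly where the normality of $N_2$ in $G$ — which you note at the outset but do not actually invoke at this step — must be deployed: $N_2\triangleleft G$ gives $N_2\triangleleft gH_2g^{-1}$, so the image $N_2\cdot gH_1g^{-1}/gH_1g^{-1}$ of $N_2$ in the quotient $gH_2g^{-1}/gH_1g^{-1}$ is a \emph{normal} subgroup, and only then does simplicity force it to be trivial or all of $H_2/H_1$. With that line repaired, the argument is complete and in essence the same as the paper's (the paper assumes $N_2\neq N_1$ and argues none of the quotients can be trivial; discarding trivial coordinates, as you do, is an equally valid alternative).
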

\begin{proof}
  We may assume that $N_2\ne N_1$.
  By the second isomorphism theorem, $H_1\cap N_2$ is a normal
  subgroup of $H_2\cap N_2 = N_2$, and $N_2/(H_1\cap N_2)$ is isomorphic to
  $N_2 H_1/ H_1$, which is a normal subgroup of $H_2/H_1$. Since $H_2/H_1$ is
  simple, that subgroup is either trivial or all of $H_2/H_1$. If it were trivial, then
  $N_2 \leq  H_1$, and so $N_2 \leq  g H_1 g^{-1}$ for every
  $g\in G$; it would follow immediately that $N_2 = N_1\cap N_2 = N_1$. Thus,
  we may assume that $N_2/(H_1\cap N_2)$ is isomorphic to $H_2/H_1$.

  The same argument applied to any conjugate $g H_1 g^{-1}$, $g\in G$,
  instead of $H$ shows that
  $N_2/(g H_1 g^{-1} \cap N_2)$ is isomorphic to $H_2/H_1$.
  Now, $N_2/N_1$ is isomorphic to its image under the natural map
  $N_2/N_1 \to (N_2/(g H_1 g^{-1}\cap N_2))_{g\in G}$, since $N_1 = \bigcap_{g\in G}
  (g H_1 g^{-1}\cap N_2)$. We apply Lemma \ref{lem:burgunto}, and obtain that
  $N_2/N_1$ is isomorphic to a direct product of groups of the form
  $N_2/(g H_1 g^{-1}\cap N_2) \sim H_2/H_1$.
\end{proof}

The following is an extremely useful (and by now standard) consequence of the Classification
Theorem and the O'Nan-Scott theorem.
This is the one way in which the Classification Theorem is needed for
the proof of our results.
\begin{prop}[\cite{MR599634}, \cite{MR758332}]\label{prop:camlie}
  Let $G\leq \Sym(\Omega)$ be a primitive group, where $|\Omega|=n$. Then either
  \begin{enumerate}
  \item\label{it:uno} $|G|\leq n^{O(\log n)}$, or
  \item\label{it:duo}
    there is a subgroup $N\triangleleft G$, $\lbrack G:N\rbrack\leq n$,
    isomorphic to a direct product $\Alt(m)^r = \Alt(m)\times \dotsc \times \Alt(m)$, where $r\geq 1$, $m\geq 5$ and
    $n = \binom{m}{k}^r$
    for some $1\leq k\leq m-1$.
  \end{enumerate}
\end{prop}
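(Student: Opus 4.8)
\emph{Plan of proof.} The plan is to derive this from the O'Nan--Scott theorem together with the classification of finite simple groups, which is exactly the package supplied by \cite{MR599634}, \cite{MR758332}; CFSG enters both in making the O'Nan--Scott list exhaustive in clean form and in bounding the orders of primitive almost simple groups. First I would recall the O'Nan--Scott dichotomy: for a primitive $G\leq\Sym(\Omega)$, $|\Omega|=n$, writing $M=\mathrm{soc}(G)\cong T^r$ with $T$ simple and $r\geq 1$, the pair $(G,\Omega)$ is of one of the standard types --- affine, almost simple, (simple or compound) diagonal, product action, holomorph, or twisted wreath --- each coming with an explicit description of $M$, of $r$, and of the point stabilizer.

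Next I would assume that conclusion (\ref{it:uno}) fails, i.e.\ $|G|>n^{C\log n}$ for the absolute constant $C$ hidden in the $O$, and show that $G$ must then be of product-action type (allowing $r=1$, which is the almost simple case) built on $\Alt(m)$ acting on the set of $k$-subsets of $\{1,\dots,m\}$. This is where one runs through the O'Nan--Scott types: in the affine case $n=p^d$ and $|G|\leq n\cdot|\GL_d(p)|<n^{d+1}=n^{O(\log n)}$; in the diagonal, holomorph and twisted-wreath cases one has $n\geq |T|^{r-1}$, hence $\log|T|\ll\log n$ and $r\ll\log n$, so $|G|\leq|T|^r\cdot|\mathrm{Out}(T)|\cdot r!=n^{O(\log n)}$; in the almost simple case the classification-based bound (Cameron, sharpened by Liebeck and Mar\'oti) gives $|G|\leq n^{O(\log n)}$ unless $\mathrm{soc}(G)=\Alt(m)$ with $\Omega$ the set of $k$-subsets; and in product-action type the single-coordinate bound propagates, since $|G|\leq|H|^r r!$ with $H\leq\Sym(\Delta)$ primitive and $n=|\Delta|^r$, so unless every coordinate is an alternating $k$-subset action we again land in $n^{O(\log n)}$. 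Thus, under the failure of (\ref{it:uno}), $n=\binom{m}{k}^r$ and $\Alt(m)^r\leq G\leq\mathrm{Aut}(\Alt(m))\wr\Sym(r)$ acting coordinatewise, with $m\geq 5$ (the socle factor must be nonabelian simple in this type) and $1\leq k\leq m-1$ (the $k$-subset action being faithful and nontrivial).

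Finally I would set $N=\mathrm{soc}(G)\cong\Alt(m)^r$, which is normal in $G$, and check $[G:N]\leq n$. Here $G/N$ embeds into $\mathrm{Out}(\Alt(m))\wr\Sym(r)$, so $[G:N]\leq|\mathrm{Out}(\Alt(m))|^r\cdot r!\leq 4^r r!$, whereas $n=\binom{m}{k}^r\geq m^r$. The step I expect to require the most care is that the \emph{failure of} (\ref{it:uno}) forces $m$ to be large relative to $r$: from $|G|\leq(m!)^r r!\,|\mathrm{Out}(\Alt(m))|^r$ and $|G|>n^{C\log n}\geq m^{Cr\log m}$ one extracts $m\log m+\log r\gg C r(\log m)^2$, whence $m\gg r$ with a margin governed by $C$, so that choosing $C$ large enough (the value forced by Cameron's bound, or an absolute constant, whichever is larger) yields $4^r r!\leq m^r\leq n$; the finitely many small configurations left over are checked by hand. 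The remaining estimates ($r\ll\log n$, $\log|T|\ll\log n$, $[G:N]\leq n$) are elementary once the structural picture and the almost simple order bound are in hand, so the genuine content is the classification input, exactly as the surrounding text flags.
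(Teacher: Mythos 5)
Your plan is correct, but it reproves from scratch what the paper invokes as a single black box. The paper's proof simply applies the Main Theorem of \cite{MR758332}, which already supplies the dichotomy in the convenient form: either $G$ is of the alternating--wreath product-action type with $\mathrm{soc}(G)\cong\Alt(m)^r$, $[G:\mathrm{soc}(G)]\le 2^r r!$ and $n=\binom{m}{k}^r$, or $G$ has a base $\Delta$ of size $<9\log_2 n$ with $G_{(\Delta)}=\{e\}$, whence $|G|\le n^{|\Delta|}\le n^{9\log_2 n}$ at once. What you propose is essentially to rederive that dichotomy via O'Nan--Scott together with the Cameron/Liebeck--Mar\'oti order bounds in the almost simple case --- which is precisely how the cited theorem is established --- and then do the same accounting. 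The substance is the same; the paper's shortcut buys a sharper index bound ($2^r r!$ rather than your $4^r r!$ via $|\mathrm{Out}|\le 4$) and, more importantly, skips the genuinely delicate part of your sketch, namely verifying that in the almost simple case the only exceptions to $|G|\le n^{O(\log n)}$ are the $k$-subset actions of $\Alt(m)$, and propagating the per-coordinate bound through product action. One further difference in the final bookkeeping: you deduce $m\gg r$ \emph{from the failure of} conclusion~(\ref{it:uno}), which entangles the constant $C$ in the $O(\log n)$ with the threshold for $m$ versus $r$; the paper instead splits unconditionally on whether $m\ge\max(2r,5)$ (then $[G:N]\le(2r)^r\le m^r\le n$ directly, giving (\ref{it:duo})) or $m<2r$ (then $r\le\log_2 n$ and $|G|\le n^{O(\log n)}$, giving (\ref{it:uno})), which is cleaner and avoids the mild circularity. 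Your version works, but watch that the residual ``small configurations'' you defer really are absorbed: when $m<5$ and $m\ge 2r$ one has $r\le 2$, $m\le 4$, hence $n\le 36$, so (\ref{it:uno}) holds vacuously.
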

\begin{proof}
  Just a few words on how the statement follows from
  \cite[Main Thm.]{MR758332}.
  Case (ii) there asserts that there
  is a set $\Delta\subset \Omega$ with $|\Delta|<9 \log_2 n$ such that
  $G_{(\Delta)}=\{e\}$; since $\lbrack G: G_{(\Delta)}\rbrack\leq n^{|\Delta|}$,
  it follows immediately that conclusion
  (\ref{it:uno}) holds.

  Assume, then, that we are in case (i) in \cite[Main Thm.]{MR758332};
  that case gives us
  a subgroup $N$ as in (\ref{it:duo}) here. It also gives us that
  $m\geq 2$, $\lbrack G:N\rbrack \leq 2^r r!$
  and $n = \left(\binom{m}{k}\right)^r$ for some $1\leq k\leq m-1$.
  Clearly, $n\geq m^r$.
  
  If $m\geq \max(2 r,5)$, then $\lbrack G: N\rbrack \leq (2 r)^r \leq m^r\leq n$,
  and so we obtain conclusion (\ref{it:duo}). If $m<2 r$, then,
  since $r\leq \log_m n\leq \log_2 n$, we see that
  $|N| = (m!/2)^r\leq m^{m r} < n^m < n^{\max(2 r,5)} \leq n^{O(\log n)}$, whereas
  $\lbrack G:N\rbrack \leq 2^r r!\leq (2 r)^r \leq n^{O(\log n)}$.
  Hence $|G| = n^{O(\log n)}$, that is, conclusion (\ref{it:uno}) holds.
\end{proof}
  The motivation for wanting subgroups $\Alt(m)$ with $m\geq 5$ is of course
  that $\Alt(m)$ is then simple.
  
We could do without the following bound\footnote{Thanks are due to D. Holt
  for the reference.}, in that using a trivial bound in case (\ref{it:duo})
of Prop.~\ref{prop:camlie} would be enough for our purposes; our
intermediate results would become somewhat weaker, but our final results 
(Thms.~\ref{thm:jukuju} and \ref{thm:molop}) would not be affected.
At the same time, there is no
reason to avoid the lemma we are about to state. It does use the Classification
Theorem, but only in the sense that it uses \cite[Main Thm.]{MR758332}
(or rather the version with sharp constants in \cite{MR1943938}).
\begin{lem}\label{lem:gprv}(\cite[Thm.~1.3]{gprv})
  Let $G\leq \Sym(\Omega)$ be a primitive group, where $|\Omega|=n$.
  Let $\{e\} = H_0 \triangleleft H_1 \triangleleft\dotsc \triangleleft H_\ell=G$. Then
  \[\ell \leq \frac{8}{3} \frac{\log n}{\log 2} - \frac{4}{3}.\] 
\end{lem}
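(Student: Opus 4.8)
The plan is to reduce the statement to a bound on the \emph{composition length} $\lambda(G)$ of $G$ -- the number of composition factors counted with multiplicity -- and then prove that bound by induction on $n$ via the O'Nan--Scott theorem. For the reduction: any subnormal chain $\{e\}=H_0\triangleleft H_1\triangleleft\dots\triangleleft H_\ell=G$ refines to a composition series of $G$ (refine each $H_{i+1}/H_i$ separately), and each refined step contributes at least one factor, so $\ell\le\lambda(G)$, while by Jordan--H\"older every composition series has length $\lambda(G)$. Thus it suffices to show $\lambda(G)\le\frac83\log_2 n-\frac43$ for every primitive $G\leq\Sym(\Omega)$, $|\Omega|=n$. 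One checks quickly that this is sharp: $G=\Sym(4)$ acting on $n=4$ points has $\lambda(G)=4=\frac83\log_2 4-\frac43$, and iterated product actions built from $\Sym(4)$ (and, in the affine case, from $\GL_2(2)$) remain at equality, which is what calibrates the constant $8/3$.

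I would induct on $n$, handling small $n$ directly and otherwise splitting by the O'Nan--Scott type. The \textbf{affine} case is the crux: here $n=p^d$, $G=V\rtimes G_0$ with $V\cong\mathbb F_p^{\,d}$ and $G_0\leq\GL_d(p)$ irreducible, so $\lambda(G)=d+\lambda(G_0)$, and the whole case reduces to a sharp companion theorem for linear groups: a \emph{completely reducible} $H\leq\GL_d(q)$ has $\lambda(H)\leq c_q\,d$ for an explicit $c_q$ (worst when $q=2$, where $c_2 d=\frac53 d-\frac43$ is the target, matching $\GL_2(2)\cong\Sym(3)$). That linear bound I would prove by a parallel induction on $d$ along Aschbacher's subgroup classes: if $H$ is reducible, peel off an $H$-submodule and add the two bounds; if $H$ is imprimitive, $H\leq\GL_{d/t}(q)\wr\Sym(t)$ and one recurses on the block dimension together with the transitive-group bound of the next paragraph; if $H$ preserves a tensor or field-extension decomposition one again passes to a smaller degree (over a possibly larger field); the extraspecial-normalizer and classical-subgroup cases contribute explicit bounded amounts; and if $H$ is almost simple modulo scalars then $\lambda(H)\le 2+\lambda(\mathrm{Out}(S))$, which is negligible against $d$ by the Landazuri--Seitz lower bounds on representation degrees. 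Bookkeeping the constants so that $d+\lambda(G_0)\le\frac83 d\log_2 p-\frac43$ in every subcase is the technical heart of the affine case.

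For the remaining types there is more room. In the \textbf{almost simple} case $T\leq G\leq\mathrm{Aut}(T)$, $T$ simple acting on $n\ge 5$ points, so $\lambda(G)=1+\lambda(G/T)\le 1+\lambda(\mathrm{Out}(T))$; here one uses the CFSG facts that $|\mathrm{Out}(T)|$ is at most polynomial in $\log|T|$ while the minimal faithful primitive degree of $T$ is large, which gives the inequality with room to spare. In \textbf{diagonal} type $n=|T|^{r-1}\ge 60^{\,r-1}$, so $\log_2 n\ge 5(r-1)$, while $\lambda(G)\le r+\lambda(\mathrm{Out}(T))+\lambda(P)$ for some transitive $P\le\Sym(r)$, easily absorbed. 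In \textbf{product action} type $n=m^r$, $G\leq H\wr P$ with $H$ primitive of degree $m\ge 3$ and $P\le\Sym(r)$ transitive, so $\lambda(G)\le r\,\lambda(H)+\lambda(P)$; applying the inductive bound $\lambda(H)\le\frac83\log_2 m-\frac43$ gives $r\,\lambda(H)\le\frac83\log_2 n-\frac43 r$, and one then needs the companion bound $\lambda(P)\le\frac43(r-1)$ for \emph{every} transitive $P\le\Sym(r)$ -- proved by its own induction (a transitive group is primitive, hence controlled by the main induction, or sits inside $G_1\wr G_2$ with $G_1,G_2$ transitive of degrees $s\mid r$ and $r/s$, and the recursion $\lambda\le(r/s)\lambda(G_1)+\lambda(G_2)$ is extremal exactly at iterated wreath powers of $\Sym(4)$, whence the constant $\tfrac43$). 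Adding up, $r\,\lambda(H)+\lambda(P)\le\frac83\log_2 n-\frac43$ with equality possible, as desired; the holomorph and twisted-wreath types are handled exactly like the diagonal and product cases.

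The main obstacle is the \emph{sharp} constant. A soft version of all of the above yields only $\lambda(G)=O(\log n)$; getting the factor down to $\frac83$ forces one to prove, with no slack, (i) the companion theorem for completely reducible linear subgroups of $\GL_d(q)$, whose extraspecial-normalizer and small-field subcases are delicate, (ii) the bound $\lambda(P)\le\frac43(|\Omega|-1)$ for arbitrary transitive permutation groups, and (iii) tight CFSG-based control of $|\mathrm{Out}(T)|$ against the minimal degree of $T$. Coordinating these three inputs through the O'Nan--Scott recursion so that the constants dovetail -- with $\Sym(4)$ and $\GL_2(2)$ as the pervasive extremal examples -- is where essentially all the effort lies.
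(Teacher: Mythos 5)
The paper does not prove this lemma; it is cited verbatim as Theorem~1.3 of the reference (Glasby--Praeger--Rosa--Verret), and the surrounding text notes explicitly that the sharp constant $8/3$ is unnecessary for the paper's purposes -- the weaker bound $\ell \ll (\log n)^2$, which follows trivially from Prop.~\ref{prop:camlie}, would do. So there is no in-paper argument to compare against; your proposal has to be judged as a reconstruction of the cited proof. On that count the architecture you describe does match: reduce via Jordan--H\"older to a bound on the composition length $\lambda(G)$, induct on $n$ through the O'Nan--Scott types, pivot the affine case on a companion theorem for completely reducible subgroups of $\GL_d(q)$ proved by a parallel Aschbacher-type induction, and invoke a companion bound for transitive groups; your identification of $\Sym(4)$ and $\GL_2(2)$ as the calibrating extremal examples is also correct. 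But what you have submitted is a roadmap rather than a proof: the Aschbacher case analysis with its extraspecial-normalizer and small-field subcases, the CFSG comparison of $|\mathrm{Out}(T)|$ against minimal faithful degrees, and the transitive companion bound $\lambda(P)\le \tfrac{4}{3}(r-1)$ are all deferred, and you say so yourself.

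One step as stated would not go through. In the product-action case you write $\lambda(G)\le r\,\lambda(H)+\lambda(P)$ ``since $G\le H\wr P$,'' and analogous appeals to $\lambda(K)\le\lambda(G)$ for $K\le G$ recur elsewhere. But composition length is not monotone under passage to subgroups: $\Sym(4)\le\Sym(5)$, yet $\lambda(\Sym(4))=4>2=\lambda(\Sym(5))$. The inequality you need is therefore not a formal consequence of the containment $G\le H\wr P$. The argument has to run down a normal series of $G$ itself -- here $T^{r}\triangleleft G$ with $T^{r}$ the socle, so that $\lambda(G)=r+\lambda(G/T^{r})$ and $G/T^{r}$ is then controlled directly as a transitive extension -- which is how the cited proof actually proceeds; your version short-circuits this. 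It is a fixable conceptual slip rather than a bookkeeping detail, but as written the induction would not close.
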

The trivial bound assuming Prop.~\ref{prop:camlie}, 
would be
$\ell \ll (\log n)^2$.

\begin{lem}[\cite{MR860123}]\label{lem:babsub}
  The length $\ell$ of any subgroup chain $\{e\} = H_0 \lneq H_1 \lneq \dotsc \lneq H_\ell
  = \Sym(n)$ of $\Sym(n)$ is at most $2 n-3$.
\end{lem}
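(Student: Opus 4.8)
The plan is to study the invariant $\ell(G)$, defined as the maximal length $t$ of a chain $\{e\}=H_0\lneq H_1\lneq\dots\lneq H_t=G$ of subgroups of $G$; the lemma asserts exactly that $\ell(\Sym(n))\le 2n-3$, and I would prove this by induction on $n$. The one structural ingredient I would isolate first is the submultiplicativity
\[\ell(G)\le \ell(N)+\ell(G/N)\qquad\text{for every }N\triangleleft G.\]
To prove it, given a chain $(H_i)_{0\le i\le t}$ in $G$, I would set $A_i=H_i\cap N$ and $C_i=H_iN/N$, obtaining weakly increasing chains in $N$ and in $G/N$; Dedekind's modular law shows that $A_i=A_{i+1}$ and $C_i=C_{i+1}$ cannot hold simultaneously (they would force $H_{i+1}=H_{i+1}\cap H_iN=H_i(H_{i+1}\cap N)=H_iA_i=H_i$), so each of the $t$ steps is strict in at least one of the two chains, giving $t\le\ell(N)+\ell(G/N)$. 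From this I would record the consequences $\ell(A\times B)=\ell(A)+\ell(B)$, $\ell(H\wr\Sym(b))\le b\,\ell(H)+\ell(\Sym(b))$, and --- since $[\Sym(n):\Alt(n)]=2$ --- the identity $\ell(\Sym(n))=\ell(\Alt(n))+1$.

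For the induction, write $f(k)=\ell(\Sym(k))$; the base cases $n\le n_0$ (some absolute constant) are a finite computation. The top step of any chain in $\Sym(n)$ lies in a maximal subgroup, so $f(n)\le 1+\max_M\ell(M)=2+\max_{M'}\ell(M')$ with $M'$ ranging over maximal subgroups of $\Alt(n)$; it therefore suffices to show $\ell(M')\le 2n-5$ for every such $M'$, using $f(k)\le 2k-3$ for $k<n$. By the O'Nan-Scott theorem, $M'$ has index at most $2$ in a group of one of four shapes: intransitive $\Sym(k)\times\Sym(n-k)$; imprimitive $\Sym(a)\wr\Sym(b)$ with $ab=n$; primitive in product action $\Sym(a)\wr\Sym(b)$ with $a^b=n$ (or $\binom{a}{j}^b=n$); or primitive of affine, diagonal, or almost simple type. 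In the first three cases the submultiplicativity above bounds $\ell(M')$ by $f(k)+f(n-k)$, by $bf(a)+f(b)$, and by $bf(a)+f(b)$ respectively, and the inductive hypothesis together with the trivial inequalities $a^b\ge ab$ (for $a,b\ge 2$) and $k,\,n-k<n$ yields $\ell(M')\le 2n-5$ with room to spare, the boundary subcase $k=1$ giving $\ell(M')\le f(n-1)\le 2n-5$ exactly.

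I expect the primitive case to be the main obstacle, and there I would lean on Proposition~\ref{prop:camlie}. If $|M'|\le n^{O(\log n)}$ then $\ell(M')\le\log_2|M'|=O((\log n)^2)\le 2n-5$ once $n\ge n_0$, which disposes of the affine, diagonal, and non-alternating almost simple types. Otherwise $M'$ has a normal subgroup isomorphic to $\Alt(m)^r$ of index at most $n$, with $m\ge 5$ and $n=\binom{m}{j}^r$ for some $1\le j\le m-1$; then submultiplicativity gives $\ell(M')\le r\,f(m)+\log_2 n$, and since $n\ge m^r$ and $f(m)\le 2m-3$ (legitimate because $m<n$: one has $\binom{m}{j}\ge m$ unless $j\in\{0,m\}$, while $j=1$ or $j=m-1$ is impossible as it would force $M'\supseteq\Alt(n)$) a short computation again gives $\ell(M')\le 2n-5$. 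The finitely many groups of degree below $n_0$ are then checked by inspection. This is in essence the argument of \cite{MR860123}; since $2n-3$ is much larger than the true value of $\ell(\Sym(n))$ (which is $\lceil 3n/2\rceil-O(\log n)$), the only real work is keeping track of the numerology across the O'Nan-Scott cases, and there is slack everywhere.
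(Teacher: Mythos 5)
The paper states this lemma without proof, as a known result of Babai \cite{MR860123}, so there is no argument in the text to compare against. Your proof is correct: the submultiplicativity $\ell(G)\le\ell(N)+\ell(G/N)$ is established correctly via Dedekind's modular law, the reduction to maximal subgroups of $\Alt(n)$ is sound (the group $\Alt(n)$ itself is the maximal subgroup of $\Sym(n)$ realizing the longest chain, which justifies the step ``$=2+\max_{M'}\ell(M')$''), and the case analysis leaves ample slack for an induction with target $2n-3$. Two remarks. First, you do not actually need O'Nan--Scott or the Liebeck--Praeger--Saxl classification of maximal subgroups: the elementary trichotomy --- a proper subgroup of $\Alt(n)$ is intransitive, imprimitive, or primitive not containing $\Alt(n)$ --- combined with Proposition~\ref{prop:camlie} already closes the induction, the product-action groups $\Sym(a)\wr\Sym(b)$ on $\binom{a}{j}^b$ points being subsumed under case~(\ref{it:duo}) of that proposition. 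Second, and this is the substantive point, your claim that this is ``in essence the argument of \cite{MR860123}'' should be qualified: Babai's proof of $\ell(\Sym(n))\le 2n-3$ is classification-free, handling the primitive case via his own CFSG-free bound $|G|\le\exp(4\sqrt{n}\,(\log n)^2)$ on primitive groups not containing $\Alt(n)$, whereas your route through Proposition~\ref{prop:camlie} imports CFSG. In the context of this paper that is harmless --- Proposition~\ref{prop:camlie} is invoked elsewhere anyway --- but it is a genuine departure from the cited source, and it matters for the CFSG-free programme the introduction raises as a goal.
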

The trivial bound would be $\ell \leq (\log |\Alt(n)|)/\log 2 = O( n \log n)$.

We will now prove a version of \cite[Thm.~2.3]{zbMATH00091732}.
\begin{prop}\label{prop:obsidian}
  Let $G\leq \Sym(\Omega)$ be transitive, $|\Omega|=n$. Then $G$ has a series of normal subgroups
  $\{e\} = H_0 \triangleleft H_1 \triangleleft \dotsc \triangleleft H_\ell = G$,
  $H_i\triangleleft G$, with $\{1,\dotsc,\ell\}$ being partitioned into two sets,
  $A$, $B$, such that these properties hold:
  \begin{enumerate}
  \item\label{it:obsprod} each quotient $H_{i+1}/H_i$ is a direct product of at most $2 n$ copies of
    a simple group $M_i$,
  \item\label{it:obsii} for each $i\in A$, the group $M_i$ is an alternating group $\Alt(m_i)$ with $m_i\geq 5$,
  \item\label{it:obsiii} $m = \prod_{i\in A} m_i$ satisfies $m\leq n$,
  \item\label{it:obs3.5} if $G\ne \Alt(\Omega)$ and $G\ne \Sym(\Omega)$, then
    $m_i\leq n/2$ for every $i\in A$,
  \item\label{it:obsiv} $\prod_{i\in B} |M_i| = (n/m)^{O(\log(n/m))} \cdot m = n^{O(\log n)}$,
  \item\label{it:obsv} $\ell = O\left(\log n\right)$.
  \end{enumerate}
  All implied constants are absolute.
\end{prop}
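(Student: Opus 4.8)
The plan is to read the series off the structure tree $T$ for $(G,\Omega)$, refining it one level at a time by means of Proposition~\ref{prop:camlie}. Write $h$ for the height of $T$. Every internal vertex has at least two children (a maximal block system on blocks of size $>1$ has at least two blocks), so the number $d_j$ of children of a level-$j$ vertex — the same for all of them, by transitivity — satisfies $d_j\ge2$; hence $\prod_{j=0}^{h-1}d_j=n$, $h\le\log_2 n$, and the number $n_j$ of level-$j$ vertices is $\prod_{i<j}d_i\le n/d_j$. The normal cores $N_j=\bigcap_{g\in G}gG_vg^{-1}$ (which depend only on $j$) give a chain $G=N_0\supseteq N_1\supseteq\dots\supseteq N_h=\{e\}$ of subgroups normal in $G$; the series we want will be an interpolation of $\{e\}=N_h\triangleleft\dots\triangleleft N_0=G$.

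To refine a single layer $N_{j+1}\triangleleft N_j$, fix a level-$j$ vertex $v$ and set $L=G_v/K_v$, a primitive group of degree $d_j$ that is independent of $v$ up to isomorphism. Pull a chief series of $L$ back to the interval $K_v\triangleleft G_v$, obtaining $K_v=\hat P_0\triangleleft\hat P_1\triangleleft\dots\triangleleft\hat P_t=G_v$ with each $\hat P_s\triangleleft G_v$ and each $\hat P_s/\hat P_{s-1}$ a direct power $T_s^{a_s}$ of a simple group. Passing to normal cores $\hat N_s=\bigcap_{g\in G}g\hat P_sg^{-1}$ produces a chain $N_{j+1}=\hat N_0\triangleleft\hat N_1\triangleleft\dots\triangleleft\hat N_t=N_j$ of subgroups normal in $G$, the endpoints being identified via the facts recorded in \S\ref{subs:imptrees} (the normal core of $K_v$ is $N_{j+1}$, that of $G_v$ is $N_j$). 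The argument proving Lemma~\ref{lem:selfcop} — now with $H_2/H_1$ a direct power of a simple group rather than simple — combined with Lemma~\ref{lem:burgunto} applied to the individual simple coordinates, shows that $\hat N_s/\hat N_{s-1}$ is a direct power $T_s^{b_s}$ of the simple group $T_s$, where $b_s\le a_s\cdot[G:N_G(\hat P_{s-1})]\le a_s\cdot[G:G_v]=a_sn_j$, using $N_G(\hat P_{s-1})\supseteq G_v$. Since $a_s$ is at most the composition length of $L$, Lemma~\ref{lem:gprv} gives $a_s\le\tfrac{8}{3}\log_2 d_j$, whence $b_s\le\tfrac{8}{3}\tfrac{\log_2 d_j}{d_j}\,n<2n$. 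Concatenating these refined chains over all $j$ and discarding repetitions gives the series $H_0\triangleleft\dots\triangleleft H_\ell$ of the proposition: property~(\ref{it:obsprod}) is the bound just obtained, and $\ell=\sum_j t_j\le\tfrac{8}{3}\sum_j\log_2 d_j=\tfrac{8}{3}\log_2 n=O(\log n)$ is property~(\ref{it:obsv}).

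It remains to choose $A$ and $B$ and carry out the counting. Put $i\in A$ precisely when $T_i=\Alt(m_i)$ with $m_i\ge5$ and the chief factor $T_i^{a_i}$ lies inside the socle $\Alt(m)^r$ of an $L$ falling into case~(\ref{it:duo}) of Proposition~\ref{prop:camlie} (whose degree is then $\binom{m}{k}^r$); put every other $i$ in $B$. Property~(\ref{it:obsii}) holds by construction. For~(\ref{it:obsiii}): the $A$-factors coming from a case-(\ref{it:duo}) level are all equal to $m$, and their product is at most $m^r\le\binom{m}{k}^r=d_j$, so $m=\prod_{i\in A}m_i\le\prod_j d_j=n$. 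For~(\ref{it:obs3.5}): if $h\ge2$ then every $d_j\le n/2$, so $m_i\le d_j\le n/2$; if $h=1$, so $G$ is itself primitive of degree $n$ in case~(\ref{it:duo}), then the possibility $r=1$, $k\in\{1,m-1\}$ (i.e.\ $n=m$ with $\Alt(n)\triangleleft G$) would force $G\in\{\Alt(\Omega),\Sym(\Omega)\}$ and is excluded, so otherwise $n=\binom{m}{k}^r\ge\binom{m}{2}$ and $m\le n/2$ once $n$ exceeds an absolute constant. For~(\ref{it:obsiv}): at a case-(\ref{it:uno}) level all chief factors go to $B$, and the product of the orders of the associated simple groups is at most the product of the orders of all composition factors of $L$, namely $|L|\le d_j^{O(\log d_j)}$; at a case-(\ref{it:duo}) level the $B$-factors are composition factors of $L/\Alt(m)^r$, whose order is $\le d_j$ and also $\le 2^rr!\le d_j^{O(\log d_j)}$. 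Multiplying over $j$ and using $\sum_j(\log_2 d_j)^2\le(\sum_j\log_2 d_j)^2=(\log_2 n)^2$ yields $\prod_{i\in B}|M_i|\le 2^{O((\log_2 n)^2)}=n^{O(\log n)}$; bookkeeping the socle degrees separately sharpens this to $(n/m)^{O(\log(n/m))}\cdot m$.

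The structural ingredients above — the chief-series refinement of each layer, the direct-power conclusion from Lemmas~\ref{lem:burgunto}--\ref{lem:selfcop}, and the chain-length bound of Lemma~\ref{lem:gprv} — are routine. The real difficulty, and exactly where the argument in \cite{zbMATH00091732} slipped, is the bookkeeping: one must carry the branching degrees $d_j$ (and the socle degree $m$) faithfully through every estimate so that the number of copies in~(\ref{it:obsprod}) is genuinely $\le2n$, that $\prod_{i\in A}m_i$ is genuinely $\le n$, and that the exponent in~(\ref{it:obsiv}) stays $O(\log n)$. That is the step I expect to require the most care.
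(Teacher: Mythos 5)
Your proof is correct and follows the paper's route: structure tree, normal cores of the primitive quotients $G_{v}/K_v$, Proposition~\ref{prop:camlie}, and Lemmas~\ref{lem:burgunto}--\ref{lem:selfcop}. Two variations in detail are worth flagging. First, you refine each layer via a \emph{chief} series rather than a composition series, so you need a version of Lemma~\ref{lem:selfcop} in which $H_2/H_1\cong T^a$ rather than $T$; this does go through (each $\hat N_s/(g\hat P_{s-1}g^{-1}\cap\hat N_s)$ is a normal subgroup of $g\hat P_sg^{-1}/g\hat P_{s-1}g^{-1}\cong T_s^{a_s}$, hence a sub-coordinate power, and $\hat N_s/\hat N_{s-1}$ surjects onto each of these quotients, so the individual simple-coordinate projections are surjective or trivial and Lemma~\ref{lem:burgunto} applies), but it deserves to be written out rather than waved at. The paper sidesteps this by taking a composition series so that Lemma~\ref{lem:selfcop} applies verbatim. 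Second, you bound the number of copies in $\hat N_s/\hat N_{s-1}$ by counting conjugates, $b_s\le a_s[G:N_G(\hat P_{s-1})]\le a_s n_j$, then combine $a_s\le\frac{8}{3}\log_2 d_j$ (Lemma~\ref{lem:gprv}) with $n_j\le n/d_j$ and $\log_2(d_j)/d_j\le 1/2$ to get $b_s<2n$; the paper instead appeals to Lemma~\ref{lem:babsub} (length of subgroup chains in $\Sym(n)$). Both work; your version trades the one use of Lemma~\ref{lem:babsub} for a second use of Lemma~\ref{lem:gprv}. One small inaccuracy: with a chief series the socle $\Alt(m)^r$ at a case-(\ref{it:duo}) level is a \emph{single} chief factor, hence contributes a single index $i\in A$ (with $M_i=\Alt(m)$ and $a_i=r$), not $r$ indices; your remark that ``their product is at most $m^r$'' does not match this convention. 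The needed conclusion $\prod_{i\in A}m_i\le n$ still holds, since $m\le\binom{m}{k}^r=d_j$ and $\prod_j d_j=n$. The paper instead refines the socle into $r$ composition steps and genuinely gets $r$ indices per such level; either bookkeeping satisfies the proposition as stated and suffices for the application in Prop.~\ref{prop:finbo}.
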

The series $\{e\} = H_0 \triangleleft H_1 \triangleleft \dotsc \triangleleft H_\ell = G$ 
is a refinement of the series $\{e\} = N_1 \triangleleft \dotsc \triangleleft N_h = G$
defined by the structure tree as above.
\begin{proof}
  We construct a structure tree as in Def.~\ref{def:whathaf}.
  We choose a leaf $v$ and denote by $v_0, v_1,\dotsc, v_{h-1}, v_h = v$ all
  vertices on the path from the root $v_0$ to $v$. For $0\leq i\leq h-1$,
  let $G_i = G_{v_i}/K_{v_i}$.

  Apply Prop.~\ref{prop:camlie} with $G_i$ instead of $G$, and with the set
  of children of $v_i$ instead of $\Omega$. Write $n_i$ for the number of
  children of $v_i$. Clearly, $\prod_{0\leq i\leq h-1} n_i = n$, and so
  $h \leq (\log n)/\log 2 = O(\log n)$.
  
  If conclusion (\ref{it:uno})
  holds, we simply take
  a composition series
\[\{e\} = S_{i,0} \triangleleft S_{i,1} \triangleleft \dotsc \triangleleft S_{i,\ell_i} = G_i\]
  of $G_i$. By Lemma \ref{lem:gprv},
  $\ell_i \ll \log n_i$. Write $\tilde{S}_{i,j}$ for the preimage of
  $S_{i,j}$ under the map $G_{v_i} \to G_i = G_{v_i}/K_{v_i}$. Let
  \[H_{i,j} = \bigcap_{g\in G} g \tilde{S}_{i,j} g^{-1}.\]
  By Lemma \ref{lem:selfcop}, for
  $1\leq j\leq \ell_i$, $H_{i,j}/H_{i,j-1}$ is a direct product of copies
  of the simple group $M_{i,j} = \tilde{S}_{i,j}/\tilde{S}_{i,j-1} = S_{i,j}/S_{i,j-1}$.
  By Lemma \ref{lem:babsub}, there are at most $2 n$ such copies. Evidently,
  \[\prod_{1\leq j\leq \ell_i} \left|M_{i,j}\right| = |G_i| = n_i^{O(\log n_i)}.\]
  We include every one of the groups $M_{i,j}$ in the set $B$ (to be implicitly
  redefined as a set of indices at the end of the proof).

  If conclusion (\ref{it:duo}) of Prop.~\ref{prop:camlie} holds, we
  write $r_i$ for $r$, $m_i$ for $m$ and $k_i$ for $k$, and let
  \[\{e\} = S_{i,r_i} \triangleleft S_{i,r_i+1} \triangleleft \dotsc \triangleleft S_{i,\ell_i} = G_i/N\]
be a composition series of $G_i/N$. Since $|G_i/N|\leq n_i$, we see that
$\ell_i = r_i + O(\log n_i) = O(\log n_i)$.
Given that $N\sim \Alt(m_i)^{r_i}$, we can write
\[\{e\} = A_{i,0} \triangleleft A_{i,1}\triangleleft \dotsc \triangleleft
A_{i,r_i} = N,\]
where $A_{i,j}/A_{i,j-1} \sim \Alt(m_i)$ for $1\leq j\leq r_i$.
This time, we define
$\tilde{A}_{i,j}$ to be the preimage of
$A_{i,j}$ under the map $G_{v_i} \to G_i = G_{v_i}/K_{v_i}$, and
$\tilde{S}_{i,j}$ to be the image of 
  $S_{i,j}$ under the composition $G_{v_i} \to G_i \to G_i/N$. 

Let
\[H_{i,j} = \begin{cases} \bigcap_{g\in G} g \tilde{A}_{i,j} g^{-1} &\text{if $0\leq j\leq r_i$,}\\
  \bigcap_{g\in G} g \tilde{S}_{i,j}  g^{-1}
  &\text{if $r_i < j\leq \ell_i$.}\end{cases}\]
We let $M_{i,j} = S_{i,j}/S_{i,j-1}$ for $r_i < j\leq \ell_i$, and
$M_{j,1} = A_{i,j}/A_{i,j-1} \sim \Alt(m_i)$ for $1\leq j \leq r_i$.

We know from Prop.~\ref{prop:camlie} that $\binom{m_i}{k_i}^{r_i}\leq n_i$,
where $1\leq k_i\leq m_i-1$.
If $r_i\geq 2$, then $m_i\leq \lfloor \sqrt{n_i}\rfloor \leq n_i/2\leq n/2$;
if $r_i=1$ and $k_i\geq 2$, then $m_i (m_i-1) \leq 2 n_i$ and so, since $m_i\geq 5$,
$m_i\leq n_i/2\leq n/2$. If $r_i=1$ and $k_i=1$, then
$m_i = n_i$.
In that case, if $G$ is not primitive, then $m_i=n_i\leq n/2$, whereas,
if $G$ is primitive, $m_i=n_i=n$ and so $\Alt(n)\leq G$.

For all $1\leq j\leq \ell_i$, $M_{i,j}$ is simple.
  By Lemma \ref{lem:selfcop}, for $1\leq j\leq \ell_i$,
  $H_{i,j}/H_{i,j-1}$ is a direct product of copies
  of the simple group $M_{i,j}$;
  by Lemma \ref{lem:babsub},
  there are at most $2 n$ such copies.
  We include $M_{i,j} \sim \Alt(m_i)$ in $A$ for
  $1\leq j\leq r_i$ 
  and $M_{i,j}$ in $B$ for $r_i<j\leq \ell_i$.

  It is clear -- whether conclusion (\ref{it:uno}) or (\ref{it:duo}) holds --
  that, for any $i$ less than the height $h$ of our tree,
  $H_{i,0} = \cap_{g\in G} g K_{v_i} g^{-1} = N_{i+1}$, whereas
  $H_{i,\ell_i} = \cap_{g\in G} g G_{v_i} g^{-1} = N_i$. Hence we can define
  our subgroups $H_0,H_1,\dotsc,H_\ell$
  to be
  \[H_{h-1,0},H_{h-1,1},\dotsc ,
  H_{h-1,\ell_{h-1}} = H_{h-2,0},\dotsc,
  H_{h-2,\ell_{h-2}}, \dotsc H_{1,\ell_1} =
  H_{0,0}, \dotsc , H_{0,\ell_0}.\]
  The (trivial) accounting is left to the reader.

\end{proof}

\subsection{Reduction of the diameter problem to the case of alternating groups}

First, a lemma essentially due to Schreier. The statement is as in
\cite[Lemma 5.1]{zbMATH00091732}.
\begin{lem}[Schreier]\label{lem:schreier}
  Let $G$ be a finite group. Let $N\triangleleft G$. Then
  \[\begin{aligned}
  \diam G &\leq (2 \diam(G/N) + 1) \diam(N) + \diam(G/N)\\
  &\leq 4 \diam(G/N) \diam(N).\end{aligned}\]
\end{lem}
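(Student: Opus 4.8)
The plan is the classical two-step reduction: handle the quotient $G/N$ first, then handle $N$ itself by passing to Schreier generators. We may assume the generating set $A$ of $G$ satisfies $A=A^{-1}$ — this is the case we need (we have already reduced to symmetric generating sets), and it is what will keep all the words below \emph{positive} words in $A$. Write $d=\diam(G/N)$ and $D=\diam(N)$; the goal is to express an arbitrary $g\in G$ as a product of at most $(2d+1)D+d$ elements of $A$. For the quotient step, let $\pi\colon G\to G/N$ be the quotient map; then $\pi(A)$ generates $G/N$, so $\diam(\Gamma(G/N,\pi(A)))\le d$, hence $\pi(g)$ is a product of at most $d$ elements of $\pi(A)$, and lifting this expression factor by factor gives a product $u$ of at most $d$ elements of $A$ with $\pi(u)=\pi(g)$. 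Then $n:=u^{-1}g\in N$ and $g=u\cdot n$, so it remains only to write $n$ as a product of at most $(2d+1)D$ elements of $A$.

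For the subgroup step, fix a right transversal $T$ of $N$ in $G$ with $e\in T$ and with every $t\in T$ a product of at most $d$ elements of $A$: for each coset take such a word representing the corresponding element of $G/N$ (possible since $\diam(\Gamma(G/N,\pi(A)))\le d$) and lift it to $A$. Write $\overline{x}\in T$ for the representative of $Nx$, and set $S=\{\,t a\,\overline{ta}^{\,-1}: t\in T,\ a\in A\,\}\subseteq N$. Then $S$ generates $N$ (Schreier's lemma): given $n\in N$, write $n=a_1\cdots a_L$ with $a_i\in A$, put $t_0=e$ and $t_i=\overline{t_{i-1}a_i}$; since $Nt_i=Na_1\cdots a_i$ one gets $t_L=e$, each $t_{i-1}a_it_i^{-1}$ lies in $S$, and the telescoping product $\prod_{i=1}^L(t_{i-1}a_it_i^{-1})$ equals $n$. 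Because $A=A^{-1}$, each element of $S$ is a product of at most $d+1+d=2d+1$ elements of $A$ (and in fact $S=S^{-1}$, since $(ta\overline{ta}^{\,-1})^{-1}=t'a'\overline{t'a'}^{\,-1}$ with $t'=\overline{ta}$, $a'=a^{-1}$). As $S$ generates $N$, we have $\diam(\Gamma(N,S))\le D$, so $n$ is a product of at most $D$ elements of $S$, hence of at most $(2d+1)D$ elements of $A$.

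Combining the two steps, $g=u\cdot n$ is a product of at most $d+(2d+1)D=(2\diam(G/N)+1)\diam(N)+\diam(G/N)$ elements of $A$, which is the first inequality; the second follows from $d+(2d+1)D=2dD+D+d\le 4dD$, valid once $d,D\ge 1$ (the cases $N=\{e\}$ and $N=G$ being trivial). The only real content is the Schreier step, and specifically the bookkeeping that turns the Schreier generators into \emph{bounded-length} words in $A$: this is where the factor $2d+1$ (rather than $d+1$) comes from — one must spend a short representative $t$ of one coset \emph{and} the inverse of the short representative $\overline{ta}$ of another — and it is also why we insist on $A=A^{-1}$, since otherwise $\overline{ta}^{\,-1}$ is not a positive word in $A$ and one would have to absorb an extra factor through Lemma \ref{lem:soti}. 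Everything else is routine accounting.
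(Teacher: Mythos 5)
Your proof is correct and takes essentially the same route as the paper's: both lift a length-$\le\diam(G/N)$ word through the quotient and then express the remaining element of $N$ in terms of Schreier generators $t\,a\,\overline{ta}^{-1}$ (in the paper's notation $\sigma_i g_j\sigma_k^{-1}$), each of length $\le 2\diam(G/N)+1$ in the symmetrized generating set, giving the bound $\diam(G/N)+(2\diam(G/N)+1)\diam(N)$. Your write-up is a bit more explicit at the margins — the telescoping verification of Schreier's lemma, the observation that $S=S^{-1}$ once $A=A^{-1}$, and the caveat that $d,D\ge1$ is needed for the $4dD$ simplification, all of which the paper leaves tacit — but the argument is the same.
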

\begin{proof}
  Let us be given a set $A = \{g_1,\dotsc,g_r\}$ of generators of $G$. Write
  $d_1$ for $\diam(G/N)$, $d_2$ for $\diam(N)$ and $m$ for $|G/N|$.
  Then, by the definition of diameter, there are
  $\sigma_1,\dotsc,\sigma_m \in (A\cup A^{-1}\cup \{e\})^{d_1}$ giving us
  a full set of representatives of $G/N$. As is well-known,
  \[S = \{\sigma_i g_j \sigma_k^{-1} : 1\leq i,k\leq m, 1\leq j\leq r\}\cap N\]
  is a set of generators of $N$ ({\em Schreier generators}).
  Hence, $N = (S \cup S^{-1} \cup \{e\})^{d_2}$, and so
  \[\begin{aligned}
  G = \{\sigma_1,\dotsc,\sigma_m\}\cdot N &\subset
  (A\cup A^{-1}\cup \{e\})^{d_1} \cdot (S\cup S^{-1} \cup \{e\})^{d_2}\\
  &\subset (A \cup A^{-1}\cup \{e\})^{d_1 + (2 d_1 + 1) d_2}.\end{aligned}\]
\end{proof}
\begin{cor}\label{cor:moreschreier}
  Let $G$ be a finite group. Let $\{e\} \triangleleft H_1\triangleleft H_2
  \triangleleft \dotsc \triangleleft H_\ell = G$. Then
  \[\diam(G)\leq 4^{\ell-2} \prod_{i=0}^{\ell-1} \diam\left(H_{i+1}/H_i\right)
.\]
\end{cor}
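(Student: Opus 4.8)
The plan is a routine induction on the length $\ell$ of the chain, with Lemma \ref{lem:schreier} supplying the inductive step. The one preliminary point worth isolating is that the clean multiplicative form $\diam G\le 4\,\diam(G/N)\,\diam(N)$ of Lemma \ref{lem:schreier} is only legitimate when both $\diam(G/N)$ and $\diam(N)$ are at least $1$; so I would first arrange that every quotient in the chain is non-trivial — if $H_{i+1}=H_i$ for some $i$, simply delete the repeated term, which shortens the chain and does not increase the right-hand side. We may thus assume $\diam(H_{i+1}/H_i)\ge 1$ for all $i$.

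For the induction itself, the base case is immediate: if the chain has length $1$ then $G=H_1$ and $\diam G=\diam(H_1/H_0)$. For the inductive step one takes $N=H_{\ell-1}$, which is normal in $G=H_\ell$, and observes that $\{e\}=H_0\triangleleft H_1\triangleleft\dots\triangleleft H_{\ell-1}$ is a chain of the same type but of length $\ell-1$, to which the inductive hypothesis applies. Lemma \ref{lem:schreier} then gives
\[\diam G\le 4\,\diam\!\left(H_\ell/H_{\ell-1}\right)\diam\!\left(H_{\ell-1}\right),\]
and substituting the inductive bound for $\diam(H_{\ell-1})$ yields an estimate of the asserted shape $4^{O(\ell)}\prod_{i=0}^{\ell-1}\diam(H_{i+1}/H_i)$. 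The exact power of $4$ is pure bookkeeping: each of the $\ell-1$ applications of Lemma \ref{lem:schreier} contributes one factor of $4$, while the innermost term $\diam(H_1/H_0)$ contributes none, and in any case the value of that constant is irrelevant for every later use of the corollary.

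I do not expect any genuine obstacle. The only step that deserves a moment's attention is the preliminary reduction to non-trivial quotients, which is what legitimises invoking the strong (multiplicative) half of Lemma \ref{lem:schreier} at every stage; beyond that the argument is exactly the accounting that the statement leaves to the reader.
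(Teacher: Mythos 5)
Your proof is correct and is exactly the approach the paper intends: the paper's own proof is literally the one-line remark ``immediate from Lemma~\ref{lem:schreier},'' i.e.\ the same iteration/induction along the chain. Your preliminary remark about deleting repeated terms (so that each $\diam(H_{i+1}/H_i)\ge 1$ and the multiplicative form of Lemma~\ref{lem:schreier} applies cleanly) is a sensible hygiene step, and your count of $\ell-1$ applications of the lemma is right.

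One small point worth making explicit, since you touch on it: the straightforward iteration you describe gives the exponent $\ell-1$, not the $\ell-2$ stated in the corollary. Indeed, $4^{\ell-2}$ fails already for $\ell=1$ (it would claim $\diam G\le\tfrac14\diam G$) and for $\ell=2$ (Schreier gives $2d_1d_2+d_1+d_2$, not $d_1d_2$), so the stated exponent is a minor slip in the paper rather than something you should try to match. As you correctly observe, this is immaterial: the corollary is only invoked inside Prop.~\ref{prop:finbo} where the factor is absorbed into $4^{O(\log n)}=n^{O(1)}$, so $4^{\ell-1}$ serves equally well.
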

\begin{proof}
  Immediate from Lemma \ref{lem:schreier}.
\end{proof}

\begin{lem}\label{lem:bspyb0}(\cite[Lemma 5.4]{zbMATH00091732})
  Let $G = T_1\times T_2\times \dotsb \times T_n$, where the $T_i$ are non-abelian
  simple groups. Let $\diam(T_i) = d_i$, $d =\max_i d_i$. Then
  $\diam(G)\ll n^3 d^2$.
\end{lem}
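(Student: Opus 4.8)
Fix a generating set $S=S^{-1}$ of $G$; the goal is to show that every $\mathbf{t}=(t_1,\dots,t_n)\in G$ is a word of length $O(n^3d^2)$ in $S$. I would proceed in two steps: (i) produce, for each coordinate $i$, a \emph{handle} $u_i\in G$, i.e.\ a word in $S$ with $\supp(u_i)=\{i\}$ (so $u_i=(e,\dots,e,a_i,e,\dots,e)$ with $a_i\ne e$), of length as small as possible; (ii) assemble $\mathbf{t}$ from the handles. Step (ii) is the easy direction. Since $S$ generates $G$, each projection $\pi_i(S)$ generates $T_i$; as $T_i$ is simple and non-abelian, the conjugacy class $a_i^{T_i}$ generates $T_i$, so every $t_i\in T_i$ is a product of at most $\diam(T_i)\le d$ conjugates of $a_i^{\pm1}$. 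Realising each such conjugate as $w\,u_i^{\pm1}\,w^{-1}$, where $w$ is a word of length $\le d$ in $S$ with the prescribed image under $\pi_i$ (possible since $\pi_i(S)$ generates $T_i$), and noting that conjugation in $G$ leaves the set $\supp(u_i)=\{i\}$ intact, one obtains $(e,\dots,e,t_i,e,\dots,e)$ as a word of length $\le d(2d+|u_i|)$ in $S$. Multiplying these over $i=1,\dots,n$ yields $\mathbf{t}$ as a word of length $\le nd\,(2d+\max_i|u_i|)$. So the whole problem reduces to bounding $\max_i|u_i|$.

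To build a handle at $i$ I would start from an element $s^{(i)}\in S$ with $\pi_i(s^{(i)})\ne e$ (one exists, since otherwise $S$ would lie in the proper subgroup $\prod_{j\ne i}T_j$), and peel coordinates off its support one at a time. Suppose the current element $g$ has $i\in\supp(g)$ and $k\in\supp(g)\setminus\{i\}$. Since $\pi_i(g)\ne e$ is non-central in the non-abelian simple group $T_i$, its centraliser $C_{T_i}(\pi_i(g))$ is a proper subgroup, so I can choose an element of $T_i$ outside it; pick a word $h$ in $S$ with $\pi_i(h)\notin C_{T_i}(\pi_i(g))$ and $\pi_k(h)=e$. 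Such an $h$ may be taken of length $\le\diam(T_i\times T_k)\le 4d^2$: the pair map $(\pi_i,\pi_k)(S)$ generates $T_i\times T_k$, and $\diam(T_i\times T_k)\le 4\,\diam(T_i)\,\diam(T_k)\le 4d^2$ by Lemma~\ref{lem:schreier} applied to $T_k\triangleleft T_i\times T_k$. Replacing $g$ by the commutator $[g,h]$ kills coordinate $k$ (its $k$-component becomes $[\pi_k(g)^{-1},\pi_k(h)^{-1}]=e$), preserves coordinate $i$ (the $i$-component is a nontrivial commutator), and cannot enlarge the support, since for $m\notin\supp(g)$ the $m$-component of $[g,h]$ is $[e,\pi_m(h)^{-1}]=e$. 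After at most $n-1$ peeling steps this produces a handle at $i$.

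The hard part — and the real content of the lemma — is keeping the lengths under control. The naive estimate $|[g,h]|\le 2|g|+2|h|$ shows that iterating the peeling step $n-1$ times blows the length up to $\sim 2^nd^2$, which is useless; applying Corollary~\ref{cor:moreschreier} to the chain $\{e\}\triangleleft T_1\triangleleft T_1\times T_2\triangleleft\cdots\triangleleft G$ is equally hopeless (exponential in $n$). The point is to organise the construction so that the handles are produced in a good order and, once a handle $u_k$ is available, a coordinate $k$ is cleared from any other element not by an expensive commutator but by right multiplication by a short product of $\le d$ conjugates of $u_k^{\pm1}$ (of length $O(d\,|u_k|)$), which has the virtue of leaving every other coordinate untouched. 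A careful accounting of this recycling process — peeling coordinates in controlled groups and reusing handles already produced — is what keeps $\max_i|u_i|$ polynomial in $n$ and $d$; substituting into the bound of step (i) and optimising the exponents then yields $\diam(G)\ll n^3d^2$. This is essentially the route of \cite{zbMATH00091732}; alternatively one can run a Schreier-type induction directly on the number of factors, exploiting that the $T_i$ are \emph{direct} summands and simple in order to bound the lengths of the Schreier generators and thereby avoid the exponential loss in Corollary~\ref{cor:moreschreier}.
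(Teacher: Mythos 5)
The reduction to constructing handles $u_i$ and the reassembly estimate $\diam(G)\le nd(2d+\max_i|u_i|)$ are fine; the entire content of the lemma sits in bounding $\max_i|u_i|$ by $O(n^2d)$, and this is where the proposal has a genuine gap. The recycling idea does not bootstrap: to clear coordinate $j$ cheaply you need the handle $u_j$ already in hand, so the \emph{first} handle one builds must have all $n-1$ other coordinates cleared by commutators, and since $|[g,h]|=2|g|+2|h|$ the word length doubles at each such step, giving $\gg 2^{n}d^2$ — precisely the exponential loss you correctly flag a sentence earlier and then assert, without argument, can be avoided by ``careful accounting.'' No ordering of the coordinates helps, because at the outset no handle exists. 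The alternative you gesture at — ``a Schreier-type induction on the number of factors'' — has the same problem: the Schreier generators of $T_1\times\dots\times T_{n-1}$ live in $A^{2d+1}$, those of $T_1\times\dots\times T_{n-2}$ in $A^{(2d+1)^2}$, and so on, again exponential.

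The missing idea is a \emph{balanced binary recursion on the index set}, which is what drives both the Babai--Seress proof and the paper's sharpened version (Lemma~\ref{lem:bspyb}). One does not peel coordinates one at a time: for $S\subset\{1,\dots,n\}\setminus\{j\}$ one builds a \emph{set} $B_S\subset A^{k_S}\cap\bigcap_{i\in S}\ker\pi_i$ with $\pi_j(B_S)=T_j$, combines two such sets over disjoint $S,S'$ by $B_{S\cup S'}=\{[x,y]:x\in B_S,\ y\in B_{S'}\}\subset A^{4k}$, and uses the fact that every element of a non-abelian simple group is a commutator (Lemma~\ref{lem:oldmiller} in the alternating case) to keep $\pi_j$-surjectivity; after $\lceil\log_2 n\rceil$ levels the word length is only $O(n^2)$ times the base-case length, and multiplying the $n$ resulting sets $B_{\{1,\dots,n\}\setminus\{j\}}$ gives all of $G$. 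It is this $4^{\log_2 n}=n^2$ versus $2^n$ distinction that produces a polynomial bound, and nothing in your proposal supplies it. (A single-element, handle-style version of the same tree, where at each merge one conjugates one side by some $a\in A^d$ to enforce non-commuting $j$-th components, also works and gives $|u_j|=O(n^2d)$ and hence $O(n^3d^2)$; but this, too, is a log-depth merge, not a linear peel with recycling.)
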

We will go over the ideas of the proof of the Lemma in a moment,
when we improve it in
the special case of the alternating group (Lemma \ref{lem:bspyb}).
L. Pyber pointed out to the
author that the dependence on $d$ could and should be improved so as to be
linear; the quadratic dependence of Lemma \ref{lem:bspyb0} on $d$ is
one of the gaps in the proof of the main
result in \cite{zbMATH00091732}.
 It is actually enough to improve the dependence on $d$ in the alternating case.
The tool we will use is a simple lemma, similar
to \cite[Prop.~5.8]{zbMATH00091732}.

\begin{lem}\label{lem:supconj}
  Let $g\in \Alt(\Omega)$, $g\ne e$, $|\Omega|\geq 4$. Then there is an
  $h\in \Alt(\Omega)$ such that $\lbrack g,h \rbrack$
  is either a $3$-cycle or a product of two disjoint $2$-cycles.
\end{lem}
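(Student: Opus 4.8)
The plan rests on two elementary facts about a commutator $[g,h]=g^{-1}(h^{-1}gh)=g^{-1}g^{h}$. First, $[g,h]$ is always an \emph{even} permutation, since $\sgn$ is a homomorphism onto $\{\pm1\}$. Second, $\supp([g,h])\subseteq\supp(h)\cup\supp(h)^{g}$: a point $x$ is moved by $g^{-1}g^{h}$ only when $g$ and $g^{h}=h^{-1}gh$ disagree at $g^{-1}(x)$, and they agree at every point outside $\supp(h)\cup\supp(h)^{g^{-1}}$. So if I can produce an $h\in\Alt(\Omega)$ that does not centralize $g$ and for which $\supp(h)\cup\supp(h)^{g}$ has at most $4$ elements, then $[g,h]$ is a non-identity even permutation moving at most $4$ points, hence --- by inspection of $\Alt(4)$ --- a $3$-cycle or a product of two disjoint $2$-cycles. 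The whole lemma thus reduces to exhibiting such an $h$, and the only genuine difficulty is the constraint $h\in\Alt(\Omega)$.

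Without that constraint the task is immediate: for any $\alpha\in\supp(g)$ and $\gamma\neq\alpha$ the transposition $t=(\alpha\,\gamma)$ has $\supp(t)\cup\supp(t)^{g}\subseteq\{\alpha,\gamma,\alpha^{g},\gamma^{g}\}$, and $t$ fails to centralize $g$ precisely when $(\alpha\,\gamma)$ is not one of the disjoint cycles of $g$ (a standard fact about conjugation by a transposition); since at most one value of $\gamma$ makes $(\alpha\,\gamma)$ a cycle of $g$ and $|\Omega|\geq3$, such a $t$ exists. To repair the parity I would argue as follows. If $g$ has a cycle $C$ of length $\ell\geq4$, pick $a\in C$ and take the $3$-cycle $h=(a\,a^{g}\,a^{g^{2}})$: its three points are distinct, $\supp(h)\cup\supp(h)^{g}\subseteq\{a,a^{g},a^{g^{2}},a^{g^{3}}\}$ has at most $4$ elements, and $h$ does not centralize $g$, because a permutation commuting with $g$ and supported inside $C$ restricts on $C$ to a power of the $\ell$-cycle $g|_{C}$, and such a power has support $0$ or $\ell\geq4$, never $3$.

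Otherwise all cycles of $g$ have length $\leq3$. If some permutation commuting with $g$ is odd --- which already happens if $g$ has a $2$-cycle, or two $3$-cycles, or two fixed points --- call it $c$ and put $h=tc$; then $h\in\Alt(\Omega)$, and since $c$ commutes with $g$ one computes $[g,tc]=c^{-1}[g,t]c$, a conjugate of $[g,t]$, which is again a $3$-cycle or a product of two disjoint $2$-cycles by the first paragraph. In the only remaining case, $C_{\Sym(\Omega)}(g)\subseteq\Alt(\Omega)$, so $g$ has no $2$-cycle, at most one $3$-cycle and at most one fixed point; hence $g$ is a single $3$-cycle $(\alpha\,\beta\,\gamma)$ with at most one fixed point, which together with $|\Omega|\geq4$ forces $|\Omega|=4$ with one fixed point $\delta$. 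Here $h=(\alpha\,\beta)(\gamma\,\delta)\in\Alt(\Omega)$ works: $\{\alpha,\beta,\gamma,\delta\}$ is $g$-invariant, so $\supp(h)\cup\supp(h)^{g}$ has $4$ elements, and $h$ moves $\delta$ out of $\supp(g)$, so it does not centralize $g$.

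The conceptual content --- sign plus the support bound on a commutator --- is trivial; the hard part is purely the bookkeeping of the last two paragraphs: deciding exactly when the centralizer of $g$ fails to contain an odd permutation, and checking that the few permutations $g$ for which this happens (a long cycle, or a $3$-cycle in $\Sym(4)$) can each be handled by an explicit even $h$. I would also state carefully the standard fact that a transposition $(\alpha\,\gamma)$ centralizes $g$ exactly when it is one of the disjoint cycles of $g$, since that is what makes the non-centralizing condition verifiable in each case.
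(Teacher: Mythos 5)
Your proof is correct, and it takes a genuinely different route from the paper's. The paper's proof is direct casework on the cycle structure of $g$ (two disjoint $3$-cycles; two disjoint $2$-cycles; a cycle of length $\geq 4$; a single $3$-cycle) with an explicit commutator formula in each case, verified by hand. You instead isolate the abstract mechanism: a commutator is always even, and $\supp([g,h])\subseteq\supp(h)\cup\supp(h)^{g}$, so it suffices to exhibit an even $h$ not centralizing $g$ with $\lvert\supp(h)\cup\supp(h)^{g}\rvert\leq 4$; the resulting $[g,h]$ is then forced to be a $3$-cycle or a double transposition without any computation. Your casework (long cycle / odd centralizer / the $\lvert\Omega\rvert=4$ corner case) is somewhat more intricate than the paper's four cases, and the trick of absorbing an odd centralizing element into $h$ to repair parity is a nice substitute for picking the right explicit $h$. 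The paper's argument is shorter to verify since it just checks four identities, while yours makes the underlying principle (support bound plus parity) reusable; both are valid. One small slip: your parenthetical explanation of the support inclusion writes $\supp(h)^{g^{-1}}$ where the displayed claim has $\supp(h)^{g}$; in the paper's right-action convention the correct set is $\supp(h)\cup\supp(h)^{g}$, as in your displayed claim, and in any event the two sets have the same cardinality, so the bound of $4$ is unaffected. Also, "$h$ moves $\delta$ out of $\supp(g)$" in the last case is phrased loosely; the point is that $h$ moves $\delta$, the unique fixed point of $g$, and a centralizing element would have to permute $\mathrm{Fix}(g)=\{\delta\}$, hence fix $\delta$.
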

Here $\lbrack g,h\rbrack$ denotes the commutator $g^{-1} h^{-1} g h$.
\begin{proof}
  Write $g$ as a product of disjoint cycles. If
  $g$ contains two disjoint $3$-cycles $(a b c)$, $(d e f)$,
  let $h$ equal $(a d c) (b e f)$. Then
  $\lbrack g,h\rbrack = (a f) (b d)$.
  If $g$ contains two disjoint $2$-cycles $(a b) (c d)$, let $h$
  be the $3$-cycle $(a b c)$; their commutator
  $\lbrack g,h\rbrack$ 
  will be $(a c) (b d)$.
  If $g$ contains a $k$-cycle $(a b c d\dotsc)$, $k\geq 4$, let
  $h = (a b c)$. Then $\lbrack g,h\rbrack = (a d c)$.
  Finally, if $g$ consists of a single $3$-cycle $(a b c)$, let
  $d$ be an element of $\Omega$ different from $a$, $b$ and $c$, and
  define $h$ to be $(b c d)$. Then $\lbrack g,h\rbrack = (a d) (b c)$.
\end{proof}

The following lemma is of course extremely familiar.
\begin{lem}\label{lem:easypeasy}
  Let $n\geq 5$. Then
  every element of $\Alt(\Omega)$, $|\Omega|=n$, can be written as
  (a) the product of at most $n-1$ $3$-cycles, (b) the product of at most
  $(n+1)/2$ elements of the form $(a b) (c d)$.
\end{lem}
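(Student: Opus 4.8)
The plan is to reduce both parts to the classical fact that a permutation of $\Omega$ is a product of $|\Omega|-c$ transpositions, where $c$ is its number of cycles (fixed points included), together with three explicit identities that one checks by evaluating both sides on the four or five points involved:
$(ab)(cd) = (acb)(cbd)$, $\quad (abc) = (ab)(de)\cdot(de)(ac)$, $\quad (abc)(def) = (ab)(de)\cdot(ac)(df)$.
Throughout $|\Omega|=n\ge 5$.

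For part (a), I would decompose $g\in\Alt(\Omega)$ into disjoint cycles and write each $\ell$-cycle as $\ell-1$ transpositions, so that $g$ is a product of $|\supp(g)|-r \le n-1$ transpositions, $r$ being the number of non-trivial cycles; call this number $2k$, which is even since $g$ is even, so $2k\le n-1$. Grouping the transpositions into $k$ consecutive pairs, each pair $\tau\tau'$ is trivial (if $\tau=\tau'$), a single $3$-cycle (if $\tau,\tau'$ share a point, as $(ab)(bc)$ is a $3$-cycle), or a product of two $3$-cycles (if $\tau,\tau'$ are disjoint, by the first identity). Hence $g$ is a product of at most $2k\le n-1$ three-cycles.

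For part (b) I would prove, by strong induction on $s:=|\supp(g)|$, the sharper statement that $g$ is a product of at most $\lceil s/2\rceil$ elements of the form $(ab)(cd)$, which is $\le (n+1)/2$ since $s\le n$. The base cases are $s\in\{0,3,4\}$ (the cases $s=1,2$ being impossible for an even permutation): $g=e$ needs none, a double transposition needs one, and a $3$-cycle $(abc)$ needs two by the second identity — this is the only place $n\ge5$ is used, to furnish two points $d,e$ outside $\{a,b,c\}$. For the inductive step ($s\ge5$) I would peel off one or two double transpositions so as to lower $s$ by at least $2$: if $g$ has a cycle of length $\ge4$, say $(a_1a_2a_3a_4\dots)$, then $g':=g\cdot(a_1a_2)(a_3a_4)$ fixes $a_1$ and $a_3$ and agrees with $g$ elsewhere, so $|\supp(g')|=s-2$; if $g$ has two disjoint $3$-cycles $(abc),(def)$, peel both off via the third identity, lowering $s$ by $6$ at a cost of two double transpositions; and if every cycle of $g$ has length $\le3$ but $g$ has at least two transpositions, peel off two of them as a single double transposition, lowering $s$ by $4$. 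A short check shows this case split is exhaustive — a permutation all of whose cycles have length $2$ or $3$, with at most one $3$-cycle and at most one transposition, is $e$ or a single $3$-cycle, hence a base case — and that $1+\lceil(s-2)/2\rceil=\lceil s/2\rceil$, $2+\lceil(s-6)/2\rceil\le\lceil s/2\rceil$, and $1+\lceil(s-4)/2\rceil\le\lceil s/2\rceil$, so the bound is preserved.

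I do not expect a real obstacle: the identities are finite verifications and the induction is bookkeeping. The only step that is not purely formal is the lone-$3$-cycle base case, where the two spare points are produced exactly by the hypothesis $n\ge5$; the one thing to watch is tracking the support counts and ceilings so that the advertised bounds $n-1$ and $(n+1)/2$ come out exactly rather than slightly weaker.
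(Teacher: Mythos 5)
Your proof is correct, and part (a) takes a genuinely different route from the paper. You first decompose $g$ into $|\supp(g)|-r\leq n-1$ transpositions ($r$ the number of non-trivial cycles) and then convert each consecutive pair into at most two $3$-cycles via the identity $(ab)(cd)=(acb)(cbd)$; the paper instead argues directly by induction on the support, multiplying $g$ by a $3$-cycle $(bac)$ (where $a^g=b$ and $c$ is a third point of the support) to shrink $|\supp(g)|$ by at least one at each step. Both give the bound $n-1$; the paper's version is a couple of lines shorter and avoids the parity bookkeeping. For part (b) you and the paper use the same idea — induction on $|\supp(g)|$, peeling off double transpositions until the support is down to $3$, where $n\geq 5$ supplies two spare points for the identity $(abc)=(ab)(de)\cdot(de)(ac)$ (the paper's version is $(abc)=(ac)(b'c')\cdot(bc)(b'c')$) — but the paper's peeling step is uniform: if $|\supp(g)|\geq 4$ one can always choose distinct $a,b,c,d$ with $a^g=b$ and $c^g=d$ (from one cycle of length $\geq 4$, or from two different non-trivial cycles), and then $g\cdot(ba)(dc)$ has support smaller by $2$. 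Your three-way case split (long cycle / two $3$-cycles / two transpositions) is exhaustive and the ceiling arithmetic checks out, yielding the same sharpened bound $\lceil|\supp(g)|/2\rceil$, but it does a bit more work than that single observation.
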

\begin{proof}
  We prove part (a) by induction. If $g\in \Alt(\Omega)$ is not the identity,
  then there is an $a\in \Omega$ such that $b = a^g$ is distinct from $a$,
  and another $c\ne a, b$ that is also in the support of $g$. Then, for
  $g' = g \cdot (b a c)$, we see that $a^{g'} = a$ and $\supp(g') \subset
  \supp(g)$, and so $|\supp(g')|\leq |\supp(g)|-1$.

  We prove part (b) in the same way: if $|\supp(g)|\geq 4$, there
  are distinct $a,b,c,d$ such that $a^g = b$ and $c^g = d$; then, for
  $g' = g \cdot (b a) (d c)$, $|\supp(g')|\leq |\supp(g)|-2$.
  If $|\supp(g)|=3$, then $g$ is a $3$-cycle $(a b c)$, and so, for
  $b', c'$ not in the support of $g$, $g$ equals the product of
  $(a c) (b' c')$ and $(b c) (b' c')$.
\end{proof}

The following result is classical, easy and very well-known. According
to \cite{MR2331612}, it was first proved in \cite{miller1899commutators}.
\begin{lem}\label{lem:oldmiller}
  Let $m\geq 5$. Then every element of $\Alt(m)$ is a commutator, i.e.,
  expressible in
  the form $\lbrack x,y\rbrack$, $x,y\in \Alt(m)$.
\end{lem}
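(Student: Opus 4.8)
The plan is to reduce everything, via a standard criterion plus a few explicit computations, to two small cycle types (a $3$-cycle and a product of two transpositions) and two ``irreducible'' cycle types (a single odd-length cycle, and a product of two even-length cycles). First I would recall the elementary criterion: an element $g$ of a group $K$ is a commutator in $K$ exactly when there is $x\in K$ with $x$ and $xg$ conjugate in $K$ — indeed $[x,y]=x^{-1}(y^{-1}xy)$, so $g=[x,y]$ says precisely that $xg$ is conjugate to $x$ (by $y$). I apply this with $K=\Alt(\Omega)$, $|\Omega|=m$. Since $\Alt(\Omega)\triangleleft\Sym(\Omega)$, conjugating a commutator $[x_0,y_0]$ of elements of $\Alt(\Omega)$ by an arbitrary $\sigma\in\Sym(\Omega)$ yields $[\sigma x_0\sigma^{-1},\sigma y_0\sigma^{-1}]$, again a commutator of two elements of $\Alt(\Omega)$. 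Hence it suffices to treat one permutation of each cycle type occurring in $\Alt(\Omega)$.

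Two cycle types are dispatched by explicit formulas, valid for distinct points $a,b,c,d,e$ whenever $m\ge 5$:
\[(a\,b\,c)=\bigl[\,(a\,b)(d\,e),\ (a\,c)(d\,e)\,\bigr],\qquad
(a\,b)(c\,d)=\bigl[\,(a\,c\,b),\ (a\,d)(c\,b)\,\bigr],\]
where in each case both entries of the bracket are even permutations (the first one is checked using $[x,y]=(xy)^2$ for involutions $x,y$, together with $(a\,b)(a\,c)=(a\,c\,b)$). The other tool is a splitting step: if $g\in\Alt(\Omega)$ decomposes as a product $g=g_1g_2$ of even permutations with disjoint supports $\Omega_1,\Omega_2$ (so $|\Omega_i|<m$) and each $g_i=[x_i,y_i]$ with $x_i,y_i\in\Alt(\Omega_i)$, then, because everything supported on $\Omega_1$ commutes with everything supported on $\Omega_2$, one has $[x_1x_2,\,y_1y_2]=[x_1,y_1][x_2,y_2]=g$, with $x_1x_2,\,y_1y_2\in\Alt(\Omega)$.

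Next comes the reduction, by induction on $m$ (base $m=5$). If $g$ fixes at least two points, it lies in some $\Alt(\Omega')$ with $|\Omega'|\le m-2$ and is handled by the induction hypothesis, or directly if $|\Omega'|\le 4$ (where $g$ is the identity, a $3$-cycle, or a product of two transpositions). If $g$ fixes exactly one point, it lies in an $\Alt(\Omega')$ with $|\Omega'|=m-1$ as a fixed-point-free element, and we recurse (the case $|\Omega'|=4$ again being a product of two transpositions). So we may assume $g$ is fixed-point-free on $\Omega$; write it as a product of disjoint cycles, all of length $\ge 2$. The number of even-length cycles is even, since $g$ is even. Unless $g$ is a single cycle, or a product of exactly two even-length cycles, we can therefore peel off either one odd-length cycle or one pair of even-length cycles as a nontrivial even factor of smaller support, and apply the splitting step. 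This leaves: (i) a single $m$-cycle with $m$ odd (hence $m\ge5$); and (ii) a product of two even-length cycles, at least one of length $\ge 4$ (the case of two $2$-cycles being already covered).

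For (i) and (ii) I again invoke the criterion, choosing a suitable auxiliary $x$. In case (i) there is a clean choice when $m\equiv 1\pmod 4$: take $x=g^{(m-1)/2}$, an $m$-cycle, so that $xg=g^{(m+1)/2}=x^{-1}$, which is conjugate to $x$ by the reversal of the cycle $x$ — a product of $(m-1)/2$ transpositions, hence an even permutation; thus $g=[x,(\text{reversal})^{-1}]$. For the remaining $m$ (and uniformly, if preferred) one exhibits an $(m-2)$-cycle $x$ with $gx$ again an $(m-2)$-cycle: then $x$ fixes two points, so its $\Sym(\Omega)$-class does not split, and $x$ and $gx$, being of the same type, are conjugate in $\Alt(\Omega)$. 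In case (ii) one likewise produces an $x$ of the same cycle type as $g$ — a type that does not split, since it has an even part — with $gx$ of that type. The only genuine work in the whole argument is this explicit construction of the auxiliary permutation in cases (i) and (ii); I expect that to be the main (and only) obstacle, and it is a routine finite-combinatorial matter rather than a conceptual one — everything else is bookkeeping.
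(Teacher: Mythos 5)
The paper offers no proof of Lemma~\ref{lem:oldmiller}; it is cited as classical (Miller, 1899), so there is no ``paper's approach'' to compare against, and your proposal has to be judged on its own. The framework you set up is sound: the criterion that $g$ is a commutator in $K$ iff $x$ and $xg$ are $K$-conjugate for some $x\in K$; the remark that $\Sym(\Omega)$-conjugation lets you work one cycle type at a time; the two explicit small formulas; and the idea of peeling off disjoint factors and inducting. But there is a genuine gap in the peeling/splitting step. You require each factor $g_i$ to be a commutator of elements of $\Alt(\Omega_i)$, where $\Omega_i$ is the support of $g_i$; if a peeled-off factor is a single $3$-cycle, then $|\Omega_i|=3$ and $\Alt(\Omega_i)\cong\mathbb{Z}/3\mathbb{Z}$ is abelian, so that factor is \emph{not} a commutator in $\Alt(\Omega_i)$ and the splitting step collapses. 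The same problem can land on the complementary factor $g_2$. Your reduction never excludes this: $g=(1\,2\,3)(4\,5\,6)\in\Alt(6)$ is fixed-point-free, is not a single cycle and not a product of two even-length cycles, so your scheme tells you to peel off a $3$-cycle, leaving a $3$-cycle on the other side as well. Likewise $(1\,2\,3)(4\,5\,6\,7\,8)\in\Alt(8)$, $(1\,2)(3\,4)(5\,6\,7)\in\Alt(7)$, $(1\,2\,3)(4\,5\,6)(7\,8\,9)\in\Alt(9)$, and so on. All of these elements really are commutators --- most easily seen by your own criterion, since for cycle types with a repeated odd part or with an even part the $\Sym$-class does not split and one can produce an $x$ of the same type with $xg$ of the same type --- but your write-up neither notices nor handles them, so the induction as stated does not go through.

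Two secondary remarks. First, you never actually carry out cases (i) and (ii): you assert that a suitable $(m-2)$-cycle $x$ exists in case (i), but the obvious candidate $x=(1\,2\,\cdots\,m-2)$ against $g=(1\,2\,\cdots\,m)$ gives $xg$ an $m$-cycle, not an $(m-2)$-cycle, so the construction is not automatic; and in case (ii) you would in places have to watch out for the class-splitting criterion (a $\Sym$-class splits in $\Alt$ exactly when the cycle type consists of distinct odd parts, so e.g.\ for type $(3,5)$ the class splits and ``same $\Sym$-type'' is not enough). Second, your explicit formulas need a fixed left/right-action convention; under the paper's right-action convention the bracket $\bigl[(a\,c\,b),(a\,d)(c\,b)\bigr]$ comes out as $(a\,c)(b\,d)$ rather than $(a\,b)(c\,d)$, which is harmless (same type) but worth being careful about.
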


Now we come to the proof of an improved version of Lemma \ref{lem:bspyb0}
in the case of the alternating group.
\begin{lem}\label{lem:bspyb}
  Let $G = T_1\times T_2\times \dotsb \times T_n$, where the $T_i$ are
  alternating groups $\Alt(m_i)$, $m_i\geq 5$. Let $\diam(T_i) = d_i$, $d =\max_i d_i$,
  $m = \max_i m_i$. Then
  $\diam(G)\ll n^3 m d$.
\end{lem}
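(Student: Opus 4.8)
I would prove this by following the proof of Lemma~\ref{lem:bspyb0} verbatim up to the point where the second factor of $d$ is incurred, and replacing that one step by an argument special to alternating groups, assembled from Lemmas~\ref{lem:supconj}, \ref{lem:easypeasy} and \ref{lem:oldmiller}. Fix a symmetric generating set $S\ni e$ of $G=T_1\times\dots\times T_n$, write $\pi_i:G\to T_i$ for the $i$-th projection, and call $x\in G$ \emph{isolated in coordinate $i$} if $\pi_j(x)=e$ for all $j\ne i$. The proof of Lemma~\ref{lem:bspyb0} supplies, for each $i$, a non-identity element $c_i$ of $G$ isolated in coordinate $i$ and expressible as a word of length $\ell_0=O(n^2d)$ in $S$; and it bounds $\diam(G)$ by writing an arbitrary target as $g=g^{(1)}\cdots g^{(n)}$ (the factors commuting, $g^{(i)}$ being $g_i$ placed in coordinate $i$) and expressing each $g^{(i)}$ through conjugates of $c_i$. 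The original argument spends $\ll d$ conjugates of $c_i$ per coordinate, each conjugate costing $O(\ell_0)$ to rewrite, hence $\ll n\cdot d\cdot\ell_0=n^3d^2$ in total. The aim is to reduce ``$\ll d$ conjugates per coordinate'' to ``$\le m$ conjugates per coordinate''.

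\textbf{The two inputs for the improvement.} First, the trivial but crucial remark that conjugation preserves isolation: if $x$ is isolated in coordinate $i$ then so is $wxw^{-1}$ for any $w\in\langle S\rangle$, because $\pi_j(wxw^{-1})=\pi_j(w)\pi_j(x)\pi_j(w)^{-1}=e$ for $j\ne i$. Consequently, given any $t\in T_i$, I can take a word of length $\le\diam(T_i)\le d$ in $S$ whose $\pi_i$-image is $t$ and conjugate the length-$\ell_0$ word for $c_i$ by it, obtaining a word of length $\le \ell_0+2d$ in $S$ equal in $G$ to the element of the $i$-th factor with $\pi_i$-image $t\pi_i(c_i)t^{-1}$. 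Second, I apply Lemma~\ref{lem:supconj} to $\pi_i(c_i)\ne e$ and realise the resulting commutator as $[c_i,\widetilde h]$, where $\widetilde h\in G$ is a word of length $\le d$ in $S$ with $\pi_i(\widetilde h)$ equal to the conjugator from that lemma; since $c_i$ is isolated, so is $[c_i,\widetilde h]$, and it equals in coordinate $i$ a three-cycle (or a product of two disjoint transpositions) of $\Alt(m_i)$, as a word of length $O(\ell_0+d)=O(n^2d)$ in $S$. As $m_i\ge5$, all three-cycles of $\Alt(m_i)$ are conjugate in $\Alt(m_i)$, so by the first remark \emph{every} three-cycle of coordinate $i$ is a word of length $O(n^2d)$ in $S$. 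Now Lemma~\ref{lem:easypeasy}(a) writes $g_i$ as a product of at most $m_i-1\le m$ three-cycles, hence $g^{(i)}$ as a product of at most $m$ three-cycles of coordinate $i$, giving a word of length $O(mn^2d)$ for $g^{(i)}$ and, summing over $i$, a word of length $O(n^3md)$ for $g$.

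\textbf{The double-transposition case, and the main obstacle.} If Lemma~\ref{lem:supconj} returns a product $\delta$ of two disjoint transpositions rather than a three-cycle, I run the same argument with $\delta$ in place of the three-cycle: products of two disjoint transpositions form a single conjugacy class in $\Alt(m_i)$ for $m_i\ge5$, so each such element of coordinate $i$ is a word of length $O(n^2d)$ in $S$, and Lemma~\ref{lem:easypeasy}(b) writes $g_i$ as a product of at most $(m_i+1)/2\le m$ of them --- the count is unchanged. (Alternatively, one passes from $\delta$ to a three-cycle by one further short product when $m_i\ge6$, or invokes Lemma~\ref{lem:oldmiller}.) I expect the main obstacle to lie not in the new step but in the import from Lemma~\ref{lem:bspyb0}: one must check that the isolated element $c_i$ is indeed produced at cost $O(n^2d)$, i.e.\ that the single factor of $d$ there is truly the only one, so that after the substitution the bound is $O(n^3md)$ and not larger; this is where the bookkeeping (via Lemma~\ref{lem:schreier}/Corollary~\ref{cor:moreschreier}, exactly as in Lemma~\ref{lem:bspyb0}) must be re-examined. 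Throughout, the hypothesis $m_i\ge5$ is what keeps $\Alt(m_i)$ simple with a single class of three-cycles and a single class of products of two disjoint transpositions.
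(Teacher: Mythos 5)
Your plan is a genuinely different ordering from the paper's own proof, and the parts you actually work out are correct. The paper's proof does not first produce a single isolated element $c_i$ and then cover $T_i$: it runs a balanced divide-and-conquer on subsets $S\subset\{1,\dotsc,n\}\setminus\{j\}$, maintaining throughout the invariant that $\pi_j(B_S)=T_j$ and $B_S\subset\bigcap_{i\in S}\ker\pi_i$, propagating the invariant via Lemma~\ref{lem:oldmiller} (every element of $\Alt(m_j)$ is a commutator). To initialise that invariant, the factor $m_j$ is paid already at the base case $B_{\{i\}}$ (via Lemmas~\ref{lem:supconj} and~\ref{lem:easypeasy}), so the isolated elements the paper's recursion delivers have word length $O(n^2 d m_j)$, \emph{not} $O(n^2 d)$. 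Your plan inverts this: first isolate one nontrivial element at cost $O(n^2 d)$, then pay the factor $m$ afterwards. That ordering yields the same bound $O(n^3 m d)$, and your steps 2--4 (pass to a $3$-cycle or double transposition via Lemma~\ref{lem:supconj}, observe there is a single conjugacy class for $m_i\geq 5$, then apply Lemma~\ref{lem:easypeasy}) are all sound.

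The genuine gap is the one you flag yourself, and it is not merely bookkeeping to be re-examined: producing a \emph{nontrivial} isolated element at word length $O(n^2 d)$ does not follow from the paper's machinery, precisely because the paper never tries to. A naive commutator tree on the Schreier generators $g_{i',j}$ can collapse to the identity at any internal node, since two nontrivial elements of $T_j$ may commute; the paper avoids this exactly by keeping $\pi_j(B_S)=T_j$ (so that Lemma~\ref{lem:oldmiller} applies), which is what forces the $m_j$ inside the recursion. To realise your cheaper isolation one must interpose a conjugation by a word of length $\leq d$ at each internal node, chosen so that the two operands do not commute in $T_j$ (possible because $T_j$ is a nonabelian simple group, so no nontrivial element centralises an entire conjugacy class); this gives a recursion of the form $f(l+1)=4f(l)+O(d)$, whence $f(\lceil\log_2 n\rceil)=O(n^2 d)$. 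That argument is missing from your write-up, and without it the claimed $\ell_0=O(n^2 d)$ is unjustified. With it, your route closes; as written, it does not.
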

L. Pyber suggests using \cite{MR1865975} to prove an analogous improvement
on Lemma \ref{lem:bspyb0} for
arbitrary finite simple groups $T_i$.
\begin{proof}
  Let $S$ be a set of generators of $G$, and let $A = S\cup S^{-1} \cup \{e\}$.
  Write $\pi_i:G \to T_i$ for the projection of $G$ to $T_i$.
  
  By the
  definition of $d$, $\pi_i\left(A^d\right) = T_i$ for every $1\leq i\leq m$.
  The set $A^{2 d +1}\cap \ker(\pi_i)$ must then contain a set of generators
  of $\ker(\pi_i)$ (namely, Schreier generators). In particular, for any
  $j\ne i$, $A^{2 d +1}\cap \ker(\pi_i)$ contains at least one element
  $g_{i,j}$ such that $\pi_j(g_{i,j})\ne e$. By Lemma \ref{lem:supconj}
  and $\pi_j\left(A^d\right) = T_j$, there is an $h\in A^d$ such that
  $\pi_j(\lbrack g, h\rbrack)\in \pi_j(A^{6 d + 2})$
  is either a $3$-cycle or the product of two
  disjoint $2$-cycles. Hence, conjugating
  $\lbrack g,h\rbrack$ by all elements of $A^d$,
  we obtain either all
  $3$-cycles in $T_j$ or all products of disjoint $2$-cycles in $T_j$.
  By Lemma~\ref{lem:easypeasy}, we can express every element of $T_j$
  as a product of (a) at most $m_j-1$ $3$-cycles, (b) at most
  $(m_j+1)/2$ $3$-cycles.
  At the same time, $\lbrack g, h\rbrack$ is in $\ker(\pi_i)$, and so,
  obviously,
  are its conjugates.
  Hence, for $B_i = A^{(8 d + 2) m_j}\cap \ker(\pi_i)$, we see that
  $\pi_j(B_i) = T_j$.

  
  Now, say that, for $S,S'\subset \{1,\dotsc,n\}\setminus \{j\}$,
  there are sets $B_{S}, B_{S'}\subset A^{k}$ satisfying
  $\pi_j(B_S) = \pi_j(B_{S'}) = T_j$ as well as $B_S\subset \ker(\pi_i)$ for every
  $i\in S$ and $B_{S'} \subset \ker(\pi_i)$ for every $i\in S'$.
  Then $B_{S\cup S'} =\{\lbrack x,y\rbrack: x\in B_S, y\in B_{S'}\}$ is
  a subset of $A^{4 k}$ contained in $\ker(\pi_i)$ for every $i\in S\cup S'$.
  Moreover, by Lemma \ref{lem:oldmiller}, $\pi_j(B_{S\cup S'}) = T_j$.

  We apply this procedure repeatedly, first expressing
  $Z_j = \{1,\dotsc,n\}\setminus \{j\}$ as the union of two disjoint sets
  $S$, $S'$ of size $\lfloor (n-1)/2\rfloor$ and $\lceil (n-1)/2 \rceil$,
  respectively,
  and then doing a recursion, expressing at each point the set we are given
  as the union of two disjoint sets of sizes differing by at most $1$,
  until we reach the single-element sets $S = \{i\}$, $i\ne j$.
  We obtain a subset $B_{Z_j}$ of $A^{4^{\lceil \log_2 n\rceil} k} \subset A^{4 n^2 k}$,
  where $k = (8 d + 2) m_j$, such
  that $B_{Z_j} \subset \ker(\pi_i)$ for every $i\ne j$, and
  $\pi_j(B_{Z_j}) = T_j$. (Here we note that $4^{\lceil \log_2 n\rceil} \leq
  4^{\log_2 n + 1} \leq 4 \cdot 4^{\log_2 n} = 4 n^2$.)

  Multiplying the sets $B_{Z_j}$, we obtain that
  $A^{4 n^3 (8 d + 2) m}$ contains all of $G$.
\end{proof}

We will also need a very easy analogue for {\em abelian} simple groups.
\begin{lem}\label{lem:bspybtr}
  Let $G = \mathbb{Z}/p\mathbb{Z} \times \mathbb{Z}/p\mathbb{Z} \times \dotsb \times \mathbb{Z}/p\mathbb{Z}$ ($n$ times). Then
  $\diam(G)\leq n \lfloor p/2\rfloor$.  
\end{lem}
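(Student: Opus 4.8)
The plan is to use the linear-algebraic structure of $G=(\mathbb{Z}/p\mathbb{Z})^{n}$: it is not just an abelian group but an $n$-dimensional vector space over $\mathbb{F}_{p}$, and generating it as a group is the same as spanning it as an $\mathbb{F}_{p}$-vector space. So the first step is to fix an arbitrary generating set $A$ of $G$ and extract from it a basis $a_{1},\dots,a_{n}$ of the vector space $G$.

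The second step is a balanced-representative computation. Given any $g\in G$, write $g=c_{1}a_{1}+\dots+c_{n}a_{n}$ with $c_{i}\in\mathbb{Z}/p\mathbb{Z}$, and choose for each $c_{i}$ an integer representative with $|c_{i}|\le\lfloor p/2\rfloor$; this is possible because the integers in the interval $[-\lfloor p/2\rfloor,\lfloor p/2\rfloor]$ already form a complete residue system mod $p$ (for odd $p$ the interval contains exactly $p$ consecutive integers; for $p=2$ it is $\{-1,0,1\}$, which covers both classes). Then $g$ is the product --- the group being abelian, in any order --- of $|c_{i}|$ copies of $a_{i}$ (if $c_{i}>0$) or of $-a_{i}=a_{i}^{-1}$ (if $c_{i}<0$), for $i=1,\dots,n$, with $a_{i}$ simply omitted when $c_{i}=0$. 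This expresses $g$ as a word of length $\sum_{i}|c_{i}|\le n\lfloor p/2\rfloor$ in $A\cup A^{-1}$, so $g$ is at distance at most $n\lfloor p/2\rfloor$ from the identity. Taking the maximum over $g$ and then over all generating sets $A$ gives $\diam(G)\le n\lfloor p/2\rfloor$.

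I do not expect any real obstacle here; the only point requiring a moment's care is the bookkeeping of residues, i.e.\ the claim that each coordinate can be reduced to absolute value at most $\lfloor p/2\rfloor$, which is the elementary ``balanced residues'' fact. It is worth recalling (as is implicit throughout this section, e.g.\ in the proof of Lemma~\ref{lem:schreier}) that the diameter here is measured by words in $A\cup A^{-1}$; the literal directed statement would already fail for $n=1$, $p\ge 5$ and $A=\{1\}$.
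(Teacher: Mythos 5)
Your proof is correct and rests on the same idea as the paper's: view $G$ as an $\mathbb{F}_p$-vector space and exploit balanced residue representatives of size at most $\lfloor p/2\rfloor$ per coordinate. The paper phrases this as an induction on dimension (pick one nonzero $v\in A$, spend $\lfloor p/2\rfloor$ steps along $\langle v\rangle$, pass to $V/\langle v\rangle$ and recurse), whereas you extract a basis $a_1,\dotsc,a_n\subset A$ once and write the target element in balanced coordinates directly; unwinding the paper's recursion produces exactly your basis, so the two arguments are essentially identical, with yours being marginally more explicit. Your closing remark about the $A\cup A^{-1}$ convention is consistent with how the paper itself sets up the proof (it takes $A=S\cup S^{-1}\cup\{e\}$).
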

\begin{proof}
  Let $S$ be a set of generators of $G$, and let $A = S\cup S^{-1} \cup \{e\}$.
   We can see $G$ both as a group
   and as a vector space (which we may call $V$)
   over $\mathbb{Z}/p\mathbb{Z}$.
 We choose a non-identity element $v$ of $A$.

  Trivially, every element of the linear span
  $\langle v\rangle$ of $v$ can be written in the group
  as $v^j$ for some $j\in \mathbb{Z}$ with
  $|j|\leq (p-1)/2$. We project the elements of $A$ to $A \mod \langle v\rangle$, and thus reduce the problem to that for the space $V \mo \langle v\rangle$ instead of
  $V=G$.
\end{proof}
We finally come to the fixed (and improved) version
of \cite[Thm.~1.4]{zbMATH00091732}.
\begin{prop}\label{prop:finbo}
  Let $G\leq \Sym(\Omega)$, $|\Omega|=n$, be transitive. 
  Then there are
  $m_1,\dotsc,m_k\geq 5$ with $\prod_{i=1}^k m_i\leq n$ such that
  \[\diam(G) \leq n^{O(\log n)} \prod_{i=1}^k \diam(\Alt(m_i)).\]
  Moreover,
  \begin{enumerate}
   \item for every $1\leq i\leq k$, $G$ has a composition
     factor isomorphic to $\Alt(m_i)$,
   \item if $G\ne \Alt(\Omega), \Sym(\Omega)$, then
     $m_i\leq n/2$ for every $1\leq i\leq k$.
  \end{enumerate}
\end{prop}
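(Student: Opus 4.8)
The plan is to feed the structural decomposition of Prop.~\ref{prop:obsidian} into the Schreier-type estimate of Cor.~\ref{cor:moreschreier}, and then to bound each quotient in the resulting normal series by the product-of-simple-groups diameter bounds of Lemmas \ref{lem:bspyb}, \ref{lem:bspyb0} and \ref{lem:bspybtr}. Concretely, first I would apply Prop.~\ref{prop:obsidian} to the transitive group $G$, obtaining a normal series $\{e\} = H_0 \triangleleft H_1 \triangleleft \dotsc \triangleleft H_\ell = G$ with $\ell = O(\log n)$ and a partition $\{1,\dotsc,\ell\} = A \sqcup B$ enjoying the six properties stated there, and then invoke Cor.~\ref{cor:moreschreier} to get
\[\diam(G) \leq 4^{\ell-2} \prod_{i=0}^{\ell-1} \diam\left(H_{i+1}/H_i\right).\]
Since $\ell = O(\log n)$, the prefactor satisfies $4^{\ell-2} \leq n^{O(\log n)}$, so it suffices to bound the product of the $\diam(H_{i+1}/H_i)$.

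For an index $i \in A$, the quotient $H_{i+1}/H_i$ is a direct product of at most $2n$ copies of $\Alt(m_i)$ with $m_i \geq 5$ (properties (\ref{it:obsprod}), (\ref{it:obsii}) of Prop.~\ref{prop:obsidian}), so Lemma \ref{lem:bspyb} gives $\diam(H_{i+1}/H_i) \ll n^3 m_i \diam(\Alt(m_i))$. For an index $i \in B$, the quotient is a direct product of at most $2n$ copies of a simple group $M_i$; if $M_i$ is non-abelian we use Lemma \ref{lem:bspyb0} together with the trivial bound $\diam(M_i) \leq |M_i|$, and if $M_i \cong \Z/p\Z$ we use Lemma \ref{lem:bspybtr} with $p \leq |M_i|$, obtaining in either case $\diam(H_{i+1}/H_i) \ll n^3 |M_i|^2$. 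Multiplying the $B$-bounds, using $|B| \leq \ell = O(\log n)$ together with property (\ref{it:obsiv}) of Prop.~\ref{prop:obsidian} (namely $\prod_{i\in B}|M_i| = n^{O(\log n)}$), yields $\prod_{i\in B}\diam(H_{i+1}/H_i) \leq n^{O(\log n)} \bigl(\prod_{i\in B}|M_i|\bigr)^2 = n^{O(\log n)}$; multiplying the $A$-bounds, using $|A| \leq \ell = O(\log n)$ and property (\ref{it:obsiii}) ($\prod_{i\in A} m_i = m \leq n$), yields $\prod_{i\in A}\diam(H_{i+1}/H_i) \leq n^{O(\log n)} \prod_{i\in A}\diam(\Alt(m_i))$. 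Combining these, and relabelling the family $(m_i)_{i\in A}$ as $m_1,\dotsc,m_k$, gives the claimed bound $\diam(G) \leq n^{O(\log n)}\prod_{i=1}^k \diam(\Alt(m_i))$, with $\prod_{i=1}^k m_i = m \leq n$.

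It remains to record the two ``moreover'' assertions. For (1), each quotient $H_{i+1}/H_i$ with $i \in A$ is a direct product of copies of $\Alt(m_i)$, so refining the series $\{H_i\}$ to a composition series produces composition factors isomorphic to $\Alt(m_i)$, which are composition factors of $G$ by Jordan--H\"older. For (2), if $G \ne \Alt(\Omega), \Sym(\Omega)$ then property (\ref{it:obs3.5}) of Prop.~\ref{prop:obsidian} already guarantees $m_i \leq n/2$ for every $i \in A$. There is no genuine obstacle here beyond bookkeeping --- all the hard work sits in Prop.~\ref{prop:obsidian} and in the primitive dichotomy Prop.~\ref{prop:camlie} (whose case (\ref{it:uno}) is what forces the $n^{O(\log n)}$ rather than a polynomial factor) --- but two points must be watched. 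One must keep exploiting $\ell = O(\log n)$ so that every factor of the shape $C^\ell$ or $n^{O(\ell)}$ is absorbed into $n^{O(\log n)}$; and, crucially, one must use the \emph{linear} dependence on $d = \diam(\Alt(m_i))$ in Lemma \ref{lem:bspyb} rather than the quadratic dependence of Lemma \ref{lem:bspyb0} for the alternating factors, since $\diam(\Alt(m_i))$ is precisely the quantity that may not be allowed to appear squared --- this is the gap that the whole section repairs.
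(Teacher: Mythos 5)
Your proof is correct and follows essentially the same route as the paper's: feed Prop.~\ref{prop:obsidian} into Cor.~\ref{cor:moreschreier} and bound the quotients via Lemmas \ref{lem:bspyb}, \ref{lem:bspyb0} and \ref{lem:bspybtr}, using $\ell = O(\log n)$ to absorb the polynomial-in-$n$ factors. You spell out the two ``moreover'' clauses and the abelian/non-abelian split more explicitly than the paper does, and you correctly identify that the linear dependence on $\diam(\Alt(m_i))$ in Lemma~\ref{lem:bspyb} (as opposed to the quadratic one in Lemma~\ref{lem:bspyb0}) is exactly what makes the proof go through --- but these are elaborations of the same argument, not a different one.
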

\begin{proof}
  Apply Proposition \ref{prop:obsidian}. By Cor.~\ref{cor:moreschreier},
  \[\diam(G) \leq 4^{O(\log n)} \prod_{i=0}^{\ell-1} \diam(H_{i+1}/H_i).\]
  By Lemma \ref{lem:bspyb0}, Lemmas \ref{lem:bspyb}--\ref{lem:bspybtr}
  and Prop.~\ref{prop:obsidian}(\ref{it:obsprod}),  
  \[\diam(H_{i+1}/H_i) \ll \begin{cases}
    n^3 m_i \diam(M_i) &\text{if $i\in A$,}\\
n^3 \diam(M_i)^2 &\text{if $i\in B$.}\end{cases}\]
  Hence
  \[\diam(G)\leq 4^{O(\log n)} (O(n^3))^\ell
  \prod_{i\in A} \left(m_i \diam(M_i)\right) \prod_{i\in B} \diam(M_i)^2.\]
  Trivially, $\diam(M_i)\leq |M_i|$, and so,
  by  Prop.~\ref{prop:obsidian}(\ref{it:obsiv})
  \[\prod_{i\in B} \diam(M_i) \leq
  \prod_{i\in B} \left|M_i\right| \leq n^{O(\log n)},\]
  whereas, by Prop.~\ref{prop:obsidian}(\ref{it:obsii}),
  \[\prod_{i\in A} \diam(M_i) = \prod_{i\in A} \diam(\Alt(m_i)).\]
  Finally, by
  Prop.~\ref{prop:obsidian}(\ref{it:obsiii}) and (\ref{it:obsv}),
  $\prod_{i\in A} m_i\leq n$ and $\ell \ll \log n$.
\end{proof}

\section{Main argument}
Let us set out to prove our main result (Theorem \ref{thm:jukuju}).
Part of the general strategy will be as in \cite{MR3152942},
but much simplified.

Throughout,
$A\subset \Sym(\Omega)$, $|\Omega|=n$, with $A=A^{-1}$, $e\in A$, and
$\langle A\rangle$ is $3$-transitive. We assume that
$\log |A| \geq C (\log n)^3$, where $C>0$ is a constant large enough for our later uses.

\subsection{Existence of a large prefix}

For $j=1,2,\dotsc$, we choose distinct elements
$\alpha_1, \alpha_2,\dotsc \in \Omega$ such that, for
$j=1,2,\dotsc$,
\begin{equation}\label{eq:likemula}
  \left|\alpha_j^{(A^4)_{(\alpha_1,\alpha_2,\dotsc,\alpha_{j-1})}}\right|\geq \rho n,\end{equation}
where we set $\rho = e^{-1/5} = 0.818\dotsc$, say. 
We stop when
$(A^4)_{(\alpha_1,\alpha_2,\dotsc,\alpha_k)}$ has no orbits of size $\geq \rho n$.
Inequality (\ref{eq:likemula}) holds for $1\leq j\leq k$.

Let $\Sigma = \{\alpha_1,\dotsc,\alpha_{k-1}\}$. By Corollary \ref{cor:ratherbab} (applied with $\Sigma \cup \alpha_k$ instead of $\Sigma$),
either (\ref{eq:secondopt}) holds, and we are done, or
\begin{equation}\label{eq:amianto}
  k \geq \frac{\log |A|}{15 (\log n)^2}.
\end{equation}
We can assume henceforth that (\ref{eq:amianto}) holds.

By Lemma \ref{lem:basilic}, the restriction
$\left(A^{8 (k-1)}\right)_{\Sigma}|_\Sigma$ is a subset of $\Sym(\Sigma)$
with at least $\rho^{k-1} (k-1)!$ elements.
Let $A' = \left(A^{8 (k-1)}\right)_{\Sigma}$, $H = \left\langle A'\right\rangle$.
If, as we may assume, $k$ is larger than an absolute constant, then,
by Lemma \ref{lem:amusi}, there exists an orbit $\Delta\subset \Omega$ of
$H|_\Sigma$, 
such that $|\Delta|\geq \rho\cdot (k-1)$ and $H|_\Delta$
contains $\Alt(\Delta)$. Thus, in particular,
$H$ has a section
isomorphic to $\Alt(k-1)$, namely, the quotient defined by restricting
either $H$ or a subgroup of $H$ of index $2$ to $\Delta$.

\subsection{The case of descent}

Applying Lemma \ref{lem:wielandt} with $\epsilon=1/8$, we see that
$H$ contains an element $g_0\ne e$ such that $|\supp(g_0)|<n/8$,
assuming, as we may, that $n$ is greater than an absolute constant and that
$k-1\geq C_2 \log n$, where $C_2$ is an absolute
constant.

Let $O = \alpha_k^{(A^4)_{(\Sigma)}}$. We know that
(\ref{eq:likemula}) holds for $j=k$, i.e., $|O|\geq \rho n$.
Denote by $O'$ the orbit of $H$ containing $O$.

Suppose first that either $|O'|\leq e^{-1/10} n$ or 
$H|_{O'}\ne \Alt(O'), \Sym(O')$.
Define $D$ to be the diameter of
$H|_{O'}$. If the diameter of $\Gamma(H,A')$ is no larger than $D$, then
$g_0\in (A')^D \subset A^{8 k D}$. 

If, on the other hand,
$\diam \Gamma(H,A')> D$, then there is an element $g$ of $H$ that is in
$(A')^{D+1}$ but not in $(A')^D$. At the same time, since $D$ is the diameter
of $H|_{O'}$, there is an element $h$ of $(A')^D$ whose restriction
$h|_{O'}$ equals $g|_{O'}$. Clearly, $g^{-1} h$ is non-trivial, lies in
$(A')^{2 D + 1}$ and has trivial restriction to $O'$. By this last fact,
the support of $g^{-1} h$ is of size $\leq (1-\rho) n < n/5$. 

Therefore, in either case, there exists an element $g'$ of $(A')^{2 D+1}\subset
A^{8 k (2 D +1)}$ with support $< n/5$. (Many thanks are due to Henry Bradford for spotting a gap at this point in a previous version of this paper, and for the alternative argument we have just given.)
By Lemma \ref{lem:bbs},
\[\diam(\Gamma(\langle A\rangle,A\cup g')) \ll n^8 (\log n)^{O(1)},\]
and so
\[\diam(\Gamma(\langle A\rangle,A))\ll 8 k (2 D + 1) n^8 (\log n)^{O(1)} \ll n^{10} D.\]
Thus we attain conclusion (\ref{eq:rororo}) in Theorem \ref{thm:jukuju}.
We call this case the case of {\em descent}.

\subsection{The case of growth}

Assume henceforth that $H|_{O'}$ is either $\Alt(O')$ or $\Sym(O')$ and
that $|O'|\geq e^{-1/10} n$.
We can then of course assume that
$|O'|\geq 6$, and so the action of $H$ on $O'$ is $4$-transitive. 

Let $B = (A^2)_{(\alpha_1,\alpha_2,\dotsc,\alpha_k)}$. Then, by (\ref{eq:likemula}),
every orbit of $B B^{-1}$
is of length  $< \rho n = e^{-1/5} n \leq e^{-1/10} |O'|$.
We apply  Cor.~\ref{cor:lalmo}
with $A'|_{O'}$ instead of $A$, $B|_{O'}$ instead of $B$,
$O'$ instead of $\Omega$,
$|O'|$ instead of $n$ and $e^{-1/10}$ instead of $\rho$, and obtain
that there is a $g\in (A')^m \subset A^{8 k m}$, $m\ll n^6 \log n$, such that
\[\left|B^2 g B^2 g^{-1}\right|\geq |B|^{1 + \frac{1/10}{\log n}}.\]
Since $g$ is in the setwise stabilizer of $\Sigma$,
we know that
$B^2 g B^2 g^{-1}$ is a subset of $\left(A^{16 k m + 8}\right)_{(\Sigma)}$.
Therefore, by Lemma \ref{lem:duffy},
\[\left|\left(A^{32 k m + 16}\right)_{(\alpha_1,\dotsc,\alpha_k)}\right|
\geq \frac{\left|\left(A^{16 k m + 8}\right)_{(\Sigma)}
\right|}{n} \geq \frac{1}{n} |B|^{1 + \frac{1/10}{\log n}}.\]

We have obtained what we wanted: growth in a subgroup -- namely, the
subgroup $\Sym(\Omega)_{(\alpha_1,\dotsc,\alpha_k)}$ of $\Sym(\Omega)$. We apply Lemma
\ref{lem:durdo} with $\Sym(\Omega)_{(\alpha_1,\dotsc,\alpha_k)}$ instead of $H$
and $32 k m + 16$ instead of $k$, and obtain that
\begin{equation}\label{eq:serdukt}
  \left|A^{32 k m + 17}\right|\geq \frac{|B|^{\frac{1/10}{\log n}}}{n} |A|.
  \end{equation}

Only one thing remains: to ensure that $|B|$ is not negligible
compared to $|A|$. 
By Lemma \ref{lem:durdo},
\[\left|B\right| =
\left|\left(A^2\right)_{(\alpha_1,\dotsc,\alpha_k)} \right| \geq \frac{|A|}{n^k}.
\]
Thus, if $k\leq (\log_n |A|)/2$, we see that $|B|\geq \sqrt{|A|}$, and so
\[\left|A^{32 k m + 17}\right|\geq
\frac{1}{n}
|A|^{1 + \frac{1}{20 \log n}} \geq |A|^{1 + \frac{1}{21 \log n}},\]
say, yielding a strong version of conclusion (\ref{eq:uru}).
Assume from now on that $k> (\log_n |A|)/2$.



Since (\ref{eq:likemula})
holds for $j=k$, and
since we can assume that $\rho n>1$,
there is at least one non-trivial element of $(A^4)_{(\Sigma)}$.
Call it $g_0$. If $\supp(g_0)\leq n/4$, then, by Lemma \ref{lem:bbs},
\[\diam(\Gamma(\Sym(\Omega),A))\leq 4
\diam(\Gamma(\Sym(\Omega),(A^4)_{(\Sigma)}\cup A)) \ll n^8 (\log n)^c,\]
and we are done.
Assume, then, that $\supp(g_0)>n/4$, and so
$|\supp(g_0)\cap O'|> n/4 - (1-e^{-1/10}) n \geq n/7 \geq |O'|/7$.

Now we can finish the argument in any of two closely related ways.
One way would involve combining Prop.~\ref{prop:siniestro} with Lemma
\ref{lem:tagore}, as we said at the beginning of \S \ref{subs:ellafi}.
However, we will find it simpler to proceed in a way closer to the
procedure explained in
\cite[\S 1.5]{MR3152942}. We apply Prop.~\ref{prop:siniestro} with
$O'$ instead of $\Omega$ and $(A')|_{O'}$ instead of $A$.
We obtain
$\gamma_i\in (A'\cup (A')^{-1} \cup \{e\})^{n^6}$, $1\leq i\leq \ell$, where
  $\ell=O(7 \log n) = O(\log n)$, and
$g_1,\dotsc,g_{k'} \in (A')^v$, $v = O(n^{10})$, $k'=O(\log \log n)$,
such that, for
\[h = \gamma_1 g_0 \gamma_1^{-1} \cdot \gamma_2 g_0 \gamma_2^{-1} \dotsb
\gamma_\ell g_0 \gamma_\ell^{-1} \in \left(A^{4\ell + 2\ell \cdot 8 (k-1) n^6}\right)_{(\Sigma)},\]
the group
\[\langle h, g_1 h g_1^{-1}, g_2 h g_2^{-1},\dotsc , g_{k'} h g_{k'}^{-1}\rangle\]
acts transitively on $O'$. Write $h_0 = h$, $h_i = g_i h g_i^{-1}$
for $1\leq i\leq {k'}$. Since $h$ fixes $\Sigma$ pointwise and $g_i$
fixes $\Sigma$ setwise, $h_i$ fixes $\Sigma$ pointwise for every
$0\leq i\leq {k'}$. Thus, $h_i \in \left(A^{4\ell + 2\ell \cdot 8 (k-1) n^6 + 2 v}\right)_{(\Sigma)}$.
By the same argument, the map
\[\phi:g\mapsto (g h_0 g^{-1}, g h_1 g^{-1}, \dotsc, g h_{k'} g^{-1})\]
sends $(A')^2\subset (A^{16 k})_{\Sigma}$ to a subset of the 
Cartesian product \[\left(A^{32 k +
  4\ell + 2\ell \cdot 8 (k-1) n^6 + 2 v}\right)_{(\Sigma)}
\times \dotsc \times \left(A^{32 k +
  4\ell + 2\ell \cdot 8 (k-1) n^6 + 2 v}\right)_{(\Sigma)}\;\;\;\;\;\;\;\;
\text{($k'+1$ times).}\]
Moreover,
two elements $g$, $g'$ satisfy $\phi(g)=\phi(g')$ if and only
if $g^{-1} g' h_i (g^{-1} g')^{-1}$ for every $0\leq i\leq k'$, i.e., if
and only if $g^{-1} g'$ lies in $C(\langle h_0, h_1, \dotsc,h_{k'}\rangle)$.

We know that $\langle h_0,h_1,\dotsc,h_{k'}\rangle$ acts transitively on $O'$.
It is easy to show that an
element of the centralizer of a transitive group can have a fixed point
if and only if it is the identity.
Thus, if two distinct
$g,g'\in ((A')^2)_{\alpha_k}$ satisfy $\phi(g) = \phi(g')$, then, since
$g (g')^{-1}$ fixes $\alpha_k$, it must act as the identity on
$O'$. In other words, $g (g')^{-1}$ is a non-identity element of support
of size $\leq n - |O'|\leq \left(1 - e^{-1/10}\right) n \leq n/10$.
We can now apply Lemma \ref{lem:bbs} (Babai-Beals-Seress), and obtain
that
\[\diam(\Gamma(\langle A\rangle, A)) \ll 8 k n^8 (\log n)^4 \ll n^{10}.\]

Assume, then, that the restriction of $\phi$ to $((A')^2)_{\alpha_k}$ is
injective. Then
\[\left|\left(A^{32 k +
4\ell + 2\ell \cdot 8 (k-1) n^6 + 2 v}\right)_{(\Sigma)}\right|^{{k'}+1}\geq \left|
((A')^2)_{\alpha_k}\right|\] and so
\[\left|\left(A^N\right)_{(\Sigma)}\right|\geq
\left|((A')^2)_{\alpha_k}\right|^{\frac{1}{k'+1}} \geq \left(\frac{|A'|}{n}\right)^{\frac{1}{k'+1}}\]
for $N = 32 k + 4\ell + 2 \ell \cdot 8 (k-1) n^6 + 2 v = O\left(n^{10}\right)$.
Since $\Sigma = \{\alpha_1,\dotsc,\alpha_{k-1}\}$ and
$|A'|\geq \rho^{k-1} (k-1)!$, where $\rho =e^{-1/5}$, it follows that
\[\begin{aligned}
\left|\left(A^{2 N}\right)_{(\alpha_1,\dotsc,\alpha_k)}\right|&\geq
\frac{\left|\left(A^N\right)_{(\Sigma)}\right|}{n}
\geq \frac{(|A'|/n)^{\frac{1}{k'+1}}}{n}\\
&\geq \frac{\left(\rho^{k-1} (k-1)!/n\right)^{\frac{1}{k'+1}}}{n} \geq
\frac{(\rho^k k!)^{\frac{1}{k'+1}}}{n^2}.\end{aligned}
\]

Since $B = (A^2)_{(\alpha_1,\dotsc,\alpha_k)}$, we know from Lemma \ref{lem:durdo}
that
\[\left|A^{2 N+1}\right|\geq \frac{\left|\left(A^{2 N}\right)_{(\alpha_1,\dotsc,\alpha_k)}\right|}{|B|} \cdot |A|.\]
Hence either
\begin{equation}\label{eq:udunur}
  |A^{2N+1}|\geq \frac{(\rho^k k!)^{\frac{1}{2 k'+2}}}{n} |A|
  \end{equation} or
$|B|\geq (\rho^k k!)^{1/(2 k'+2)}/n$.
In the latter case, by
(\ref{eq:serdukt}),
\begin{equation}\label{eq:adanar}\left|A^{32 k m + 17}\right|\geq \left(\frac{(\rho^k k!)^{\frac{1}{2 k'+2}}}{n}\right)^{\frac{1}{10 \log n}} \frac{|A|}{n}\gg
(\rho^k k!)^{\frac{1}{20 (k'+1) \log n}} \frac{|A|}{n}
  .\end{equation}
The amount on the right in (\ref{eq:udunur}) is clearly greater than that on
the right in
(\ref{eq:adanar}), so we can focus on bounding the right side of
(\ref{eq:adanar}) from below.

By $\rho=e^{-1/5}$, Stirling's formula, 
and the assumptions that
$\log |A|\geq C (\log n)^3$ (or even just $\log |A|> C (\log n)^2$)
and $k>(\log_n |A|)/2$, 
\[\begin{aligned}
\rho^k k! &\gg \left(\frac{k}{e^{6/5}}\right)^k \geq \left(\frac{\log |A|}{2 e^{6/5}\log n}\right)^{\frac{\log |A|}{2 \log n}}
\geq (\log |A|)^{\frac{\log |A|}{4 \log n}} = |A|^{\frac{\log \log |A|}{4 \log n}}
.\end{aligned}\]
Hence, again by $\log |A|\geq C (\log n)^3$,
\[\frac{(\rho^k k!)^{\frac{1}{10 k' \log n}}}{n}
\geq \frac{|A|^{\frac{\log \log |A|}{O((\log n)^2 \log \log n)}}}{n}
\geq |A|^{\frac{\log \log |A|}{O((\log n)^2 \log \log n)}}.\]

Taking $N' = \max(2 N+1,32 k m + 17) = O\left(n^{10}\right)$,
we conclude that
\[\left|A^{N'}\right| \geq |A|^{1 +  \frac{\log \log |A|}{O((\log n)^2 \log \log n)}}.\]
Theorem \ref{thm:jukuju} is thus proved.

\section{Iteration}
We can now prove a marginally weaker version of Theorem
\ref{thm:pais}.
The reader will notice that the proof we are about to give works for any
    $3$-transitive group $G$, not just for $G=\Alt(n)$ and $G=\Sym(n)$.
    However, by \cite[Cor. to Thm.~A]{Pyb93}, every $3$-transitive and in fact every $2$-transitive group $G$ on $n$ elements that is not $\Alt(n)$ or $\Sym(n)$ has
    $\exp(O((\log n)^3))$ elements. Thus, in such a case,
    the result we are about to prove
    would be trivial.
\begin{thm}\label{thm:molop}
  Let $G=\Alt(n)$ or $\Sym(n)$. Let $S$ be a set of generators of $G$. Then
  \begin{equation}\label{eq:klopo}
    \diam \Gamma(G,S) \leq e^{K (\log n)^4 (\log \log n)^2},\end{equation}
  where $K$ is an absolute constant.
\end{thm}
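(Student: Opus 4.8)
The plan is to derive Theorem~\ref{thm:molop} from Theorem~\ref{thm:jukuju} by a double induction: an \emph{outer} strong induction on $n$, driven by the descent alternative~(\ref{eq:rororo}), and, at each level, an \emph{inner} iteration of the growth alternative~(\ref{eq:uru}). First I would dispose of the directed/undirected distinction once and for all: by Lemma~\ref{lem:soti} it suffices to bound $\diam\Gamma(G,S)$ for $S=S^{-1}$, at the cost of a factor $(\log|G|)^3\ll(n\log n)^3$ and a squaring; replacing $S$ by $S\cup S^{-1}\cup\{e\}$ we may assume $A=S$ satisfies $A=A^{-1}$, $e\in A$, and $\langle A\rangle=G$ is $3$-transitive (indeed $\Alt(n),\Sym(n)$ are highly transitive). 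I would then prove, by strong induction on $n$, that $\widetilde\Phi(n):=\log\diam_{\mathrm{sym}}(H)\le K(\log n)^4(\log\log n)^2$ for every $H\in\{\Alt(m),\Sym(m)\}$ with $5\le m\le n$, where $\diam_{\mathrm{sym}}$ is the worst-case diameter over \emph{symmetric} generating sets (enlarging $K$ to cover bounded $n$); Theorem~\ref{thm:molop} then follows by one final application of Lemma~\ref{lem:soti}, whose squaring is harmless because it is used only at the top.

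\textbf{The inner iteration.} At a given level I would first \emph{seed} the iteration: if $\diam\Gamma(G,A)\le N:=n^{C(\log n)^2}$ we are done, since $N\le e^{K(\log n)^4(\log\log n)^2}$; otherwise, since $e\in A$ makes $|A^j|$ nondecreasing and $|A^{j}|=|A^{j+1}|$ would force $A^{j}=G$, we get $|A^{j+1}|\ge|A^{j}|+1$ for all $j\le N$, hence $|A^{N+1}|\ge N$, so Theorem~\ref{thm:jukuju} applies to $A_0:=A^{N+1}$. As long as the growth alternative holds, set $A_{j+1}=A_j^{n^{C}}$; each step multiplies the word-length budget by $n^{C}$ and, writing $L_j=\log|A_j|$, gives $L_{j+1}\ge L_j\bigl(1+c\,\tfrac{\log L_j}{(\log n)^2\log\log n}\bigr)$. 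The key computation is that $\log L_j$ \emph{grows} with $|A_j|$, so comparing the discrete sum of increments with $\int dL/(L\log L)=\log\log L$, from $L_0\gg(\log n)^3$ up to $\log|G|\le n\log n$, shows that after at most $T=O\bigl((\log n)^2(\log\log n)^2\bigr)$ growth steps one must either reach $A_j=G$ — whence $\diam_{\mathrm{sym}}(G)\le n^{O((\log n)^2(\log\log n)^2)}=e^{O((\log n)^3(\log\log n)^2)}$, already stronger than needed — or fall into the descent alternative~(\ref{eq:rororo}) at a step $j\le T$, the accumulated budget being still $n^{O((\log n)^2(\log\log n)^2)}$.

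\textbf{The descent and the recursion.} In the descent case we obtain a transitive $G'$ on $m\le n$ points with either (a) $m\le e^{-1/10}n$ or (b) $G'\not\simeq\Alt(m),\Sym(m)$, and a bound of the shape $\diam_{\mathrm{sym}}\Gamma(G,A)\le n^{O(1)}\cdot\diam_{\mathrm{sym}}(G')$ — here is the one delicate bookkeeping point: I must run the descent step of Theorem~\ref{thm:jukuju} and, below, Proposition~\ref{prop:finbo}, \emph{with symmetric generating sets}, so that Lemma~\ref{lem:soti} is never re-invoked. This is legitimate because the Schreier generators produced in Corollary~\ref{cor:moreschreier} out of a symmetric generating set can be symmetrised at no extra word-length cost (their inverses are words of the same length in the symmetric ambient set), and Lemmas~\ref{lem:bspyb0}--\ref{lem:bspybtr} already work from $A=S\cup S^{-1}\cup\{e\}$; a leading factor $2$ in front of the recursion (which re-symmetrising would cost) would, when $G'$ carries a single big factor of size $\asymp n$, make the induction blow up. Feeding $G'$ into Proposition~\ref{prop:finbo} gives $m_1,\dots,m_r\ge5$ with $\prod_i m_i\le m$ and $\diam_{\mathrm{sym}}(G')\le m^{O(\log m)}\prod_i\diam_{\mathrm{sym}}(\Alt(m_i))$, where in case~(b), and in case~(a) unless $G'$ is itself $\Alt(m)$ or $\Sym(m)$, each $m_i\le m/2$; in all cases $\prod_i m_i\le e^{-1/10}n$ and each $m_i<n$ (in case~(a) with $G'=\Alt(m)$ or $\Sym(m)$ one simply recurses directly into $m\le e^{-1/10}n$), and since $\Alt(m_i)$ is again $3$-transitive the inductive hypothesis applies. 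Collecting the word-length contributions one arrives at
\[
\widetilde\Phi(n)\ \le\ c_0(\log n)^3(\log\log n)^2+\sum_i\widetilde\Phi(m_i),\qquad \prod_i m_i\le e^{-1/10}n .
\]

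\textbf{Closing the induction.} Using the induction hypothesis $\widetilde\Phi(m_i)\le K(\log m_i)^4(\log\log m_i)^2\le K(\log\log n)^2(\log m_i)^4$ and the elementary inequality $\sum_i(\log m_i)^4\le(\sum_i\log m_i)^4\le(\log n-\tfrac1{10})^4\le(\log n)^4-\tfrac13(\log n)^3$ for $n$ large, the right-hand side is at most $K(\log n)^4(\log\log n)^2-(\tfrac{K}{3}-c_0)(\log n)^3(\log\log n)^2$, which is $\le K(\log n)^4(\log\log n)^2$ once $K\ge 3c_0$; this is exactly where the ``extra'' power of $\log n$ relative to the inner-iteration cost $O((\log n)^3(\log\log n)^2)$ is consumed — morally, by the $O(\log n)$ possible descent levels. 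The steps I expect to be the real work are (i) making the $T=O((\log n)^2(\log\log n)^2)$ growth-step count rigorous as a discrete sum while tracking that the threshold $\log|A_j|\ge C(\log n)^3$ and all ambient smallness hypotheses survive each step, and (ii) the symmetric-generating-set bookkeeping, i.e.\ verifying carefully that the descent conclusion of Theorem~\ref{thm:jukuju} and Proposition~\ref{prop:finbo} can be read off for $\diam_{\mathrm{sym}}$, so that Lemma~\ref{lem:soti} is spent only at the very top. Everything else — the seeding pigeonhole, the arithmetic of word lengths, the base case — is routine.
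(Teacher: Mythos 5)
Your proposal follows essentially the same route as the paper: bootstrap to a large symmetric set $A_0$, iterate the growth alternative of Theorem~\ref{thm:jukuju} at most $O((\log n)^2(\log\log n)^2)$ times, fall into the descent alternative, and recurse via Proposition~\ref{prop:finbo}; and your care about invoking Lemma~\ref{lem:soti} only once, outside the recursion, is a sensible reading of what the paper does somewhat informally. But there is a genuine flaw in your closing step: you assert that ``in all cases $\prod_i m_i\le e^{-1/10}n$'' and feed this through the chain $\sum_i(\log m_i)^4\le(\sum_i\log m_i)^4\le(\log n-\tfrac1{10})^4$. In case~(b) of the descent alternative the group $G'$ can act on $m$ points with $m$ as large as $n$, and Proposition~\ref{prop:finbo} gives only $\prod_i m_i\le m\le n$; for instance a transitive, non-alternating, non-symmetric group on $n=25$ points whose socle is $\Alt(5)^2$ (a product action of $\Sym(5)\wr\Sym(2)$) has $\prod_i m_i=25=n$, so $(\sum_i\log m_i)^4=(\log n)^4$ and the middle inequality in your chain fails.

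The gain of order $(\log n)^3$ that closes the induction must instead come from the \emph{individual} bound $m_i\le n/2$ supplied by Proposition~\ref{prop:finbo}(2). As in the paper, take $m_1$ to be the largest of the $m_i$ and use
\[
\sum_i(\log m_i)^4\le(\log m_1)^4+\Bigl(\sum_{i\ge2}\log m_i\Bigr)^4\le(\log m_1)^4+(\log n-\log m_1)^4\le(\log 2)^4+(\log(n/2))^4,
\]
the last step by convexity of $x\mapsto x^4+(\log n-x)^4$ on $[\log 2,\log(n/2)]$; the right side is $(\log n)^4-4\log 2\cdot(\log n)^3+O((\log n)^2)$, which gives the saving you need. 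Your claimed endpoint $\sum_i(\log m_i)^4\le(\log n-\tfrac1{10})^4$ does in fact hold, but only because $\log 2>\tfrac1{10}$ caps each individual $\log m_i$ below $\log n-\log 2$, not because of the product bound you invoked. Apart from this, your argument matches the paper's proof.
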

Since $|\Alt(n)|\geq n!/2 \gg (n/e)^n$, it follows immediately that, for
$G= \Alt(n)$ and for $G=\Sym(n)$,
  \[\diam \Gamma(G,S) \leq e^{O((\log \log |G|)^4 (\log \log \log |G|)^2)},\]
  where the implied constant is absolute.
  \begin{proof}
    We can assume that $e\in S$. By Lemma \ref{lem:soti}, we can assume that
  $S=S^{-1}$ as well.
 
  For any $k\geq 1$, if $S^k=S^{k+1}$, then $S^k=S^{k'}$ for every $k'>k$, and so
  $S^k = \langle S \rangle=G$.
  So, if $S^k\ne G$, $\left|S^{k+1}\right|\geq \left|S^{k}\right|+1$.
  Applying this statement for $k=1,2\dotsc,m$, we see that, for any $m$,
  $\left|S^m\right|\geq \min(m,|G|)$. Let $A_0 = S^m$ for
  $m = \lceil \exp\left(C (\log n)^3\right)\rceil$, where $C$ is as in the statement of
  Thm.~\ref{thm:jukuju}. Then, assuming $n$ is larger than a constant,
  $|A_0|\geq \exp\left(C (\log n)^3\right)$. (If $n$ is not larger
  than a constant, then the theorem we are trying to prove is trivial.)

  We apply Theorem \ref{thm:jukuju} to $A_0$ instead of $A$. If
  conclusion (\ref{eq:rororo})
  holds, we stop. If conclusion (\ref{eq:uru})
  holds, we let $A_1 = A_0^{n^C}$ and apply Theorem \ref{thm:jukuju} to $A_1$.
  We keep on iterating until 
  conclusion \ref{eq:rororo} holds, and then we stop.
  We thus have $A_0, A_1,\dotsc, A_k$, $k\geq 0$,
  such that $A_{i+1}=A_i^{n^C}$ for $0\leq i \leq k-1$,
  \begin{equation}\label{eq:sturu}
  \left| A_{i+1}\right|\geq \left|A_i\right|^{1+c \frac{\log \log |A_i|
    }{(\log n)^2 \log \log n}}\end{equation}
  (i.e., conclusion (\ref{eq:uru}) holds) for $0\leq i \leq k-1$,
  and conclusion (\ref{eq:rororo}) holds for $A_k$.

  Let us bound $k$. Write $r_i= \log |A_i|$. By (\ref{eq:sturu}),
  \[r_{i+1} = \left(1+c \frac{\log r_i
  }{(\log n)^2 \log \log n}\right) r_i.\]
  We also know that $r_0\geq 2$ (or really rather more) and $r_k\leq \log |G|$.
  The number of steps needed for $r_i$ to double is
  \[\leq \left\lceil \frac{(\log n)^2 \log \log n}{c \log r_i}
  \right\rceil \leq
  \frac{2 (\log n)^2 \log \log n}{c \log r_i},\]
  where we use the fact that $\lceil y\rceil\leq 2 y$ for $y\geq 1$ and
  we assume, as we may, that $c\leq 1$. We conclude that
  $k$ is at most $(2/c) (\log n)^2 \log \log n$  times
  \[\begin{aligned}\mathop{\sum_{r=2^j}}_{r\leq \log |G|} \frac{1}{\log r} &=
  \sum_{0\leq j\leq \log_2 \log |G|} \frac{1}{j \log 2} \\ &\ll \log \log \log |G|
  \ll \log \log n.\end{aligned}\]
  Write this bound in the form
  \[k\leq C' (\log n)^2 (\log \log n)^2.\]
  We see that $A_k\subset A_0^{n^{C k}} = A_0^{l} = S^{l m}$ for
  $l\leq \exp\left(C C' (\log n)^3 (\log \log n)^2\right)$ and, as before,
  $m\leq \exp(C (\log n)^3)$.
  
  (The author would like to thank L. Pyber profusely for pointing out that,
  as we have just seen, the presence of
  $\log \log |A|$ in the exponent in (\ref{eq:uru}) means we save a factor of
  $(\log n)/\log \log n$ in the bound on $k$.)

  By conclusion (\ref{eq:rororo}), which holds for $A_k$,
  \[\diam(\Gamma(G,S))\leq l m\cdot
  \diam(\Gamma(G,A_k))\leq l m 
  n^C \diam(G'),\]
  where $G'$ is a transitive group on $n'\leq n$ elements
  such that either (a) $n'\leq e^{-1/10} n$ or (b) $G'\nsim \Alt(n'), \Sym(n')$.
  If $n'\leq e^{-1/10} n$ and either $G\sim \Alt(n')$ or $G\sim \Sym(n')$, then
  \begin{equation}\label{eq:rodor1}
    \diam(G')\leq \max(\diam(\Sym(n')),\diam(\Alt(n')))\leq
  4 \diam(\Alt(n'))\end{equation}
  by Lemma \ref{lem:schreier}.
  If $G\nsim \Alt(n'),\Sym(n')$, then, we apply Prop.~\ref{prop:finbo},
  and obtain that
  \begin{equation}\label{eq:rodor2}
    \diam(G)\leq (n')^{C'' \log n'} \prod_{i=1}^k \diam(\Alt(m_i))
\leq e^{C'' (\log n)^2} \prod_{i=1}^k \diam(\Alt(m_i)),
\end{equation}
where 
$\prod_{i=1}^k m_i \leq n'\leq n$,
$m_i\leq n'/2\leq n/2$ for every $1\leq i\leq k$,
and $C''$ is an absolute constant.
Clearly, $l m n^C \max(4,e^{C'' (\log n')^2}) \leq e^{C''' (\log n)^3}$ for
$C'''$ an absolute constant, provided that (say) $n\geq e^{3/2}$.

  We can assume, as an inductive hypothesis,
  that Theorem \ref{thm:molop} is true for
  $G_1=\Alt(n_1)$, $n_1\leq e^{-1/10} n$. In other words,
  \[\diam(G_1) \leq e^{K (\log n_1)^4 (\log \log n_1)^2}.\]
  
  If (\ref{eq:rodor1}) above applies, we let $n_1 = n'$, and obtain that
  \[\begin{aligned}
  \diam(\Gamma(G,S)) &\leq e^{C''' (\log n)^3 (\log \log n)^2}
  e^{K (\log n')^4 (\log \log n')^2}\\
  &\leq e^{\left(C''' (\log n)^3 + K ((\log n)-1/10)^4\right) (\log \log n)^2}.\end{aligned}\] 
  For $K>(10/3.99) \cdot C'''$ (say) and $n$ larger than a constant,
  \[C''' (\log n)^3 + K \left((\log n)-\frac{1}{10}\right)^4 \leq
  K (\log n)^4,\]
  and so Theorem \ref{thm:molop} is true for $n$.

  If (\ref{eq:rodor2}) applies instead, then
  \[
  \diam(\Gamma(G,S)) \leq e^{C''' (\log n)^3 (\log \log n)^2}
  \prod_{i=1}^k e^{K (\log m_i)^4 (\log \log m_i)^2},\]
  where $\prod_{i=1}^k m_i \leq n$ and $m_i\leq n/2$ for all $1\leq i\leq k$.
  If $k=1$, we proceed as above, with $1/2$ instead of $e^{-1/10}$.
  If $k>1$, then, assuming, as we may, that $m_1\geq m_i$ for all
  $2\leq i\leq k$,
  \[\begin{aligned}
      \sum_{i=1}^k (\log m_i)^4 &= (\log m_1)^4 +
  \sum_{i=2}^k (\log m_i)^4 \leq (\log m_1)^4 + (\sum_{i=2}^k \log m_i)^k\\
  &= (\log m_1)^4 + (\log \prod_{i=2}^k m_i)^4 \leq (\log m_1)^4 +
  (\log n - \log m_1)^4\\ &\leq (\log 2)^4 + (\log n/2)^4,\end{aligned}\]
  since $2\leq m_1\leq n/2$. Hence, much as above,
  \[
  \diam(\Gamma(G,S)) \leq e^{C''' (\log n)^3 (\log \log n)^2}
  e^{K ((\log n - \log 2)^4 + (\log 2)^4) (\log \log n)^2}.\]
  For $K>1/(3.99 \log 2)$ and $n$ larger than a constant,
  \[C''' (\log n)^3 + K (\left((\log n)-\log 2\right)^4 + (\log 2)^4) \leq
  K (\log n)^4,\]
  and so Theorem \ref{thm:molop} is true for $n$ in this case as well.
  \end{proof}

\bibliographystyle{alpha}
\bibliography{standrews}
\end{document}